\title{\bf Billiards in L-shaped tables with barriers}
\author{Matt Bainbridge}
\numberwithin{equation}{section}
\renewcommand{\l@section}{\@dottedtocline{0}{1.5em}{2.3em}}
\renewcommand{\l@subsection}{\@dottedtocline{1}{3.8em}{3.2em}}
\renewcommand{\l@subsubsection}{\@dottedtocline{2}{7.0em}{4.1em}}
\newcommand{\A}[1][D]{\mathcal{A}_{#1}}
\newcommand{\barmoduli}[1][2]{{\overline{\mathcal M}}_{#1}}
\newcommand{\barP}[1][D]{\overline{P}_{#1}}
\newcommand{\barW}[1][D]{\overline{W}_{#1}}
\newcommand{\barX}[1][D]{\overline{X}_{#1}}
\newcommand{\bdry}{\partial}
\newcommand{\cD}{\mathcal{D}}
\newcommand{\cE}[1][5]{\mathcal{E}_{#1}}
\newcommand{\comp}{\asymp}
\newcommand{\Cusps}[1][D]{C(\X[#1])}
\newcommand{\deriv}[2]{\frac{d }{d #2}}
\newcommand{\pderiv}[2]{\frac{\partial #1}{\partial #2}}
\newcommand{\domed}{\prec}
\newcommand{\E}[1][D]{E_{#1}}
\newcommand{\fU}{\mathfrak{U}}
\newcommand{\barF}[1][D]{\overline{\mathcal{F}}_{#1}}
\newcommand{\F}[1][D]{\mathcal{F}_{#1}}
\newcommand{\GLtwoR}{{\rm GL}_2\reals}
\newcommand{\GLtwoRplus}{{\rm GL}_2^+\reals}
\renewcommand{\hat}{\widehat}
\renewcommand{\tilde}{\widetilde}
\renewcommand{\Im}{\IM}
\renewcommand{\Re}{\RE}
\newcommand{\isom}{\cong}
\newcommand{\K}[1][D]{K_{#1}}
\newcommand{\kron}[2]{\left(\frac{#1}{#2}\right)}
\newcommand{\moduli}[1][2]{{\mathcal M}_{#1}} 
\newcommand{\omn}[1][g]{\Omega\moduli[#1]({\bf n})}
\newcommand{\ord}[1][D]{\mathcal{O}_{#1}}
\newcommand{\otn}[1][g]{\Omega\teich_{#1}({\bf n})}
\renewcommand{\P}[1][D]{P_{#1}}
\newcommand{\pom}[1][2]{\proj\Omega{\mathcal M}_{#1}}
\newcommand{\potm}[1][2]{\proj\Omega\tildemoduli[#1]}
\newcommand{\pobarm}[1][2]{\proj\Omega\overline{\mathcal M}_{#1}}
\newcommand{\PSL}{{\rm PSL}}
\newcommand{\PSLtwoR}{\PSL_2 \reals}
\newcommand{\sC}{\mathscr{C}}
\newcommand{\siegelmod}{\mathcal{A}_2}
\newcommand{\SL}{{\mathrm{SL}}}
\newcommand{\SLtwoord}[1][D]{\SL(\ord[#1]\oplus\ord[#1]^\vee)}
\newcommand{\SLtwoR}{\SL_2 \reals} 
\newcommand{\SOtwoR}{{\rm SO}_2 \reals} 
\newcommand{\teich}{\mathcal{T}}
\newcommand{\U}[1][D]{\Upsilon_{#1}^3}
\newcommand{\tildemoduli}[1][2]{{\widetilde{\mathcal M}}_{#1}}
\newcommand{\W}[1][D]{W_{#1}}
\newcommand{\X}[1][D]{X_{#1}} 
\newcommand{\Y}[1][D]{Y_{#1}}
\newcommand{\Yprot}[1][D]{\mathcal{Y}_{#1}}
\newcommand{\cx}{{\mathbb C}} 
\newcommand{\half}{{\mathbb H}} 
\newcommand{\integers}{{\mathbb Z}}
\newcommand{\nats}{{\mathbb N}}
\newcommand{\ratls}{{\mathbb Q}} 
\newcommand{\reals}{{\mathbb R}} 
\newcommand{\proj}{{\mathbb P}}
\newcommand{\zed}{\integers}
\newcommand{\Mobius}{M\"obius\xspace}
\newcommand{\Poincare}{Poincar\'e\xspace}
\newcommand{\Teichmuller}{Teichm\"uller\xspace}
\DeclareMathOperator{\Area}{Area}
\DeclareMathOperator{\area}{area}
\DeclareMathOperator{\Collapse}{Collapse}
\DeclareMathOperator{\End}{End}
\DeclareMathOperator{\Hol}{Hol}
\DeclareMathOperator{\IM}{Im}
\DeclareMathOperator{\RE}{Re}
\DeclareMathOperator{\height}{height}
\DeclareMathOperator{\interior}{int}
\DeclareMathOperator{\length}{length}
\DeclareMathOperator{\Link}{Link}
\DeclareMathOperator{\BOX}{Box}
\DeclareMathOperator{\Jac}{Jac}
\DeclareMathOperator{\Plumb}{Plumb}
\DeclareMathOperator{\Res}{Res}
\DeclareMathOperator{\Stab}{Stab}
\DeclareMathOperator{\connsum}{Sum}
\DeclareMathOperator{\zerosplit}{Split}
\DeclareMathOperator{\supp}{support}
\DeclareMathOperator{\Tr}{Tr}
\DeclareMathOperator{\vol}{vol}
\DeclareMathOperator{\Vol}{vol}
\newtheorem{theorem}{Theorem}[section] 
\newtheorem{prop}[theorem]{Proposition} 
\newtheorem{cor}[theorem]{Corollary}
\newtheorem{lemma}[theorem]{Lemma}
\theoremstyle{definition}
\newtheorem*{definition}{Definition}
\theoremstyle{remark}
\newtheorem*{remark}{Remark}
\begin{document}
\bibliographystyle{halpha}

\maketitle

\begin{abstract}
  We compute the volumes of the eigenform loci in the moduli space of genus two Abelian differentials.  From this, we obtain
  asymptotic formulas for counting closed billiards paths in certain L-shaped polygons with barriers.
\end{abstract}

\tableofcontents

\section{Introduction}
\label{sec:introduction}

Let $P=P(a, b, t)$ be the L-shaped polygon with barrier shown in Figure~\ref{fig:Lshaped}.  A \emph{billiards path} on $P$ is a path which
is geodesic on $\interior(P)$ and which bounces off of the sides of $P$ so that the angle of incidence equals the angle of reflection.  We
allow billiards paths to pass through one of the six corners of $P$ but not through the endpoint $p$ of the barrier.  A \emph{saddle
  connection} is a billiards path which joins $p$ to itself.  Each periodic billiards path is contained in a family of parallel periodic
billiards paths called a \emph{cylinder}, and each cylinder is bounded by a union of saddle connections.  A saddle connection is
\emph{proper} if it does not bound a cylinder.  We say that a saddle connection is \emph{multiplicity one} if it passes through a corner and
\emph{multiplicity two} otherwise.

We define the following counting functions:
{\allowdisplaybreaks
  \begin{align*}
    N_{\rm c}(P, L) =& \#\{\text{maximal cylinders of length at most $L$}\}, \\
    N_{\rm s}^1(P, L) =& \#\{\text{proper multiplicity one saddle connections of length at most $L$}\}, \\
    N_{\rm s}^2(P, L) =& \#\{\text{proper multiplicity two saddle connections of length at most $L$}\}. 
  \end{align*}
}
Note that we are counting unoriented saddle connections and cylinders, and we do not count periodic paths which repeat more than once.

In this paper, we will show:

\begin{theorem}
  \label{thm:countingsaddleconnections}
  Consider the polygon $P=P(x+z\sqrt{d}, y+z\sqrt{d}, t)$, with $t>0$, $x, y, z\in\ratls$, $x+y=1$,  $d\in \nats$ nonsquare, and
  \begin{equation}
    \label{eq:65}
    P \neq P\left(\frac{1+\sqrt{5}}{2}, \frac{1+\sqrt{5}}{2}, \frac{5-\sqrt{5}}{10}\right).
  \end{equation}
  We have the following asymptotics:
  \begin{align*}
    N_{\rm c}(P,L)&\sim \frac{15}{2\pi\Area(P)} L^2, \\
    N_{\rm s}^1(P,L)&\sim \frac{27}{16\Area(P)}\pi L^2,\quad\text{and} \\
    N_{\rm s}^2(P,L) &\sim \frac{5}{16\Area(P)}\pi L^2.
  \end{align*}
\end{theorem}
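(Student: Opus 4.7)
The plan is to unfold the billiard, place the resulting translation surface in an eigenform locus, use McMullen's classification of orbit closures to reduce to Siegel--Veech theory, and then compute the resulting Siegel--Veech constants from the volume computations that make up the main body of the paper. Specifically, I would first apply the Zemlyakov--Katok unfolding: since every interior angle of $P$ --- including the $2\pi$ angle at the barrier tip $p$, viewed as a boundary traversed on both sides --- is an integer multiple of $\pi/2$, the unfolding group is the dihedral group of order four, and the resulting translation surface $(X,\omega)$ has area $4\Area(P)$ and genus two, with two simple zeros: one from the reflex corner of the L, the other from the tip $p$. Under this correspondence, unoriented maximal cylinders of billiards in $P$ correspond to horizontal cylinders of $(X,\omega)$, proper multiplicity-two saddle connections correspond to saddle connections joining the two preimages of $p$, and proper multiplicity-one saddle connections correspond to saddle connections from a preimage of $p$ to itself which pass through a preimage of a corner of $P$.

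The arithmetic hypotheses $a = x + z\sqrt{d}$, $b = y + z\sqrt{d}$, $x+y = 1$ (with $x,y,z\in\ratls$) then force $(X,\omega)$ to be an eigenform for real multiplication by the order $\ord$ of a suitable discriminant $D$: the condition $x+y=1$ is exactly what makes both horizontal cylinders of $(X,\omega)$ have moduli in the real quadratic field $\ratls(\sqrt{d})$, which one checks is equivalent to eigenform-hood. Hence $(X,\omega)\in\W$. By McMullen's classification of $\SLtwoR$-orbit closures inside $\W$, the orbit closure of $(X,\omega)$ is either $\W$ itself or the closed orbit of a genus-two Veech surface; the unique Veech surface appearing in this family is the decagon double cover, which corresponds precisely to the excluded polygon \eqref{eq:65}.

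With the orbit closure identified as $\W$, the Eskin--Masur counting theorem for affine invariant submanifolds gives, for each of the three configurations $\star$ listed in the theorem,
\begin{equation*}
  N_\star(X,L) \;\sim\; c_\star(\W)\,\pi L^2 / \Area(X,\omega),
\end{equation*}
where $c_\star(\W)$ is the associated Siegel--Veech constant. By the Eskin--Masur--Zorich formula, $c_\star(\W)$ is the ratio of the volume of the boundary stratum of $\W$ parameterizing $\star$-configurations to $\vol(\W)$ itself. I would compute these three boundary volumes together with $\vol(\W)$, read off the three constants, and translate back to the count on $P$ via the $4$-to-$1$ unfolding, carefully accounting for the unoriented versus oriented distinction to pin down the exact coefficients in the theorem.

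The main obstacle is the volume computation of $\W$ and of its three relevant cusps, which requires the detailed structure of eigenform loci as Hilbert-modular objects and control over their compactifications --- this is the technical core of the paper. A subtler point is the $D$-independence of the three asymptotic coefficients: each individual volume depends on $D$, but the three ratios do not, and verifying this cancellation while simultaneously singling out \eqref{eq:65} as the only Veech exception is the most delicate part of the argument.
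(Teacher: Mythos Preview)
Your overall strategy---unfold, place the surface in an eigenform locus, compute Siegel--Veech constants as volume ratios, pull back---is the paper's. But several details are off and one step is a real gap. You write $\W$ where you mean $\Omega_1\E(1,1)$: in this paper $\W=\Omega\E\cap\Omega\moduli(2)$ is the \emph{double}-zero locus, itself a union of closed $\SLtwoR$-orbits, so there is nothing to classify there. The unfolding $U(P)$ with $t>0$ has two \emph{simple} zeros, and both are preimages of the barrier tip $p$: in genus two the hyperelliptic involution swaps the two simple zeros, so neither is a Weierstrass point, while the six Weierstrass points of $U(P)$ are exactly the preimages of the six corners of $P$ (including the reflex one). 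Consequently every counted saddle connection on $U(P)$ joins the two preimages of $p$; multiplicity one versus two is whether it passes through a Weierstrass point, not whether it returns to the same zero. Relatedly, the billiard-to-surface count is $2$-to-$1$, not $4$-to-$1$: closed geodesics and saddle connections on $U(P)$ come in hyperelliptic pairs over each billiard path, though the area ratio is indeed $4$.

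The substantive gap is the passage from orbit closure to quadratic asymptotics. McMullen's classification tells you the $\SLtwoR$-orbit closure of $U(P)$ is all of $\Omega_1\E(1,1)$ (away from the decagon), but Eskin--Masur requires the stronger input that expanding circles $(a_t)_* m_{U(P)}$ equidistribute to $\mu_D^1/\Vol(\mu_D^1)$. The paper establishes this separately as Theorem~\ref{thm:uniformdistribution}, combining the Calta--Wortman classification of horocycle-invariant ergodic measures on $\Omega_1\E$ with an argument adapted from \cite{emm} to rule out mass accumulating on the decagon curve. Orbit-closure classification alone does not supply this; you must either reproduce that argument or invoke a later pointwise-equidistribution theorem for circle translates in affine invariant submanifolds.
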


\begin{remark}
  The notation $a(L)\sim b(L)$ means $a(L)/b(L)\to 1$ as $L\to\infty$.
\end{remark}

\begin{figure}[htbp]
  \centering
  \input{Lshaped.pstex_t}
  \caption{$P(a, b, t)$}
  \label{fig:Lshaped}
\end{figure}

Using \cite{veech89}, one can show for the surface $P$ of \eqref{eq:65},
{\allowdisplaybreaks
  \begin{align*}
    N_{\rm c}(P,L)&\sim \frac{75}{16\pi\Area(P)} L^2, \\
    N_{\rm s}^1(P,L)&\sim \frac{1125 +425\sqrt{5}}{128\pi\Area(P)} L^2,\quad\text{and} \\
    N_{\rm s}^2(P,L) &\sim \frac{125-25\sqrt{5}}{32\pi\Area(P)} L^2.
  \end{align*}
}

\paragraph{Spaces of Abelian differentials.}

There is an unfolding construction which associates to a rational angled polygon $P$ a Riemann surface $X$ equipped with an Abelian
differential (holomorphic 1-form) $\omega$.  The \emph{unfolding} $(X, \omega)$ consists of several reflected copies of $P$ glued along their
boundaries.  One step in the proof of Theorem~\ref{thm:countingsaddleconnections} is the computation of the volumes of certain spaces of Abelian
differentials in which the unfoldings of the polygons $P(a, b, t)$ lie.  These volume computations are the focus of this paper.

More precisely, let $\Omega_1\moduli$ be the moduli space of genus two Abelian differentials $(X, \omega)$ such that $\int_X|\omega|^2=1$,
and let $\Omega_1\moduli(1,1)$ be the locus of differentials with two simple zeros.  Given a square-free $D\in\nats$ with $D\equiv 0$ or
$1\pmod 4$, let
\begin{equation*}
  \Omega_1\E\subset\Omega_1\moduli
\end{equation*}
be the locus of eigenforms for real multiplication of discriminant $D$ (we discuss real multiplication in \S\ref{sec:hilb-modul-surf}), and
let $\Omega_1\E(1,1) = \Omega_1\E\cap\Omega_1\moduli(1,1)$.  The locus $\Omega_1\E$ is a circle bundle over a Zariski-open subset of
the Hilbert modular surface of discriminant $D$,
\begin{equation}
  \label{eq:1}
  \X = (\half\times(-\half))/\SL_2\ord,
\end{equation}
where $\ord\subset \ratls(\sqrt{D})$ is the unique real quadratic order of discriminant $D$.  The unfolding of the polygon $P(a, b, t)$ is
genus two.  It has two simple zeros if and only if $t>0$, and it is an eigenform if and only if $a$ and $b$ are as in
Theorem~\ref{thm:countingsaddleconnections}.

There is a natural action of $\SLtwoR$ on $\Omega_1\moduli$ which preserves the spaces $\Omega_1\moduli(1,1)$ and $\Omega_1\E$.  There is
a natural absolutely continuous, finite, ergodic $\SLtwoR$-invariant measure $\mu_D^1$ on $\Omega_1\E$ which is constructed in
\cite{mcmullenabel}.   The main result of this paper is that the volume of $\Omega_1\E$ with respect to this measure is proportional to
the orbifold Euler characteristic of $\X$:
\begin{theorem}
  \label{thm:volED}
  The $\mu_D^1$-volume of $\Omega_1\E$ is
     $4 \pi \chi(\X)$.
\end{theorem}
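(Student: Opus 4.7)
The plan is to realize $\Omega_1\E$ as an orbifold circle bundle $\pi\colon \Omega_1\E \to \X$, to disintegrate the measure $\mu_D^1$ as a product of the fiber measure and a multiple of the hyperbolic area measure on $\X$, and then to apply Gauss--Bonnet to compute the latter.

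First I would make the bundle structure explicit. Each eigenform $(Y,\omega) \in \Omega_1\E$ determines a Jacobian with real multiplication by $\ord$, hence a point of $\X$; conversely, the fiber over this point is the set of unit-area forms lying in one fixed eigenline of $H^{1,0}(Y)$ for the $\ord$-action. Using an $\ord$-basis $(\alpha,\beta)$ of the absolute homology modulo the other eigenspace, the two real embeddings of $\ord$ applied to $(\int_\alpha\omega, \int_\beta\omega)$ yield coordinates $(z_1, z_2) \in \half \times (-\half)$ that realize $\X$ via~\eqref{eq:1}, while $\SOtwoR$ acts freely along the circle fibers by rotating $\omega$.

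Second, I would disintegrate $\mu_D^1$ along $\pi$. In relative period coordinates on the ambient stratum $\Omega\moduli(1,1)$, the eigenform locus is a complex linear subspace, cut out by the condition that absolute periods lie in the prescribed $\ord$-eigenspace, and by the construction in~\cite{mcmullenabel} the measure $\mu_D^1$ is a constant multiple of Lebesgue measure on this subspace restricted to the unit-area hypersurface. I would introduce coordinates $(z_1, z_2, \theta, w)$, with $w$ a relative period and $\theta$ the $\SOtwoR$-rotation angle, compute the Jacobian of the change of variables from period coordinates, and integrate out $w$ using the unit-area constraint. This should present $\mu_D^1$ as the product of the rotation-invariant measure $d\theta$ on the circle fiber with an explicit multiple of the hyperbolic area measure $\frac{dx_1\,dy_1\,dx_2\,dy_2}{y_1^2 y_2^2}$ on $\X$.

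Finally I would invoke Gauss--Bonnet. Each of $\half$ and $-\half$ carries the hyperbolic metric of constant curvature $-1$, so the product metric has curvature product $1$ and Gauss--Bonnet gives $\int_{\X}\frac{dx_1\,dy_1\,dx_2\,dy_2}{y_1^2 y_2^2} = (2\pi)^2\chi(\X)$. Combining this with the previous step produces $4\pi\chi(\X)$ once the normalization constants line up. The principal obstacle is exactly this bookkeeping: the proportionality constant from the disintegration must combine with the $2\pi$ fiber length and the $(2\pi)^2$ from Gauss--Bonnet to give $4\pi$ rather than some other simple multiple. Carrying this out demands a careful choice of reference eigenform, an explicit computation of its period matrix in a basis adapted simultaneously to the stratum's period coordinates and to the uniformization of $\X$, and a precise statement of the normalization of $\mu_D^1$ used in~\cite{mcmullenabel}.
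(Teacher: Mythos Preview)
Your approach has a genuine gap at the second step. The pushforward $\mu_D$ of $\mu_D^1$ to $\X$ is \emph{not} a constant multiple of the product hyperbolic area $\frac{dx_1\,dy_1\,dx_2\,dy_2}{y_1^2\,y_2^2}$, so the Gauss--Bonnet shortcut is unavailable. Concretely, $\mu_D$ is the product of the leafwise hyperbolic area on the $\SLtwoR$-orbit foliation $\F$ with a transverse measure $|q|$ carried by the kernel foliation $\A$ (the vertical foliation of $\half\times\half$). Here $q=(\partial f)^2$, with $f$ the relative period, is a quadratic differential along each leaf of $\A$; it has simple zeros along $\W$ and simple poles along $\P$. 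Hence $|q|$ is not a constant multiple of $\frac{dx_2\,dy_2}{y_2^2}$ on the $\A$-leaves, and the Jacobian you propose to compute between period coordinates and $(z_1,z_2,\theta)$ is a genuinely nonconstant function on $\X$, singular along $\W\cup\P$. Your coordinate list $(z_1,z_2,\theta,w)$ is also overdetermined: along a leaf of $\A$ the relative period $w$ and the second Hilbert modular coordinate $z_2$ parameterize the \emph{same} one-complex-dimensional direction, so they cannot both appear as independent coordinates.

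The paper confronts exactly this difficulty. It interprets $\Vol(\mu_D)$ as the pairing $2\pi\,[\omega_1]\cdot[\barF]$ on a compactification $\Y$ of $\X$, where $[\barF]$ is the cohomology class of the closed current defined by integration over the measured foliation $\F$; showing this current extends to $\Y$ already requires work. It then writes $[\omega_1]$ as an explicit rational combination of $[\barW]$, $[\barP]$, and a boundary class $B_D$, proves $[\barW]\cdot[\barF]=[\barP]\cdot[\barF]=0$ via a local model for $\F$ near these closed leaves, and computes $B_D\cdot[\barF]$ curve by curve using explicit coordinates near $\partial\X$. The number $\chi(\X)$ enters not through Gauss--Bonnet applied to $\mu_D$ but through the identity $B_D\cdot B_D=-15\chi(\X)$; that the various intersection numbers conspire to give a clean multiple of $\chi(\X)$ is the substance of the theorem.
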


\begin{remark}
  This theorem was conjectured by Maryam Mirzakhani.
  
  From a formula of Siegel for $\chi(\X)$, we obtain the explicit formula:
  \begin{equation*}
    \Vol\Omega_1\E[f^2 D] = 8\pi f^3 \zeta_{\ratls(\sqrt{D})}(-1) \sum_{r|f}\kron{D}{r}\frac{\mu(r)}{r^2},
  \end{equation*}
  for $D$ a fundamental discriminant.
\end{remark}

\paragraph{Counting functions.}

In order to prove Theorem~\ref{thm:countingsaddleconnections}, we need also to evaluate integrals over $\Omega_1\E(1,1)$ of certain counting
functions.  A Riemann surface with Abelian differential $(X, \omega)$ carries a flat metric $|\omega|$ on the complement of the zeros of
$\omega$.  A \emph{saddle connection} is a geodesic segment joining two zeros of $\omega$ whose interior is disjoint from the zeros.  If $X$
is genus two, it has a hyperelliptic involution $J$.  We say that a saddle connection $I$ joining distinct zeros has multiplicity one if
$J(I)=I$, and otherwise it has multiplicity two.    As for billiards paths, closed geodesics occur in parallel families forming metric
cylinders bounded by saddle connections.  We define for $T\in\Omega_1\moduli(1,1)$:
\begin{align*}
  N_{\rm c}(T, L) =& \#\{\text{maximal cylinders on $T$ of circumference at most $L$}\}, \\
  N_{\rm s}^1(T, L) =& \#\{\text{multiplicity one saddle connections on $T$ of length at most $L$}\\
  &\text{joining distinct zeros of $T$}\}, \quad\text{and}\\
  N_{\rm s}^2(T, L) =& \#\{\text{pairs of multiplicity two saddle connections on $T$ of length at}\\ 
  &\text{most $L$ joining distinct zeros of $T$}\}.
\end{align*}

We will show in \S\ref{sec:siegelveech}:
\begin{theorem}
  \label{thm:svintegral}
  For any $L>0$,
  \begin{align}
    \label{eq:2}
    \int\limits_{\Omega_1\E(1,1)} N_{\rm c}(T, L)\,  d\mu_D^1(T)&= 60 L^2 \chi(\X),\\
    \label{eq:3}
    \int\limits_{\Omega_1\E(1,1)} N_{\rm s}^1(T, L)\, d\mu_D^1(T)&= \frac{27}{2}\pi^2 L^2 \chi(\X),\quad\text{and}\\
    \label{eq:4}
    \int\limits_{\Omega_1\E(1,1)} N_{\rm s}^2(T, L) \,d\mu_D^1(T)&= \frac{5}{2}\pi^2 L^2 \chi(\X).
  \end{align}
\end{theorem}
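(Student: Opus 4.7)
The plan is to reduce each integral to the evaluation of a Siegel--Veech constant and then compute that constant by enumerating configurations at cusps. For a compactly supported continuous $f:\reals^2\to\reals$, define
\[
\hat f_{\rm c}(T) = \sum_C f(\Hol(C)), \qquad \hat f_{\rm s}^i(T) = \sum_I f(\Hol(I)),
\]
with $C$ ranging over maximal cylinders on $T$ and $I$ over multiplicity-$i$ saddle connections joining the two distinct zeros of $T$ (counted in $J$-orbits for $i=2$). Since $\mu_D^1$ is $\SLtwoR$-invariant, ergodic, and of finite total mass on $\Omega_1\E$ by Theorem~\ref{thm:volED}, the Eskin--Masur integrability criterion provides Siegel--Veech constants $c_{\rm c}$, $c_{\rm s}^1$, $c_{\rm s}^2$ satisfying
\[
\int_{\Omega_1\E(1,1)} \hat f_{\star}(T)\, d\mu_D^1(T) \;=\; c_{\star}\int_{\reals^2} f(v)\, dv.
\]
Taking $f$ to be the indicator of the disk of radius $L$ about the origin yields $\int N_{\star}(T,L)\,d\mu_D^1(T)=c_{\star}\pi L^2$, reducing the theorem to the identities $c_{\rm c}=60\chi(\X)/\pi$, $c_{\rm s}^1=\tfrac{27}{2}\pi\chi(\X)$, and $c_{\rm s}^2=\tfrac{5}{2}\pi\chi(\X)$.

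To evaluate these constants, I would enumerate the \emph{configurations} of saddle connections and cylinders appearing on generic eigenforms in $\Omega_1\E(1,1)$. Real multiplication by $\ord$ forces each cylinder decomposition to respect the eigenspace structure, so cylinder holonomies lie in the trace field $\ratls(\sqrt D)\subset\reals$ and the decompositions organize into $\ord$-invariant families. The hyperelliptic involution $J$ acts on saddle connections joining the two distinct zeros: multiplicity-one connections are fixed setwise by $J$, while multiplicity-two connections occur in $J$-orbits of size two. Each configuration type corresponds to a cusp of the closure of $\Omega_1\E$ (equivalently, up to stratification, to a cusp of $\X$), and its contribution to the Siegel--Veech constant is a transverse-slice volume computable in the Eskin--Masur--Zorich framework. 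Using the identification of $\Omega_1\E$ as a circle bundle over a Zariski-open subset of $\X$ and the local product structure of $\mu_D^1$ in period coordinates valued in $\ord\otimes\cx$, each transverse volume becomes an arithmetic sum over ideal-class (prototype) data for $\ord$; summing with the normalization $\mu_D^1(\Omega_1\E)=4\pi\chi(\X)$ from Theorem~\ref{thm:volED} should produce the explicit constants above.

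The main obstacle is the enumeration of configurations together with the verification of the precise numerical factors. One must classify all cylinder decompositions and saddle-connection configurations occurring on generic genus-two eigenforms in $\Omega_1\moduli(1,1)$, handle the hyperelliptic involution carefully in distinguishing multiplicity one from multiplicity two, and show that the transverse volumes sum to the required rational multiples of $\chi(\X)$. The differing factors of $\pi$ among the three constants reflect the distinct local geometries of the cylinder and saddle-connection configurations near the cusps.
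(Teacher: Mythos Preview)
Your proposal is a plausible outline but not a proof: you correctly identify the Siegel--Veech framework and the need to sum cusp contributions over prototype data, but you explicitly leave the ``main obstacle'' --- the actual enumeration and the arithmetic that produces the constants $60$, $\tfrac{27}{2}\pi$, $\tfrac{5}{2}\pi$ --- unaddressed. That enumeration \emph{is} the content of the theorem; without it you have a strategy, not an argument.

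The paper's approach differs from yours in two interesting ways. For the cylinder integral \eqref{eq:2}, the paper does something close in spirit to what you sketch: it decomposes $\Omega_1\E(1,1)$ into the three-cylinder regions $U_P$ indexed by prototypes $P$ (constructed in \S\ref{sec:three-cylind-surf}), computes $\int_{U_P} N_{\rm c}(T,\epsilon)\,d\mu_D = \epsilon^2 v(P)$ by a direct hyperbolic integral in the explicit $(y_1,y_2,y_3)$ coordinates, and then proves the arithmetic identity $\sum_{P\in\Yprot} v(P)=60\,\chi(\X)$ via a sequence of involutions on the prototype set together with Cohen's formula for $H(2,D)$. This last step is nontrivial combinatorics you would need to supply.

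For the saddle-connection integrals \eqref{eq:3} and \eqref{eq:4}, the paper does \emph{not} enumerate configurations at cusps at all. Instead it observes that the collapse operation sends the space $S_i(\epsilon)$ of pairs (eigenform, short multiplicity-$i$ saddle connection) to $\Omega_1\P\times(\Delta_\epsilon/\pm1)$ or $\Omega_1\W\times(\Delta_\epsilon/\pm1)$ as a measure-preserving map (bijective in one case, three-to-one in the other). The integrals then reduce immediately to $\Vol\,\Omega_1\P$ and $\Vol\,\Omega_1\W$, which equal $-2\pi\chi(\P)$ and $-2\pi\chi(\W)$; these Euler characteristics were computed in \cite{bainbridge06} as rational multiples of $\chi(\X)$. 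This bypasses the EMZ-style transverse-volume analysis entirely and is considerably cleaner than the route you propose.
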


\paragraph{Uniform distribution of circles.}

Given $T\in\Omega_1\E$, let $m_T$ be the measure on $\Omega_1\E$ obtained by transporting the Haar measure on $\SOtwoR$ to the
orbit $\SOtwoR\cdot T$.  Let
\begin{equation*}
  a_t =
  \begin{pmatrix}
    e^t & 0 \\
    0 & e^{-t}
  \end{pmatrix}.
\end{equation*}
The measure $(a_t)_*m_T$ is the uniform measure on a circle centered at $p$ in the $\SLtwoR$ orbit through $p$.

There is a \emph{\Teichmuller curve} $\Omega_1 D_{10}\subset\Omega_1\E[5](1,1)$ which is the $\SLtwoR$-orbit of the Abelian differential
obtained by identifying opposite sides of a regular decagon.  In \S\ref{sec:equid-large-circl}, we prove
\begin{theorem}
  \label{thm:uniformdistribution}
  For any point $T\in\Omega_1\E(1,1)$ which does not lie on the curve $\Omega_1 D_{10}$, we have
  \begin{equation}
     \label{eq:5}
    \lim_{t\to\infty} (a_t)_* m_T = \frac{\mu_D^1}{\Vol\mu_D^1}.
  \end{equation}
\end{theorem}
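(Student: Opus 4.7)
The plan is to combine McMullen's classification of $\SLtwoR$-invariant orbit closures and ergodic probability measures on $\Omega_1\E$ (proved in his paper on the dynamics of $\SLtwoR$ over the genus two moduli space) with the standard argument that weak-$*$ limits of circle averages under $a_t$ are $\SLtwoR$-invariant. Let $\nu$ be any weak-$*$ subsequential limit of the probability measures $(a_t)_* m_T$. The first step is to check that $\nu$ is $\SLtwoR$-invariant: it inherits $\SOtwoR$-invariance from $m_T$, and for each fixed unipotent $n_s = \left(\begin{smallmatrix} 1 & s \\ 0 & 1\end{smallmatrix}\right)$, the conjugate $a_{-t} n_s a_t$ contracts to the identity as $t\to\infty$, so $(a_t)_* m_T$ is asymptotically $n_s$-invariant. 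Invariance under $\SOtwoR$ together with the upper-triangular unipotent subgroup $N$ generates $\SLtwoR$ by Iwasawa.

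Before invoking the classification one must verify no escape of mass, i.e.\ that each such $\nu$ is a probability measure on $\Omega_1\E$. For this I would apply the quantitative recurrence estimates of Eskin--Masur for the Teichm\"uller flow on strata, restricted to the affine invariant submanifold $\Omega_1\E$: the reciprocal of the shortest-saddle-connection length is integrable against $\mu_D^1$ and has controlled mean growth along $a_t$-orbits, ensuring that $\{(a_t)_* m_T\}_{t\ge 0}$ is tight in $\Omega_1\E$.

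With $\nu$ an $\SLtwoR$-invariant probability measure on $\Omega_1\E$, McMullen's classification says that $\nu$ is a convex combination of the normalized measure $\mu_D^1 / \Vol\mu_D^1$ and measures supported on Teichm\"uller curves contained in $\Omega_1\E$. The Teichm\"uller curves in $\Omega_1\E$ are $\Omega_1\E(2)$ (a finite union of Weierstrass curves, containing all eigenforms with a double zero) and, when $D=5$, the decagon curve $\Omega_1 D_{10}$. Since $T\in\Omega_1\E(1,1)$, the point $T$ avoids $\Omega_1\E(2)$, and by hypothesis $T\notin\Omega_1 D_{10}$, so $T$ lies on none of these curves.

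The hard step is to rule out that $\nu$ assigns positive mass to any Teichm\"uller curve $C\subset\Omega_1\E$; these curves \emph{are} contained in the orbit closure $\overline{\SLtwoR\cdot T}$, which equals all of $\Omega_1\E$ by McMullen's orbit-closure theorem, so positivity is not immediately excluded by support considerations. I would address this via the Dani--Margulis linearization technique, adapted to affine invariant submanifolds of moduli spaces of Abelian differentials: positive $\nu$-mass on $C$ would force a positive proportion of the arc length of $a_t\cdot\SOtwoR\cdot T$ to lie within a fixed tubular neighborhood of $C$ for all large $t$, but the Teichm\"uller geodesic flow is transverse to any proper $\SLtwoR$-invariant submanifold at points not lying on it, making such concentration incompatible with $T\notin C$. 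Hence $\nu = \mu_D^1 / \Vol\mu_D^1$, the subsequential limit is unique, and the theorem follows.
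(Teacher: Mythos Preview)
Your first step contains a genuine error: the subsequential limit $\nu$ need not be $\SOtwoR$-invariant, and hence need not be $\SLtwoR$-invariant. The measure $(a_t)_* m_T$ is invariant under the conjugate $a_t\,\SOtwoR\,a_{-t}$, not under $\SOtwoR$ itself, and these conjugates do not converge to $\SOtwoR$ as $t\to\infty$. What one can actually extract from the $\SOtwoR$-invariance of $m_T$ is invariance of $\nu$ under the \emph{horocycle} flow $h_s$ (your $n_s$): since $(r_\phi)_* m_T = m_T$ one has $(a_t)_* m_T = (a_t r_\phi a_{-t})_*(a_t)_* m_T$, and choosing $\phi(t)\to 0$ with $e^{2t}\sin\phi(t)\to -s$ gives $a_t r_{\phi(t)} a_{-t}\to h_s$; this is \cite[Lemma~7.3]{emm}. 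Your separate contraction argument for unipotent invariance does not close either: from $(h_s)_*(a_t)_* m_T = (a_t)_*(h_{e^{-2t}s})_* m_T$ and $h_{e^{-2t}s}\to e$ one learns nothing, since $a_t h_{e^{-2t}s} = h_s a_t$ simply restores the original translate.

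Because $\nu$ is only horocycle-invariant, McMullen's classification of $\SLtwoR$-invariant measures does not apply. The paper instead invokes the Calta--Wortman classification of ergodic $h_s$-invariant measures on a double cover of $\Omega_1\E(1,1)$: such a measure is either supported on the locus of surfaces with a horizontal saddle connection, is uniform on the decagon curve, or is $\nu_D$. The first possibility --- which has no analogue among $\SLtwoR$-invariant measures and which your outline never addresses --- is ruled out because those surfaces diverge under $a_t$, while the set of subsequential limits is $a_t$-invariant and consists of probability measures (via Eskin--Masur, as you correctly anticipate). Mass on the decagon curve is then excluded by an explicit box argument along the kernel foliation (Lemma~\ref{lem:nestedintervals}); this is indeed linearization in spirit, so your final paragraph points in the right direction, but the paper carries it out concretely rather than appealing to a general Dani--Margulis framework.
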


The proof of Theorem~\ref{thm:uniformdistribution} is a straightforward adaption of an argument in \cite{emm} together with a partial
classification of ergodic horocycle-invariant measures on $\Omega_1\E$ obtained in \cite{caltawortman}.

It follows from \cite{eskinmasur} that if \eqref{eq:5} holds for $T\in\Omega_1\E(1,1)$, then $$N(T, L)\sim c L^2,$$ where
\begin{equation*}
  c =  \frac{1}{L^2 \Vol(\mu_D^1)} \int\limits_{\Omega_1\E(1,1)} N_{\rm c}(T, L)\,  d\mu_D^1(T),
\end{equation*}
and similarly for the counting functions from saddle connections.

We then obtain from Theorems~\ref{thm:volED} and \ref{thm:svintegral}:

\begin{theorem}
  \label{thm:EDcountings}
  For every $T\in \Omega_1\E(1,1)$ which does not lie on $\Omega_1 D_{10}$, 
  \begin{align*}
    N_{\rm c}(T, L) &\sim \frac{15}{\pi} L^2, \\
    N_{\rm s}^1(T, L) &\sim\frac{27}{8}\pi L^2,\quad\text{and} \\
    N_{\rm s}^2(T, L) &\sim\frac{5}{8}\pi L^2.
  \end{align*}
\end{theorem}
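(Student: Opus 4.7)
The plan is simply to combine Theorems~\ref{thm:volED}, \ref{thm:svintegral}, and~\ref{thm:uniformdistribution} through the Eskin--Masur counting framework of \cite{eskinmasur}, as already outlined in the introduction. Fix $T\in\Omega_1\E(1,1)$ with $T\notin\Omega_1 D_{10}$. Theorem~\ref{thm:uniformdistribution} gives that the expanded circle measures $(a_t)_\ast m_T$ converge weakly to the probability measure $\mu_D^1/\Vol(\mu_D^1)$, which is exactly the ``equidistribution of large circles'' input required by \cite{eskinmasur}.

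Second, I would apply the Eskin--Masur theorem to each of the three counting functions $N_\ast \in\{N_{\rm c}, N_{\rm s}^1, N_{\rm s}^2\}$ in turn. The conclusion is that $N_\ast(T,L)\sim c_\ast L^2$ as $L\to\infty$, where the Siegel--Veech constant $c_\ast$ is given by
\[
c_\ast = \frac{1}{L^2\,\Vol(\mu_D^1)} \int_{\Omega_1\E(1,1)} N_\ast(T, L)\,d\mu_D^1(T),
\]
with the right-hand side independent of $L$ by homogeneity. Substituting $\Vol(\mu_D^1) = 4\pi\chi(\X)$ from Theorem~\ref{thm:volED} and the three integrals from Theorem~\ref{thm:svintegral} gives the three asymptotic constants directly: $60/(4\pi) = 15/\pi$ for $N_{\rm c}$, $(27\pi^2/2)/(4\pi) = 27\pi/8$ for $N_{\rm s}^1$, and $(5\pi^2/2)/(4\pi)=5\pi/8$ for $N_{\rm s}^2$. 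The cancellation of $\chi(\X)$ in each ratio serves as an internal consistency check.

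No genuinely new mathematical input is required at this stage; the deduction is essentially bookkeeping once the three preceding theorems are in hand. The only point deserving care is verifying that the three counting functions fit into the exact hypotheses of \cite{eskinmasur} --- in particular, that the counting of maximal cylinders and the pair-counting of multiplicity-two saddle connections are both covered by, or directly reducible to, the standard Eskin--Masur saddle-connection counting theorem. Both reductions are by now routine in this area and are already implicitly assumed in the paragraph immediately preceding the statement of Theorem~\ref{thm:EDcountings}, so I do not expect any genuine obstacle.
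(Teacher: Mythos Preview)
Your proposal is correct and follows essentially the same route as the paper: the paper's proof (in \S\ref{sec:appl-bill}) also just packages Theorems~\ref{thm:volED}, \ref{thm:svintegral}, and \ref{thm:uniformdistribution} into the Eskin--Masur/Veech Siegel--Veech framework and reads off the three constants. The only cosmetic difference is that the paper first formalizes the notion of a ``counting problem'' $(S,\mu,V)$ and explicitly writes down the measures $C(T)$ and $S_i(T)$ on $\reals^2$ before invoking \cite{eskinmasur}, which is precisely the verification you flag as routine.
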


Theorem~\ref{thm:countingsaddleconnections} follows directly from this by applying the unfolding construction.   

\paragraph{Outline of proof of Theorem~\ref{thm:volED}.}

The proof of Theorem~\ref{thm:volED} proceeds in the following steps:

\begin{enumerate}
\item Let $\mu_D$ be the projection of $\mu_D^1$ to $\X$.  It is enough to calculate $\Vol(\mu_D)$.  There is a foliation $\F$ of $\X$ by
  hyperbolic Riemann surfaces covered by $\SLtwoR$-orbits on $\Omega_1\E$.  The foliation $\F$ has a natural transverse measure which
  we discuss in \S\ref{sec:hilb-modul-surf}, and the product of this measure with the leafwise hyperbolic measure is $\mu_D$.
\item There is a compactification $\Y$ of $\X$ which we studied in \cite{bainbridge06}.  The space $\Y$ is a complex orbifold, and the
  boundary $\bdry\X = \Y\setminus\X$ is a union of rational curves.  These curves are parameterized by numerical invariants called
  \emph{prototypes} which we discuss in \S\ref{sec:prototypes}.  There is one rational curve $C_P\subset\bdry\X$ for each prototype $P$.
  We summarize the relevant properties of $\Y$ in \S\ref{sec:compactification-x}.  
\item The foliation $\F$ doesn't extend to a foliation of $\Y$; however, in \S\ref{sec:foliation-f-as}, we show that integration over the
  measured foliation $\F$ defines a closed current on $\Y$.  The form,
  \begin{equation*}
    \tilde{\omega}_1 = \frac{1}{2\pi}\frac{dx_1\wedge dy_1}{y_1^2},
  \end{equation*}
  on $\half\times\half$ descends to a closed 2-form $\omega_1$ on $\X$ which also defines a closed current on $\Y$.  In
  \S\ref{sec:foliation-f-as}, we also show that
  \begin{equation}
    \label{eq:6}
    \Vol(\mu_D) = [\barF]\cdot[\omega_1],
  \end{equation}
  where $[\barF]$ and $[\omega_1]$ are the classes in $H^2(\Y; \reals)$ defined by these currents, and the product is the intersection product
  on cohomology.
\item There are closed leaves $\W$ and $\P$ of $\F$ whose closures are smooth curves in $\Y$.  From \cite{bainbridge06}, we have
  \begin{equation*}
    [\omega_1] = -\frac{1}{3} [\barW] + \frac{3}{5}[\barP] - \frac{4}{15} B_D,
  \end{equation*}
  where $B_D$ is a certain linear combination of fundamental classes of the curves $C_P\subset\bdry\X$.  Thus by \eqref{eq:6}, we need only
  to compute the intersection numbers of these curves with $[\barF]$.
\item In \S\ref{sec:inters-with-clos}, we show that
  \begin{equation}
    \label{eq:7}
    [\barW]\cdot[\barF] = [\barP]\cdot[\barF] = 0.
  \end{equation}
  Heuristically, these intersection numbers are zero because these curves are leaves of $\barF$, and a closed, nonatomic leaf of a measured
  foliation has zero intersection number with that foliation.  Since $\barF$ is singular at the cusps of $\barW$ and $\barP$, this
  intersection number is \emph{a priori} not necessarily zero.  However, leaves close to $\barW$ or $\barP$ tend to diverge from these curves near
  the cusps, so these cusps should not contribute to the intersection number.
  
  Here is the idea of the rigorous proof of \eqref{eq:7}: In \S\ref{sec:canonical-foliations}, we construct for any Riemann surface $X$ a canonical
  measured foliation $\mathcal{C}_X$ of $TX$ by Riemann surfaces and show essentially that the intersection number of the zero section of $TX$ with
  $\mathcal{C}_X$ is zero.  In \S\ref{sec:inters-with-clos}, we show that there is a tubular neighborhood of $\barW$ or $\barP$ which is
  isomorphic to a neighborhood of the zero section of (a twist of) $T\barW$ or $T\barP$.  Furthermore, this isomorphism takes the foliation
  $\F$ to the canonical foliation defined in \S\ref{sec:canonical-foliations}.  From this, we obtain \eqref{eq:7}.
\item In \S\ref{sec:inters-with-bound}, we calculate the intersection numbers $[\barF]\cdot C_P$ and obtain Theorem~\ref{thm:volED}.  
\end{enumerate}  

\paragraph{Notation.}

Here we fix some notation that we will use throughout this paper.  Given a manifold $M$ equipped with a foliation $\mathcal{F}$ by
hyperbolic Riemann surfaces and a $n$-form $\omega$, we will denote by $\|\omega\|_\mathcal{F}$ the function on $M$ defined at $x$ by:
\begin{equation}
  \label{eq:normdef}
  \|\omega_x\|_\mathcal{F} = \sup_{v_1, \dots, v_n \in T_x \mathcal{F}} \frac{|\omega(v_1, \dots, v_n)|}{\|v_1\|\cdots\|v_n\|},
\end{equation}
where the norms of vectors is with respect the hyperbolic metric.  Similarly, if $h$ is a Riemannian metric, then
$\|\omega\|_h$ will denote the analogous norm with the supremum over all vectors tangent $M$.  

We use the notation $x \domed y$ to mean that $|x|<C|y|$ for an arbitrary positive constant $C$.  We use $x\comp y$ to mean that $c
<|x/y|<C$ for arbitrary positive constants $c$ and $C$, and we say that $x$ is \emph{comparable} to $y$.  We write $C(x)$ for an arbitrary
positive constant depending only on the data $x$.

We use the following notation for subsets of $\cx$: $\half$ will denote the upper-half plane, $\Delta_r$ the disk of radius $r$, and
$\Delta^*_r = \Delta_r\setminus\{0\}$ the punctured disk.

$\GLtwoRplus$ denotes the identity component of $\GLtwoR$.  We write $B$ for the subgroup of upper-triangular matrices, and $H$ for the
subgroup of $B$ with ones on the diagonal.  We write:
\begin{equation*}
  a_t =
  \begin{pmatrix}
    e^t & 0 \\
    0 & e^{-t} 
  \end{pmatrix}, \quad
  h_t =
  \begin{pmatrix}
    1 & t \\
    0 & 1
  \end{pmatrix},\quad\text{and}\quad
  r_\theta =
  \begin{pmatrix}
    \cos \theta & -\sin\theta \\
    \sin\theta & \cos\theta
  \end{pmatrix}.
\end{equation*}

\paragraph{Notes and references.}

McMullen classified the orbit closures for the action of $\SLtwoR$ on $\Omega_1\moduli$ in the series of papers,
\cite{mcmullentor,mcmullendecagon,mcmullenspin,mcmulleninfinite,mcmullenabel,mcmullenbild}.  See also \cite{calta} and
\cite{hubertlelievre}.  The orbit closures in this classification are the entire space, the stratum $\Omega_1\moduli(2)$, the eigenform loci
$\Omega_1\E$, the curves $\Omega_1\W$, the decagon curve $\Omega_1 D_{10}$, and infinitely many \Teichmuller curves generated by
square-tiled surfaces.  This paper, together with \cite{bainbridge06} almost finishes computing the volumes of all of these orbit closures
(see also \cite{lelievreroyer}).  It only remains to understand the \Teichmuller curves generated by square-tiled surfaces with two simple
zeros.  

There has been much recent work on computing asymptotics for closed geodesics and configurations of saddle connections on rational billiards
tables and flat surfaces.  It is conjectured for any surface $(X, \omega)$ of any genus,
\begin{equation*}
  N_{\rm c}((X, \omega), L)\sim C L^2
\end{equation*}
for some constant $C$ which depends on $(X, \omega)$.  This constant $C$ is called a \emph{Siegel-Veech constant} Veech \cite{veech89}
showed quadratic asymptotics for any \emph{lattice surface} $(X, \omega)$ (this means that the affine automorphism group of $(X, \omega)$
determines a lattice in $\SLtwoR$).  He also computed the Siegel-Veech constants for an infinite series of examples obtained by gluing
regular $n$-gons.  Eskin and Masur \cite{eskinmasur}, building on work of Veech \cite{veech98}, obtained quadratic asymptotics for almost
every Abelian differential $(X, \omega)$ with a given genus and orders of zeros, and \cite{emz} computed the Siegel-Veech constants.
Eskin, Masur, and Schmoll \cite{ems}  evaluated the Siegel-Veech constants for the polygons $P(a, b, t)$ of Theorem~\ref{thm:countingsaddleconnections}
with $a,b\in\ratls$ and $t\in\reals\setminus\ratls$ (this case can also be handled with the methods of this paper).  See also
\cite{gutkinjudge}, \cite{schmoll05}, \cite{emm}, and \cite{lelievre} for more about Siegel-Veech constants.

I would like to thank Curt McMullen, Maryam Mirzakhani, Alex Eskin, Howard Masur, and Florian Herzig for useful conversations over the
course of this work.  I am also grateful to IHES for their hospitality during the preparation of this paper.

\section{Canonical foliations}
\label{sec:canonical-foliations}

In this section, we discuss a canonical measured foliation $\mathcal{C}_X$ of the tangent bundle $TX$ of any Riemann surface $X$.  In the
case where $X$ has finite volume, we study smooth $2$-forms on a certain extension $L$ of the bundle  $TX$ over the compactification $\overline{X}$.  We
will show that for some compactly supported 2-form $\Psi$ on $L$ which represents the Thom class,
  $\int_{\mathcal{C}_X} \Psi =0$.
These foliated bundles will serve as models for tubular neighborhoods of closed leaves of $\F$ and will be used in the proof
of \eqref{eq:7} in \S\ref{sec:inters-with-clos}.

\paragraph{Action of $\PSLtwoR$ on $\cx$.}

Let $Q$ be the complex plane $\cx$ equipped with the quadratic differential
\begin{equation*}
  q = \frac{1}{2z}dz^2.
\end{equation*}
Define an action of $\PSLtwoR$ on $Q$ by $q$-affine automorphisms as follows.  Let $\SLtwoR$ act on $\cx$ by
\begin{equation*}
  \begin{pmatrix}
    a & b \\ c & d
  \end{pmatrix}
  \cdot (x + i y) = (ax -by) + i(-cx + dy).
\end{equation*}
Given $z\in Q$, define $A\circ z$ so that the following diagram commutes,
\begin{equation*}
  \xymatrix{ \cx \ar[r]^{A\cdot}\ar[d]_f & \cx\ar[d]^f \\
    Q \ar[r]^{A\circ} & Q }
\end{equation*}
where $f(z) = z^2$.

\paragraph{Canonical foliations.}

Let $\PSLtwoR$ act on $\half$ on the left by \Mobius transformations in the usual way:
\begin{equation*}
  \begin{pmatrix}
    a & b\\ c & d
  \end{pmatrix}
  \cdot z = \frac{az+b}{c z+d},
\end{equation*}
and give $\half\times Q$ the product action of $\PSLtwoR$.  Define $\Phi\colon \half\times Q\to T\half$ by
\begin{equation*}
  \Phi(z, (u+iv)^2) = \left(z, (u + zv)^2 \pderiv{}{z}\right).
\end{equation*}
The following lemma is a straightforward calculation:

\begin{lemma}
  \label{lem:actioncommutes}
  $\Phi$ commutes with the $\PSLtwoR$ actions on $\half\times Q$ and $T\half$.
\end{lemma}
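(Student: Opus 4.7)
The plan is to verify equivariance by direct computation, unpacking both sides of the identity $\Phi(A \cdot z, A \circ w^2) = A_*\,\Phi(z, w^2)$ for $A = \begin{pmatrix} a & b \\ c & d \end{pmatrix} \in \SLtwoR$ and $w = u+iv \in \cx$ with $w^2 \in Q$. First I would note that the action on $Q$ is well-defined on the quotient by $w \mapsto -w$: since the $\SLtwoR$-action on $\cx$ is linear, $A \cdot (-w) = -(A \cdot w)$, so $A \circ w^2 := (A \cdot w)^2$ is unambiguous. Likewise, the formula for $\Phi$ is invariant under $w \mapsto -w$, so $\Phi$ descends to a well-defined map from $\half \times Q$.

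Writing $w' = u' + iv' = A \cdot w$, the action formula gives $u' = au - bv$ and $v' = -cu + dv$. By definition,
\begin{equation*}
  \Phi(A \cdot z,\, A \circ w^2) = \left(A \cdot z,\; (u' + (A \cdot z) v')^2 \pderiv{}{z'}\right),
\end{equation*}
where $z' = A \cdot z = (az+b)/(cz+d)$. On the other side, the differential of the \Mobius transformation $z \mapsto (az+b)/(cz+d)$ satisfies $dz' = (cz+d)^{-2} dz$ (using $ad-bc=1$), so
\begin{equation*}
  A_* \Phi(z, w^2) = \left(A \cdot z,\; \frac{(u+zv)^2}{(cz+d)^2}\pderiv{}{z'}\right).
\end{equation*}
Thus the lemma reduces to the single algebraic identity
\begin{equation*}
  u' + (A \cdot z) v' = \frac{u + zv}{cz+d},
\end{equation*}
which is the heart of the calculation.

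To verify this identity, I would clear denominators, obtaining $(au-bv)(cz+d) + (az+b)(-cu+dv)$ in the numerator, expand, and collect terms; the $acuz$ terms cancel, and the remaining terms group as $(ad-bc)(u + zv) = u + zv$, using $\det A = 1$. Squaring the identity then gives the desired equality of second coordinates. The computation is entirely routine — there is no real obstacle, and the only subtlety worth flagging explicitly is the well-definedness on $Q$, which permits the implicit use of the square-root $w$ without worrying about sign.
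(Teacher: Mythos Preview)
Your proposal is correct and is exactly the straightforward calculation the paper alludes to without writing out; the paper gives no proof beyond the phrase ``a straightforward calculation,'' and your expansion verifies the key identity $u' + (A\cdot z)v' = (u+zv)/(cz+d)$ just as one would expect.
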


The map $\Phi$ sends the horizontal foliation of $\half\times Q$ with the transverse measure induced by the quadratic differential $q$ to a
measured foliation of $T\half$.  Call this measured foliation $\mathcal{C}_\half$.

Let $X = \Gamma\backslash\half$ be a hyperbolic Riemann surface, or more generally a hyperbolic Riemann surface orbifold.  By
Lemma~\ref{lem:actioncommutes}, the foliation $\mathcal{C}_\half$ of $T\half$ descends to a measured foliation of $TX$.  Call this measured
foliation $\mathcal{C}_X$.

\paragraph{Global angular forms.}

Let $X$ be a compact $2$-dimensional real orbifold equipped with a line bundle $\pi\colon L\to X$ with a Hermitian metric.  Given an
open set $U\subset X$ with a trivialization $\pi^{-1}(U)\to U\times \cx$, let $d\theta_U$ be the $1$-form induced by the standard angular
form on $\cx$,
\begin{equation*}
  d\theta = \frac{1}{2\pi}\frac{x\,dy - y\,dx}{x^2 + y^2}.
\end{equation*}
Choose a compactly supported smooth bump function $\rho\colon [0, \infty)\to \reals$ with $\rho(r)\equiv 1$ near $0$.  Using the
Hermitian metric, we regard $\rho(r)$ as a smooth function on $L$.

The following theorem follows from the discussion on pp.70-74 of \cite{botttu}:

\begin{theorem}
  \label{thm:globalangularform}
  There is a smooth $1$-form $\psi$ on the complement of the zero section of $L$ with the following properties:
  \begin{itemize}
  \item For any contractable domain $U\subset X$,  we have on $\pi^{-1}(U)$,
    \begin{equation*}
      \psi = d\theta_U + \pi^*\eta,
    \end{equation*}
    for some smooth $1$-form $\eta$ on $U$.
  \item $d\psi = -\pi^* e,$ with $e$ a smooth $2$-form representing the Euler class of $L$.
  \item $\Psi = d(\rho(r)\psi)$ is a compactly supported smooth $2$-form representing the Thom class of $L$.
  \end{itemize}
\end{theorem}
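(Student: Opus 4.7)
The plan is to follow the standard Bott--Tu construction of a global angular form, suitably adapted to the orbifold setting. First I would choose a Hermitian connection $\nabla$ on $L$. On each sufficiently small contractable trivializing open $U \subset X$ with unit local section $s_U$, the connection is described by $\nabla s_U = 2\pi i\, \eta_U \otimes s_U$ for a real $1$-form $\eta_U$ on $U$; the factor $2\pi$ compensates for the explicit $\tfrac{1}{2\pi}$ built into the angular form $d\theta$. Equivalently, relative to a good cover $\{U_\alpha\}$ with transition functions $g_{\alpha\beta}=e^{i\phi_{\alpha\beta}}$ and a subordinate partition of unity $\{\rho_\gamma\}$, one can build the $\eta_\alpha$ directly by the \v{C}ech-to-de Rham formula $\eta_\alpha = -\tfrac{1}{2\pi}\sum_\gamma \rho_\gamma\, d\phi_{\gamma\alpha}$, which solves the required cocycle problem.

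Next I would define $\psi$ piecewise by $\psi|_{\pi^{-1}(U)} = d\theta_U + \pi^*\eta_U$. The principal computation is to verify compatibility on overlaps: on $U_\alpha \cap U_\beta$ the normalized angular coordinate shifts by $\phi_{\beta\alpha}$, so $d\theta_\beta - d\theta_\alpha = \tfrac{1}{2\pi} d\phi_{\beta\alpha}$, while the $U(1)$-connection $1$-forms transform by $\eta_\beta - \eta_\alpha = -\tfrac{1}{2\pi} d\phi_{\beta\alpha}$. The two jumps cancel, so $\psi$ is a globally defined smooth $1$-form on the complement of the zero section, satisfying the first bullet by construction. For the second bullet, observe that $d\psi|_{\pi^{-1}(U)} = \pi^*\, d\eta_U$ since $d(d\theta_U)=0$; the forms $d\eta_U$ agree on overlaps (because $d$ annihilates the jump $d\phi_{\beta\alpha}$), so they glue to a global closed $2$-form $-e$ on $X$, and by the Chern--Weil prescription $e$ represents the Euler class of $L$.

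For the third bullet, expand $\Psi = d(\rho(r)\psi) = \rho'(r)\, dr\wedge \psi + \rho(r)\, d\psi$. Since $\rho \equiv 1$ near $r=0$, the first term vanishes there, while the second becomes $-\pi^* e$, so $\Psi$ extends smoothly across the zero section; outside a compact neighborhood of the zero section $\rho$ vanishes, so $\Psi$ is compactly supported. Applying Stokes' theorem fiberwise between a small circle of radius $\epsilon$ and a large circle of radius $R$, one obtains $\int_{\text{fiber}} \Psi = 1$ (up to orientation) from the residue of $d\theta$ at $r=0$, which identifies $\Psi$ as a representative of the Thom class. The main obstacle is the cocycle verification in the second paragraph, which requires careful bookkeeping of the normalization factors of $\tfrac{1}{2\pi}$ and of orientation conventions; once that is in hand, the other properties are essentially formal. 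The orbifold generalization poses no new difficulty because Hermitian line bundles on a $2$-dimensional orbifold are defined via equivariant data on local uniformizing charts, and both the fiberwise angular form and any compatible unitary connection are automatically equivariant for the finite local isotropy groups acting on the fibers.
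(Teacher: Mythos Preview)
Your proposal is correct and is precisely the Bott--Tu construction that the paper invokes: the paper does not give its own proof of this theorem but simply states that it ``follows from the discussion on pp.~70--74 of \cite{botttu}.'' Your reconstruction of that argument---building $\psi$ from local angular forms corrected by connection $1$-forms, verifying the cocycle cancellation on overlaps, and reading off the Euler and Thom class properties---is exactly what those pages do, so there is nothing to compare.
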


Such a form $\psi$ is called a \emph{global angular form} for $L$.

\paragraph{Intersection of zero section with $\mathcal{C}_X$.}

Now, consider the following situation.  Let $X=\Gamma\backslash \half$ be a finite volume hyperbolic orbifold and $\overline{X}$ its
compactification.  Let
\begin{equation*}
  L = T\overline{X}\left(-\sum c_i\right),
\end{equation*}
where the $c_i$ are the cusps of $X$.  That is, $L$ is the line bundle whose nonzero holomorphic sections are holomorphic sections of
$T\overline{X}$ which have simple zeros at the cusps $c_i$.  Equip $L$ with a Hermitian metric $h$.  Let $\psi$ be a global angular form on
$L$ and let $\Psi = d(\rho(r)\psi)$.

\begin{theorem}
  \label{thm:thomintegralzero}
  With $X$ and $\Psi$ as above,
  \begin{equation*}
    \int_{\mathcal{C}_X} \Psi = 0.
  \end{equation*}
\end{theorem}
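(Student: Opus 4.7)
My plan is to exploit that $\Psi = d(\rho(r)\psi)$ is exact on $L$, together with the fact that the measured foliation $\mathcal{C}_X$ defines a \emph{closed} $2$-current $\omega_{\mathcal{C}}$ on $TX$.  If $X$ were compact, Stokes' theorem on $L$ would give the vanishing immediately.  Since $X$ has cusps, I will truncate by removing small horoball neighborhoods of the cusps, apply Stokes on the compact truncation, and show that the resulting boundary integrals tend to $0$.

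First, to see that $\omega_{\mathcal{C}}$ is closed, I would lift to the $2$-to-$1$ cover $\half\times\cx\to\half\times Q$ given by $(\mathrm{id}\times f)$, where $(u,v)\in\reals^2$ parameterizes $\cx$.  In these coordinates the horizontal foliation is $\half\times\{(u,v)\}$, and a direct change of variables yields $f^*|q| = 2\,du\wedge dv$; thus the transverse current is represented by the smooth, closed $2$-form $du\wedge dv$.  By Lemma~\ref{lem:actioncommutes} and $\Gamma$-equivariance (the $\SLtwoR$-action on $\cx$ has Jacobian $\det\gamma = 1$, so $du\wedge dv$ is invariant), this pushes forward to a closed $2$-current $\omega_{\mathcal{C}}$ on $TX$, smooth away from the zero section with an $L^1_{\text{loc}}$ singularity along it.

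Next, let $X_\epsilon = \overline X\setminus\bigcup_i U_\epsilon(c_i)$ and $M_\epsilon = \pi^{-1}(X_\epsilon)\subset L$.  Since $\Psi$ has compact support in $L$ and $X_\epsilon$ is compact, $\Psi|_{M_\epsilon}$ has compact support; combined with $d\omega_{\mathcal{C}}=0$, Stokes' theorem gives
\begin{equation*}
\int_{M_\epsilon}\Psi\wedge\omega_{\mathcal{C}} \;=\; \int_{\partial M_\epsilon}\rho(r)\,\psi\wedge\omega_{\mathcal{C}}.
\end{equation*}
As $\epsilon\to 0$, the left-hand side converges to $\int_{\mathcal{C}_X}\Psi$ by dominated convergence, so it suffices to show that the right-hand side tends to $0$.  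Near a single cusp I uniformize so that the cusp is at $\infty$ with stabilizer $\langle T\rangle$, $T = \begin{pmatrix}1&1\\0&1\end{pmatrix}$.  Then $\partial X_\epsilon$ corresponds to $\{y = Y_\epsilon\}\subset\half/\langle T\rangle$ with $Y_\epsilon = \tfrac{1}{2\pi}|\log\epsilon|$, and $T$ acts on the leaf parameters by $(u,v)\mapsto (u-v,v)$, giving a fundamental domain on the $3$-dimensional boundary with coordinates $x\in[0,1]$, $v\in\reals$, $u\in[0,|v|)$.

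Three quantitative facts combine to give the decay.  (a) Using $L = T\overline X(-\sum c_i)$, whose trivializing section $t\,\partial/\partial t$ has bounded Hermitian norm at the cusp, one computes $|\sigma_{u,v}(x+iY_\epsilon)|_L\asymp Y_\epsilon^2 v^2$, so the cutoff $\rho$ confines the integration to $|v|\lesssim Y_\epsilon^{-1}$.  (b) Pulling back $\psi = d\theta + \pi^*\eta$ along $\sigma_{u,v}(z) = (u+zv)^2\partial/\partial z$ and extracting the $dx$-coefficient (the only one that wedges nontrivially with $du\wedge dv$) gives $-Y_\epsilon v^2/(\pi((u+xv)^2 + Y_\epsilon^2 v^2))$, which is $O(Y_\epsilon^{-1})$ on the relevant region (the $\pi^*\eta$ contribution, expressed in the cusp trivialization $t\,\partial/\partial t$, is $O(e^{-2\pi Y_\epsilon})$ and thus negligible).  (c) The integration region has volume $\int_0^1 dx \int_{|v|\lesssim Y_\epsilon^{-1}}|v|\,dv = O(Y_\epsilon^{-2})$.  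Combining these, the boundary integral at each cusp is $O(Y_\epsilon^{-3}) = O(|\log\epsilon|^{-3})\to 0$.

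The main obstacle is the cusp estimate itself: one must distinguish carefully the $L$-norm from the hyperbolic norm on $TX$ (they differ precisely by the twisting at the cusps) in order to get $|\sigma_{u,v}|_L\asymp Y_\epsilon^2 v^2$, and then organize the fundamental domain for the parabolic action on the leaf parameters so that the $\sim |v|$ length of the $u$-fundamental domain together with the $\sim Y_\epsilon^{-1}$ length of the $v$-range and the $\sim Y_\epsilon^{-1}$ bound on the angular form combine to produce the decisive cubic decay.  Heuristically, leaves of nontrivial transverse size $v$ escape $\operatorname{supp}(\Psi)$ at rate $Y_\epsilon^2v^2$ as we approach a cusp, while leaves close to the zero section occupy negligible transverse measure; these two effects together force the boundary to vanish.
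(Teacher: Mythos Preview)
Your overall strategy is correct and your cusp estimate is valid (in fact sharper than the paper's), but the Stokes step contains a gap that you do not address. You write that ``combined with $d\omega_{\mathcal{C}}=0$, Stokes' theorem gives'' the boundary formula on $M_\epsilon$. That would be immediate if the primitive $\rho(r)\psi$ were smooth on $M_\epsilon$, but $\psi$ is a global angular form and is singular along the zero section. As a current on the bundle one has $d(\rho(r)\psi)=\Psi + [\text{zero section}]$ (this is exactly what makes $\Psi$ represent the Thom class: its fiber integral is nonzero), so your identity
\[
\int_{M_\epsilon}\Psi\wedge\omega_{\mathcal{C}} \;=\; \int_{\partial M_\epsilon}\rho(r)\psi\wedge\omega_{\mathcal{C}}
\]
is only correct once you know that $[\text{zero section}]\wedge\omega_{\mathcal{C}}=0$. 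This \emph{is} true --- in your $(u,v)$-cover the zero section is $\half\times\{0\}$, and the transverse form $du\wedge dv$ restricts to zero on any tube $\{u^2+v^2=\epsilon^2\}$ around it, so the residue term vanishes --- but you never say so, and the phrase ``$d\omega_{\mathcal{C}}=0$'' alone does not supply it. Closedness of a current only lets you integrate by parts against \emph{smooth} compactly supported test forms.

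The paper handles this point by a second truncation: it excises not only a cusp neighborhood $\pi^{-1}C$ but also a hyperbolic $s$-ball $B_s$ around the zero section, applies Stokes on the doubly truncated region where $\rho(r)\psi$ is honestly smooth, and then shows separately that the extra boundary contribution satisfies $\int_{T^sX}|\rho(r)\psi|=O(s)$ (Lemma~\ref{lem:boundoverTr}) and that $\int_{B_s}|\Psi|\to 0$. The paper's cusp bound (Lemma~\ref{lem:boundoverhorocycle}) is also coarser than yours --- it gets $O(\mathrm{length}(H))$ via a uniform norm bound $\|\psi\|_{\mathcal{C}_X}\le C$ and a volume estimate, rather than your explicit $O(Y_\epsilon^{-3})$ computation. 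Your lift to $(u,v)$-coordinates with smooth representative $du\wedge dv$ is a genuine simplification and, once you add the one-line observation that $du\wedge dv$ kills the zero-section residue, your argument is complete and somewhat cleaner than the paper's double truncation.
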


The proof will follow a series of lemmas.

\begin{lemma}
  \label{lem:normrho}
  For any smooth, compactly supported form $\eta$ on $L$,
    $\|\eta\|_{\mathcal{C}_X} \domed 1$.
\end{lemma}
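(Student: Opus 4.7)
The plan is to establish the pointwise bound locally. Since $\eta$ is compactly supported on $L$ and $\overline{X}$ is compact, the only nontrivial regions are neighborhoods of the cusps of $X$: away from the cusps the foliation $\mathcal{C}_X$ has smooth $2$-dimensional leaves, the leafwise hyperbolic metric is uniformly bounded on compact sets, and $\eta$ is smooth with compact support, so the bound is immediate.

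Near a cusp $c$, I would use the disk coordinate $w=e^{2\pi i z}$ on $\overline{X}$ and the local frame $s = w\,\partial/\partial w$ trivializing $L = T\overline{X}(-c)$. Under $\Phi$, the leaf through $(u+iv)^2\in Q$ lifts to
\begin{equation*}
  \{(w,\lambda) : w = e^{2\pi i z},\ \lambda = 2\pi i(u+zv)^2,\ z\in\half\},
\end{equation*}
where $\lambda$ is the fiber coordinate with respect to $s$. I would pull $\eta$ back to this leaf using $dw = 2\pi i w\, dz$ and $d\lambda = 4\pi i v(u+zv)\,dz$; the result is a $2$-form $f(z)\,dz\wedge d\bar z$ whose coefficient is a bounded sum of terms with explicit factors $|w|^2$, $|w|\,|v|\,|u+zv|$, and $|v|^2|u+zv|^2$.

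Combined with the hyperbolic norm $\|dz\wedge d\bar z\| = 2(\IM z)^2$ on the leaf, the lemma reduces to bounding each of these terms times $(\IM z)^2$. The compact support of $\eta$ in $L$ gives $|\lambda|\le C$, hence $|u+zv|\le C'$; in a fundamental domain for the cusp this in turn forces $|v|\,\IM z\le C''$. The term $|v|^2|u+zv|^2(\IM z)^2$ is then uniformly bounded by $(C'C'')^2$, and the exponential decay $|w|=e^{-2\pi\IM z}$ swallows the polynomial factor $(\IM z)^2$ in the other two terms, giving $\|\eta\|_{\mathcal{C}_X}\domed 1$.

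The main obstacle I anticipate is the degenerate family of leaves with $v=0$, which lie in a single fiber $\lambda=2\pi i u^2$ of $L$ and extend across the cusp to the single point $(0,2\pi i u^2)\in L$. For these the bound is not controlled by the $|v|\IM z$ estimate above; instead I would restrict $\eta$ directly to a smooth $2$-form in $w$ on a disk neighborhood of the cusp, and use $\|dw\wedge d\bar w\| = |w|^2(\log(1/|w|))^2 \to 0$ as $w\to 0$, together with the fact that the fiber coordinate $2\pi i u^2$ is bounded on the support of $\eta$. Ensuring the estimates are uniform in the leaf parameters $(u,v)$, rather than merely pointwise, is the real content of the argument.
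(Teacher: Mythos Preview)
Your approach is correct and arrives at the same conclusion, but it is organized differently from the paper's proof. The paper does not parameterize leaves and pull back $\eta$; instead it writes down, at each point of $L$ near a cusp, a single unit tangent vector $V$ to $\mathcal{C}_X$ in the smooth $(w,s)$-coordinates on $L$ (where $w=e^{iz}$ and $s$ is the fiber coordinate for the frame $w\,\partial/\partial w$), namely
\[
V = -i w\log|w|\,\pderiv{}{w} + (|s|-is)\,\pderiv{}{s},
\]
and observes that the Euclidean norm of $V$ is bounded on compact subsets of $L$. Since any smooth $p$-form satisfies $|\eta(V_1,\dots,V_p)|\domed \|V_1\|_e\cdots\|V_p\|_e$, the bound $\|\eta\|_{\mathcal{C}_X}\domed 1$ follows immediately for all degrees at once.

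Your leaf-by-leaf pullback computation is essentially the dual of this: your three terms $|w|^2$, $|w|\,|v|\,|u+zv|$, and $|v|^2|u+zv|^2$ correspond exactly to the products of the two components of $V$ above, and your constraints $|u+zv|\le C'$ and $|v|\,\Im z\le C''$ from compact support encode the boundedness of the $\partial/\partial s$-component. The paper's formulation is shorter and handles $1$-forms and $2$-forms uniformly without a separate case analysis; in particular, the degenerate leaves with $v=0$ require no special treatment, because the paper works pointwise rather than leafwise and the formula for $V$ is valid everywhere. Your explicit treatment is fine, but you should note that the same estimates go through for $1$-forms (only one factor of each type appears), since the lemma is stated for forms of arbitrary degree and is used for the $1$-form $\psi$ in the next lemma.
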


\begin{proof}
  Since $\eta$ is a smooth form on $L$, it suffices to bound $\|\eta\|_{\mathcal{C}_X}$ over a neighborhood of each cusp.  Suppose $c$ is a cusp of
  $X=\Gamma\backslash\half$, and conjugate $\Gamma$ so that $c=i\infty$ and $\Stab_\infty \Gamma$ is generated by $z\mapsto z+2\pi$.

  Identify $T\half$ and $\half\times\cx$  by
  \begin{equation*}
    (z, t)\mapsto \left(z, t\pderiv{}{z}\right).
  \end{equation*}
  Identify a neighborhood of the cusp $c$ in $\overline{X}$ and $\Delta_\epsilon\subset\cx$ with coordinate $w$ by
  $w=e^{iz}$.
  Identify $T\Delta_\epsilon(-0)$ with $\Delta_\epsilon\times\cx$
  by
  \begin{equation*}
    (w, s)\mapsto\left(w, sw\pderiv{}{w}\right).
  \end{equation*}
  The $(z, t)$ and $(w, s)$-coordinates are related by $s=-it$.  We give $T\Delta_\epsilon(-0)$ the Euclidean metric $e$
  inherited from $\cx^2$.

  At $(z, t\pderiv{}{z})\in T\half$, suppose $(u+ zv)^2 = t$ with $u, v\in\reals$, which implies
  \begin{equation*}
    v=\frac{\Im\sqrt{t}}{\Im z}.
  \end{equation*}
  The vector,
  \begin{align}
    \notag
    V &=\Im z\left(\pderiv{}{z} + 2v(u+zv)\pderiv{}{t}\right) \\
    \label{eq:9}
    &= \Im z \pderiv{}{z} + i(|t|-t)\pderiv{}{t},
  \end{align}
  is a unit tangent vector to $\mathcal{C}_X$ at $(z, t\pderiv{}{z})$.  In $(w, s)$-coordinates on $T\Delta_\epsilon(-0)$,
  \begin{equation*}
    V= -i w \log |w| \pderiv{}{w} +(|s|-is)\pderiv{}{s}
  \end{equation*}
  is a unit tangent vector to $\mathcal{C}_X$ at $(w, s)$.  We have $\|V\|_e \domed s$, so for any smooth $p$-form $\eta$ on $L$,
  $\|\eta\|_{\mathcal{C}_X}\domed s^p$ near $c$.  Thus $\|\eta\|_{\mathcal{C}_X}\domed 1$ when $\eta$ is compactly supported.
\end{proof}

\begin{lemma}
  \label{lem:boundpsi}
  $\|\psi\|_{\mathcal{C}_X}$ is bounded on compact subsets of $L$.
\end{lemma}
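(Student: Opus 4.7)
The plan is to reduce the claim to a local pointwise estimate. I would cover the given compact set $K \subset L$ by finitely many open sets $\pi^{-1}(U_i)$ with each $U_i \subset \overline{X}$ a contractible coordinate patch trivializing $L$. By Theorem~\ref{thm:globalangularform}, on each piece $\psi = d\theta_U + \pi^*\eta$ with $\eta$ smooth on $U$. The pullback term is easily controlled: $(\pi^*\eta)(V) = \eta(d\pi\,V)$, and for $V$ a unit tangent vector to $\mathcal{C}_X$ the projection $d\pi\,V$ has bounded hyperbolic norm on $X$ (both in the interior and near the cusps, by the computation in the proof of Lemma~\ref{lem:normrho}), so smoothness of $\eta$ on $\overline{X}$ suffices. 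It remains to bound $\|d\theta_U\|_{\mathcal{C}_X}$.

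The substantive step is bounding $d\theta_U$, which \emph{a priori} blows up like $1/|t|$ as one approaches the zero section. The algebraic miracle is that the fiber component of a unit $\mathcal{C}_X$-tangent vector vanishes at precisely the cancelling rate. Working in interior coordinates with the formula $V = \Im z\,\partial/\partial z + i(|t|-t)\,\partial/\partial t$ from the proof of Lemma~\ref{lem:normrho} and converting to a real tangent vector $X = y\,\partial/\partial x + \eta\,\partial/\partial\xi + (|t|-\xi)\,\partial/\partial\eta$ (writing $t = \xi + i\eta$ and $y = \Im z$), a direct computation yields
\begin{equation*}
  d\theta_U(X) = \frac{1}{2\pi}\cdot\frac{\xi(|t|-\xi) - \eta^2}{|t|^2} = \frac{1}{2\pi}\left(\frac{\xi}{|t|} - 1\right),
\end{equation*}
which is bounded by $1/\pi$ since $|\xi| \leq |t|$. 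The parallel calculation for the orthogonal real vector $JX$ gives $d\theta_U(JX) = \eta/(2\pi|t|)$, bounded by $1/(2\pi)$. Together these control $\|d\theta_U\|_{\mathcal{C}_X}$ on the interior.

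Near a cusp, the analogous computation in the $(w,s)$ coordinates of Lemma~\ref{lem:normrho}, using $V = -iw\log|w|\,\partial/\partial w + (|s|-is)\,\partial/\partial s$, yields the same type of bound: the $\partial/\partial w$ piece contributes nothing to $d\theta_U$ (which is pulled back from the fiber), and the $\partial/\partial s$ piece gives $d\theta_U(X) = -(1 + \Im s/|s|)/(2\pi)$, again bounded. The main obstacle is simply verifying this tight algebraic cancellation, which relies essentially on $V$ being tangent to a leaf of $\mathcal{C}_X$ (so that the intrinsic fiber component forced by the leaf parametrization vanishes at exactly the right rate as $t\to 0$); once the formulas from Lemma~\ref{lem:normrho} are in hand, the remaining work is a direct algebraic check.
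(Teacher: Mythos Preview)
Your argument is correct. One small wording issue: in handling $\pi^*\eta$, you say $d\pi\,V$ has bounded \emph{hyperbolic} norm on $X$, and then conclude ``smoothness of $\eta$ on $\overline{X}$ suffices.'' What you actually need is a bound in a smooth metric on $\overline{X}$, not the hyperbolic metric on $X$. This is in fact what Lemma~\ref{lem:normrho}'s proof gives (the base component $-iw\log|w|\,\partial/\partial w$ has Euclidean norm $|w\log|w||\to 0$), or more cleanly you can just observe that $\pi^*\eta$ is itself a smooth form on $L$ and invoke Lemma~\ref{lem:normrho} directly.

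The paper organizes the proof differently. In the interior it introduces the $\PSLtwoR$-invariant $1$-form
\[
\alpha = \frac{dx}{y} + d\theta
\]
on $T\half$ (the pullback of the Levi--Civita connection form from $T^1\half$). Invariance reduces the computation of $\|\alpha\|_{\mathcal{C}_X}$ to a single point, where it equals $1$; then $\psi-\alpha$ is smooth on $TX$ and handled by Lemma~\ref{lem:normrho}. Near the cusps the paper does essentially your computation, pairing $d\theta$ with the explicit vector $V$ in $(w,s)$-coordinates. So the cusp analysis is the same; the difference is only in the interior, where the paper's invariance argument hides the algebraic cancellation $\xi|t|-\xi^2-\eta^2 = |t|(\xi-|t|)$ that you compute by hand. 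Your direct computation is slightly longer but makes the mechanism of the bound more visible; the paper's use of $\alpha$ is more conceptual and explains \emph{why} the cancellation occurs (it is forced by the symmetry).
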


\begin{proof}
  Let $\tilde{\alpha}$ be the $1$-form on $T\half$ defined by
  \begin{equation*}
    \tilde{\alpha} = \frac{dx}{y} + d\theta.
  \end{equation*}
  The form $\tilde{\alpha}$ is $\PSLtwoR$-invariant, so it descends to a form $\alpha$ on $TX$.  This invariance can be seen from a
  direct computation or more conceptually as follows.  Let $T^1\half$ be the unit tangent bundle, and let $T^0\half$ be the complement in $TX$ of
  the zero section with projection $p\colon T^0\half\to T^1\half$. The connection form $\omega$ (in the sense of \cite{kobayashinomizu}) of
  the Levi-Civita connection is an isometry-invariant form on $T^1\half$, and $\tilde{\alpha} = p^*\omega$, so $\tilde{\alpha}$ is also
  invariant.
  
  In the $(z, t)$ coordinates on $T\half$ from the proof of Lemma~\ref{lem:normrho}, for $r\geq 0$, let $v_r$ be the vector $\pderiv{}{z}$
  at $(i, r)$.  For $\theta\in\reals$, the vector $e^{i\theta}v_r$ is tangent at $(i, r)$ to $\mathcal{C}_\half$, with
  $\tilde{\alpha}(e^{i\theta}v_r)\leq 1$ and equality if $\theta\equiv 0\pmod{2\pi}$.  Furthermore, every unit norm tangent vector to
  $\mathcal{C}_\half$ is $\PSLtwoR$-equivalent to some $e^{i\theta}v_r$.  Thus we have,
  \begin{equation}
    \label{eq:11}
    \|\alpha\|_{\mathcal{C}_X} = 1.
  \end{equation}

  The form $\psi-\alpha$ is smooth on $TX$ by Theorem~\ref{thm:globalangularform}, so 
  $\|\psi-\alpha\|_{\mathcal{C}_X}$ is bounded 
  on compact sets disjoint from the fibers over the cusps, and thus $\|\psi\|_{\mathcal{C}_X}$ is bounded on such sets by \eqref{eq:11}.
  
  We must now bound the norm of $\psi$ around the cusps.  As in the proof of Lemma~\ref{lem:normrho}, suppose the cusp is at $i\infty$ with
  $\Stab_{i\infty}\Gamma$ generated by $z\mapsto z+2\pi$, and identify the bundle $L$ restricted to a neighborhood of the cusp with
  $T\Delta_\epsilon(-0)$.  The form $d\theta$ on $T\half$ is invariant under $z\mapsto z+2\pi$, so it descends to a form $\beta$ on
  $T\Delta_\epsilon(-0)$.  From the coordinates on $T\Delta_\epsilon(-0)$ in the proof of Lemma~\ref{lem:normrho}, and the first part of
  Theorem~\ref{thm:globalangularform}, we see that $\psi-\beta$ is smooth, so has bounded norm on compact sets by Lemma~\ref{lem:normrho}.
  By pairing the tangent vectors $v$ to $\mathcal{C}_\half$ in \eqref{eq:9} with $\beta$, we see that $\|\beta\|_{\mathcal{C}_X}\leq 2$,
  thus the norm of $\psi$ is bounded on compact subsets of $T\Delta_\epsilon(-0)$.
\end{proof}

\begin{lemma}
  \label{lem:boundoverhorocycle}
  For any horocycle $H$ around a cusp $c$ of $X$, the inverse image $\pi^{-1}(H)\subset L$ is transverse to $\mathcal{C}_X$, and
  \begin{equation}
    \label{eq:12}
    \int_{\mathcal{H}}|\rho(r)\psi| \domed \length(H),
  \end{equation}
  where the integral is over the measured foliation $\mathcal{H}$ of $\pi^{-1}(H)$ induced by the intersection with $\mathcal{C}_X$.
\end{lemma}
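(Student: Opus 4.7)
The plan is to work in the local cusp coordinates $(w,s)$ on $L|_{\text{cusp}}\cong T\Delta_\epsilon(-0)$ introduced in the proof of Lemma~\ref{lem:normrho}: the cusp is at $w=0$, the horocycle $H$ corresponds to a circle $|w|=e^{-y_0}$, and $\length(H)=2\pi/y_0$. Transversality is immediate from the explicit formula for $V$: the tangent plane to $\mathcal{C}_X$ is the real span of $V$ and $iV$, whose $w$-components are $-iw\log|w|$ (tangent to the circle $|w|=\mathrm{const}$) and $w\log|w|$ (radial), respectively; hence $iV\notin T\pi^{-1}(H)$ and $T\mathcal{C}_X+T\pi^{-1}(H)=TL$. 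The leaves of the induced foliation $\mathcal H$ are then the integral curves of $V$ inside $\pi^{-1}(H)$.

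For the integral, I lift to the universal cover and parameterize leaves of $\mathcal{C}_\half$ on $T\half$ by pairs $(u,v)\in\reals^2$, so that the leaf is the image of $z\mapsto(z,(u+zv)^2\pderiv{}{z})$. Under this parameterization the transverse measure (the $q$-area on $Q$) equals $2\,du\,dv$, the cusp stabilizer $z\mapsto z+2\pi$ acts by $(u,v)\mapsto(u-2\pi v,v)$ with fundamental domain $u\in[0,2\pi|v|),\,v\in\reals$, and the pulled-back hyperbolic metric makes $V$ a leafwise unit tangent. Let $R$ denote the radius of the support of $\rho$. By Lemma~\ref{lem:boundpsi}, $|\psi(V)|\leq C_1$ on the compact set $\{|w|=e^{-y_0},\,r\leq R\}$. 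On a leaf with parameters $(u,v)$, the arc of $\mathcal H$ inside this set is the horocycle segment $\{z=x+iy_0:(u+xv)^2+y_0^2v^2\leq R\}$, of hyperbolic length $2\sqrt{R-y_0^2 v^2}/(|v|y_0)$ when $|v|\leq \sqrt{R}/y_0$ and empty otherwise; so the leafwise integral of $|\rho\psi|$ is bounded by $C_1\rho_{\max}$ times this length.

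Integrating against $2\,du\,dv$ over the fundamental domain, the $u$-integration yields a factor $2\pi|v|$ that cancels the $|v|$ in the denominator of the leafwise length; a direct computation in $v\in[-\sqrt{R}/y_0,\sqrt{R}/y_0]$ then gives $\int_{\mathcal H}|\rho(r)\psi|\domed 1/y_0^2$. Since horocycles around a cusp have uniformly bounded length, $1/y_0^2\domed \length(H)$, proving the lemma. The main obstacle is cleanly setting up the $(u,v)$-parameterization together with its transverse measure and the fundamental domain for the cusp stabilizer; once in place, the estimate reduces to a uniform bound from Lemma~\ref{lem:boundpsi} plus an elementary integral. The naive estimate in fact gives only the quadratic bound $\length(H)^2$, but this already suffices because $\length(H)$ is bounded in a cusp neighborhood.
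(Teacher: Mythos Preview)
Your argument is correct, though it follows a more computational path than the paper's and yields only the quadratic bound $\prec\length(H)^2$, which you then upgrade to $\prec\length(H)$ using that embedded horocycles around a cusp have bounded length. This suffices for the application in Theorem~\ref{thm:thomintegralzero}, where only short horocycles are used.

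The paper's proof is shorter and obtains the linear bound directly by decomposing the volume on $\pi^{-1}(H)$ the other way. Rather than integrating over leaves of $\mathcal{H}$ parameterized by $(u,v)$, it slices along the fibers $\pi^{-1}(x)$ for $x\in H$: since every leaf of $\mathcal{C}_X$ is the graph of a holomorphic section (this is also the paper's one-line transversality argument), the leafwise arclength on $\mathcal{H}$ agrees with the hyperbolic arclength $d\ell_H$ on $H$ under the projection $\pi$, and the transverse measure restricts to the $|q|$-area on each fiber. Hence
\[
\vol_{\pi^{-1}(H)}(\supp\rho)=\int_H |q|\bigl(\pi^{-1}(x)\cap\supp\rho\bigr)\,d\ell_H(x).
\]
The fiberwise $|q|$-area is then bounded uniformly by comparing the Hermitian metric $h$ with the hyperbolic metric $g$: a nonvanishing holomorphic section of $L=T\overline{X}(-\sum c_i)$ has bounded hyperbolic norm near a cusp (it is comparable to $z\,\partial/\partial z$ on $\Delta^*$, whose norm is $(\log(1/|z|))^{-1}$), so $g/h$ is bounded on $X$ and the $h$-ball $\{r\le R\}$ in each fiber has uniformly bounded $|q|$-area. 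This gives $\vol\prec\length(H)$ immediately.

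Your approach has the merit of being fully explicit and avoiding the metric comparison $g/h$; the paper's has the merit of explaining conceptually why the bound is linear rather than quadratic and of handling transversality in one line. Both are valid.
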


\begin{proof}
  The transversality statement follows from the fact that every leaf of $\mathcal{C}_\half$ is the graph of a holomorphic section $\half\to
  T\half$.

  From Lemma~\ref{lem:boundpsi}, we have
  \begin{equation*}
    \int_{\mathcal{H}}|\rho(r)\psi| < C(\rho, \psi) \vol_{\pi^{-1}(H)} (\supp\rho(r)),
  \end{equation*}
  where the volume form on $\pi^{-1}(H)$ is obtained by taking the product of the transverse measure to the foliation with the hyperbolic
  arclength measure on the leaves.  We claim that
  \begin{equation}
    \label{eq:13}
    \vol_{\pi^{-1}(H)} \supp\rho(r) < C(\rho) \length(H),
  \end{equation}
  which would imply \eqref{eq:12}.

  Let $g$ be the hyperbolic metric on $TX$.  The hyperbolic norm of a nonzero holomorphic section over a neighborhood of a cusp is
  comparable to the hyperbolic norm of the vector field $z\pderiv{}{z}$ on $\Delta^*$,
  \begin{equation*}
    \left\|z\pderiv{}{z}\right\| = \left(\log\frac{1}{|z|}\right)^{-1},
  \end{equation*}
  which is bounded.  Thus $g/h$ is bounded on $X$, where $h$ is the Hermitian metric on $L$.
  
  For each $x\in X$, the fiber $\pi^{-1}(x)=T_x X$ has the structure of the quadratic differential, $(\cx, dz^2/2z)$, with the unit
  hyperbolic norm vectors the unit circle in $\cx$.  The intersection $\pi^{-1}(x)\cap\supp\rho(r)$ is a disk in of radius
  $R(x)$ in $\cx$ with $R(x) = Kg/h$, where
  \begin{equation*}
    K=\sup\{r : \rho(r)\neq 0\}.
  \end{equation*}
  Since $g/h$ is bounded, so is $R$.  The volume of each disk $\pi^{-1}(x)\cap\supp\rho(r)$ is less than $\pi R/2$, which is then bounded,
  and \eqref{eq:13} follows.
\end{proof}

Let $T^s X$ be the set of vectors of hyperbolic length $s$.

\begin{lemma}
  \label{lem:boundoverTr}
  The intersection of $T^s X$ with $\mathcal{C}_X$ is transverse, and we have
  \begin{equation*}
    \int_{\mathcal{T}(s)} |\rho(r)\psi| \domed s,
  \end{equation*}
  where $\mathcal{T}(s)$ is the induced measured foliation of $T^sX$.
\end{lemma}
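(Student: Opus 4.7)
The plan is to mirror the proof of Lemma~\ref{lem:boundoverhorocycle}. For the transversality claim, I note that every leaf of $\mathcal{C}_\half$ is the graph $z\mapsto(z,(u+zv)^2\,\partial/\partial z)$ of a holomorphic section of $T\half$. On such a leaf the hyperbolic length-squared of the tangent vector is $|u+zv|^4/(\Im z)^2$, and a short computation shows that this real-analytic function has nonvanishing gradient on each positive level set. Hence the leaf meets $T^s\half$ transversely in a $1$-dimensional submanifold, and the same holds on $T^sX$.

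By Lemma~\ref{lem:boundpsi}, $\|\psi\|_{\mathcal{C}_X}$ is bounded by some $M$ on the compact subset $\supp\rho(r)\subset L$, so
\begin{equation*}
\int_{\mathcal{T}(s)}|\rho(r)\psi|\leq M\cdot\vol_{\mathcal{T}(s)}(\supp\rho(r)),
\end{equation*}
and it suffices to bound the $\mathcal{T}(s)$-volume of $\supp\rho(r)$ by a constant multiple of $s$. I will use the fiberwise dilation $\phi_s\colon T^1X\to T^sX$, $v\mapsto sv$. In the $(u,v,z)$-coordinates on $\half\times Q\cong T\half$, this map sends the leaf of $\mathcal{C}_X$ labelled by $(u,v)$ to the leaf labelled by $(\sqrt{s}\,u,\sqrt{s}\,v)$. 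It identifies corresponding leaves with $\half$ hyperbolic-isometrically, so preserves the leafwise hyperbolic arclength, while the Jacobian of the relabeling scales the transverse measure induced by $q$ by a factor of $s$. Thus $\phi_s^*\,d\vol_{\mathcal{T}(s)}=s\,d\vol_{\mathcal{T}(1)}$, which gives
\begin{equation*}
\vol_{\mathcal{T}(s)}(\supp\rho(r))\leq s\cdot\vol_{\mathcal{T}(1)}(T^1X).
\end{equation*}

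It then remains to check that $\vol_{\mathcal{T}(1)}(T^1X)$ is finite. In angle coordinates $(x,y,\theta)$ on $T^1\half$ with $t=y\,e^{i\theta}$, a direct computation of the product of the leafwise arclength $d\theta/(2\sin^2(\theta/2))$ with the transverse measure $2\,du\wedge dv$ (expressed in $(x,y,\theta)$) yields $d\vol_{\mathcal{T}(1)}=(2y^2)^{-1}\,dx\,dy\,d\theta$, so that $\vol_{\mathcal{T}(1)}(T^1X)=\pi\cdot\Area(X)<\infty$ since $X$ has finite hyperbolic area. The main subtle point will be the scaling step: tracking how the transverse measure $|q|$ transforms under the relabeling $(u,v)\mapsto(\sqrt{s}u,\sqrt{s}v)$ of leaves, and checking that the hyperbolic identification of corresponding leaves is isometric, so that the factor of $s$ emerges cleanly and the proof reduces to the finiteness of a single model integral.
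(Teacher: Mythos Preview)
Your proof is correct and reaches the same conclusion, but the route differs from the paper's in two places.  For transversality, the paper exploits that $\PSLtwoR$ acts transitively on $T^s\half$, so it suffices to check at the single point $(i,s\,\partial/\partial z)$, where the constant-section leaf $z\mapsto (z,s\,\partial/\partial z)$ is obviously transverse; your gradient computation works too but is less economical.  For the volume bound, the paper observes that on each leaf (identified with $\half$) the intersection with $T^sX$ is a \emph{horocycle}, and uses the classical identity $\length(h)=\area(\text{horodisk bounded by }h)$ to get $\Vol T^sX=\Vol B_s$, then computes $\Vol B_s=\tfrac{\pi s}{2}\Vol X$ fiberwise.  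Your dilation argument $\phi_s\colon T^1X\to T^sX$ with $\phi_s^*\,d\vol_{\mathcal{T}(s)}=s\,d\vol_{\mathcal{T}(1)}$ is an equally clean alternative: it trades the horocycle identity for a scaling symmetry and an explicit Haar-measure computation on $T^1X$.  One small caveat: your formula $d\theta/(2\sin^2(\theta/2))$ for the leafwise arclength breaks down along the measure-zero locus $\theta=0$ (where $v=0$ and the horocycle is a horizontal line with $\theta$ constant), but this does not affect the resulting volume form, which is the $\PSLtwoR$-invariant form up to a constant that you have correctly pinned down at a generic point.
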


\begin{proof}
  At $v_s=(i, s \pderiv{}{z})\in T^s\half$ for $s>0$, the leaf of $\mathcal{C}_\half$ which is the graph of $z\mapsto (z, s\pderiv{}{z})$
  is transverse to $T^s\half$.  Then $T^s\half$ is transverse to $\mathcal{C}_\half$ because $\SLtwoR$ acts transitively on
  $T^s\half$.

  Since $\|\rho(r)\psi\|_{\mathcal{C}_X}$ is bounded by Lemma~\ref{lem:boundpsi}, it is enough to show
  \begin{equation}
    \label{eq:14}
    \Vol T^s X\domed s,
  \end{equation}
  where the volume is with respect to the product of the transverse measure to $\mathcal{T}(s)$ with the hyperbolic length measure on the
  leaves.  Let $h$ be a segment of a leaf of $\mathcal{T}(s)$.  If we identify the leaf $L$ of $\mathcal{C}_X$ containing $h$ with
  $\half$, then $h$ is a segment of a horocycle in $L$.  Let $D_h\subset L$ be the domain bounded by $h$ and the two asymptotic geodesic
  rays perpendicular to $h$ at its endpoints.  A simple calculation shows $\area(D_h) = \length(h)$, so
  \begin{equation*}
    \Vol T^s X = \Vol B_s = \frac{\pi s}{2} \Vol X, 
  \end{equation*}
  where $B_s\subset TX$ is the set of vectors of length at most $s$.  This proves \eqref{eq:14}. 
\end{proof}  

\paragraph{Proof of Theorem~\ref{thm:thomintegralzero}.}

Let $H\subset X$ be a union of embedded horocycles bounding a neighborhood $C$ of the cusps of $X$.

By Stokes' Theorem,
\begin{equation*}
  \int_{\mathcal{C}_X}\Psi = \int\limits_{\pi^{-1}C \cup B_s} \Psi + \int\limits_{\bdry(\pi^{-1}C \cup B_s)} \rho(r) \psi.
\end{equation*}

We have
\begin{align*}
  \bigg|\int\limits_{\pi^{-1}C \cup B_s} \Psi \bigg| &< \|\Psi\|_{\mathcal{C}_X}^\infty \vol((\pi^{-1}C\cap\supp\rho(r)) \cup B_s) \\
  & <C(\rho,\psi)(\vol C + \vol B_s)
\end{align*}
by Lemma~\ref{lem:normrho}.  This can be made arbitrarily small by choosing $C$ and $s$ small.

We also have
\begin{align*}
  \bigg|\int\limits_{\bdry(\pi^{-1}C \cup B_s)} \rho(r) \psi\bigg| &\leq \int_{\pi^{-1}H}|\rho(r)\psi| + \int_{T^s X}|\rho(r)\psi| \\
  \leq C(\rho,\psi)(\length(H) + s),
\end{align*}
by Lemmas~\ref{lem:boundoverhorocycle} and \ref{lem:boundoverTr}.  This can also be made arbitrarily small.  Thus $\int_{\mathcal{C}_X}\Psi
=0$ as claimed.
\qed

\section{Abelian differentials}
\label{sec:abel-diff}

We record in this section some standard background material on Riemann surface equipped with Abelian differentials
and their moduli spaces.   We discuss the flat geometry defined by an Abelian differential and some surgery operations on Abelian
differentials.  We then discuss the action of $\GLtwoRplus$ on the moduli space $\Omega\moduli[g]$ of genus $g$ Abelian
differentials.  

\paragraph{Stable Riemann surfaces.}

A \emph{stable Riemann surface} is a compact, complex, one-dimensional complex analytic space with only nodes as singularities such that
each connected component of the complement of the nodes has negative Euler characteristic.  One can also regard a stable Riemann surface as
a finite union of finite volume Riemann surfaces together with an identification of the cusps into pairs. The \emph{(arithmetic) genus} of a stable
Riemann surface $X$ is the genus of the nonsingular surface obtained by thickening each node to an annulus.

\paragraph{Stable Abelian differentials.}

Let $X$ be a stable Riemann surface and $X_0$ the complement of the nodes.
A \emph{stable Abelian differential} on $X$ is a
holomorphic 1-form $\omega$ on $X_0$ such that:

\begin{itemize}
\item The restriction of $\omega$ to each component of $X_0$ has at worst simple poles at the cusps.
\item At two cusps $p$ and $q$ which have been identified to form a node,
  \begin{equation}
    \label{eq:15}
    \Res_p\omega = -\Res_q\omega.
  \end{equation}
\end{itemize}

It follows from the Riemann-Roch Theorem that the space $\Omega(X)$ of stable Abelian differentials on $X$ is a
$g$-dimensional vector space, where $g$ is the genus.

\paragraph{Translation structure.}

Given a Riemann surface $X$, a nonzero Abelian differential $\omega$ on $X$ determines a metric $|\omega|$ on $X\setminus Z(\omega)$, where
$Z(\omega)$ is the set of zeros of $\omega$.  If $p\in U\subset X$ with $U$ a simply connected domain, we have a chart $\phi_{p, U}\colon
U\to\cx$ defined by
\begin{equation*}
  \phi_{p, U}(z) = \int_p^z\omega
\end{equation*}
satisfying $\phi^*dz = \omega$.  These charts differ by translations, so the metric $|\omega|$ has trivial holonomy.  In this metric, a zero
of $\omega$ of order $p$ is a cone point with cone angle $2\pi(p+1)$.

A \emph{direction} on an Abelian differential $(X, \omega)$ is a parallel line field on $X\setminus Z(\omega)$.  An Abelian differential
$(X, \omega)$ has a canonical \emph{horizontal direction}, the kernel of $\Im \omega$.

A surface with a flat cone metric with trivial holonomy and a choice of horizontal direction is often called a \emph{translation surface}.
We have seen that an Abelian differential $(X, \omega)$ naturally has the structure of a translation surface.

\paragraph{Projective Abelian differentials.}

A \emph{projective Abelian differential} on a Riemann surface $X$ is an element of the projectivization $\proj\Omega(X)$.  We 
will denote the projective class of a form $\omega$ by $[\omega]$.  One should think of a projective Abelian differential as a translation
surface without a choice of horizontal direction or scale for the metric.

\paragraph{Homological directions.}

A direction $v$ on $(X, [\omega])$ is a \emph{homological direction} if when we choose a representative $\omega$ of $[\omega]$ so that $v$
is horizontal, there is a homology class $\gamma\in H_1(X;\zed)$ so that $\omega(\gamma)\in\reals\setminus\{0\}$.

\paragraph{Directed Abelian differentials.}

A \emph{directed Abelian differential} is a triple $(X, [\omega], v)$, where $(X, [\omega])$ is a
projective Abelian differential, and $v$ is a direction on $(X, [\omega])$.  Two such objects $(X_i, [\omega_i], v_i)$ are equivalent if
there is an isomorphism $f\colon X_1\to X_2$ such that $f^*[\omega_2]=[\omega_1]$ and $f^* v_2=v_1$.  We can regard an Abelian differential
$(X, \omega)$ as directed by taking the horizontal direction.

\paragraph{Cylinders.}

Every closed geodesic on an Abelian differential $(X, \omega)$ is contained in a family of parallel closed geodesics isometric to a flat
metric cylinder $S^1\times(a, b)$.  The maximal such cylinder is bounded by a union of saddle connections.

A (directed) Abelian differential $(X, \omega)$ is \emph{periodic} if every leaf of the horizontal foliation which does not meet a zero of
$\omega$ is closed.  In this case, every leaf which meets a zero is a saddle connection.  The union of all such leaves is called the
\emph{spine} of $(X, \omega)$.  The complement of the spine is a union of maximal cylinders.  We call $(X, \omega)$ a $n$-cylinder surface
if the complement of the spine has $n$ cylinders.

If $(X, \omega)$ is a stable Abelian differential, and the residue of $\omega$ at a node $n$ is real and nonzero, then $(X, \omega)$ has two
horizontal, half-infinite cylinders facing the node $n$.  
When we speak of an $n$-cylinder surface, we count both of these half-infinite cylinders as one cylinder.

\paragraph{Connected sum.}

Consider two Abelian differentials $(X_i, \omega_i)$.  Let $I\subset\cx$ be a geodesic segment with two embeddings $\iota_i\colon I\to
X_i\setminus Z(\omega_i)$ which are local translations.  The \emph{connected sum} of $(X_1, \omega_1)$ and $(X_2, \omega_2)$ along $I$ is
the Abelian differential obtained by cutting each $(X_i, \omega_i)$ along $\iota_i$ and regluing the two slits by local translations to
obtain a connected Abelian differential.

Now suppose $(X, \omega)$ is a genus two stable Abelian differential with one separating node.  The differential $(X, \omega)$ is the union
of two genus one differentials $(E_1, \omega_1)$ and $(E_2, \omega_2)$ joined at a single point.  Let $I\subset\cx$ be a segment which
embeds in each $(E_i, \omega_i)$ by local translations. We write
\begin{equation*}
  \connsum((X, \omega), I),
\end{equation*}
for the connected sum of the $(E_i, \omega_i)$ along $I$.  Note that the choice of embedding $\iota_i\colon I\to E_i$ is irrelevant because
the automorphism group of $(E_i, \omega_i)$ is transitive.  This connected sum is a genus two Abelian differential with two simple zeros.

\paragraph{Splitting a double zero.}

Consider an Abelian differential $(X, \omega)$ with a double zero $p$.  There is a surgery operation which we call \emph{splitting}
which replaces $(X, \omega)$ with a new differential where $p$ becomes two simple zeros.  Consider a geodesic segment $I=\overline{0w}\subset\cx$.  An
\emph{embedded ``X''} is a collection of four embeddings $\iota_1, \ldots, \iota_4$ of $I$ on $(X, \omega)$ with the following properties:
\begin{itemize}
\item Each $\iota_i$ is a local translations.
\item Each $\iota_i$ sends $0$ and no other point of $I$ to $p$.
\item The segments $\iota_i(I)$ are disjoint except at $p$.
\item The segments $I_i=\iota_i(I)$ meet at $p$ with angles
  \begin{equation*}
    \angle I_1I_2 = \pi, \quad \angle I_2I_3=2\pi, \quad\text{and}\quad \angle I_3I_4=\pi.
  \end{equation*}
\end{itemize}
Given $E$ an embedded ``X'', we define the splitting
\begin{equation*}
  \zerosplit((X, \omega), E)
\end{equation*}
to be the form obtained by cutting along $E$ and regluing as indicated in Figure~\ref{fig:splitting}.
\begin{figure}[htb]
  \centering
  \includegraphics{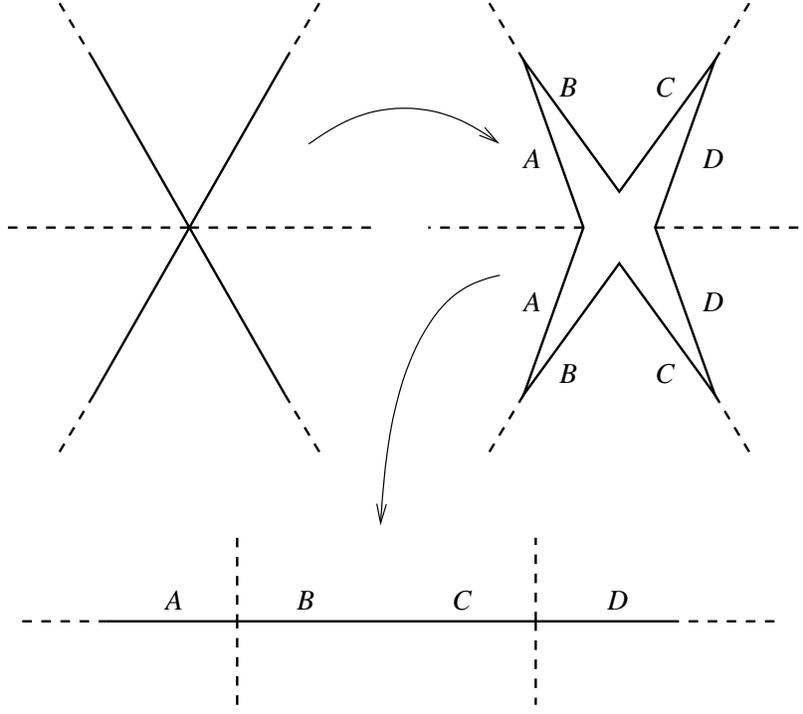}
  \caption{Splitting a double zero}
  \label{fig:splitting}
\end{figure}

\paragraph{Collapsing a saddle connection.}

The operations of forming a connected sum and splitting a double zero have inverses which we call collapsing a saddle connection.  Let $(X,
\omega)$ be a genus two differential with two simple zeros and with $J\colon X\to X$ its hyperelliptic involution.  Given a saddle
connection $I$ joining distinct zeros such that $J(I)\neq I$, the union $J(I)\cup I$ is homologous to zero and so separates $X$ into two
genus one components.  Let
\begin{equation*}
  \Collapse((X, \omega), I)
\end{equation*}
be the stable differential with one separating node obtained by cutting $(X, \omega)$ along $I$, regluing the boundary components to obtain
two genus one forms, and then identifying these two genus one forms at a single point.

Now suppose $I$ is a saddle connection of length $\ell$ joining zeros $p_1$ and $p_2$ such that $J(I)=I$.  We say that $I$ is
\emph{unobstructed} if there are embedded geodesic segments $I_1, I_2\subset X$ of length $\ell/2$ with the following properties:
\begin{itemize}
\item $I_j$ has one endpoint on $p_j$ and is otherwise disjoint from the zeros and $I$
\item $I_j$ and $I$ meet at an angle of $2\pi$ at $p_j$.
\item $I_1$ and $I_2$ are disjoint.
\end{itemize}
If $I$ is unobstructed, then we can cut along $I_1\cup I_2\cup I$ and then reglue following Figure~\ref{fig:splitting} in reverse to obtain
a new Abelian differential with two simple zeros.  We also denote this differential by $\Collapse((X, \omega), I)$.

Note that a saddle connection is always unobstructed if it is the unique shortest saddle connection up to the action of $J$, so any such
saddle connection can be collapsed.

\paragraph{Plumbing a cylinder.}

Let $(X, \omega)$ be a stable Abelian differential with a node $n$ where $\omega$ has a simple pole.  There are two half-infinite cylinders
$C_i$ facing $n$; let $\gamma_i$ be a closed geodesic in $C_i$.  The condition \eqref{eq:15} ensures that the $\gamma_i$ have the same
holonomy with respect to the translation structure.  Thus we can cut $(X, \omega)$ along the $\gamma_i$ and reglue to obtain a new
differential $(X', \omega')$ where the node $n$ has been replaced with a finite cylinder.  We call this operation \emph{plumbing a
  cylinder}.  This operation depends on two real parameters: the height of the resulting cylinder and a twist parameter which determines the
gluing of the $\gamma_i$.

\paragraph{\Teichmuller space.}

Let $\teich_g$ be the \Teichmuller space of closed genus $g$ Riemann surfaces, and let $\Omega\teich_g$ be the trivial complex vector
bundle whose fiber over a Riemann surface $X$ is the space of nonzero Abelian differentials on $X$.

There is a natural stratification of $\Omega\teich_g$.  Given a partition ${\bf n} = (n_1, \ldots, n_r)$ of $2g-2$, let 
\begin{equation*}
  \otn[g]\subset\Omega\teich_g
\end{equation*}
be the locus of forms having zeros of orders given by the $n_i$.  By \cite{veech90}, the strata $\otn[g]$ are complex submanifolds of
$\Omega\teich_g$.

\paragraph{Period coordinates.}

There are simple holomorphic coordinates on the strata $\otn$ defined in terms of periods.  A form $(X, \omega)\in\otn$ defines a cohomology
class in $H^1(X, Z(\omega); \cx)$.  Over a contractable neighborhood $U\subset\otn$ of $(X, \omega)$, the bundle of surfaces with marked
points whose fiber over $(Y, \eta)$ is the surface $Y$ marked by the points of $Z(\eta)$ marked is topologically trivial.  Thus we can
canonically identify the groups $H^1(Y, Z(\eta); \cx)$ as $(Y, \eta)$ varies over $U$ (this is called the Gauss-Manin connection).  This
defines a map
\begin{equation*}
  \phi\colon U\to H^1(X, Z(\omega); \cx) \isom \cx^n.
\end{equation*}

\begin{theorem}[{\cite{veech90}}]
  \label{thm:periodcharts}
  The maps $\phi_U$ are biholomorphic coordinate charts.
\end{theorem}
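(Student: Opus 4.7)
The plan is to prove holomorphicity of $\phi_U$ directly, and then to exhibit a local inverse built from a cut-and-paste construction along a saddle-connection triangulation.

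First, I would verify that $\phi_U$ is holomorphic. Around a basepoint $(X,\omega) \in U$, the relative cohomology groups $H^1(Y, Z(\eta); \cx)$ are identified by the Gauss-Manin connection, which is a locally constant identification over the topologically trivial family of pointed surfaces above $U$. For any $(Y,\eta) \in U$, the form $\eta$ is a holomorphic de Rham representative of its class, and its integrals over locally constant relative cycles vary holomorphically with $(Y,\eta)$ because $\eta$ does. Thus $\phi_U$ is holomorphic. A dimension count via the long exact sequence of the pair $(X, Z(\omega))$ gives $\dim_{\cx} H^1(X, Z(\omega); \cx) = 2g + r - 1$, where $r$ is the number of parts of $\mathbf{n}$; this matches the expected dimension of $\otn$.

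For the local inverse I would choose a geodesic triangulation $\mathcal{T}$ of $(X,\omega)$ in the flat metric $|\omega|$ whose vertex set equals $Z(\omega)$ and all of whose edges are saddle connections, e.g.\ the Delaunay triangulation associated to $|\omega|$. The oriented edges of $\mathcal{T}$ generate $H_1(X, Z(\omega); \zed)$, and for each edge $e$ the period $\omega(e) \in \cx$ is precisely the Euclidean edge vector in any developing chart. Given a class $\alpha \in H^1(X, Z(\omega); \cx)$ sufficiently close to $[\omega]$, I would assign to each oriented edge $e$ the new edge vector $\alpha(e)$, form Euclidean triangles with these prescribed edge vectors (the cocycle condition on $\alpha$ ensures that each triangle closes up), and reglue them combinatorially according to $\mathcal{T}$. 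This produces a translation surface $(X_\alpha, \omega_\alpha)$ depending real-analytically on $\alpha$, and $\phi_U(X_\alpha, \omega_\alpha) = \alpha$ by construction.

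The main obstacle will be verifying that $(X_\alpha, \omega_\alpha)$ stays in $\otn$ rather than slipping into a neighboring or degenerate stratum, and that the inverse is independent of the auxiliary $\mathcal{T}$. For the stratum-preservation, every new triangle remains nondegenerate for $\alpha$ close to $[\omega]$, so the combinatorial type of $\mathcal{T}$ is preserved, and the cone angle at each vertex, equal to $2\pi(n_i + 1)$, is determined by the valence and combinatorial pattern of the link of that vertex in $\mathcal{T}$ and hence is preserved by the gluing. Independence of $\mathcal{T}$ follows by observing that any two such triangulations are connected by a sequence of diagonal flips, and a diagonal flip transforms the edge-vector data consistently on both sides. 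Having produced a continuous local inverse to a holomorphic map between complex manifolds of equal dimension, the holomorphic inverse function theorem yields that $\phi_U$ is a biholomorphism onto its image.
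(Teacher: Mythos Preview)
The paper does not prove this theorem at all: it is stated with the attribution \cite{veech90} and no argument is given. There is therefore no paper proof to compare your proposal against.

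That said, your outline is the standard triangulation argument and is essentially correct. One phrasing is imprecise: you write that the cone angle at a vertex ``is determined by the valence and combinatorial pattern of the link of that vertex in $\mathcal{T}$''. This is not literally true, since the cone angle is the sum of the interior Euclidean angles of the triangles meeting at that vertex and depends on their shapes, not just their number. The correct argument is that, because the edge identifications are by translations, the holonomy around each vertex is trivial and hence each cone angle is an integer multiple of $2\pi$; under a small deformation of the edge vectors the cone angles vary continuously, so these integers cannot change. With that correction your inverse construction shows that the deformed surface stays in $\otn$, and the rest of your argument goes through.
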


\paragraph{Moduli space.}

Let $\moduli[g] = \teich_g/\Gamma_g$ be the moduli space of genus $g$ Riemann surfaces, where $\Gamma_g$ is the mapping class group.  The
moduli space of genus $g$ Abelian differentials is the quotient
$$\Omega\moduli[g] = \Omega\teich_g/\Gamma_g.$$
It is a holomorphic rank $g$ orbifold vector bundle over $\moduli[g]$ and is stratified by the suborbifolds,
\begin{equation*}
  \omn = \otn/\Gamma_g.
\end{equation*}

Let $\Omega_1\moduli[g]$ be the locus of forms $(X, \omega)$ of unit norm $\|\omega\|=1$, where
\begin{equation*}
  \|\omega\| = \int_X |\omega|^2.
\end{equation*}
Similarly, if $\Omega X$ denotes any space of Abelian differentials, $\Omega_1 X$ will denote the unit area differentials.

\paragraph{Deligne-Mumford compactification.}

Let $\barmoduli[g]$ denote the moduli space of genus $g$ stable Riemann surfaces, the Deligne-Mumford compactification of $\moduli[g]$.  The
bundle $\Omega\moduli[g]$ over $\moduli[g]$ extends to a bundle $\Omega\barmoduli[g]$ over $\barmoduli[g]$, the moduli space of stable
Abelian differentials.

\paragraph{Action of $\GLtwoRplus$.}

There is a natural action of $\GLtwoRplus$, on $\Omega\teich_g$.  Let $(X, \omega)$ be an Abelian differential with an atlas of charts
$\{\phi_\alpha\colon U\to \cx\}$ covering $X\setminus Z$ such that $\phi_\alpha^* dz = \omega$.  Given $A\in \GLtwoRplus$, define a new form
$A\cdot(X, \omega)$ by pulling back the conformal structure on $\cx$ and form $dz$ via the new atlas $\{A\circ \phi_\alpha\}$, where $A$ acts on
$\cx\isom\reals^2$ in the usual way.

This defines a free action of $\GLtwoRplus$ on $\Omega\teich_g$.  This action commutes with the action of $\Gamma_g$ and preserves the
stratification, so we obtain an action of $\GLtwoRplus$ on $\omn$ and its strata.

The action of $\SLtwoR$ on these spaces preserve the loci of unit area forms.

The $\GLtwoRplus$ action naturally extends to an action on $\Omega\barmoduli[g]$ which is no longer free.

\paragraph{Projectivization.}

It will be convenient to work with the projectivized moduli spaces $\pom[g]$ of projective Abelian differentials.  The foliation of
$\Omega_1\moduli[g]$ by $\SLtwoR$-orbits descends to a foliation $\mathcal{F}$ of $\pom$ by hyperbolic Riemann surfaces.

\section{Real multiplication}
\label{sec:hilb-modul-surf}

In this section, we discuss background material on real multiplication, eigenforms for real multiplication, and Hilbert modular surfaces.
We also introduce the eigenform loci in $\Omega\moduli$ and the \Teichmuller curves that they contain.

\paragraph{Orders.}

A \emph{real quadratic discriminant} is a positive integer $D$ with $D\equiv 0$ or $1 \pmod 4$.  In this paper, we will also require our quadratic
discriminants to by nonsquare.  A quadratic discriminant $D$ is \emph{fundamental} if $D$ is not of the form $f^2 E$ with $E$ a quadratic
discriminant and $f>1$ an integer.

Given a quadratic discriminant $D$, let $\ord$ be the ring,
\begin{equation*}
  \ord = \zed[T]/(T^2 + bT + c),
\end{equation*}
where $b, c\in\zed$ with $b^2 - 4c=D$.  The ring $\ord$ depends only on $D$ and is the unique \emph{quadratic order} of discriminant $D$.

Let $\K\isom \ratls(\sqrt{D})$ be the quotient field of $\ord$.  If
$D$ is fundamental, then $\ord$ is the ring of integers in $\K$.

There is an inclusion $\ord\to \ord[E]$ if and only if $D=f^2E$ for some integer $f$ and quadratic discriminant $E$.

We fix for the rest of this paper two embeddings $\iota_i\colon \K\to\reals$ with $i=1, 2$.  We will sometimes implicitly assume that
$\K\subset\reals$ via the first embedding $\iota_1$.  We will also use the notation $\lambda^{(i)} = \iota_i(\lambda)$ for $\lambda\in\K$.

\paragraph{Real multiplication.}

Consider a polarized Abelian surface $A= \cx^2/\Lambda$.  
\emph{Real multiplication} by $\ord$ on $A$ is a embedding of rings $\rho\colon\ord\to\End A$ with the following properties:
\begin{itemize}
\item $\rho$ is self-adjoint with respect to the polarization (a symplectic form on $\cx^2$).
\item $\rho$ does not extend to some $\ord[E]\supset \ord$.
\end{itemize}

\paragraph{Eigenforms.}

Given a form $(X, \omega)\in\Omega\moduli$, we say that $(X, \omega)$ is an \emph{eigenform for real multiplication} by $\ord$ if $\Jac(X)$
admits real multiplication $$\rho\colon\ord\to\End\Jac(X)$$ with $\omega$ as an eigenform.  More precisely, the self-adjointness of $\rho$
ensures an eigenspace decomposition of $\Omega(X)$,
\begin{equation}
  \label{eq:16}
  \Omega(X) = \Omega\Jac(X) =  \Omega^1(X) \oplus \Omega^2(X),
\end{equation}
and $\omega$ is an eigenform if it lies in one of the $\Omega^i(X)$.

Given a choice of real multiplication $\rho$ on $\Jac(X)$, the eigenspaces  $\Omega^i(X)$ correspond naturally to embeddings of $\ord$.  We
say that a form $\omega_i$ is an $i$-eigenform if
\begin{equation*}
  \omega(\rho(\gamma)\cdot\lambda) = \lambda^{(i)} \omega(\lambda)
\end{equation*}
for every $\gamma\in H_1(X; \zed)$ and $\lambda\in\ord$.  We let $\Omega^i(X)$ be the space of $i$-eigenforms.

\paragraph{Eigenform locus.}

Given a real quadratic discriminant $D$, let $$\Omega\E\subset\Omega\moduli$$ be the locus of eigenforms for real multiplication by $\ord$.  Let
$$\Omega\W = \Omega\E\cap \Omega\moduli(2),$$ the locus of eigenforms for real multiplication with a double zero.  McMullen proved in
\cite{mcmullenbild}: 

\begin{theorem}
  \label{thm:invariantorbifolds}
  The spaces $\Omega\E$ and $\Omega\W$ are closed, $\GLtwoRplus$-invariant suborbifolds of $\Omega\moduli$.
\end{theorem}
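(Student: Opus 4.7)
The plan is to prove the three pieces of the statement separately: the $\GLtwoRplus$-invariance, the closedness, and the suborbifold structure. For invariance, I would first reformulate the eigenform condition in purely topological/linear-algebraic terms: $(X,\omega)$ lies in $\Omega\E$ if and only if there exists a self-adjoint embedding $\rho\colon \ord\to\End_\zed H^1(X;\zed)$ (self-adjoint with respect to the cup-product symplectic form) and an index $i\in\{1,2\}$ such that both real cohomology classes $\Re\omega$ and $\Im\omega$ lie in the $\iota_i$-eigenspace $S^i(\rho)\subset H^1(X;\reals)$ of the $\reals$-linear extension of $\rho$. Given $A\in\GLtwoRplus$, the pair $(\Re(A\omega),\Im(A\omega))$ is obtained from $(\Re\omega,\Im\omega)$ by a real linear transformation, hence stays inside the real subspace $S^i(\rho)$. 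It remains to check that $\rho$ still acts holomorphically on $\Jac(A\cdot X)$, i.e.\ preserves the new $\Omega(A\cdot X)\subset H^1(X;\cx)$; this is a short Hodge-theoretic verification since $\Omega(A\cdot X)$ is a $2$-dimensional complex Lagrangian containing the eigenclass $A\omega\in S^i(\rho)\otimes\cx$, which forces $\Omega(A\cdot X)=L_1\oplus L_2$ with $L_j\subset S^j(\rho)\otimes\cx$.

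For closedness, suppose $(X_n,\omega_n)\to (X,\omega)$ with each $(X_n,\omega_n)\in\Omega\E$. Working in a small neighborhood of $[X]\in\moduli$, the local system $H^1(-;\zed)$ is trivializable, so I would identify $H^1(X_n;\zed)\isom H^1(X;\zed)$ and view the endomorphisms $\rho_n(\lambda)$ for a fixed $\lambda\in\ord$ as lattice endomorphisms of a single free abelian group. The characteristic polynomial of $\rho_n(\lambda)$ is pinned down by the fixed eigenvalues $\lambda^{(1)},\lambda^{(2)}$ (each appearing with multiplicity two), so the $\rho_n(\lambda)$ lie in a bounded subset of a discrete group, hence in a finite set. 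Passing to a subsequence, the $\rho_n$ stabilize to a single $\rho$, and continuity of the period map shows that $\omega$ is an eigenform for $\rho$.

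For the suborbifold structure, I would use the period coordinate charts of Theorem~\ref{thm:periodcharts} on each stratum $\omn$. The eigenform condition translates into the linear relations $\omega(\rho(\lambda)\gamma)=\iota_i(\lambda)\,\omega(\gamma)$ for $\gamma$ in a basis of absolute homology and $\lambda$ generating $\ord$; these cut out a complex linear subspace of $H^1(X,Z(\omega);\cx)$, so $\Omega\E\cap\omn$ is a complex submanifold in period coordinates. The mapping class group acts on the finite set of compatible $\rho$'s, and the stabilizer of $\rho$ gives the orbifold atlas. Since $\Omega\moduli(2)$ is itself a closed $\GLtwoRplus$-invariant suborbifold, so is $\Omega\W=\Omega\E\cap\Omega\moduli(2)$.

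The main obstacle I anticipate is compatibility with the maximality clause in the definition of real multiplication (that $\rho$ does not extend to a larger order $\ord[E]\supset\ord$): a priori a limit of forms with RM by $\ord$ could acquire RM by a strictly larger order, which would violate closedness under the strict reading of the definition. I would resolve this by noting that the extra eigenform condition imposed by $\ord[E]$ is an extra linear equation in period coordinates, so the locus of genuine $\ord[E]$-eigenforms is a proper analytic subvariety of $\Omega\E$, and the theorem should be read as asserting that the full closure $\Omega\E$ (including forms whose RM extends) is a closed suborbifold---a formulation which makes both the invariance and closedness arguments above go through without modification.
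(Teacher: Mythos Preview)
The paper does not prove this theorem; it is quoted from McMullen \cite{mcmullenbild}. Your outline is essentially the standard argument: the reformulation of the eigenform condition as a self-adjoint lattice action on $H_1(X;\zed)$ for which $\omega$ is an eigenvector is precisely Proposition~\ref{prop:realmultcriterion} in this paper (which supplies the ``short Hodge-theoretic verification'' you allude to), and invariance, closedness, and the linear suborbifold structure in period coordinates then follow as you describe.

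Two refinements. First, in the closedness step, a fixed characteristic polynomial alone does not bound an integer matrix (consider unipotents); what actually pins down $\rho_n(\lambda)$ is that its $\iota_i(\lambda)$-eigenspace in $H^1(X;\reals)$ is the span of $\Re\omega_n,\Im\omega_n$, with the other eigenspace its symplectic complement by self-adjointness. Since $\omega_n\to\omega$ these real operators converge, hence are eventually constant in the discrete group $\End_\zed H_1(X;\zed)$. Second, your worry about the maximality clause is well placed, but reinterpreting the statement is unnecessary. Once $\rho_n\equiv\rho$ is constant, the symplectic $\ord$-module $H_1(X;\zed)$ is isomorphic to $\ord\oplus\ord^\vee$ (this is what membership in $\X$ encodes, via the proof of Theorem~\ref{thm:x-moduli-space}). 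Any self-adjoint extension $\rho'\colon\ord[E]\to\End_\zed H_1$ has $\rho'(\mu)$ commuting with $\rho(\ord)$, hence $\K$-linear on $H_1\otimes\ratls\cong\K^2$; a two-line computation with the pairing $\Tr^{\K}_\ratls(\cdot\wedge\cdot)$ shows the only self-adjoint $\K$-linear endomorphisms are scalars, and preserving the lattice forces that scalar into $\ord$. So no proper self-adjoint extension exists, and the strict definition survives the limit without modification.
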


Let $\E$ and $\W\subset \pom$ be the respective projectivizations.  By \cite{mcmullenbild}, $\W$ is a (possibly disconnected) curve which is
a union of closed leaves of the foliation $\mathcal{F}$ of $\pom$.  Such closed leaves in $\proj\Omega\moduli[g]$ are called
\emph{\Teichmuller curves} because they project to curves in $\moduli[g]$ which are isometrically embedded with respect to the
\Teichmuller metric on $\moduli[g]$.

Let $\Omega \E(1,1) = \Omega\E\setminus\Omega\W$, the locus of eigenforms with two simple zeros, and let $\E(1,1)$ be its projectivization.

\paragraph{Hilbert modular surfaces.}

Let $\SLtwoord\subset\SL_2\K$ be the subgroup preserving the lattice $\ord\oplus\ord^\vee$, where $\ord^\vee$ is the \emph{inverse
  different},
\begin{equation*}
  \ord^\vee = \frac{1}{\sqrt{D}}\ord.
\end{equation*}
Concretely,
\begin{equation*}
  \SLtwoord = \left\{
    \begin{pmatrix}
      a & b \\
      c & d
    \end{pmatrix}
    \in \SL_2\K : a, d\in \ord, b\in(\ord^\vee)^{-1}, c\in\ord^\vee\right\}.
\end{equation*}

$\SLtwoord$ is the group of symplectic $\ord$-linear automorphisms of $\ord\oplus\ord^\vee$, where $\ord\oplus\ord^\vee$ is equipped with the
symplectic form,
\begin{equation*}
  \langle u, v\rangle = \Tr^{\K}_\ratls(u\wedge v)
\end{equation*}
with
\begin{equation*}
  (u_1, u_2)\wedge(v_1, v_2) = u_1 v_2 - u_2 v_1.
\end{equation*}

The \emph{Hilbert modular surface} $\X$ is the quotient,
\begin{equation*}
  \X = \half\times\half/\SLtwoord,
\end{equation*}
where $\SL_2\K$ acts on the $i$th factor of $\half\times\half$ by \Mobius transformations using the $i$th embedding, $\SL_2\K\to\SLtwoR$.
This definition of $\X$ is equivalent to the one in (\ref{eq:1}).

\begin{theorem}
  \label{thm:x-moduli-space}
  $\X$ is the moduli space for principally polarized Abelian surfaces with a choice of real multiplication by $\ord$.
\end{theorem}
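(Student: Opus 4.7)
\emph{Plan.} The strategy is to exhibit a concrete family of principally polarized Abelian surfaces with real multiplication by $\ord$ over $\half\times\half$ and check that the resulting classifying map descends to a bijection on $\X$.

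For the construction, I would fix $(z_1,z_2)\in\half\times\half$ and define
\begin{equation*}
\Psi_{z_1,z_2}\colon\ord\oplus\ord^\vee\to\cx^2,\qquad (a,b)\mapsto\bigl(a^{(1)}z_1+b^{(1)},\,a^{(2)}z_2+b^{(2)}\bigr),
\end{equation*}
and put $A_{z_1,z_2}=\cx^2/\Psi_{z_1,z_2}(\ord\oplus\ord^\vee)$. Componentwise multiplication by $\lambda\in\ord$ on $\ord\oplus\ord^\vee$ is carried by $\Psi_{z_1,z_2}$ to $\cx$-linear multiplication on $\cx^2$ by the diagonal matrix with entries $\lambda^{(1)},\lambda^{(2)}$, producing a ring embedding $\rho\colon\ord\to\End A_{z_1,z_2}$. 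The $\zed$-valued symplectic form $\langle u,v\rangle=\Tr^{\K}_{\ratls}(u\wedge v)$ on $\ord\oplus\ord^\vee$ is unimodular, essentially by the definition of $\ord^\vee$ as the trace dual of $\ord$, so it descends to a principal polarization on $A_{z_1,z_2}$; the identity $\Tr^{\K}_{\ratls}(\lambda u\wedge v)=\Tr^{\K}_{\ratls}(u\wedge\lambda v)$ gives self-adjointness of $\rho$, and the non-extendability of $\rho$ to any $\ord[E]\supsetneq\ord$ follows because $\ord\oplus\ord^\vee$ is a proper $\ord$-module.

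Next I would check that $(z_1,z_2)\mapsto(A_{z_1,z_2},\rho)$ factors through $\SLtwoord$ and is injective on orbits. By construction $\SLtwoord$ is the group of $\ord$-linear symplectic automorphisms of $\ord\oplus\ord^\vee$; transporting such an automorphism through $\Psi_{z_1,z_2}$ and $\Psi_{z_1',z_2'}$ gives a $\cx$-linear isomorphism $\cx^2\to\cx^2$ identifying the two lattices precisely when the \Mobius transformations on the two factors, obtained via $\iota_1$ and $\iota_2$, send $z_i$ to $z_i'$. Conversely, any isomorphism of two such triples pulls back lattice, polarization, and $\ord$-action to lattice, polarization, and $\ord$-action, hence is implemented by an element of $\SLtwoord$.

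For surjectivity, suppose $(A,\rho)$ is an arbitrary principally polarized Abelian surface with real multiplication by $\ord$, written $A=\cx^2/\Lambda$. Then $\Lambda$ becomes a proper $\ord$-module of rank two equipped with a unimodular $\ord$-compatible symplectic form. The key algebraic step, which I regard as the main obstacle, is to show that every such module is $\ord$-linearly and symplectically isomorphic to $\ord\oplus\ord^\vee$ with its standard pairing: choose $\lambda\in\Lambda$ with $\ord\lambda\isom\ord$, form the quotient $\mathfrak{a}=\{\mu\in\Lambda:\langle\lambda,\mu\rangle\in\zed\}/\ord\lambda$, and use unimodularity together with properness to identify $\mathfrak{a}\isom\ord^\vee$. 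Once such an isomorphism is fixed, the complex structure on $\Lambda\otimes_{\zed}\reals\isom\reals^4$ decomposes under $\ord\otimes_{\zed}\reals\isom\reals\oplus\reals$ into two $\rho$-stable real planes, each labeled by a point in $\half\cup\overline{\half}$; positivity of the Riemann form coming from the polarization places both points in $\half$, yielding a preimage in $\half\times\half$. The delicate issue is that for non-fundamental $D$ the order $\ord$ is non-maximal, so Steinitz-type classification of $\ord$-modules fails; this is precisely where the properness assumption on $\Lambda$ must be used to prevent $\mathfrak{a}$ from being a proper $\ord[E]$-module for some $\ord[E]\supsetneq\ord$.
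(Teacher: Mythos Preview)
Your approach is essentially the same as the paper's: both are the standard construction, and the paper likewise only sketches it, deferring details to \cite{bl} and \cite{mcmullenhilbert}. The paper runs the argument in the inverse direction---it starts from the space $\widetilde{X}_D$ of \emph{marked} triples $(A,\rho,\phi)$ with $\phi\colon\ord\oplus\ord^\vee\to H_1(A;\zed)$ a symplectic $\ord$-isomorphism, and sends such a triple to the pair of period ratios $\bigl(\omega_1(\beta)/\omega_1(\alpha),\,\omega_2(\beta)/\omega_2(\alpha)\bigr)\in\half\times\half$ for an $\ord$-basis $\{\alpha,\beta\}$---whereas you construct the explicit family over $\half\times\half$ and verify it is universal; these are inverse to one another. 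You are more explicit than the paper about the surjectivity step (that every $(A,\rho)$ admits a marking $\phi$, i.e.\ that $H_1(A;\zed)$ is symplectically $\ord$-isomorphic to $\ord\oplus\ord^\vee$), which the paper simply absorbs into the definition of $\widetilde{X}_D$ and the cited references. One small slip: in your sketch of this step the set $\{\mu\in\Lambda:\langle\lambda,\mu\rangle\in\zed\}$ is all of $\Lambda$, since the polarization is already $\zed$-valued; presumably you mean to use the $\ord^\vee$-valued (or $\K$-valued) refinement of the pairing coming from the $\ord$-module structure, and then take the condition to lie in $\ord^\vee$.
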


\begin{proof}[Sketch of proof]
  Let $\widetilde{X}_D$ be the space of triples $(A, \rho, \phi)$, where $A$ is a principally polarized Abelian surface, $\rho\colon
  \ord\to\End A$ is real multiplication of $\ord$ on $A$, and $\phi\colon\ord\oplus\ord^\vee\to H_1(A; \zed)$ is a symplectic isomorphism of
  $\ord$-modules.

  Let $\{\alpha, \beta\}$ be a basis of $\ord\oplus\ord^\vee$ over $\ord$ with $\langle\alpha, \beta\rangle = 1$.  Define a map $\Phi\colon
  \widetilde{X}_D\to\half\times\half$, which can be shown to be an isomorphism, by
  \begin{equation*}
    \Phi(A, \rho, \phi) = \left(\frac{\omega_1(\beta)}{\omega_1(\alpha)}, \frac{\omega_2(\beta)}{\omega_2(\alpha)}\right),
  \end{equation*}
  where $\omega_i \in\Omega A$ is a nonzero $i$-eigenform.
  
  Forgetting the marking $\phi$, the map $\Phi$ descends to an isomorphism between the space of pairs $(A, \rho)$ and $\X$.
\end{proof}

See \cite{bl} or \cite{mcmullenhilbert} for a detailed proof of this theorem.

\paragraph{Partial compactification.}

Define the partial Deligne-Mumford compactification $\tildemoduli$ of $\moduli$ to be the open subvariety of $\barmoduli$ obtained by
adjoining to $\moduli$ those points of $\barmoduli$ representing two elliptic curves joined at a node.

Let $\siegelmod$ be the Siegel modular variety parameterizing principally polarized Abelian surfaces.  There is a natural morphism
$\Jac\colon\tildemoduli\to\siegelmod$ associating a surface $X$ to its Jacobian $\Jac(X)$, where the Jacobian of two elliptic curves joined
at a node is defined to be the product of those elliptic curves.  It is a well-known fact that $\Jac$ is an isomorphism.  We sketched a
proof of this in \cite[Proposition~5.4]{bainbridge06}.

\paragraph{Embedding of $\X$.}

There are natural embeddings $j_i\colon \X\to\potm$ defined by
\begin{equation*}
  j_i(A, \rho) = (X, [\omega_i]),
\end{equation*}
where $X$ is the unique Riemann surface with $\Jac(X)\isom A$, and $\omega_i$ is an $i$-eigenform for $\rho$.  In this paper, we will regard
$\X$ to be embedded in $\potm$ by $j_1$.  So one can think of a point in $\X$ as representing a either an Abelian variety with a choice of
real multiplication or a projective Abelian differential $(X, [\omega])$ which is an eigenform for real multiplication by $\ord$.  

For any eigenform $(X, [\omega])\in \X$, there are two choices of real multiplication $\rho\colon\ord\to\Jac(X)$ realizing $\omega$ as an
eigenform.  These choices are related by the Galois involution of $\ord$.  We always choose $\rho$ so that $\omega$ is a $1$-eigenform

We identify $\E$ with $j_1^{-1}(\proj\Omega\moduli)\subset\X$.  The bundle $\Omega\E$ extends to a line bundle $\Omega\X$ over $\X$ whose
fiber over $(X, [\omega])$ is $\Omega^1(X)$, the forms in the projective class $[\omega]$.  One can think of $\Omega\X$ as the bundle of
choices of scale for the metric on $(X, [\omega])$.

\paragraph{Product locus.}

Define the \emph{product locus} $\P\subset\X$ to be the locus of points in $\X$ which represent two elliptic curves joined to a node.
Alternatively, in terms of Abelian surfaces, $\P$ is the locus of Abelian surfaces in $\X$ which are polarized products of elliptic curves.

In the universal cover $\half\times\half$ of $\X$, the curve $\P$ is a countable union of graphs of M\"obius transformations.

As a subset of $\X$, we have $\E = \X\setminus\P$.  Thus it is a Zariski-open subset of $\X$.  As $\W\subset\E$, the curves $\W$ and
$\P$ are disjoint.

For more information about the curve $\P$, see \cite{mcmullenhilbert} (where it is called $\X(1)$) and \cite{bainbridge06}.

\paragraph{Involution of $\X$.}

The involution $\tilde{\tau}(z_1, z_2) = (z_2, z_1)$ of $\half\times\half$ descends to an involution $\tau$ of $\X$.  On the level of
Abelian varieties with real multiplication, $\tau$ sends the pair $(A, \rho)$ to $(A, \rho')$, where $\rho'$ is the Galois conjugate real
multiplication.

On the level of projective Abelian differentials, we have $\tau(X, [\omega]) = (X, [\omega'])$, where $\omega'$ is an eigenform in the
eigenspace which does not contain $\omega$.  Since $\tau$ does not change the underlying stable Riemann surface, we obtain

\begin{prop}
  \label{prop:taufixesP}
  The involution $\tau$ satisfies $\tau(\P) = \P$.
\end{prop}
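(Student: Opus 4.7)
The plan is to reduce the statement to the immediate observation already flagged in the setup: the involution $\tau$ changes only the choice of eigenform (equivalently, the choice of real multiplication), not the underlying polarized Abelian surface, while membership in $\P$ depends only on that underlying surface.

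More concretely, I would proceed as follows. First, recall from the definitions that a point of $\X$ can be encoded either as a pair $(A,\rho)$, with $A$ a principally polarized Abelian surface and $\rho\colon\ord\to\End A$ a real multiplication (Theorem~\ref{thm:x-moduli-space}), or, via the embedding $j_1$, as a projective eigenform $(X,[\omega])\in\potm$. The product locus $\P$ is characterized intrinsically on either side: $(A,\rho)\in\P$ iff $A$ is a polarized product of two elliptic curves, which under $\Jac^{-1}$ corresponds exactly to those $(X,[\omega])$ whose underlying stable Riemann surface $X$ is two elliptic curves joined at a node. Next, invoke the description of $\tau$ recalled just above: on Abelian varieties it acts by $(A,\rho)\mapsto(A,\rho')$ where $\rho'$ is the Galois-conjugate real multiplication, and on projective differentials by $(X,[\omega])\mapsto(X,[\omega'])$ with $\omega'$ the eigenform of the other eigenspace. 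In both pictures the first factor is untouched.

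Combining these two facts, if $(A,\rho)\in\P$ then $A$ is a polarized product of elliptic curves, hence so is the first factor of $\tau(A,\rho)=(A,\rho')$, whence $\tau(A,\rho)\in\P$. Thus $\tau(\P)\subseteq\P$, and since $\tau$ is an involution, equality follows. I do not anticipate any genuine obstacle; the only point that deserves a sentence of care is to verify that $\P$ really is defined by a condition on the underlying polarized Abelian surface (or equivalently the underlying stable Riemann surface), independently of the choice of eigenform, which is exactly what the two parallel descriptions of $\P$ above supply.
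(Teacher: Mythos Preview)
Your proposal is correct and follows exactly the same approach as the paper: the proposition is stated immediately after the observation that $\tau$ does not change the underlying stable Riemann surface, and is presented as a direct consequence of that fact. Your write-up simply spells out in more detail why membership in $\P$ depends only on the underlying surface, which is precisely the point the paper leaves implicit.
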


\paragraph{The $\SLtwoR$-orbit foliation.}

We have the foliation $\mathcal{F}$ of $\potm$ whose leaves are the images of $\GLtwoRplus$ orbits in $\Omega\tildemoduli$.  By
Theorem~\ref{thm:invariantorbifolds}, $\X$ is saturated with respect to this foliation, meaning that a leaf which intersects $\X$ is
contained in $\X$.  Thus there is an induced foliation $\F$ of $\X$ by Riemann surfaces.

The foliation $\F$ was defined in \cite{mcmullenhilbert} where it was shown to be transversely quasiconformal and given a transverse
invariant measure, which we will define below.

\paragraph{Kernel foliation.}

The vertical foliation of $\half\times\half$ is preserved by the action of $\SLtwoord$ and so descends to a foliation of
$\X$.  We will call this foliation $\A$.

Let $\Omega_1\A$ be the foliation of $\Omega_1\X$ obtained by pulling back $\A$ by the natural projection.  The foliations $\Omega_1\A$ and
$\A$ are characterized by the following property, proved in \cite[Proposition~2.10]{bainbridge06} and \cite{mcmullenhilbert}.

\begin{prop}
  \label{prop:periodsconstant}
  The absolute periods of forms in $\Omega_1\X$ are constant along the leaves of $\Omega_1\A$.  The absolute periods of $1$-eigenforms
  are constant along the leaves of $\A$ up to the action of $\cx^*$.
\end{prop}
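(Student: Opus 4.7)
The plan is to unwind the explicit parametrization from the proof of Theorem~\ref{thm:x-moduli-space}. That argument presents (a cover of) $\Omega\X$ as $\half\times\half\times\cx^*$, where the $\cx^*$-factor records the value $A := \omega_1(\alpha)$ of a $1$-eigenform on one generator of a chosen $\ord$-basis $\{\alpha,\beta\}$ of $\ord\oplus\ord^\vee$, and the point $(z_1,z_2)\in\half\times\half$ records the ratios of periods of $\omega_1$ and $\omega_2$ on this basis.

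In these coordinates, the $1$-eigenform relation forces
\begin{equation*}
\omega_1(\lambda\alpha)=\lambda^{(1)}A,\qquad \omega_1(\lambda\beta)=\lambda^{(1)}Az_1\qquad(\lambda\in\ord).
\end{equation*}
Since every element of a $\zed$-basis of $\ord\oplus\ord^\vee$ is obtained from $\alpha$ or $\beta$ by applying an element of a $\zed$-basis of $\ord$, every absolute period of $\omega_1$ is an explicit function of $(z_1,A)$ alone, with no dependence on $z_2$. I would then apply Riemann bilinear relations to a symplectic $\zed$-basis of $\ord\oplus\ord^\vee$ (for instance $\{e_i\alpha,\, e_i^*\beta\}$, with $\{e_i\}$ a $\zed$-basis of $\ord$ and $\{e_i^*\}$ its trace-dual basis of $\ord^\vee$). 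Using that $\lambda^{(1)}\in\reals$, this collapses to an identity of the form $\|\omega_1\|^2 = c\,|A|^2\,\Im(z_1)$ for an explicit positive constant $c$, so the condition $\|\omega_1\|=1$ determines $|A|$ as a function of $z_1$ alone.

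For the first statement, $\Omega_1\A$ is the pullback of $\A$ to the $S^1$-bundle $\Omega_1\X\to\X$, so a leaf of $\Omega_1\A$ is obtained by fixing the phase of $A$ and moving $z_2$ while $z_1$ is held fixed. Combined with the previous paragraph, $A$ itself is then constant along such a leaf, and by the period formulas every absolute period of $\omega_1$ is constant. For the second statement, a leaf of $\A$ in $\X$ fixes only $z_1$, while the fiber of $\Omega\X$ over it is acted on by $\cx^*$ by rescaling $A$; the formulas show that every absolute period of $\omega_1$ gets multiplied by the same factor, which is precisely the $\cx^*$-ambiguity asserted.

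The only real bookkeeping, and not a true obstacle, is checking that the conclusion is independent of the chosen $\ord$-basis $\{\alpha,\beta\}$ and of the lift to $\half\times\half\times\cx^*$: different choices are related by the action of $\SLtwoord$ together with an $\ord$-linear automorphism of $\ord\oplus\ord^\vee$, both of which act on $A$ by an algebraic factor and permute periods by $\ord$-linear combinations, so both statements are preserved.
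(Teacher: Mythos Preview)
The paper does not give its own proof here; it cites \cite[Proposition~2.10]{bainbridge06} and \cite{mcmullenhilbert}. Your approach---computing periods explicitly in the coordinates $(z_1,z_2,A)$ on (a cover of) $\Omega\X$---is natural and essentially correct. The key observations that every absolute period of $\omega_1$ is of the form $\lambda^{(1)}A + \mu^{(1)}Az_1$, and that the Riemann bilinear relations give $\|\omega_1\| = c\,|A|^2\,\Im z_1$, are both right, and the second statement follows cleanly.

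There is, however, a genuine wrinkle in your treatment of the first statement. You assert that ``a leaf of $\Omega_1\A$ is obtained by fixing the phase of $A$ and moving $z_2$,'' but this does not follow from the definition given in the paper: $\Omega_1\A$ is declared to be the \emph{pullback} of $\A$ along the projection $\Omega_1\X\to\X$, and the literal pullback has real three-dimensional leaves $\{z_1\}\times\half\times S^1$ which include the circle-fiber direction. Along that $S^1$, periods are manifestly \emph{not} constant (they rotate with $A$), so the proposition is false under that reading. What your computation actually proves is the substantive content: the \emph{kernel foliation} of $\Omega_1\X$ (complex one-dimensional leaves on which absolute periods are constant) is precisely $\{(z_1,A)=\text{const}\}$, and hence projects to $\A$. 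The paper tacitly adopts this reading in the sentence immediately following the proposition (``$\Omega_1\A$ is the foliation of $\Omega_1\X$ induced by the kernel foliation''). So your argument is correct once $\Omega_1\A$ is understood this way, but you should flag that the phrase ``pullback'' is being used loosely, rather than claim that fixing the phase of $A$ is a consequence of the pullback construction---it is not.
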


The foliation $\A$ is transverse to $\F$ (see \cite[Theorem~8.1]{mcmullenhilbert}).

There is a foliation of $\Omega_1\moduli[g]$ along which absolute periods of forms are constant, called the kernel foliation.
Proposition~\ref{prop:periodsconstant} implies that $\Omega_1\A$ is the foliation of $\Omega_1\X$ induced by the kernel foliation.

\paragraph{Transverse measure.}

We now define a canonical quadratic differential along the leaves of $\Omega_1\A$ and use it to define a
transverse invariant measure to $\F$ following \cite{mcmullenhilbert}.  

Let $f\colon \Omega_1\E(1,1)\to\cx$ be the multivalued holomorphic function defined by
\begin{equation*}
  f(X, \omega) = \int_p^q \omega,
\end{equation*}
where $p$ and $q$ are the zeros of $\omega$.  The function $f$ is multivalued because there is a choice of ordering of the zeros and a
choice of path between them.  Define
\begin{equation*}
  q = (\partial f)^2,
\end{equation*}
where the derivative is along leaves of $\Omega_1\A$.  Since the absolute periods are constant along $\Omega_1\A$, the form $q$ is well
defined.  The form $q$ is a meromorphic quadratic differential on $\Omega_1\X$ along the leaves of $\Omega_1\A$ which is zero along
$\Omega_1\W$, has simple poles along $\Omega_1\P$, and is elsewhere nonzero and finite (see \cite{mcmullenhilbert} or \cite[\S
10]{bainbridge06} for proofs).

The measure $|q|$ on leaves of $\Omega_1\A$ is invariant under the action of $\SOtwoR$, so it descends to a leafwise measure $|q|$ on $\A$.

\begin{theorem}[\cite{mcmullenhilbert}]
  \label{thm:transversemeasure}
  The leafwise measure $|q|$ defines a transverse, holonomy invariant measure to $\F$.
\end{theorem}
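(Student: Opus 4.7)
}

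The plan is to verify the two assertions implicit in the statement: (i) that $|q|$ really does descend from a leafwise measure on $\Omega_1\A$ to one on $\A$, and (ii) that the resulting leafwise measure on the transversal $\A$ is invariant under the holonomy of $\F$. Since $\A$ is already known to be transverse to $\F$ by \cite[Theorem~8.1]{mcmullenhilbert}, these two facts together say exactly that $|q|$ is a transverse invariant measure for $\F$.

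For (i), first recall that the projection $\Omega_1\X\to\X$ is a principal $\SOtwoR$-bundle. Under $r_\theta\in\SOtwoR$, an Abelian differential $\omega$ is rescaled to $e^{i\theta}\omega$, and hence the relative period $f=\int_p^q\omega$ is multiplied by $e^{i\theta}$. Therefore $q=(\partial f)^2$ transforms by $e^{2i\theta}$, and $|q|$ is $\SOtwoR$-invariant. Together with the $\SOtwoR$-equivariance of the foliation $\Omega_1\A\to \A$, this shows that $|q|$ descends to a well-defined leafwise $(1,1)$-form on the leaves of $\A$.

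For (ii), I would work upstairs in $\Omega_1\X$. The leaves of $\Omega_1\F$ are the $\SLtwoR$-orbits, and the leaves of $\F$ in $\X$ are precisely their quotients by the $\SOtwoR$-action, so every germ of holonomy of $\F$ on an $\A$-transversal is realized, after choosing a local unit-norm section $\X\to\Omega_1\X$, by the action of some $A\in\SLtwoR$ on $\Omega_1\X$. Note that $\SLtwoR$ permutes the leaves of $\Omega_1\A$: on a leaf with constant absolute periods $(a_1,\dots,a_n)$, applying $A$ produces a form with constant absolute periods $(Aa_1,\dots,Aa_n)$, so leaves go to leaves. It therefore suffices to show that each map $\Phi_A\colon L_1\to L_2$ between leaves of $\Omega_1\A$ induced by $A\in\SLtwoR$ satisfies $\Phi_A^*|q|_{L_2}=|q|_{L_1}$.

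This is the heart of the argument, and is a direct calculation. On a leaf of $\Omega_1\A$ the absolute periods are constant (Proposition~\ref{prop:periodsconstant}), so the relative period $f$ is single-valued (after choice of branch), and $|q|=\frac{i}{2}\,df\wedge d\bar f$ is just the pullback to the leaf of the standard Euclidean area form on $\cx\cong\reals^2$ under $f$. Since $f\circ\Phi_A=A\cdot f$, where $A$ acts on $\cx$ as an $\reals$-linear map of determinant one (preserving the Euclidean area form), we get $\Phi_A^*|q|_{L_2}=|q|_{L_1}$ by the change of variables formula. Combined with (i) this yields the theorem. The main conceptual point to handle carefully is the passage from $\Omega_1\X$ back to $\X$, ensuring that the ambiguity in the choice of lift (which is precisely the $\SOtwoR$ already shown to act trivially on $|q|$) does not obstruct holonomy invariance downstairs; the routine computational step is the $\SL_2\reals$-invariance of $\frac{i}{2}\,df\wedge d\bar f$ sketched above.
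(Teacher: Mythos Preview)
The paper does not prove this statement; it is quoted from \cite{mcmullenhilbert} and used without proof. There is thus nothing in the paper to compare your proposal against.

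For what it is worth, your argument is correct. The key point is exactly as you say: on each kernel-foliation leaf the relative period $f$ is a local holomorphic coordinate (its ambiguity by an absolute period is constant along the leaf, and the sign ambiguity disappears in $q=(\partial f)^2$), so $|q|$ is the pullback of Euclidean area on $\cx\cong\reals^2$ by $f$. Under $A\in\SLtwoR$ the period $f$ transforms by the standard $\reals$-linear action of $A$ on $\reals^2$, which has determinant one, whence $|q|$ is $\SLtwoR$-invariant. Specializing to $\SOtwoR$ gives your (i), and the full statement gives (ii) once one observes that holonomy of $\F$ between $\A$-transversals lifts to the $\SLtwoR$-action between kernel-foliation leaves.
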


Let $\mu_D$ be the measure on $\X$ (supported on $\E(1,1)$) defined by taking the product of the transverse measure $|q|$ with the leafwise
hyperbolic metric on $\F$.

We say that a measure $\mu$ on $\pom[g]$ is invariant if it is the push-forward of a $\SLtwoR$-invariant measure on
$\Omega_1\moduli[g]$, or equivalently if it is the product of a transverse invariant measure with the leafwise hyperbolic metric.  In this
sense, $\mu_D$ is invariant.  McMullen proved in \cite{mcmullenabel}:

\begin{theorem}
  \label{thm:finiteinvariant}
  The measure $\mu_D$ is finite and ergodic.  Moreover, it is the only such invariant measure supported on $\E(1,1)$.
\end{theorem}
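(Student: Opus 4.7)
The plan is to prove the three assertions---finiteness, ergodicity, and uniqueness of $\mu_D$---as separate steps, using the transverse structure provided by the kernel foliation $\A$ and the $\SLtwoR$-action. For finiteness, I would work in a fundamental domain $F \subset \half \times \half$ for $\SL_2\ord$, so that the total mass equals $\int_F |q|\, dV_{\mathrm{hyp}}$. Convergence can only fail near the two cusps of $\X$. In coordinates $z_j = x_j + iy_j$ on $\half\times\half$, the Hilbert hyperbolic volume form is $dx_1\,dx_2\,dy_1\,dy_2/(y_1 y_2)^2$, while the transverse measure $|q|$ on $\A$-leaves is controlled since $q = (\partial f)^2$ has at worst simple poles along $\P$ and the saddle-connection length $f$ tends to zero at the stable degenerations. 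A local estimate then gives convergence, and this will also be confirmed by the explicit finite value produced later by Theorem~\ref{thm:volED}.

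For ergodicity, by the Mautner phenomenon it suffices to prove that the horocycle flow $h_t$ is ergodic on $(\Omega_1\E, \mu_D^1)$. I would invoke the partial classification of ergodic $h_t$-invariant measures on $\Omega_1\E$ from \cite{caltawortman}: every such measure is either supported on a closed $\SLtwoR$-orbit (on $\Omega_1\W$ or on the decagon curve $\Omega_1 D_{10}$), or coincides up to scale with $\mu_D^1$. Since $\mu_D^1$ assigns zero mass to $\Omega_1\W$ (because $q$ vanishes along $\W$) and to any lower-dimensional invariant locus, no closed-orbit component can appear in its ergodic decomposition, forcing $\mu_D^1$ to be ergodic.

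For uniqueness, let $\nu$ be any $\SLtwoR$-invariant probability measure on $\Omega_1\E(1,1)$ whose push-forward to $\E(1,1)$ has the form (transverse measure)$\,\times\,$(leafwise hyperbolic). Proposition~\ref{prop:periodsconstant} identifies $\A$ as the locus along which absolute periods are locally constant, so the transverse part of $\nu$ is determined along each $\A$-leaf by its push-forward under the relative period $f = \int_p^q \omega$. The $\SOtwoR$-invariance of $\nu$ forces this push-forward to be a multiple of $|q|=|\partial f|^2$, and the identification of $\nu$ with $\mu_D$ up to scale follows; normalization then yields uniqueness.

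The main obstacle is the ergodicity step, whose essential ingredient is the classification of horocycle-invariant measures on $\Omega_1\E$ from \cite{caltawortman}. Ruling out exotic $h_t$-invariant measures supported on proper, non-closed, $\SLtwoR$-invariant subloci of $\Omega_1\E$ is the substantive input; the remainder of the argument is a combination of local volume estimates near the cusps and the transversality of $\A$ and $\F$.
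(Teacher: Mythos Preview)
The paper does not prove this theorem at all: it is quoted from McMullen \cite{mcmullenabel}, with only the remark that McMullen's measure and the measure $\mu_D$ defined here agree up to a constant. So there is no proof in the paper to compare against; your proposal is an attempt to supply an argument where the paper simply cites one.

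That said, your argument has genuine circularity and gaps. For finiteness, appealing to Theorem~\ref{thm:volED} is circular: the paper's proof of that theorem passes through Corollary~\ref{cor:Fcurrent}, which explicitly invokes Theorem~\ref{thm:finiteinvariant} (``since $\mu_D$ has finite volume by Theorem~\ref{thm:finiteinvariant}''). The direct estimate you sketch is too vague to stand on its own, and note that $\X$ does not in general have ``two cusps''---the number of cusps is the number of $\SLtwoord$-orbits on $\proj^1(\K)$. For ergodicity, invoking \cite{caltawortman} is again circular: their classification of ergodic $H$-invariant measures on $\Omega_1\E$ is built on McMullen's results and in particular lists $\mu_D^1$ as one of the ergodic options, which presupposes exactly what you are trying to prove. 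Finally, your uniqueness step does not work as written: $\SOtwoR$ acts on $\Omega_1\E$, not on the quotient $\E(1,1)$, and it does not preserve individual leaves of $\A$, so ``$\SOtwoR$-invariance forces the push-forward to be a multiple of $|q|$'' is not a valid deduction. McMullen's actual proof in \cite{mcmullenabel} proceeds quite differently, constructing the measure via period coordinates and establishing ergodicity directly rather than through a downstream measure classification.
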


\begin{remark}
  The invariant measure in \cite{mcmullenabel} was actually defined in a slightly different way, but it is not hard to see directly that the two are
  the same up to a constant multiple independent of $D$.
\end{remark}

We also give $\Omega_1\E(1,1)$ the unique $\SLtwoR$-invariant measure $\mu_D^1$ such that $\pi_*\mu_D^1 = \mu_D$.  This is the product of
$\mu_D$ with the uniform measure of unit mass on the circle fibers of $\pi$.

\paragraph{Euler characteristic of $\X$.}

Siegel calculated the orbifold Euler characteristic $\chi(\X)$ in terms of values of the Dedekind zeta function $\zeta_{\K}$ of $\K$.  He
showed in \cite{siegel36} (see also \cite[Theorem~2.12]{bainbridge06}):

\begin{theorem}
  \label{thm:chiX}
  If $D$ is a fundamental discriminant, and $f\in\nats$, then
  \begin{equation}
    \label{eq:17}
    \chi(\X[f^2D]) = 2 f^3 \zeta_{K_D}(-1) \sum_{r|f}\kron{D}{r}\frac{\mu(r)}{r^2}.
  \end{equation}
\end{theorem}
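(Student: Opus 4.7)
The plan is to apply Siegel's classical evaluation of Hilbert modular surface volumes from \cite{siegel36}, already recorded as \cite[Theorem 2.12]{bainbridge06}, so the proof here is really a matter of organizing the citation.

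The first step handles the fundamental case $f = 1$ by Gauss-Bonnet on $\X = \half\times\half/\SLtwoord$: the orbifold Euler form is $\omega_1 \wedge \omega_2$, where each $\omega_i = \frac{1}{2\pi}\frac{dx_i \, dy_i}{y_i^2}$ is the normalized hyperbolic area form pulled back from the $i$-th factor of $\half\times\half$. Hence $\chi(\X)$ equals the product-hyperbolic volume of a fundamental domain divided by $(2\pi)^2$. Siegel's computation expresses this volume through $\zeta_{\K}$ by integrating an Eisenstein series over a Siegel fundamental domain, Mellin-transforming, and applying the functional equation for the Dedekind zeta function; the outcome is $\vol(\X) = 8\pi^2 \zeta_{\K}(-1)$, so that $\chi(\X) = 2\zeta_{\K}(-1)$. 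This matches \eqref{eq:17} at $f = 1$, where the Möbius sum collapses to $\mu(1) = 1$.

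The second step addresses $f > 1$ with $D$ fundamental by comparing the arithmetic lattices $\SLtwoord[f^2D]$ and $\SLtwoord[D]$. The order $\ord[f^2D]$ has conductor $f$ in $\ord[D]$, so $\ord[f^2D]\oplus\ord[f^2D]^\vee$ is a finite-index sublattice of $\ord[D]\oplus\ord[D]^\vee$, and the corresponding index for the symplectic groups can be computed one prime at a time. At each rational prime $p \mid f$, the local factor depends on the splitting behavior of $p$ in $\K$, which is precisely recorded by $\kron{D}{p}$: the prime is split, inert, or ramified according to whether this symbol equals $+1$, $-1$, or $0$. Assembling the local contributions via an Euler product and Möbius-inverting across divisors of $f$ yields the sum $\sum_{r|f}\kron{D}{r}\mu(r)/r^{2}$, while the overall scaling of the covolume contributes the factor $f^{3}$; combined with Step 1 this gives \eqref{eq:17}.

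The genuinely hard part is hidden in Step 1: Siegel's identification of the volume with the special value $\zeta_{\K}(-1)$ depends on the analytic continuation and functional equation of the Dedekind zeta function, together with a nontrivial unfolding of the Eisenstein series. I would treat this as classical input from \cite{siegel36} rather than attempt to reprove it; the bookkeeping in Step 2 is straightforward by comparison, though one must be careful about the stabilizer orders at orbifold points when interpreting indices as ratios of $\chi$.
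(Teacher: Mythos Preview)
Your proposal is correct and matches the paper's approach: the paper gives no proof of this theorem at all, simply attributing it to Siegel \cite{siegel36} with a cross-reference to \cite[Theorem~2.12]{bainbridge06}. Your sketch of the underlying argument (Gauss--Bonnet plus Siegel's volume computation for the fundamental case, then an index calculation for non-maximal orders) is more than the paper itself provides, and is accurate as an outline.
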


\begin{remark}
  Here, $\kron{D}{r}$ is the Kronecker symbol, defined in \cite{miyake}, and $\mu$ is the M\"obius function.
\end{remark}

For a fundamental discriminant $D$, Cohen defined in \cite{cohen75}:
\begin{equation}
  \label{eq:18}
  H(2, f^2 D) = -12 \zeta_{K_D}(-1) \sum_{r|f}\mu(r)\kron{D}{r}r \sigma_3\left(\frac{f}{r}\right),
\end{equation}
where
\begin{equation*}
  \sigma_m(n) = \sum_{d|n}d^m.
\end{equation*}
From M\"obius inversion, and \eqref{eq:17}, we obtain:
\begin{equation*}
  \sum_{r|f} \chi(\X[r^2 D]) = -\frac{1}{6} H(2, f^2D).
\end{equation*}
Cohen proved in \cite{cohen75} (see also \cite{siegel69}):
\begin{theorem}
  \label{thm:h2dsum}
  For any real quadratic discriminant $D$,
  \begin{equation*}
     H(2, D) = -\frac{1}{5}\sum_{e\equiv D\,(2)} \sigma_1\left(\frac{D-e^2}{4}\right).
  \end{equation*}
\end{theorem}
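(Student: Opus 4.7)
The plan is to prove this identity via the theory of modular forms of half-integral weight, by realizing both sides as Fourier coefficients of elements of Kohnen's plus subspace $M_{5/2}^+(\Gamma_0(4))$ of weight-$5/2$ modular forms on $\Gamma_0(4)$, and then exploiting the one-dimensionality of that space.

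On the left, as shown in \cite{cohen75}, the generating series
$$\mathcal{H}(\tau) = \zeta(-3) + \sum_{D \geq 1} H(2, D)\, q^D, \qquad q = e^{2\pi i \tau},$$
lies in $M_{5/2}^+(\Gamma_0(4))$; this is the Cohen--Eisenstein series of weight $5/2$, and its Fourier coefficients are by construction the real-quadratic $L$-value packaging that defines $H(2, D)$. On the right, set $\theta(\tau) = \sum_{e \in \zed} q^{e^2}$, a modular form of weight $1/2$, and let $E_2(\tau) = 1 - 24\sum_{n \geq 1}\sigma_1(n)\, q^n$ be the quasi-modular Eisenstein series of weight $2$. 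A direct Fourier-coefficient computation gives
$$\theta(\tau)\cdot E_2(4\tau) = \sum_{D \geq 0} \Bigl(r_1(D) - 24 \sum_{\substack{e \equiv D\,(2)\\ e^2 < D}} \sigma_1\bigl((D-e^2)/4\bigr)\Bigr) q^D,$$
where $r_1(D)$ is the theta coefficient ($1$ if $D = 0$, $2$ if $D$ is a positive square, $0$ otherwise). So the theorem is equivalent to the statement that $\theta \cdot E_2(4\cdot)$ and $\mathcal{H}$ are proportional, the proportionality constant producing the factor $-\frac{1}{5}$.

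To make this rigorous, one must deal with the quasi-modularity of $E_2$. Replacing $E_2$ by its non-holomorphic completion $E_2^*(\tau) = E_2(\tau) - 3/(\pi\, \Im \tau)$ turns $\theta(\tau) \cdot E_2^*(4\tau)$ into a genuine non-holomorphic modular form of weight $5/2$ on $\Gamma_0(4)$; its holomorphic projection onto $M_{5/2}^+(\Gamma_0(4))$ has Fourier coefficients equal to those of $\theta \cdot E_2(4\cdot)$ up to an explicit elementary correction coming from the $\Im \tau$ term, computable by the standard holomorphic-projection integral. By the Shimura correspondence, $S_{5/2}^+(\Gamma_0(4))$ embeds into $S_4(\SL_2 \zed) = 0$, so $M_{5/2}^+(\Gamma_0(4))$ is one-dimensional; hence the holomorphic projection of $\theta \cdot E_2^*(4\cdot)$ is forced to be a scalar multiple of $\mathcal{H}$. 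Matching any single convenient Fourier coefficient (for instance at $D$ a small square, where $r_1(D) \neq 0$) pins down the scalar and yields the factor $-\frac{1}{5}$.

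The main obstacle is the careful bookkeeping required to handle the quasi-modular correction from $E_2$, to compute the holomorphic projection explicitly, and to align the constant-term normalization of $\mathcal{H}$ with the $r_1(D)$ contributions coming from the theta factor. Everything else in the argument reduces to the standard dimension count in Kohnen's plus space.
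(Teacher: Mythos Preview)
The paper does not prove this theorem at all; it simply quotes the result from Cohen \cite{cohen75} (with a parallel reference to Siegel \cite{siegel69}). There is thus no argument in the paper to compare yours against.

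Your outline---placing both sides in the one-dimensional Kohnen plus space $M_{5/2}^+(\Gamma_0(4))$ and then matching a single coefficient---is the standard modern route and is in the spirit of Cohen's original treatment. Your Fourier-coefficient identification of $\theta(\tau)E_2(4\tau)$ with the right-hand side is correct, and you are right that the quasi-modularity of $E_2$ is the only real obstacle.

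One point deserves more care than you suggest. Holomorphic projection in the usual sense (Sturm, Gross--Zagier) is defined only for forms that decay at the cusps, and $\theta\cdot E_2^*(4\,\cdot)$ does not decay: it tends to a nonzero constant at $i\infty$. So invoking ``the standard holomorphic-projection integral'' is not legitimate without a regularization step, and that step is where the actual work lies, not just bookkeeping. Two clean ways to close the gap: (i) observe that the non-holomorphic piece $\theta(\tau)/\Im\tau$ has vanishing $q^D$-coefficient whenever $D$ is not a perfect square, so for the nonsquare discriminants the paper treats, the Fourier coefficients of $\theta E_2^*(4\cdot)$ already agree with those of its (regularized) holomorphic projection, and one can argue directly; or (ii) bypass $E_2$ entirely and construct a second element of $M_{5/2}^+(\Gamma_0(4))$ by projecting $\theta^5$ (or another genuine holomorphic weight-$5/2$ form) into the plus space and unwinding the resulting divisor-sum identity. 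Either route completes your sketch.
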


\paragraph{Cusps of $\X$.}

Given $\sigma\in \proj^1(\K)$ and $r>0$, define
\begin{equation*}
  N_r(\sigma) = A^{-1}\{x_j +i y_j\in\half\times\half : y_1 y_2 >r\}
\end{equation*}
for some $A\in\SL_2\K$ such that $A\sigma = \infty$.  The set $N_r(\sigma)$ is independent of the choice of $A$.
Let $\Gamma_\sigma$ be the stabilizer of $\sigma$ in $\SLtwoord$, and define
\begin{equation*}
  \mathfrak{N}_r(\sigma) =  N_r(\sigma)/\Gamma_\sigma.
\end{equation*}

We call a $\SLtwoord$-orbit in $\K$ a \emph{cusp} of $\X$. Let $\Cusps\subset \proj^1(\X)$ be a set of representatives for the set of cusps of $\X$.

\begin{theorem}
  \label{thm:coreandcusps}
  If $r$ is sufficiently large, then for each $\sigma\in \proj^1(\K)$, the natural embedding $\mathfrak{N}_r(\sigma)\to \X$, is injective.
  Furthermore, $\X$ is the disjoint union,
  \begin{equation*}
    \X = K\cup \bigcup_{\sigma\in\Cusps} \mathfrak{N}_r(\sigma),
  \end{equation*}
  where $K$ is a compact submanifold with boundary which is a deformation retract of $\X$.
\end{theorem}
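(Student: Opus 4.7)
The plan is to apply the standard reduction theory for Hilbert modular groups. There are three ingredients to verify: injectivity of the natural map $\mathfrak{N}_r(\sigma) \to \X$ for $r$ large, finiteness of $\Cusps$, and compactness of the complement of the cuspidal neighborhoods (together with the deformation-retract claim).

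For the injectivity I would first reduce to the case $\sigma = \infty$ by conjugating by a suitable $A \in \SL_2\K$. The key calculation is that for $g = \bigl(\begin{smallmatrix} a & b \\ c & d \end{smallmatrix}\bigr) \in \SLtwoord$ acting on $\half \times \half$ via the embeddings $\iota_1, \iota_2$,
\begin{equation*}
  \Im(gz)_1 \cdot \Im(gz)_2 = \frac{y_1 y_2}{|c^{(1)} z_1 + d^{(1)}|^2 \, |c^{(2)} z_2 + d^{(2)}|^2} \le \frac{1}{(c^{(1)} c^{(2)})^2 \, y_1 y_2},
\end{equation*}
since $|c^{(i)} z_i + d^{(i)}| \ge |c^{(i)}| \, y_i$. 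If both $z$ and $gz$ lie in $N_r(\infty) = \{y_1 y_2 > r\}$, this forces $|c^{(1)} c^{(2)}| < 1/r$. But any nonzero $c \in \ord^\vee$ satisfies $|c^{(1)} c^{(2)}| \ge 1/D$, so for $r > D$ we must have $c = 0$, i.e., $g \in \Gamma_\infty$, giving injectivity at the cusp $\infty$.

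The finiteness of $\Cusps$ is a classical consequence of the finiteness of the ideal class group of $\ord$: cusps of $\SLtwoord$ are parameterized by a finite quotient of $\mathrm{Pic}(\ord)$. For distinct $\sigma, \sigma' \in \Cusps$, applying the same norm bound to any $g \in \SLtwoord$ with $g \sigma \in \SLtwoord \cdot \sigma'$ (after conjugating $\sigma$ to $\infty$) yields $\mathfrak{N}_r(\sigma) \cap \mathfrak{N}_r(\sigma') = \emptyset$ in $\X$ for $r$ large. Compactness of the complement $K$ then follows from the Godement compactness criterion, or equivalently from the Maass reduction theory for Hilbert modular groups, which provides a Siegel-set fundamental domain for $\SLtwoord$ whose intersection with $\{y_1 y_2 \le r\}$ is compact modulo $\Gamma_\infty$. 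The deformation retract is obtained by retracting each $\mathfrak{N}_r(\sigma)$ (in the coordinates where $\sigma = \infty$) onto its boundary via the flow $(x_i + i y_i) \mapsto (x_i + i \lambda(t) y_i)$, where $\lambda(t)^2 y_1 y_2$ decreases monotonically to $r$; since $\lambda$ depends only on the $\Gamma_\infty$-invariant function $y_1 y_2$ and $\Gamma_\infty$ scales $y_i$ by units of norm $\pm 1$, this flow is $\Gamma_\sigma$-equivariant and so descends to $\X$.

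The main technical obstacle is the uniform treatment of all cusps simultaneously: the norm bound must be applied across the finite set of conjugating elements $A$ for the various $\sigma \in \Cusps$, and one must verify that the pairwise-disjointness statements and the quotient structures $\mathfrak{N}_r(\sigma) = N_r(\sigma)/\Gamma_\sigma$ fit together with compactness of $K$ for a single common threshold $r$. For $\SLtwoord$ these uniformities are classical and reduce to standard facts about Siegel sets in products of upper half-planes.
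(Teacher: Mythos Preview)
Your proof sketch is correct and follows the standard reduction theory for Hilbert modular groups. The paper does not actually prove this theorem: its entire proof is a one-line citation to pp.~7--11 of van der Geer's book \cite{vandergeer88}. So in effect you have written out the argument that the paper defers to, and your norm-bound computation, the finiteness of cusps via the class group, and the retraction along the $y_1y_2$-level flow are exactly the ingredients one finds in that reference.
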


\begin{proof}
  See the discussion on pp.~7-11 of \cite{vandergeer88}.
\end{proof}

$\X$ can be compactified by adding one point for each cusp of $\X$.  The sets $\mathfrak{N}_r(\sigma)$ form a neighborhood basis of the cusp
$\sigma$.

Given a cusp $\sigma$ of $\X$, define the covering,
\begin{equation*}
  \X^\sigma = \half\times\half / \Gamma_\sigma.
\end{equation*}

\paragraph{Full $\ord$-modules.}

Given an Abelian surface with real multiplication $(A, \rho)\in \X$, we say that an $\ord$-submodule $M\subset H_1(A; \zed)$ is \emph{full}
if for any $x\in M$ and $n\in\zed$, we have  $n x \in M$ if and only if $x\in M$.  We say that the rank of $M$ is the dimension of
$M\otimes\ratls$ as a $\K$-vector space.  Let $\mathcal{X}_D$ be the disjoint union,
\begin{equation*}
  \mathcal{X}_D= \bigcup_{c\in\Cusps} \X^c.
\end{equation*}

\begin{prop}
  \label{prop:fullrankone}
  $\mathcal{X}_D$ is the moduli space of all triples $(A, \rho, M)$ with $(A, \rho)\in \X$  and $M\subset H_1(A; \zed)$ a full, rank one
  $\ord$-submodule.
\end{prop}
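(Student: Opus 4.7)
The plan is to first match the cusps of $\X$ with $\SLtwoord$-orbits of full, rank one $\ord$-submodules of $\ord \oplus \ord^\vee$, and then to identify each individual cover $\X^\sigma$ with the moduli space of triples $(A, \rho, M)$ whose submodule lies in the orbit determined by $\sigma$.

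First I would establish a natural bijection between full, rank one $\ord$-submodules of $\ord \oplus \ord^\vee$ and $\proj^1(\K)$. Given such a submodule $M$, its rational span $M \otimes_\zed \ratls$ is a one-dimensional $\K$-subspace of $\K^2$, determining a point of $\proj^1(\K)$. Conversely, any $\K$-line $L \subset \K^2$ meets $\ord \oplus \ord^\vee$ in a full, rank one $\ord$-submodule (fullness follows from the fact that the intersection of a $\ratls$-subspace with a lattice is automatically saturated). Since $\SLtwoord$ acts on $\K^2$ through $\SL_2\K$, this bijection is equivariant and descends to a bijection between $\SLtwoord$-orbits of such submodules and $\Cusps = \proj^1(\K)/\SLtwoord$.

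Next I would use the marked moduli space $\widetilde{X}_D \cong \half\times\half$ from the proof of Theorem~\ref{thm:x-moduli-space}, whose points are triples $(A, \rho, \phi)$ with $\phi\colon \ord \oplus \ord^\vee \to H_1(A; \zed)$ a symplectic $\ord$-module isomorphism. Fix a cusp $c \in \Cusps$ with representative $\sigma \in \proj^1(\K)$, and let $M_\sigma \subset \ord \oplus \ord^\vee$ be the corresponding full rank one submodule. The assignment
\begin{equation*}
  \Psi_\sigma(A, \rho, \phi) = (A, \rho, \phi(M_\sigma))
\end{equation*}
sends $\widetilde{X}_D$ onto the set of triples whose module has cusp type $c$. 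Two markings $\phi_1, \phi_2$ of the same $(A, \rho)$ produce the same triple precisely when $\phi_1^{-1}\phi_2 \in \SLtwoord$ preserves $M_\sigma$, i.e.\ when $\phi_1^{-1}\phi_2 \in \Gamma_\sigma$. Hence, after passing to isomorphism classes, $\Psi_\sigma$ descends to a bijection between $\X^\sigma = \half\times\half/\Gamma_\sigma$ and the moduli space of triples $(A, \rho, M)$ with $M$ of cusp type $c$.

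Taking the disjoint union over cusps gives the claimed description of $\mathcal{X}_D$. I expect the main technical point to be verifying that the $\Gamma_\sigma$-action on markings accounts for exactly the ambiguity in the triple $(A, \rho, M)$, so that automorphisms of $(A, \rho, M)$ do not produce extra identifications beyond those already present in $\X^\sigma$; this reduces to the observation that a symplectic $\ord$-linear automorphism of $(\ord \oplus \ord^\vee)$ fixing $M_\sigma$ setwise is the same as an element of $\Stab_\sigma(\SLtwoord)$.
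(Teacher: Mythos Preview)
Your proposal is correct and follows essentially the same approach as the paper: both arguments use the marked moduli interpretation of $\half\times\half$ from Theorem~\ref{thm:x-moduli-space}, identify the full rank one $\ord$-submodule $M_\sigma = \K\cdot\sigma \cap (\ord\oplus\ord^\vee)$ attached to each $\sigma\in\proj^1(\K)$, and observe that the residual ambiguity in the marking after fixing the image $\phi(M_\sigma)$ is exactly $\Gamma_\sigma$. Your presentation is slightly more explicit in first setting up the equivariant bijection between $\proj^1(\K)$ and full rank one submodules, but the content is the same.
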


\begin{proof}
  Let $[x:y]\in\proj^1(\K)$.  By the proof of Theorem~\ref{thm:x-moduli-space}, $\X^{[x:y]}$ is the moduli space of triples $(A, \rho,
  \phi)$, where $A$ is a principally polarized Abelian surface, $\rho$ is real multiplication by $\ord$, and $\phi$ is a symplectic isomorphism
  $\phi\colon\ord\oplus\ord^\vee\to H_1(A;\zed)$ defined up to the action of $\Gamma_{[x:y]}$ on $\ord\oplus\ord^\vee$.
 
  Given such a triple $(A, \rho, \phi)\in\X^{[x:y]}$, define 
  $M\subset H_1(A; \zed)$ as follows.   Define
  \begin{equation*}
    M_{[x:y]} = \K\cdot(x, y) \cap \ord\oplus\ord^\vee.
  \end{equation*}
  Since $M_{[x:y]}$ is preserved by the action of $\Gamma_{[x:y]}$, we can define  $M = \phi(M_{[x:y]})$.

  Conversely, given a triple $(A, \rho, M)$, choose a symplectic $\ord$-isomorphism $\psi\colon \ord\oplus\ord^\vee\to H_1(A; \zed)$.  Then $\psi^{-1}(M) =
  M_{[x:y]}$ for some $[x:y]\in\proj^1(\K)$.  Composing $\psi$ with an element of $\SLtwoord$, we can assume $[x:y]\in C(\X)$.  The marking
  $\psi$ is then unique up to composition with elements of $\Gamma_{[x:y]}$.  Then $(A, \rho, \psi)$ yields a well-defined point in $\X^{[x:y]}$.
\end{proof}

\paragraph{Homological directions.}

$\mathcal{X}_D$ also parameterizes eigenforms for real multiplication equipped with a homological direction:

\begin{prop}
  \label{prop:homologicaldirection}
  $\mathcal{X}_D$ is the moduli space of directed Abelian differentials $(X, [\omega], v)$ with $(X, [\omega])\in\X$ an eigenform for real
  multiplication and $v$ a homological direction.
\end{prop}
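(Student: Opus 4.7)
The plan is to combine Proposition~\ref{prop:fullrankone} with a natural bijection between triples $(A,\rho,M)$ (with $M$ a full rank-one $\ord$-submodule of $H_1(A;\zed)$) and directed eigenforms $(X,[\omega],v)$ whose direction $v$ is homological. For the forward direction, given $(A,\rho,M)$, I set $(X,[\omega]) = j_1(A,\rho)$, and for any nonzero $\gamma\in M$ I define $v$ to be the real line in $\cx$ spanned by $\omega(\gamma)$. Since $M\otimes_\zed\ratls$ is a one-dimensional $\K$-subspace of $H_1(A;\ratls)$ (where $\K$ acts via $\rho$), any other nonzero $\gamma'\in M$ satisfies $\gamma' = \lambda\gamma$ for some $\lambda\in\K^*$, and because $\omega$ is a $1$-eigenform one has $\omega(\gamma') = \lambda^{(1)}\omega(\gamma)$ with $\lambda^{(1)}\in\reals^*$; hence $v$ does not depend on $\gamma$. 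Rescaling $\omega$ by $\omega(\gamma)^{-1}$ places $v$ in the horizontal direction and makes $\omega(\gamma)=1\in\reals^*$, exhibiting $v$ as a homological direction.

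For the inverse map, given $(X,[\omega],v)$ I set $(A,\rho) = j_1^{-1}(X,[\omega])$, choose a representative $\omega$ of $[\omega]$ for which $v$ is horizontal, and pick some $\gamma\in H_1(X;\zed)$ with $\omega(\gamma)\in\reals^*$. I then define $M$ to be the intersection of $H_1(A;\zed)$ with the $\K$-line $\K\cdot\gamma\subset H_1(A;\ratls)$; by construction this is a full rank-one $\ord$-submodule. The key step is to verify that $M$ depends only on $v$, not on the auxiliary choice of $\gamma$. Suppose $\gamma'\in H_1(X;\zed)$ is a second class with $\omega(\gamma')\in\reals^*$. If $\gamma$ and $\gamma'$ were $\K$-linearly independent they would form a $\K$-basis of the two-dimensional $\K$-vector space $H_1(A;\ratls)$, and for every $\delta=\lambda\gamma+\mu\gamma'$ with $\lambda,\mu\in\K$ the eigenform property would give
\begin{equation*}
\omega(\delta) = \lambda^{(1)}\omega(\gamma) + \mu^{(1)}\omega(\gamma') \in \reals,
\end{equation*}
forcing $\omega$ to have all real periods.

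The main obstacle is to rule out this all-real-periods scenario, which I do by observing that such an $\omega$ would represent a nonzero class in $H^{1,0}(X)\cap H^1(X;\reals)$, and this intersection vanishes by the Hodge decomposition on the compact Riemann surface $X$. Hence any two admissible choices of $\gamma$ lie on the same $\K$-line, so $M$ is well-defined. Finally, the two maps are mutually inverse: starting from $(A,\rho,M)$, forming $v$ via any nonzero $\gamma\in M$ and then recovering $\K\cdot\gamma\cap H_1(A;\zed)$ returns $M$, while starting from $(X,[\omega],v)$, constructing $M$ and then taking the direction of $\omega(\gamma)$ for nonzero $\gamma\in M$ returns $v$ by the same eigenform calculation. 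Invoking Proposition~\ref{prop:fullrankone} identifies the moduli problem for triples $(A,\rho,M)$ with $\mathcal{X}_D$, completing the proof.
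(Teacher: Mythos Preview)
Your proof is correct and follows essentially the same strategy as the paper: reduce to Proposition~\ref{prop:fullrankone} and exhibit a bijection between full rank-one $\ord$-submodules $M\subset H_1(X;\zed)$ and homological directions. The forward direction is identical to the paper's.

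The one difference worth noting is in the reverse direction. You define $M$ as $\K\cdot\gamma\cap H_1(A;\zed)$ for an auxiliary choice of $\gamma$, and then spend the bulk of your argument checking that $M$ is independent of this choice. The paper instead sets
\[
M=\{\gamma\in H_1(X;\zed):\omega(\gamma)\in\reals\},
\]
which is manifestly well-defined, visibly a full $\ord$-submodule, and is rank one precisely because not all periods can be real (the paper cites the Riemann relations; your appeal to the Hodge decomposition is the same fact). Your well-definedness argument in effect proves that these two descriptions of $M$ coincide, so nothing is lost, but the paper's formulation sidesteps the auxiliary choice entirely and is a bit cleaner.
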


\begin{proof}
  Let $(X, [\omega])\in\X$.  Using Proposition~\ref{prop:fullrankone}, we need only to show that there is a natural correspondence between
  full, rank one $\ord$-modules $M\subset H_1(X; \zed)$ and homological directions on $(X, [\omega])$.

  Given such an $M$, choose a representative $\omega$ of $[\omega]$ so that $\omega(\gamma)\in \reals$ for some $\gamma\in M$.  Since $M$ is
  rank one, for any $\gamma_1\in M$, we have $\gamma_1 = \lambda\cdot\gamma$ for some $\lambda\in\K$, so $\omega(\gamma_1) =
  \lambda^{(1)}\omega(\gamma)$.  Thus $\omega(\gamma_1)\in \reals$ for any $\gamma_1\in M$, and so this normalization of $[\omega]$ is
  canonically determined by $M$.  We then set $v$ to be the horizontal direction of $\omega$.

  Conversely, if $v$ is a homological direction on $(X, [\omega])$, choose a representative $\omega$ of $[\omega]$ so that $v$ is horizontal, and let
  \begin{equation*}
    M= \{\gamma\in H_1(X; \zed): \omega(\gamma)\in\reals\}.
  \end{equation*}
  $M$ is clearly a full $\ord$-submodule, and $M\neq H_1(X; \zed)$ because the periods can not all be real by the Riemann relations.  Thus
  $M$ is rank one.
\end{proof}

\paragraph{Detecting eigenforms.}

We conclude this section by showing that eigenforms for real multiplication in genus two can be detected in terms of their periods.

\begin{prop}
  \label{prop:realmultcriterion}
  Let $A$ be a principally polarized Abelian surface with a holomorphic 1-form  $\omega\in\Omega(A)$.  Suppose there is a monomorphism
  $\rho\colon\ord\to \End H_1(A; \zed)$ with the following properties:
  \begin{itemize}
  \item $\rho$ is self-adjoint with respect to the polarization.
  \item $\rho$ doesn't extend to a larger order $\ord[E]\supset\ord$.
  \item $\omega(\rho(\lambda)\cdot x) = \lambda^{(i)} \omega(x)$ for every $x\in H_1(A; \zed)$ and $\lambda\in \ord$.
  \end{itemize}
  Then $\rho$ defines real multiplication by $\ord$ on $A$ with $\omega$ an $i$-eigenform.
\end{prop}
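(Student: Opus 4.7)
The plan is to show that the hypotheses upgrade the purely algebraic map $\rho\colon\ord\to\End H_1(A;\zed)$ to a genuine holomorphic action on $A$, with $\omega$ an $i$-eigenform. The crux is verifying that each $\rho(\lambda)$ commutes with the complex structure $J$ on $T_0 A \cong H_1(A;\reals)$; once that is done, $\rho(\lambda)$ descends to a holomorphic endomorphism of $A$ and the remaining axioms of real multiplication are then immediate from the hypotheses.

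First I will pass to $\reals$-coefficients. Because $\rho$ is injective and $T^2+bT+c$ is irreducible over $\ratls$, the endomorphism $\rho(T)$ cannot be a scalar on $H_1(A;\zed)$, so its minimal polynomial on $H_1(A;\ratls)$ is the full polynomial $T^2+bT+c$. Consequently $H_1(A;\ratls)$ becomes a two-dimensional $\K$-vector space via $\rho$, and tensoring with $\reals$ using $\ord\otimes\reals = \reals\oplus\reals$ (via $\iota_1$, $\iota_2$) produces a decomposition $H_1(A;\reals) = V_1\oplus V_2$ with each $V_k$ of real dimension two, characterized as the common $\lambda^{(k)}$-eigenspace of $\{\rho(\lambda)\}_{\lambda\in\ord}$.

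Next I will show that $V_j$ is $J$-invariant for $j\neq i$. For $x\in V_j$, the eigenform hypothesis and the eigenspace definition give $\lambda^{(i)}\omega(x) = \omega(\rho(\lambda)x) = \lambda^{(j)}\omega(x)$; choosing $\lambda$ with $\lambda^{(i)}\neq \lambda^{(j)}$ forces $\omega(x)=0$, so $V_j\subseteq\ker\omega$. But $\omega\in\Omega(A)$ is $\cx$-linear on $T_0 A$ with respect to $J$, so $\ker\omega$ is a $J$-invariant real subspace of dimension exactly two. Equality of dimensions then gives $V_j = \ker\omega$, which is $J$-invariant.

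To handle $V_i$, I combine self-adjointness of $\rho$ with compatibility between $J$ and the Riemann form $\sigma$ of the polarization. Self-adjointness together with the existence of $\lambda$ with $\lambda^{(1)}\neq \lambda^{(2)}$ gives $\sigma(V_1,V_2)=0$, by the standard eigenvalue-pairing calculation. Compatibility of $\sigma$ with $J$ yields $\sigma(Jv,w) = -\sigma(v,Jw)$; taking $v\in V_i$ and $w\in V_j$, the right side vanishes since $Jw\in V_j$ by the previous step, so $Jv\in V_j^\perp = V_i$. With both summands $J$-invariant and $\rho(\lambda)$ acting as a real scalar on each, $\rho(\lambda)$ commutes with $J$ and hence defines a holomorphic endomorphism of $A$. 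The other requirements of real multiplication (self-adjointness and non-extension to a larger order) are direct from the hypotheses, and the eigenform property of $\omega$ was used in the previous step. The most delicate ingredient is the $J$-invariance of $V_i$: the eigenform hypothesis alone controls only $V_j$, and without the compatibility of the polarization with $J$ one could not convert the orthogonality $V_1\perp V_2$ into $J$-invariance of $V_i$.
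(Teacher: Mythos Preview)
Your argument is correct. The paper's proof packages the same linear algebra differently: it passes to the dual endomorphism $T^*$ on $\Omega(A)$, observes that $T^*$ is self-adjoint, preserves the complex line $\cx\omega$, and acts $\cx$-linearly there, and then invokes \cite[Lemma~7.4]{mcmullenbild} to conclude that $T^*$ (hence $T$) is $\cx$-linear. Your route works directly on $H_1(A;\reals)$ via the eigenspace splitting $V_1\oplus V_2$, identifying $V_j=\ker\omega$ from the eigenform hypothesis and then using self-adjointness plus the Riemann relation $\sigma(J\cdot,J\cdot)=\sigma(\cdot,\cdot)$ to force $J(V_i)\subset V_j^\perp=V_i$. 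This is essentially the content of McMullen's lemma unwound in this particular setting, so the two proofs are the same argument at heart; yours has the advantage of being self-contained rather than citing an external black box, while the paper's version makes clearer that the phenomenon is a general linear-algebra fact about self-adjoint real endomorphisms preserving a complex line.
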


\begin{proof}
  An endomorphism $\rho(\lambda)$ determines a real-linear endomorphism $T$ of $H_1(A; \reals)\isom\Omega(A)^*$.  The dual endomorphism
  $T^*$ of $\Omega(A)$ is self-adjoint, preserves the complex line spanned by $\omega$, and is complex linear on this line.  Thus $T^*$ is complex linear by a linear algebra
  argument (see \cite[Lemma~7.4]{mcmullenbild}), and so $T$ is complex linear as well.  Therefore $T$ defines real multiplication on $\ord$,
  with $\omega$ an eigenform.
\end{proof}

\section{Prototypes}
\label{sec:prototypes}

We now introduce a combinatorial object called a \emph{prototype}, which is closely related to the combinatorics of various subsets of
Hilbert modular surfaces and their compactifications.  Our prototypes are nearly the same as the splitting prototypes introduced in
\cite{mcmullenspin}.  What we call a prototype here was called a $\Y$-prototype in \cite{bainbridge06}.

\begin{definition}
  A \emph{prototype} of discriminant $D$ is a quadruple, $(a, b, c, \bar{q})$ with $a, b,  
  c\in\integers$ and $\bar{q}\in\integers/\gcd(a, b, c)$ which satisfies the following five properties:
  \begin{equation*}
    b^2-4 a c=D,\quad a>0, \quad c < 0, \quad \gcd(a, b, c, \bar{q})=1, \quad\text{and}\quad a+b+c < 0.
  \end{equation*}
\end{definition}

We let $\Yprot$ denote the set of prototypes of discriminant $D$.

We associate to each prototype $P=(a, b, c, \bar{q})\in\Yprot$ the unique algebraic number
$\lambda(P)\in \K$ such that $a \lambda(P)^2 + b\lambda(P) + c=0$ and $\lambda(P)>0$.
This makes sense because the two roots of $ax^2+bx+c=0$ have opposite signs.  It is easy to check that
the last condition $a + b + c <0$ is equivalent to $\lambda(P) > 1$.

\paragraph{Notation.}

Unless said otherwise, a prototype $P$ will always be given by the letters $P=(a,b , c, \bar{q})$, and the letters $a$, $b$, $c$, and
$\bar{q}$ will always belong to some implied prototype.  The letter $\lambda$ will usually denote the number $\lambda(P)$ for an implied
prototype $P$.  We define
\begin{equation*}
  (a', b', c', \bar{q}'):= \frac{(a, b, c, \bar{q})}{\gcd(a, b, c)},
\end{equation*}
with $\bar{q}'\in \ratls/\zed$.

\paragraph{Operations on prototypes.}

Given a prototype $P$, define the \emph{next prototype} $P^+$ by
\begin{align*}
  P^+&=
  \begin{cases}
    (a, 2a+b, a+b+c, \bar{q}), &\text{if } 4a + 2b + c < 0;\\
    (-a-b-c, -2a-b, -a, \bar{q}), &\text{if } 4a + 2b + c > 0.
  \end{cases}\\
  \intertext{Given a prototype $P$, define the \emph{previous prototype} $P^-$ by}
  P^-&=
  \begin{cases}
    (a, -2a+b, a-b+c, \bar{q}), &\text{if } a-b+c < 0;\\
    (-c, -b+2c, -a+b-c, \bar{q}), &\text{if } a-b+c > 0.
  \end{cases}
\end{align*}
Its easy to check that $P^+$ and $P^-$ are actually prototypes of the same discriminant and that
$$(P^+)^-=(P^-)^+=P.$$

Define an involution $t$ on the set of prototypes of discriminant $D$ by
$$
t(a, b, c, \bar{q})=
\begin{cases}
  (a, -b, c, \bar{q}), & \text{if } a - b + c <  0;\\
  (-c, b, -a, \bar{q}), & \text{if } a - b + c > 0.
\end{cases}
$$

\section{Three-cylinder surfaces}
\label{sec:three-cylind-surf}

We define
\begin{equation*}
  \Upsilon_D\subset\mathcal{X}_D
\end{equation*}
to be the set of directed surfaces $(X, [\omega], v)$ for which the direction $v$ is periodic, and we let
$\Upsilon_D^i\subset \mathcal{P}$ be the locus where the associated cylinder decomposition has $i$ cylinders.  The goal of this section is to
classify the connected components of $\U$ and define explicit coordinates on each connected component.

\paragraph{Parameterizing three-cylinder surfaces.}

Given two complex numbers $z_i$, let $P(z_1, z_2)$ be the parallelogram containing as sides the two segments $[0, z_i]$.

Given positive real numbers $x_1$, $x_2$, and $x_3$ such that
\begin{equation*}
  x_2 = x_1+x_3,
\end{equation*}
and complex numbers $y_1$, $y_2$, and $y_3\in \half$, define a surface,
\begin{equation*}
  S(x_1, x_2, x_3, y_1, y_2, y_3),
\end{equation*}
to be the surface $(X, \omega)$ obtained by gluing the three parallelograms $P_i = P(x_i, y_i)$ as in Figure~\ref{fig:Zshaped}.  This
surface only depends on $y_i$ mod ${x_i\zed}$.  We equip $(X, \omega)$ with
the homology classes $\alpha_i\in H_1(X; \zed)$ and $\gamma_i \in H_1(X, Z(\omega); \zed)$ indicated in Figure~\ref{fig:Zshaped}.
\begin{figure}[htbp]
  \centering
  \input{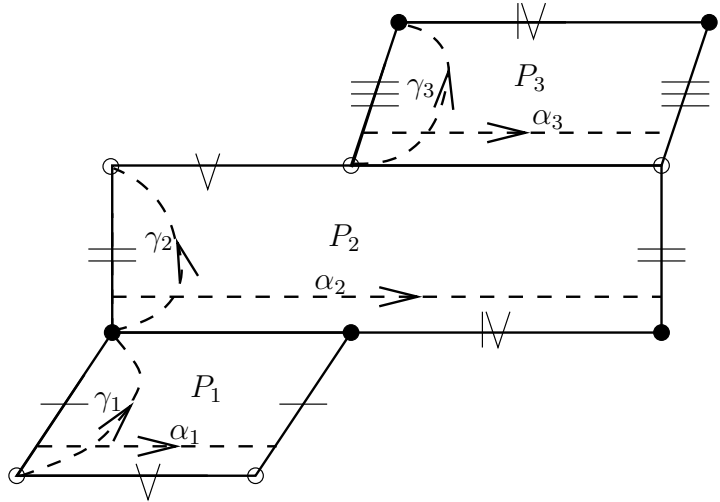}
  \caption{Three-cylinder surface}
  \label{fig:Zshaped}
\end{figure}

Define homology classes,
\begin{equation*}
  \beta_1 = \gamma_1 - \gamma_3, \quad\text{and}\quad  \beta_2 = \gamma_2 + \gamma_3.
\end{equation*}
Then
\begin{equation}
  \label{eq:19}
  (\alpha_1, \alpha_2, \beta_1, \beta_2)
\end{equation}
forms a symplectic basis of $H_1(X; \zed)$.

\paragraph{Canonical representation.}

We now define a canonical representation of every surface in $\U$ as some $S({\bf x}, {\bf y})$.

\begin{prop}
  \label{prop:canonicalrepresentation}
  Each directed surface $(X, [\omega], v)\in \U$ is of the form,
  \begin{equation*}
    (X, [\omega], v) = S({\bf x}, {\bf y}) = S(1, \mu, \mu-1, y_1, y_2, y_3),
  \end{equation*}
  for some $\mu\in\K$ with $\mu >1$ and $N(\mu)<0$.  This representation is unique taking $y_i$ mod $x_i \zed$.
\end{prop}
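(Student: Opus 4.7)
The plan proceeds in three stages. First I construct the representation, second I verify the algebraic conditions on $\mu$, and third I establish uniqueness.

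In Stage 1, I choose a representative $\omega$ of $[\omega]$ whose horizontal direction realizes $v$. Since $v$ is periodic and lies in $\U$, the horizontal foliation of $\omega$ decomposes $X$ into exactly three maximal cylinders $C_1, C_2, C_3$. A direct combinatorial analysis of the saddle connections bounding these cylinders shows that any three-cylinder genus-two surface with two simple zeros is obtained by gluing three parallelograms $P_i = P(x_i, y_i)$ with widths $x_i > 0$ and complex height vectors $y_i$ (well-defined modulo $x_i \zed$ by the twist of $C_i$), arranged in the Z-pattern of Figure~\ref{fig:Zshaped}; the labels $1, 2, 3$ are canonically assigned from the combinatorics of the bounding saddle connections and the distinguished pair of zeros. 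The gluing forces the width identity $x_2 = x_1 + x_3$, where $P_2$ is the middle cylinder whose width equals the sum of the other two.

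In Stage 2, because $v$ is a homological direction, Proposition~\ref{prop:homologicaldirection} provides a full rank-one $\ord$-submodule $M \subset H_1(X;\zed)$ on which $\omega$ has real periods. Each core curve $\alpha_i$ of $C_i$ lies in $M$, and the topological relation $\alpha_2 = \alpha_1 + \alpha_3$ holds in $H_1(X;\zed)$. Since $M \otimes \ratls$ is two-dimensional over $\ratls$, the pair $\alpha_1, \alpha_3$ forms a $\ratls$-basis, so there is a unique $\xi \in \K$ with $\alpha_3 = \xi \cdot \alpha_1$ inside $M \otimes \ratls$; the fact that real multiplication by $\ord$ does not extend to a larger order rules out $\xi \in \ratls$. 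After rescaling $\omega$ so that $x_1 = 1$, this gives $x_3 = \xi^{(1)}$, hence $\mu = x_2 = 1 + \xi^{(1)} \in \K$, with $\mu > 1$ following from $x_3 > 0$.

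To establish $N(\mu) < 0$, I consider the Galois-conjugate eigenform $\omega_2 \in \Omega^2(X)$, which satisfies $\omega_2(\lambda \cdot \alpha_1) = \lambda^{(2)} \omega_2(\alpha_1)$ for $\lambda \in \K$. Substituting the $\omega_1$- and $\omega_2$-periods on the symplectic basis \eqref{eq:19} into the Hermitian orthogonality relation $\frac{i}{2}\int_X \omega_1 \wedge \overline{\omega_2} = 0$, which holds because $\omega_1, \omega_2$ lie in distinct eigenspaces of the self-adjoint real multiplication $\rho$, and combining with the positivity of $\Vol(X)$ and of the cylinder heights $\Im(y_i) > 0$, one deduces that $\mu^{(2)}$ must be negative, yielding $N(\mu) = \mu \cdot \mu^{(2)} < 0$.

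For Stage 3, suppose $(X,[\omega],v)$ admits two such representations. Any isomorphism between them preserves the horizontal cylinder decomposition and respects the canonical labeling fixed in Stage 1, so both representations share the same widths $x_i$ and hence the same $\mu$. The only residual freedom is in the choice of base points used to identify each cylinder with a standard parallelogram, which corresponds precisely to the equivalence $y_i \sim y_i + n x_i$ for $n \in \zed$. The main obstacle is the step showing $N(\mu) < 0$, which requires carefully unpacking the sign information encoded in the Hermitian orthogonality between $\omega_1$ and $\omega_2$; all other ingredients are combinatorial or follow immediately from the eigenform structure on $\X$.
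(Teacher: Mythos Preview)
Your Stage~1 claim that ``the labels $1, 2, 3$ are canonically assigned from the combinatorics'' is false, and this gap propagates through the rest of the argument.  The middle cylinder $P_2$ is indeed combinatorially distinguished (it is adjacent to both other cylinders), but $P_1$ and $P_3$ are exchanged by the hyperelliptic involution, which acts on the Z-shaped picture by rotation through $\pi$.  There is no way to break this symmetry using only the topology of the cylinder decomposition.

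This makes your Stage~2 argument for $N(\mu)<0$ untenable as written.  Under the swap $P_1\leftrightarrow P_3$ one has $\mu\mapsto \mu/(\mu-1)$, and a short computation shows that $N(\mu)$ and $N(\mu/(\mu-1))$ have \emph{opposite} signs once one knows $N(\mu-1)<0$.  Hence any argument symmetric in the two labelings (and your Hermitian orthogonality argument must be, since it uses only intrinsic data) cannot possibly yield $\mu^{(2)}<0$.  What such an argument \emph{can} yield is the swap-invariant statement $N(\mu-1)<0$, i.e.\ $N(x_3/x_1)<0$; this is exactly what the paper proves (citing \cite[Theorem~3.5]{bainbridge06}, which is itself a Riemann bilinear relations argument close in spirit to what you sketch).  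The paper then uses this to observe that exactly one of the two labelings satisfies $N(\mu)<0$, \emph{defines} the canonical labeling to be that one, and obtains uniqueness from the fact that the other labeling fails the norm condition.

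So the fix is: drop the combinatorial-canonicity claim, redirect your orthogonality computation toward the symmetric target $N(x_3/x_1)<0$, and then use the sign dichotomy to both normalize the labeling and prove uniqueness.  Your Stage~3 uniqueness argument as stated also fails, since it rests on the nonexistent canonical labeling.
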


\begin{proof}
  Since the horizontal direction is periodic, we can put the surface in the form, $(X, [\omega], v) = S({\bf x}, {\bf y}),$ for
  some ${\bf x}$ and ${\bf y}$.  The module $M = \langle \alpha_1, \alpha_2\rangle\subset H_1(X;\zed)$ is an $\ord$-submodule because it is
  the set of all homology classes with real periods.  Thus we have
  $x_1/x_j\in\K$ for any $i, j$ since $\omega$ is an eigenform.

  We claim that $N^{\K}_\ratls(x_3/x_1)<0$.  To see this, let $N= H_1(X; \zed)/M$, another $\ord$-module.  The intersection pairing on homology gives a
  perfect pairing of $\ord$-modules,
  \begin{equation*}
    M\times N\to \zed.
  \end{equation*}
  The bases
  \begin{equation*}
    (u_1, u_2) = (\alpha_1, \alpha_3) \quad\text{and}\quad(v_1, v_2)=(\beta_1+\beta_2, \beta_2)
  \end{equation*}
  are dual bases of $M$ and $N$ respectively.  They satisfy $\omega(u_i)>0$ and $\Im \omega(v_i)>0$.  The claim then follows directly from
  Theorem~3.5 of \cite{bainbridge06}.

  We now have either
  \begin{equation}
    \label{eq:20}
    N^{\K}_\ratls(x_2/x_1)<0 \quad\text{or}\quad N^{\K}_\ratls(x_3/x_2)<0,
  \end{equation}
  but not both.  If the second holds, then swap $x_1$ and $x_3$ as well as $y_1$ and
  $y_3$, which does not change the surface.  We can then assume $N(x_2/x_1)<0$.  Finally divide $({\bf x}, {\bf y})$ by $x_1$ to put the
  surface in the required form.

  This representation is unique because if there were two such representations, then both alternatives in \eqref{eq:20} would hold, a
  contradiction. 
\end{proof}

We will always assume that any point in $\U$ is represented by the surface $(X, \omega)$ which is the canonical representative given in
Proposition~\ref{prop:canonicalrepresentation}, and $X$ will be equipped with the homology classes $\alpha_i$,
$\beta_i$, and $\gamma_i$ defined above.  The class $\gamma_i$ is really only defined up to adding a multiple of $\alpha_i$.  We just
choose any $\gamma_i$ so that $\alpha_i\cdot\gamma_j = \delta_{ij}$.  

\paragraph{Prototypes.}

We now assign to every $(X, \omega)\in \U$ a prototype $P(X, \omega)$.

Let $\mu=\omega(\alpha_2)\in\K$, and define $\phi_\mu(x) = ax^2 + bx +c$ to be the unique multiple of the minimal
polynomial of $\mu$ such that $b^2-4ac=D$ and $a>0$. Let $T$ be the matrix of the action of $a\mu$ on $H_1(X; \ratls)$ in the
symplectic basis \eqref{eq:19}.  Since $T$ is self-adjoint with respect to the intersection pairing, it is of the form
\begin{equation}
  \label{eq:21}
  T =
  \begin{pmatrix}
    0 & -c & \phantom{-}0 & \phantom{-}q \\
    a & -b & -q & \phantom{-}0 \\
    0 & \phantom{-}0 & \phantom{-}0 & \phantom{-}a \\
    0 & \phantom{-}0 & -c & -b
  \end{pmatrix}.
\end{equation}
We define $P(X, \omega) = (a, b, c, \overline{q})$.

\begin{prop}
  \label{prop:Pisaprototype}
  $P(X, \omega)$ is a well-defined prototype.
\end{prop}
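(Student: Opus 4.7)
The plan is to use (i) the $\ord$-module structure on $M = \langle\alpha_1,\alpha_2\rangle$ (established in the proof of Proposition~\ref{prop:canonicalrepresentation}), (ii) self-adjointness of the real multiplication with respect to the intersection pairing, and (iii) the non-extensibility clause in the definition of real multiplication. Fix a $\zed$-basis $\{1,\tau\}$ of $\ord$ with $\tau^2 - b_0\tau + c_0 = 0$ and $b_0^2 - 4c_0 = D$. Since $M$ is $\rho$-invariant and $\rho(\tau) \in \End H_1(X; \zed)$, we can write $\rho(\tau)\alpha_1 = u\alpha_1 + v\alpha_2$ with $u, v \in \zed$. Pairing with $\omega$, which is a $1$-eigenform, gives the relation $\tau = u + v\mu$ in $\K$; here $v \neq 0$ because $N(\mu) < 0$ forces $\mu \notin \ratls$, and after possibly replacing $\tau$ by $b_0 - \tau$ we may assume $v > 0$. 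Substituting $\mu = (\tau - u)/v$ into the minimal polynomial of $\tau$ and dividing by $v$ yields
\begin{equation*}
v\mu^2 + (2u - b_0)\mu + (u^2 - b_0 u + c_0)/v = 0,
\end{equation*}
a polynomial of discriminant $D$ with positive leading coefficient. The constant term is integral: writing also $\rho(\tau)\alpha_2 = u'\alpha_1 + v'\alpha_2$ with $u', v' \in \zed$, the identities $u + v' = b_0$ (trace) and $uv' - u'v = c_0$ (determinant) yield $u' = -(u^2 - b_0u + c_0)/v$, so the constant term is $-u' \in \zed$. Thus the displayed polynomial is $\phi_\mu$, and $a = v$, $b = 2u - b_0$, $c = -u'$ are integers with $a > 0$.

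Since $a\mu = \tau - u \in \ord$, the matrix $T$ of $\rho(a\mu)$ in the symplectic basis is integer. Its top-left $2\times 2$ block is determined by $\rho(a\mu)\alpha_1 = a\alpha_2$ and $\rho(a\mu)\alpha_2 = -c\alpha_1 - b\alpha_2$ (the latter from $a\mu^2 = -b\mu - c$). The other blocks follow from two constraints: $\rho$-invariance of the Lagrangian $M$ forces the bottom-left block to vanish, and self-adjointness of $\rho(a\mu)$ for the intersection pairing forces the bottom-right block to equal the transpose of the top-left block and the top-right block to be antisymmetric. This pins down $T$ to the form \eqref{eq:21} for some $q \in \zed$. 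Shifting the lifts $\beta_1, \beta_2 \in H_1(X; \zed)$ of the dual basis of $H_1(X;\zed)/M$ by elements of $M$ that preserve the symplectic form changes $q$ by an arbitrary element of $a\zed + b\zed + c\zed = \gcd(a, b, c)\zed$, so $\bar q$ is well-defined in $\zed/\gcd(a, b, c)$.

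It remains to verify the three inequalities and the primitivity. First, $c < 0$ because $c/a = N(\mu) < 0$ and $a > 0$. Second, $a + b + c < 0$ because $\mu > 1$ and $\mu^{(2)} < 0$ (from $\mu > 0$ and $N(\mu) < 0$) place $1$ strictly between the two roots of $\phi_\mu$, making $\phi_\mu(1) < 0$. Finally, for $\gcd(a, b, c, q) = 1$: if some $g > 1$ divided all four entries, then $T/g$ would be an integer matrix representing $\rho(a\mu/g)$ on $H_1(X;\zed)$; non-extensibility of the real multiplication would force $a\mu/g \in \ord$, yet $a\mu/g$ satisfies the monic polynomial $x^2 + (b/g)x + ac/g^2$ of discriminant $D/g^2$, contradicting the standard fact that every element of a discriminant-$D$ quadratic order has minimal polynomial of discriminant $n^2 D$ for some $n \in \zed$. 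The main obstacle is this last primitivity step, which is precisely the point at which the non-extensibility of the real multiplication enters essentially.
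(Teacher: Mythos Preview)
Your proof is correct and follows the same approach as the paper: the inequalities $c<0$ and $a+b+c<0$ come from $\mu>1$ and $N(\mu)<0$, primitivity comes from non-extensibility of the real multiplication, and well-definedness of $\bar q$ comes from the ambiguity in the choice of $\gamma_i$. Your version is considerably more self-contained: you explicitly verify that $a,b,c\in\zed$ and $a\mu\in\ord$ via the relation $\tau=u+v\mu$, you derive the block form of $T$ from self-adjointness and $\rho$-invariance of $M$, and you carry out the computation showing that symplectic shifts of $(\beta_1,\beta_2)$ by elements of $M$ change $q$ by exactly $\gcd(a,b,c)\zed$ --- all points the paper either takes for granted or outsources to \cite[Theorem~7.21]{bainbridge06}. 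Your primitivity argument is also slightly more careful than the paper's one-line version: the paper simply says the action would extend to a larger order, whereas you close the loop by observing that even if $T/g$ already lay in $\rho(\ord)$ one would get $a\mu/g\in\ord$ with discriminant $D/g^2$, which is impossible.
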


\begin{proof}
  We have $c<0$ and $a+b+c<0$ because $\mu>1$ and $N^{\K}_\ratls(\mu)<0$ by
  Proposition~\ref{prop:canonicalrepresentation}.  Since $\ord\isom\zed[a\mu]$, we must have $\gcd(a, b, c, \bar{q})=1$, or else the
  action of $\ord$ would extend to an order $\ord[E]\supset\ord$.
  
  To see that $P(X, \omega)$ is well-defined, we must check that it is independent of the choice of the classes $\gamma_i$.  This is
  straightforward; see \cite[Theorem~7.21]{bainbridge06} for the proof.
\end{proof}

\paragraph{Coordinates on $\U$.}

Let $U_P\subset \U$ be the set of surfaces with prototype $P$. 
We now give coordinates for $\U$ by parameterizing each $U_P$.

\begin{lemma}
  \label{lem:eigenformequation}
  The surface $(X, \omega)=S(1, \mu, \mu-1, y_1, y_2, y_3)$ represents an eigenform in $U_P$ if and only if
  \begin{gather}
    \notag
    \mu=\lambda(P),\quad\text{and}\\
    \label{eq:22}
    a' y_1 + \frac{c'}{\mu}y_2 + \frac{a'+b'+c'}{\mu-1}y_3 \equiv -q' \pmod \zed,
  \end{gather}
  where $P=(a, b, c, \overline{q})$.
\end{lemma}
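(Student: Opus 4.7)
My approach is to apply Proposition~\ref{prop:realmultcriterion} to the endomorphism $T = \rho(a\mu) \in \End H_1(X;\zed)$ given by the matrix \eqref{eq:21}, where $q \in \zed$ is any lift of $\bar q$. The structural hypotheses of that criterion come almost automatically from the prototype axioms: the block form $T = \bigl(\begin{smallmatrix} A & B \\ 0 & A^T \end{smallmatrix}\bigr)$ with antisymmetric $B$ makes $T$ self-adjoint with respect to the symplectic form $J$; a direct block computation gives $T^2 + bT + ac\,I = 0$, so $a\mu \mapsto T$ extends to a ring monomorphism $\ord = \zed[a\mu] \to \End H_1(X;\zed)$; and $\gcd(a,b,c,\bar q) = 1$ prevents extension to any larger quadratic order. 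These checks are independent of the $y_i$, so the entire content of the lemma lies in the eigenform identity $\omega \circ T = a\mu \cdot \omega$.

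I will test this identity on the symplectic basis $(\alpha_1, \alpha_2, \beta_1, \beta_2)$, using the periods $\omega(\alpha_1) = 1$, $\omega(\alpha_2) = \mu$, $\omega(\beta_1) = y_1 - y_3$, $\omega(\beta_2) = y_2 + y_3$. The $\alpha_1$ equation is trivial; the $\alpha_2$ equation reads $-c - b\mu = a\mu^2$, i.e.\ $a\mu^2 + b\mu + c = 0$, and since $\mu > 1$ is a positive root this is exactly $\mu = \lambda(P)$. The $\beta_1$ and $\beta_2$ equations each produce a linear relation between $q$ and the $y_i$; they must in fact be equivalent, since this is forced by the matrix identity $T^2 + bT + ac\,I = 0$ already used above.

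The computational core is then to massage the $\beta_1$ equation $-q\mu - c(y_2 + y_3) = a\mu(y_1 - y_3)$ into the claimed form. Substituting $c/\mu = -a\mu - b$ (from $\mu = \lambda(P)$) and using $a + b + c = (1-\mu)\bigl(a(\mu+1) + b\bigr)$ to identify $(c/\mu) - a = (a+b+c)/(\mu - 1)$, the equation rearranges to
$$a\, y_1 + \frac{c}{\mu} y_2 + \frac{a+b+c}{\mu-1}\, y_3 = -q.$$
Since each $y_i$ is defined modulo $x_i \zed$, the left-hand side is well-defined modulo $\gcd(a, c, a+b+c)\,\zed = \gcd(a,b,c)\,\zed$, and dividing through by $d = \gcd(a,b,c)$ produces the stated congruence $a' y_1 + (c'/\mu) y_2 + ((a'+b'+c')/(\mu-1)) y_3 \equiv -q' \pmod{\zed}$.

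The main obstacle I anticipate is verifying the $\beta_1$--$\beta_2$ consistency cleanly (to avoid the appearance of two independent constraints) and keeping careful track of the ambient modulus so that the final congruence is stated in the correct group. The supporting algebraic identities involving $\lambda(P)$ are routine once $\mu = \lambda(P)$ is in hand, and the converse direction then reads off by reversing these manipulations and appealing to Proposition~\ref{prop:realmultcriterion}.
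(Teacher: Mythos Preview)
Your proposal is correct and follows essentially the same approach as the paper: both apply Proposition~\ref{prop:realmultcriterion} to the endomorphism $T$ of \eqref{eq:21} and reduce the eigenform condition $\omega\circ T = a\mu\,\omega$ on the symplectic basis to $\mu=\lambda(P)$ together with \eqref{eq:22}. Your write-up is more explicit than the paper's---the paper simply asserts that one may choose the $\gamma_i$ so that the period equations \eqref{eq:73} hold exactly and that \eqref{eq:22} follows, leaving both the algebraic reduction and the $\beta_1$--$\beta_2$ consistency to the reader---whereas you work with congruences throughout and carry out the identification $(c/\mu)-a = (a+b+c)/(\mu-1)$ by hand.
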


\begin{proof}
  First, suppose the equations \eqref{eq:22} hold.  We can then choose $\gamma_i$ such that $\alpha_i\cdot\gamma_j = \delta_{ij}$ and the following
  equations hold:
  \begin{align}
    \label{eq:73}
    a\mu\omega(\alpha_2) &= -c \omega(\alpha_1) - b\omega(\alpha_2), \\
    \notag
    a\mu\omega(\beta_1) &= -q \omega(\alpha_2) - c \omega(\beta_2),\quad\text{and} \\
    \notag
    a\mu\omega(\beta_2) &= q \omega(\alpha_1) + a \omega(\beta_1) - b \omega(\beta_2).
  \end{align}
  We then define real multiplication of $\ord$ on $\Jac(X)$ by
  \begin{equation}
    \label{eq:74}a\mu\cdot x = Tx,
  \end{equation}
  where $T$ is the matrix of \eqref{eq:21}.  This defines
  an action of $\ord$ using $\ord\isom\zed[a\mu]$,  By the above equations \eqref{eq:73}, $\omega(\lambda\cdot x) = \lambda^{(1)}\omega(x)$ for all $x\in
  H_1(X; \zed)$.  So by Proposition~\ref{prop:realmultcriterion}, this exhibits $(X, \omega)$ as an
  eigenform with prototype $P$.
  
  Conversely, suppose $(X, \omega)$ is an eigenform with prototype $P$.  The real multiplication is defined by \eqref{eq:74} with $T$ as in
  \eqref{eq:21}.  Since $(X, \omega)$ is an eigenform, the equations \eqref{eq:73} hold mod $\zed$, and \eqref{eq:22} follows.
\end{proof}

Define
\begin{multline}
  \label{eq:23}
  \fU_P = \biggl\{(y_1, y_2, y_3)\in \half/\zed\times\half/\lambda\zed\times\half/(\lambda-1)\zed : \\
  a' y_1 + \frac{c'}{\lambda}y_2 + \frac{a'+b'+c'}{\lambda-1}y_3 \equiv -q' \pmod \zed\biggr\},
\end{multline}
and define $\phi_P\colon \fU_P\to U_P$ by
\begin{equation*}
  \phi_P(y_1, y_2, y_3) = S(1, \lambda, \lambda-1, y_1, y_2, y_3).
\end{equation*}

\begin{theorem}
  \label{thm:threecylcoordinates}
  The map $\phi_P\colon\fU_P\to U_P$ is a biholomorphic isomorphism.
\end{theorem}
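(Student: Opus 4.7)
The plan is to verify directly that $\phi_P$ is a bijection using Proposition~\ref{prop:canonicalrepresentation} together with Lemma~\ref{lem:eigenformequation}, and then to promote the bijection to a biholomorphism using the period coordinates from Theorem~\ref{thm:periodcharts}.

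First I would check that $\phi_P$ lands in $U_P$. The surface $S(1, \lambda, \lambda-1, y_1, y_2, y_3)$ has a visibly periodic horizontal direction given by the three cylinders $P_i$ of Figure~\ref{fig:Zshaped}. For $(y_1, y_2, y_3) \in \fU_P$ the congruence \eqref{eq:22} holds with $\mu = \lambda(P)$, so the backward direction of Lemma~\ref{lem:eigenformequation} certifies that the resulting surface is an eigenform of prototype $P$. Conversely, given $(X, [\omega], v) \in U_P$, Proposition~\ref{prop:canonicalrepresentation} provides a unique representation $(X, [\omega], v) = S(1, \mu, \mu-1, y_1, y_2, y_3)$ with $y_i$ taken modulo $x_i \zed$, and the forward direction of Lemma~\ref{lem:eigenformequation} forces $\mu = \lambda(P)$ and \eqref{eq:22}, placing $(y_1, y_2, y_3)$ in $\fU_P$. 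This gives surjectivity, and injectivity follows immediately from the uniqueness clause of Proposition~\ref{prop:canonicalrepresentation}, since the quotients defining $\fU_P$ are by exactly the lattices $x_i \zed$ modulo which the canonical representation is unique.

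For biholomorphicity I would invoke period coordinates on the stratum $\Omega\moduli(1,1)$ (Theorem~\ref{thm:periodcharts}). The gluing construction defining $S(1, \lambda, \lambda-1, y_1, y_2, y_3)$ depends holomorphically on $(y_1, y_2, y_3)$, so $\phi_P$ is holomorphic. Conversely, after the normalization $\omega(\alpha_1) = 1$ used in the canonical representation, the $y_i$ are affine combinations of the relative periods $\omega(\gamma_i)$ and hence are holomorphic functions on $U_P$, so the inverse of $\phi_P$ is holomorphic. Since $\fU_P$ is a smooth complex $2$-manifold, cut out of the $3$-complex-dimensional product $\half/\zed \times \half/\lambda\zed \times \half/(\lambda-1)\zed$ by the single complex-linear congruence \eqref{eq:22}, and $U_P$ is an open subset of the complex $2$-manifold $\mathcal{X}_D$, the holomorphic bijection $\phi_P$ is automatically a biholomorphism.

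The proof is essentially bookkeeping: the theorem just packages the preceding proposition and lemma together with the standard fact that relative periods give holomorphic coordinates on a stratum. The only small care points are verifying that $\fU_P$ is smooth (which holds because the defining equation is complex-linear in the lifted coordinates, so over $\half^3$ the zero set is a disjoint union of complex hyperplanes) and identifying the complex structure on $\mathcal{X}_D$ inherited from $\X$ with period coordinates on $\Omega\moduli(1,1)$.
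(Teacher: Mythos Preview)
Your proposal is correct and follows essentially the same approach as the paper: image and surjectivity from Lemma~\ref{lem:eigenformequation}, injectivity from the uniqueness in Proposition~\ref{prop:canonicalrepresentation}, and biholomorphicity from the period coordinates of Theorem~\ref{thm:periodcharts}. The paper compresses the last step to the phrase ``locally biholomorphic by Theorem~\ref{thm:periodcharts}'', whereas you spell out both directions, but the content is the same.
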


\begin{proof}
  By Lemma~\ref{lem:eigenformequation}, $\phi_P$ has image in $U_P$ and is surjective.  By Proposition~\ref{prop:canonicalrepresentation},
  $\phi_P$ is injective.  Finally, $\phi_P$ is locally biholomorphic by Theorem~\ref{thm:periodcharts}.
\end{proof}

\begin{cor}
  \label{cor:componentsofU}
  The connected components of $\U$ correspond bijectively to prototypes of discriminant $D$.
\end{cor}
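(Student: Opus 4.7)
The strategy is to combine Theorem~\ref{thm:threecylcoordinates} with a direct check that each $\fU_P$ is non-empty and connected, together with an openness observation. By Proposition~\ref{prop:Pisaprototype}, the assignment $(X,[\omega],v)\mapsto P(X,[\omega],v)$ is well defined, so $\U = \bigsqcup_P U_P$ as a disjoint decomposition. The prototype is determined by integer invariants of the real multiplication action on homology in a basis adapted to the three-cylinder structure, and in any continuous family in $\U$ such a basis extends continuously; since integer-valued continuous functions are locally constant, the map $\U\to\Yprot$ is locally constant, and each $U_P$ is open in $\U$. Combined with Theorem~\ref{thm:threecylcoordinates}, identifying the connected components reduces to showing $\fU_P$ is non-empty and connected for every prototype $P$.

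For this, write $y_j = u_j + iv_j$ and separate the real and imaginary parts of the congruence in \eqref{eq:23}. The imaginary part becomes the single linear equation
\begin{equation*}
  a'v_1 + \frac{c'}{\lambda}v_2 + \frac{a'+b'+c'}{\lambda-1}v_3 = 0
\end{equation*}
on $(v_1,v_2,v_3)\in (0,\infty)^3$. Since $a'>0$ while $c'/\lambda$ and $(a'+b'+c')/(\lambda-1)$ are both negative (using $\lambda>1$, $c<0$, and $a+b+c<0$), the solution set is a non-empty, 2-dimensional open convex cone, which is connected. The real part imposes
\begin{equation*}
  a'u_1 + \frac{c'}{\lambda}u_2 + \frac{a'+b'+c'}{\lambda-1}u_3 \equiv -q' \pmod{\zed}
\end{equation*}
on the 3-torus $\reals/\zed\times\reals/\lambda\zed\times\reals/(\lambda-1)\zed$, cutting out a single fiber of the corresponding Lie group homomorphism into $\reals/\zed$. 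The image of the defining linear functional on the period lattice is $\langle a', c', a'+b'+c'\rangle = \langle a',b',c'\rangle = \zed$, the last equality being the primitivity condition $\gcd(a,b,c,\bar q)=1$ from the prototype definition. Hence this homomorphism is surjective with connected 2-torus kernel, so its fibers are connected 2-tori, and $\fU_P$ is the product of two non-empty connected pieces.

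The main obstacle is this connectedness check and, in particular, its dependence on the primitivity hypothesis: without $\gcd(a,b,c,\bar q)=1$, the real-part fiber would be a disjoint union of several 2-tori and each $U_P$ would split further, breaking the bijection. Granting the argument above, the corollary follows at once from Theorem~\ref{thm:threecylcoordinates}.
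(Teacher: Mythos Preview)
Your proof is correct and follows the same route as the paper, which simply cites Theorem~\ref{thm:threecylcoordinates} and asserts that $U_P$ is connected; you have filled in the connectedness check on $\fU_P$ that the paper leaves implicit. One small correction: the equality $\langle a',b',c'\rangle=\zed$ holds by the very definition $(a',b',c')=(a,b,c)/\gcd(a,b,c)$, not by the primitivity condition $\gcd(a,b,c,\bar q)=1$, so your closing remark that the real-part fiber would disconnect without primitivity is not accurate---the fiber is a connected $2$-torus regardless of $\bar q$.
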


\begin{proof}
  By Theorem~\ref{thm:threecylcoordinates}, $U_P$ is connected, so $P\mapsto U_P$ defines the required bijection.
\end{proof}

\paragraph{Covering of $U_P$.}

We will have use for the following covering of $U_P$.  Define
\begin{equation*}
  \hat{U}_P = \half / \frac{a'}{\gcd(a', c')}\lambda\zed \times \half/\frac{a'}{\gcd(a', a'+b'+c')}(\lambda-1)\zed, 
\end{equation*}
which is an $a'/\gcd(a', c')\gcd(a', a'+b'+c')$-fold covering of $U_P$ via
\begin{equation}
  \label{eq:24}
  g(y_2, y_3) = \phi_P\left(-\frac{c'}{a'\lambda}y_2 - \frac{a'+b'+c'}{a'(\lambda-1)}y_3 - \frac{q'}{a'}, y_2, y_3\right).
\end{equation}

\section{Compactification of $\X$}
\label{sec:compactification-x}

Let $\Y$ be the normalization as an algebraic variety of $\barX$, the closure of $\X$ in $\pobarm$.
We showed in \cite{bainbridge06}:

\begin{theorem}
  \label{thm:YDorbifold}
  $\Y$ is a compact, complex projective orbifold.
\end{theorem}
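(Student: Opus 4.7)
The plan is to verify the three claims (compact, projective, orbifold) in order, since the first two follow by general algebraic geometry and the real work lies in the orbifold structure at the boundary.

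First, compactness and projectivity are essentially formal. The Deligne-Mumford compactification $\barmoduli$ is a compact projective orbifold, and so is its projectivization $\pobarm$ (obtained by dividing the complement of the zero section in $\Omega\barmoduli$ by $\cx^*$). Since $\barX$ is by definition the closure of $\X$ inside $\pobarm$, it is a closed subvariety of a compact projective orbifold, hence compact and projective. The normalization map $\Y \to \barX$ is finite and birational, so $\Y$ is also compact and projective.

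Second, one must verify the orbifold condition: every point of $\Y$ has a neighborhood analytically isomorphic to a quotient of an open subset of $\cx^2$ by a finite group. Off the boundary, $\Y$ contains the open set $\X \isom (\half \times (-\half))/\SLtwoord$, which is a Hilbert modular orbifold; the only nontrivial orbifold points here are the elliptic fixed points of $\SLtwoord$, and these are automatically finite quotient singularities of a smooth surface, so no extra work is needed on $\X$. The nontrivial analysis is at the boundary $\bdry\X = \Y \setminus \X$.

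Third, to handle the boundary, I would use the local geometry of $\pobarm$ near the locus of stable differentials with nodes, together with the combinatorial data of prototypes from Section~\ref{sec:prototypes}. Near a cusp $\sigma \in C(\X)$, a surface $(X,[\omega]) \in \X$ close to $\sigma$ degenerates in a prescribed way: the underlying stable differential acquires nodes with simple poles, and the degeneration is governed by plumbing a cylinder. Concretely, I would introduce plumbing coordinates on $\pobarm$ adapted to the stable differentials parameterizing the rational curves $C_P$, and show that the eigenform condition cuts out a subvariety in these coordinates whose pullback to a finite cover is smooth. The stabilizer $\Gamma_\sigma \subset \SLtwoord$ acts by diagonal multiplication on these local coordinates (up to finite index), exhibiting the local model as a finite quotient. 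At the finitely many points where multiple boundary curves $C_P$ and $C_{P'}$ meet, or where additional symmetries occur (e.g. the involution $\tau$ fixing a component), one repeats the analysis with a slightly larger local symmetry group. Normalization absorbs whatever non-normal self-intersections of $\barX$ occur inside $\pobarm$.

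The main obstacle is this local description at the boundary: producing explicit plumbing-style coordinates in which both the eigenform equation and the stabilizer action become simultaneously transparent, and then checking that the resulting local model is a finite quotient of a smooth surface. The prototypes $(a,b,c,\bar q)$ should be used precisely to index which plumbing model one is in, and the integer data should directly determine the finite group acting on the cover. Once this local picture is in hand at every cusp and every intersection of boundary curves, the orbifold property for $\Y$ follows by gluing.
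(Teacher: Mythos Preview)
The paper does not prove this theorem here; it is quoted from \cite{bainbridge06}. What \S\ref{sec:compactification-x} does is summarize the explicit local charts (the ``type one'' and ``type two'' coordinates of Theorems~\ref{thm:typeonecoordinates} and \ref{thm:typetwocoordinates}, also cited from \cite{bainbridge06}) that carry the content of such a proof. Your outline --- compactness and projectivity via normalization of a closed subvariety of $\pobarm$, and the orbifold structure at the boundary via plumbing-style coordinates indexed by prototypes --- matches that approach.

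One point to correct: the finite group realizing the orbifold structure at a point $c_P$ is not the cusp stabilizer $\Gamma_\sigma\subset\SLtwoord$, which is infinite (a parabolic subgroup), nor any finite-index quotient of it. As in Theorem~\ref{thm:typetwocoordinates}, a neighborhood of $c_P$ is modeled on $\Delta^2/\Gamma_{m_P,s}$ where $\Gamma_{m_P,s}$ is cyclic of order
\[
m_P=\frac{a'}{\gcd(a',c')\gcd(a',b'+c')},
\]
acting diagonally by roots of unity; this order is read off directly from the prototype $(a,b,c,\bar q)$, exactly as your last paragraph anticipates. Away from the $c_P$ the type one coordinates show $\Y$ is smooth along $C_P\setminus\{c_P,c_{P^-}\}$. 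The involution $\tau$ does not contribute additional orbifold structure at the boundary; it simply permutes the curves by $C_P\mapsto C_{t(P)}$.
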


In this section, we summarize more of
the properties of the compactification $\Y$ which we proved in \cite{bainbridge06}.  In particular, we discuss the combinatorics of curves
in $\bdry \X$, and we give explicit local coordinates around points in $\bdry\X$.

\paragraph{Curves in $\pobarm$.}

We start by discussing some curves in $\pobarm$ which will be covered by curves in $\bdry\X$.

Given $\lambda>1$ define $\sC_\lambda\subset\pobarm$ to be the closure of the locus of stable Abelian differentials $(X, [\omega])$ with two
separating nodes such that the ratio of the residues of $\omega$ at the nodes is $\pm \lambda^{\pm 1}$.

Let $c_\lambda\in\pobarm$ be
the point representing a stable Abelian differential with three nonseparating nodes having residues $1$, $\lambda$, and $\lambda-1$.

Let $p_\lambda\in \pobarm$ be the differential $(X, \omega)$ obtained by joining infinite cylinders of circumference $1$ and $\lambda$ at
one point to form a node.  The form $(X, \omega)$ has one separating node where $\omega$ has residue zero, and two nonseparating nodes where
$\omega$ has residues $1$ and $\lambda$.

Let $w_\lambda\in\pobarm$ be the unique form $(X, \omega)$ having a double zero and two nonseparating nodes having residues $1$ and
$\lambda$.  The form $(X, \omega)$ can be formed as follows.  Start with an infinite cylinder $C_1$ of circumference $\lambda$.  Cut $C_1$
along a segment of length $1$ and identify opposite ends of the segment to form a ``figure eight'' as in Figure~\ref{fig:wlambda}.  Then glue two
half-infinite cylinders of circumference $1$ to the boundary of the figure eight as shown in Figure~\ref{fig:wlambda}.

\begin{figure}[htbp]
  \centering
  \includegraphics{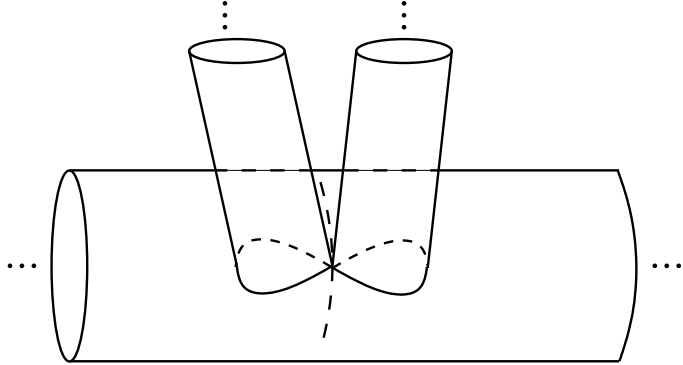}
  \caption{$w_\lambda$}
  \label{fig:wlambda}
\end{figure}

From Proposition~6.9 of \cite{bainbridge06}, we have

\begin{prop}
  \label{prop:sCrationalcurve}
  The curve $\sC_\lambda$ is a rational curve containing the points $c_\lambda$, $c_{\lambda+1}$, $w_\lambda$, and $p_\lambda$.  Every other
  point of $\sC_\lambda$ represents a stable differential with two nonseparating nodes and two simple zeros.
\end{prop}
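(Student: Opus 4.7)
The plan is to parameterize the generic stratum of $\sC_\lambda$ by the moduli space $\barmoduli[0,4]$ of $4$-pointed $\proj^1$'s and use this to establish rationality and to identify the four special points. A generic stable differential in $\sC_\lambda$ has two nonseparating nodes and two simple zeros, with normalization a smooth $\proj^1$ carrying four marked points $p_1,p_2,p_3,p_4$ that get identified in the pairs $p_1\sim p_2$ and $p_3\sim p_4$. The pullback of the differential to $\proj^1$ has simple poles at the $p_i$, and the residue-ratio condition pins down the residues, up to the allowed $\pm\lambda^{\pm 1}$ ambiguity and an overall scalar, as $(1,-1,\lambda,-\lambda)$. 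Since a meromorphic differential on $\proj^1$ is uniquely determined by the residues at its simple poles, this construction yields a morphism $\Phi\colon\barmoduli[0,4]\to\pobarm$ whose image is $\sC_\lambda$. By inspection, the only nontrivial label permutation preserving the projective class of $\omega$ is $(12)(34)$, which lies in the Klein-four kernel of $S_4\to S_3$ and thus acts trivially on $\barmoduli[0,4]$; hence $\Phi$ is generically injective, and since $\barmoduli[0,4]\cong\proj^1$, this shows $\sC_\lambda$ is rational.

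Next I would analyze the three boundary points of $\barmoduli[0,4]$ via a degeneration argument. Each boundary point corresponds to a partition of $\{1,2,3,4\}$ into two unordered pairs, representing a stable degeneration in which the $\proj^1$ splits into two components joined at a new internal node; the residue at that internal node is then determined on each component by the residue theorem. The partition $\{1,2\}|\{3,4\}$ puts each of the two original node-pairs on a single component, forcing the internal node to have residue $0$ on both sides: the resulting stable differential has two nonseparating nodes of residues $1$ and $\lambda$ together with one separating node of residue $0$, matching $p_\lambda$ exactly. The partitions $\{1,3\}|\{2,4\}$ and $\{1,4\}|\{2,3\}$ mix the pairs across components, and the residue theorem forces the internal node to carry residues $\pm(1+\lambda)$ and $\pm(\lambda-1)$ respectively, matching $c_{\lambda+1}$ and $c_\lambda$.

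For $w_\lambda$ I would exhibit it as an interior point of $\barmoduli[0,4]$ at which the generic differential acquires a double zero. Normalizing the points as $p_1=0$, $p_2=\infty$, $p_3=1$, $p_4=b$, the differential becomes
\begin{equation*}
  \omega=\left(\frac{1}{z}+\frac{\lambda}{z-1}-\frac{\lambda}{z-b}\right)dz,
\end{equation*}
whose numerator is a quadratic in $z$. Setting its discriminant to zero gives a quadratic equation in $b$ with roots $b=1$ (the trivial boundary collision $p_3=p_4$) and $b=((\lambda-1)/(\lambda+1))^2$, an honest interior value at which $\omega$ has a double zero. Checking the residues and the local flat geometry confirms the image is $w_\lambda$. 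Every remaining point of $\sC_\lambda$ comes from a generic point of $\moduli_{0,4}$, where the differential has four distinct simple poles and (by Riemann--Roch on $\proj^1$, degree of $\omega$ equal to $-2$) two simple zeros, giving a stable differential with two nonseparating nodes and two simple zeros, as required.

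The main obstacle is the boundary degeneration analysis: identifying precisely which stable differential arises as the limit at each of the three boundary strata of $\barmoduli[0,4]$ and verifying that the residue theorem produces the claimed residues at the new internal nodes. A closely related subtlety is determining the exact symmetry group $G$ of label permutations that preserves the projective differential: mistakenly including $(12)$ or a swap-of-pairs element in $G$ would incorrectly identify $c_\lambda$ with $c_{\lambda+1}$ in the quotient, so one must carefully check that swapping the residues within a pair actually changes the projective class of $\omega$ when $\lambda\neq 1$.
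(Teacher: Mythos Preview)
The paper does not actually prove this proposition; it simply cites Proposition~6.9 of \cite{bainbridge06}. Your proposal supplies a direct, self-contained argument via the parameterization by $\barmoduli[0,4]$, and it is correct.

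Your identification of the three boundary strata with $p_\lambda$, $c_{\lambda+1}$, and $c_\lambda$ via the residue theorem is exactly right, as is the discriminant computation locating $w_\lambda$ at $b = ((\lambda-1)/(\lambda+1))^2$. One small point worth making explicit to close the last sentence of the proposition: you should note that the two zeros of the numerator never collide with any of the four poles for $b\notin\{0,1,\infty\}$ (a quick substitution check), so away from the three boundary points and the single interior value giving $w_\lambda$, the differential genuinely has two distinct simple zeros and two honest simple poles, hence two nonseparating nodes and two simple zeros on the stable curve. Your symmetry analysis is also fine: only $(12)(34)$ preserves the projective residue vector $(1,-1,\lambda,-\lambda)$, and since it lies in the Klein four kernel, $\Phi$ is generically injective and $\sC_\lambda$ is indeed the image of $\proj^1$. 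The concern you flag about accidentally identifying $c_\lambda$ with $c_{\lambda+1}$ is already ruled out by your explicit computation that the three boundary points land on three distinct targets in $\pobarm$.
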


We normalize every differential $(X, \omega)$ in $\sC_\lambda$ so that $\omega$ has residues $1$ and $\lambda$ at two of the nodes.

\begin{prop}
  \label{prop:horizontalperiodic}
  With this normalization, the horizontal foliation of every $(X, \omega)\in \sC_\lambda$ is periodic.
\end{prop}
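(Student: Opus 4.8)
The plan is to show, uniformly over $\sC_\lambda$, that the horizontal foliation decomposes into cylinders and horizontal saddle connections, by exploiting the fact that every stable curve occurring in $\sC_\lambda$ is built from genus zero pieces. By Proposition~\ref{prop:sCrationalcurve} a general point of $\sC_\lambda$ represents a stable differential with two nonseparating nodes and two simple zeros; on a genus two curve, pinching two nonseparating curves yields a sphere with two pairs of points identified, so the normalization of such a surface is $\proj^1$. The four exceptional points are equally planar: $w_\lambda$ is again a sphere with two nodes, $c_\lambda$ and $c_{\lambda+1}$ are two copies of $\proj^1$ joined at three points, and $p_\lambda$ is a pair of nodal elliptic curves joined at a point, each of which desingularizes to $\proj^1$. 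In every case $(X,\omega)$ is obtained by gluing finitely many flat pieces of genus zero along half-infinite cylinders at the nodes.

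First I would make the cylinders at the nodes explicit. At each node where $\omega$ has nonzero residue there are two half-infinite cylinders facing one another, and the normalization fixing the residues at the two distinguished nodes to be $1$ and $\lambda$ is exactly the one that makes the core curves of these cylinders horizontal, of circumferences proportional to $1,1,\lambda,\lambda$ (for $c_\lambda$ and $c_{\lambda+1}$ the third node contributes a further horizontal cylinder, of circumference proportional to $\lambda-1$, respectively $\lambda+1$). Deleting the open ends of all of these half-infinite cylinders leaves a compact flat surface $Y$, each component of which is a sphere with finitely many open disks removed, with all boundary circles closed horizontal leaves and with only the two simple zeros of $\omega$ as interior cone points. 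Since the deleted cylinders are themselves foliated by closed horizontal leaves, it is enough to prove that the horizontal foliation of $Y$ consists of cylinders and saddle connections, i.e. that $Y$ has no minimal component.

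The hard part---though it is more a matter of assembling the right general fact than of genuine difficulty---is this last step, and here I would invoke planarity together with the Poincar\'e--Bendixson theorem. The horizontal foliation of an Abelian differential is orientable, so on $Y$ it is the orbit foliation of a flow with only finitely many stationary points (the zeros of $\omega$); since each component of $Y$ embeds in the plane, Poincar\'e--Bendixson shows that the limit set of any horizontal trajectory is a zero of $\omega$, a closed leaf, or a union of zeros and horizontal saddle connections joining them. Hence no horizontal leaf of $Y$ is dense in a subsurface, every regular horizontal leaf of $Y$ is closed, and together with the cylinders at the nodes this realizes the horizontal foliation of $(X,\omega)$ as periodic. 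The points that need care are the bookkeeping ones: checking, from the explicit descriptions in this section, that the residue normalization really does orient every node-cylinder horizontally, that the surface $Y$ obtained by truncating them is compact with horizontal boundary and carries no zeros of $\omega$ escaping into the nodes, and that every stable curve in $\sC_\lambda$ is indeed assembled from genus zero components; once these are in place, Poincar\'e--Bendixson on the planar pieces finishes the proof.
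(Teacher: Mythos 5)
Your proof is correct, but it takes a genuinely different route from the paper's. The paper argues entirely in the flat metric: the normalization makes the four half-infinite cylinders at the nonseparating nodes horizontal, each is bounded by horizontal saddle connections, the case of a saddle connection with half-infinite cylinders on both sides is exactly $p_\lambda$ (trivially periodic), and then a count of horizontal prongs at the zeros --- there are only two zeros counting multiplicity --- shows that every separatrix begins and ends at a zero, whence periodicity. You instead exploit the algebro-geometric structure of the fibers: every component of every stable curve in $\sC_\lambda$ is rational, so after truncating the horizontal node cylinders the remaining compact pieces are planar, and Poincar\'e--Bendixson excludes minimal components there. Two of the bookkeeping points you flag deserve emphasis. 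First, the claim that pinching two nonseparating curves on a genus two surface always produces a two-nodal sphere is not literally true (pinching two homologous curves gives, after stable reduction, an irreducible genus one curve with one node); what you actually need is that a genus two stable curve with exactly two nodes, both nonseparating, is an irreducible rational curve with two nodes, which follows from Proposition~\ref{prop:sCrationalcurve} together with stability (the torus-and-sphere-glued-at-two-points configuration has a rational component with only two special points, hence is not stable). Second, the passage from the Poincar\'e--Bendixson trichotomy to ``every regular leaf is closed'' silently uses the standard structure theorem that a non-closed regular trajectory of a translation flow is dense in a subsurface of positive area (a minimal component); this is the same kind of standard fact the paper's final sentence invokes, so it is legitimate, but it should be stated or cited. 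What your route buys is a clean principle --- no minimal components on rational pieces with real residues --- that applies verbatim to other boundary strata; what the paper's route buys is brevity and independence from the global topology of the degenerate curve, since the separatrix count alone forces periodicity without knowing the components are rational.
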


\begin{proof}
  Each of the four half-infinite cylinders of $(X, \omega)$ is bounded by a union of saddle connections.  None of these saddle connections bounds two
  cylinders, or else $(X, \omega)$ would be a one-point connected sum of two infinite cylinders and we would be done.  Since $\omega$ has
  two zeros, counting multiplicity, this means that every separatrix of $(X, \omega)$ starts and ends at a zero.  But this implies $(X,
  \omega)$ is periodic.
\end{proof}

We now define a meromorphic quadratic differential $q_\lambda$ on $\sC_\lambda$ as we did on the leaves of $\A$ in
\S\ref{sec:hilb-modul-surf}.  Let $\sC_\lambda(1,1) = \sC_\lambda\setminus\{c_\lambda, c_{\lambda+1}, p_\lambda, w_\lambda\}$.  Define a
multivalued holomorphic function $f_\lambda$ on $\sC_\lambda(1,1)$  
by
\begin{equation*}
  f_\lambda(X, \omega) = \int_p^q\omega,
\end{equation*}
where $p$ and $q$ are the zeros of $\omega$, and define $q_\lambda = (\partial f_\lambda)^2$.

\begin{prop}
  \label{prop:qlambda}
  The differential $q_\lambda$ has double poles at $c_\lambda$ and $c_{\lambda+1}$, a simple pole at $p_\lambda$, a simple zero at
  $w_\lambda$, and is holomorphic and nonzero on $\sC_\lambda(1,1)$.  The horizontal foliation of $q_\lambda$ is  periodic, and
  the spine of $q_\lambda$ coincides with the locus of two-cylinder surfaces.  
\end{prop}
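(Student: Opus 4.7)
My plan is to reduce the proposition to a local analysis of the relative period function $f_\lambda$ at each of the four distinguished points $c_\lambda, c_{\lambda+1}, p_\lambda, w_\lambda$, together with a global argument for the foliation claims.

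First, on $\sC_\lambda(1,1)$ every point represents a stable Abelian differential with two simple zeros and two nonseparating nodes of residues $\pm 1$ and $\pm \lambda$. Period coordinates on the corresponding stratum of stable Abelian differentials identify a slice of $\sC_\lambda(1,1)$ with a domain in the $f_\lambda$-plane, since the residues and twist parameters at the plumbed nodes remain constant along $\sC_\lambda$. Thus $f_\lambda$ is, up to the $\pm$ monodromy from swapping the two zeros, a local biholomorphic parameter on $\sC_\lambda(1,1)$; in particular $\partial f_\lambda$ never vanishes and $q_\lambda = (\partial f_\lambda)^2$ is holomorphic and nonzero there.

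Next, the orders of $q_\lambda$ at the four special points follow from standard plumbing and splitting computations. At $c_\lambda$ and $c_{\lambda+1}$, a plumbing parameter $t$ opening up the third nonseparating node serves as a local coordinate on $\sC_\lambda$; integrating $\omega$ across the newly opened cylinder contributes a term $\rho \log t$ to $f_\lambda$ with $\rho$ a nonzero residue, producing a double pole for $q_\lambda$. At $p_\lambda$, the plumbing parameter for the separating node (which has residue zero) is the local coordinate; the vanishing of the residue suppresses the logarithmic term, and after passing to the appropriate single-valued uniformizer on $\sC_\lambda$ one obtains a simple pole for $q_\lambda$. At $w_\lambda$, the splitting parameter $v$ for the double zero gives the relative period directly as $f_\lambda = v$; the single-valued uniformizer on $\sC_\lambda$ dictated by the monodromy $v \mapsto -v$ then yields a simple zero.

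Finally, the horizontal foliation of $q_\lambda$ is periodic because, with a canonical normalization of the path from $p$ to $q$ (e.g., via the shortest saddle connection), the function $f_\lambda$ is single-valued up to sign on $\sC_\lambda \setminus \{c_\lambda, c_{\lambda+1}, p_\lambda, w_\lambda\}$, so the level sets of $\Im f_\lambda$ close up on $\sC_\lambda \isom \proj^1$. The spine of $q_\lambda$ consists of the separatrices of its horizontal foliation, which correspond to surfaces $(X, \omega) \in \sC_\lambda$ admitting a horizontal saddle connection between the two zeros; by Proposition~\ref{prop:canonicalrepresentation} and the surrounding classification, such a horizontal saddle connection forces two of the three horizontal cylinders of $(X, \omega)$ to share a boundary component and hence to merge into a single horizontal cylinder, producing precisely the 2-cylinder locus. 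The main obstacle will be the local analyses at $w_\lambda$ and $p_\lambda$, where the interplay between the monodromy of $f_\lambda$ and the choice of single-valued uniformizer on $\sC_\lambda$ requires careful bookkeeping; the analyses at $c_\lambda$ and $c_{\lambda+1}$ reduce to standard plumbing of infinite cylinders.
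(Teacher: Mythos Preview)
Your overall strategy---local analysis of $f_\lambda$ at the four special points---is reasonable and in the spirit of the paper's proof, but there is a genuine error in your treatment of $w_\lambda$. You assert that the monodromy $v\mapsto -v$ of the splitting parameter ``yields a simple zero,'' but the monodromy alone does not determine the order of $q_\lambda$. Indeed, both at $p_\lambda$ and at $w_\lambda$ the monodromy of $f_\lambda$ around the point is $f_\lambda\mapsto -f_\lambda$, yet one is a simple pole and the other a simple zero. If one naively takes $u=f_\lambda^2$ as uniformizer (as the $\pm$ monodromy would suggest), one gets $q_\lambda=(\partial f_\lambda)^2=\tfrac14 u^{-1}(du)^2$, a simple \emph{pole}. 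The missing ingredient is that splitting a double zero is a three-to-one operation: the correct relation near $w_\lambda$ is $f_\lambda^2\sim u^3$ for a local uniformizer $u$, giving $q_\lambda\sim u\,(du)^2$. (Compare the equation $\tfrac{i}{2}\Phi(z,w)=(z,w^3\,\partial/\partial z)$ in the proof of Proposition~\ref{prop:foliationsconcide}.) The paper sidesteps this by citing \S10 of \cite{bainbridge06} for the orders at $p_\lambda$ and $w_\lambda$, and gives an independent geometric argument at $c_\lambda$, $c_{\lambda+1}$: it exhibits closed horizontal leaves of $q_\lambda$ of length $\lambda$ and $\lambda+1$ near those points, hence half-infinite cylinders, hence double poles.

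Your periodicity argument is also incomplete: $f_\lambda$ has logarithmic monodromy around $c_\lambda$ and $c_{\lambda+1}$, so $\Im f_\lambda$ is not globally well-defined even up to sign, and its level sets need not close up. The paper argues differently and more robustly: the separatrices emanating from $p_\lambda$ and $w_\lambda$ are obtained by connected sum or zero-splitting along \emph{horizontal} segments, operations which produce two-cylinder (not three-cylinder) surfaces; since the two-cylinder locus is an embedded graph in $\sC_\lambda$, every separatrix is a saddle connection, so the foliation is periodic. A short dynamical argument using the $a_t$-flow then shows the complement of the spine consists of three-cylinder surfaces. Your spine argument also has a circularity: you assume surfaces on separatrices already have three horizontal cylinders in order to conclude two of them merge.
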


\begin{proof}
  Any surface $(X, \omega)$ sufficiently close to $c_\lambda$ or $c_{\lambda+1}$ is a three cylinder surface with one finite area cylinder
  $C$.  Define a loop on $\sC_\lambda$ by cutting $(X, \omega)$ along a closed geodesic of $C$ and regluing after a rotation.  This is a
  loop of length $\lambda$ or $\lambda+1$ contained in the horizontal foliation of $q_\lambda$.  Thus $(\sC_\lambda, q_\lambda)$ has
  half-infinite cylinders around $c_\lambda$ and  $c_{\lambda+1}$, so $q_\lambda$ has double poles there.

  The other zeros and poles are located just as for the quadratic differentials we defined along $T\A$ (see \S10 of \cite{bainbridge06}).

  Points along the separatrices emanating from $p_\lambda$ and $w_\lambda$ are obtained by performing a connected sum on $p_\lambda$ or
  splitting the double zero of $w_\lambda$ along a horizontal segment, which does not create a new cylinder.  Thus the separatrices are
  contained in the two-cylinder locus, an embedded graph on $\sC_\lambda$.  Thus the separatrices are saddle connections, and the horizontal
  foliation of $q_\lambda$ is periodic.
  
  By period coordinates, there is a tubular neighborhood $U$ of the spine such that $\bdry U$ consists of three cylinder surfaces.  We then have $a_t\cdot\bdry
  U\to \{c_\lambda\cup c_{\lambda+1}\}$ as $t\to\infty$, so $\cup a_t\cdot U$ covers $\sC_\lambda\setminus\{c_\lambda\cup c_{\lambda+1}\}$.
  It follows that the complement of the spine consists of three-cylinder surfaces as claimed. 
\end{proof}

The surface $(\sC_\lambda, q_\lambda)$ with its horizontal foliation is shown in Figure~\ref{fig:clambda} with the spine indicated by a
solid line.  The singular points of the foliation with three or one prong are $w_\lambda$ and $p_\lambda$ respectively.

\begin{figure}[htbp]
  \centering
  \input{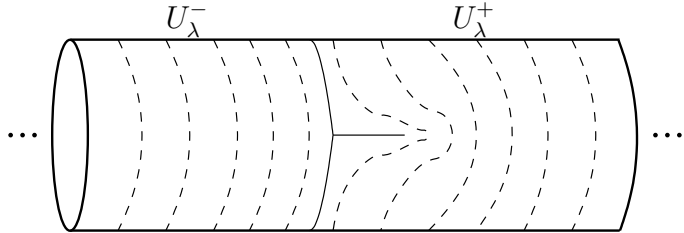}
  \caption{$\sC_\lambda$ with the quadratic differential $q_\lambda$}
  \label{fig:clambda}
\end{figure} 

The locus of three-cylinder surfaces in $\sC_\lambda$ consists of two connected components.  We refer to the component containing
$c_{\lambda+1}$ as $U_\lambda^-$ and we call the other component $U_\lambda^+$.

\paragraph{Curves in $\bdry\X$.}

In \cite{bainbridge06}, we defined for every prototype $P=(a, b, c, \overline{q})$ a curve $C_P\subset \bdry\X$ and a point $c_P\in\bdry
\X$.  The following properties of $\Y$ are proved in \S8 of that paper:

\begin{theorem}
  \label{thm:propertiesofYD}
  $\Y$ has the following properties:
  \begin{itemize}
  \item $\bdry\X = \bigcup_P C_P$, where the union is over all prototypes of discriminant $D$.
  \item $C_P$ is a rational curve which intersects $C_{P^+}$ at $c_P$ and intersects $C_{P^-}$ at $c_{P^-}$.  There are no other
    intersections between the curves $C_P$.
  \item The restriction of the natural morphism $\pi\colon\Y\to\pobarm$ realizes $C_P$ as a $\gcd(a',c')$-fold branched cover of
    $\sC_{\lambda(P)}$ with $\pi(c_P) = c_{\lambda(P)}$ and $\pi(c_{P^-}) = c_{\lambda(P)+1}$.  The restriction $\pi|_{C_P}$ is ramified to
    order $\gcd(a',c')$ at these two points and is elsewhere unramified.
  \item The points $c_P$ are cyclic quotient singularities of $\Y$ of order
    \begin{equation}
      \label{eq:25}
      m_P =\frac{a'}{\gcd(a', c')\gcd(a', b'+c')},
    \end{equation}
    and are the only singular points in $\bdry\X$.
  \item The curves $\barP$ and $\barW$ intersect $C_P$ transversely in $\gcd(a',c')$ points each and are disjoint from the points $c_{Q}$.    
  \item The involution $\tau$ of $\X$ extends to an involution $\tau$ of $\Y$ which sends $C_P$ to $C_{t(P)}$.
  \end{itemize}
\end{theorem}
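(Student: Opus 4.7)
The overall plan is to describe $\Y$ explicitly in local coordinates near each boundary point and then verify the six items by combining period coordinates on the strata $\otn$ with the real multiplication structure. The guiding principle is that the three-cylinder analysis of \S\ref{sec:three-cylind-surf} already attaches a prototype $P$ to each surface in $\U$, so $C_P$ should arise as the limit of the component $U_P$ as the cylinders degenerate in a prescribed direction.

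First I would fix a cusp $\sigma\in\Cusps$ and examine a neighborhood via Theorem~\ref{thm:coreandcusps}. Inside $\mathfrak{N}_r(\sigma)$ the kernel foliation $\A$ sweeps toward the boundary; by Proposition~\ref{prop:homologicaldirection}, the cusp is equipped with a canonical homological direction which, on a nearby eigenform, is horizontal and periodic. Pulling in the explicit coordinates of Theorem~\ref{thm:threecylcoordinates}, each such neighborhood is biholomorphic to a plumbed family of three cylinders, and the limit where one cylinder shrinks (i.e.\ $\Im y_j \to \infty$ in the notation of \eqref{eq:23}) produces the stable differentials lying on $\sC_{\lambda(P)}$. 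I would then define $C_P$ as the closure in $\Y$ of these limit points and verify smoothness away from the corner points $c_P$.

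Next I would verify the incidence relations. The operation $P\mapsto P^+$ is designed so that the cylinder decomposition of a surface in $U_P$ degenerates in one of two ways, each giving a two-cylinder stable surface that lies simultaneously on $\sC_{\lambda(P)}$ and on $\sC_{\lambda(P^+)}$; tracking this through Proposition~\ref{prop:sCrationalcurve} identifies $C_P \cap C_{P^+}$ with a single point that one labels $c_P$. Rationality of $C_P$ follows because $\sC_{\lambda(P)}$ is rational and $C_P$ maps to it finitely. For the covering $\pi|_{C_P}\to\sC_{\lambda(P)}$, I would extend the finite covering $\hat{U}_P\to U_P$ of \eqref{eq:24} over the boundary; composing with the forgetful map to $\sC_{\lambda(P)}$, the deck group becomes the action by $a'$-th roots of unity on the residues at the limiting nodes, which has order $\gcd(a',c')$ on $C_P$ and ramifies only where it fixes a point of the completed $\sC_\lambda$, namely at $c_P$ and $c_{P^-}$. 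The cyclic quotient singularity of order $m_P$ in \eqref{eq:25} then appears as the local monodromy of the plumbing parameters under the stabilizer of the cusp in $\SLtwoord$, and a direct computation using the explicit description of $\Gamma_\sigma$ yields the stated order.

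The remaining statements are more formal. The transverse intersections with $\barW$ and $\barP$ follow because the leafwise quadratic differential $q$ on $\A$ vanishes along $\W$ and has simple poles along $\P$, while its limiting analogue $q_\lambda$ on $\sC_\lambda$ (Proposition~\ref{prop:qlambda}) has its zeros and poles at interior points of $\sC_\lambda$, so the intersections stay away from the corners $c_Q$. The involution $\tau$ extends because it is induced by a holomorphic involution of $\pobarm$ (swapping the two eigenspaces), and tracing through how Galois conjugation acts on the prototype data in the matrix \eqref{eq:21} shows that $\tau$ sends $C_P$ to $C_{t(P)}$. The main obstacle is the singularity computation: identifying the stabilizer in $\SLtwoord$ of each cusp and its explicit action on the plumbing coordinates requires marrying the arithmetic of $\ord$ (expressed through $(a,b,c,\bar q)$) to the analytic degeneration picture, and this is where the precise formula \eqref{eq:25} must be extracted by hand.
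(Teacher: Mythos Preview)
The paper does not prove this theorem at all: it is quoted as a summary of results proved in \S8 of \cite{bainbridge06}, and the surrounding material (the type one and type two coordinate charts of Theorems~\ref{thm:typeonecoordinates} and \ref{thm:typetwocoordinates}) is likewise imported from that reference. So there is no in-paper proof to compare your sketch against; you are effectively outlining what the cited paper does.

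Your outline is broadly in the right spirit and uses the correct toolkit --- period coordinates on three-cylinder surfaces, the plumbing construction, and the prototype bookkeeping --- which is exactly the machinery of \cite{bainbridge06} that the present paper quotes. A few of your justifications are imprecise, however. Your account of the $\gcd(a',c')$-fold cover $C_P\to\sC_{\lambda(P)}$ as ``the action by $a'$-th roots of unity on the residues at the limiting nodes'' is not right: the residues are fixed (normalized to $1$ and $\lambda$), and the covering degree instead arises from the exponents $-c'/\gcd(a',c')$ and $a'/\gcd(a',c')$ in the plumbing parameters (visible in Theorem~\ref{thm:typeonecoordinates}), which encode how many distinct eigenform structures sit over a given stable form. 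Similarly, the transversality of $\barW$ and $\barP$ with $C_P$ does not follow merely from the location of zeros and poles of $q_\lambda$; one needs the local model identifying a tubular neighborhood of $C_P$ in $\Y$ with the normal direction, and then the explicit intersection count $\gcd(a',c')$ comes from pulling back $w_\lambda$ and $p_\lambda$ along the branched cover. Finally, your claim that $\tau$ is ``induced by a holomorphic involution of $\pobarm$'' is not accurate: $\tau$ swaps the two eigenspaces, which changes the projective form, so it does not descend from an involution of $\pobarm$; its extension to $\Y$ must be argued via the Baily--Borel or the explicit boundary coordinates.
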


It follows from this theorem that $\bdry\X$  consists of finitely many chains of rational curves $C_P$.  The connected components of $\bdry\X$ are naturally in
bijection with the cusps of $\X$.

We equip $C_P$ with a quadratic differential $q_P$ defined by,
\begin{equation*}
  q_P = (\pi|_{C_P})^*q_{\lambda(P)}.
\end{equation*}
The locus of three-cylinder surfaces in $C_P$ is the complement of the spine of $q_P$ by Proposition~\ref{prop:qlambda}.  We define
$U_P^-\subset C_P$ to be the component of the three-cylinder locus containing $c_P$, and we define $U_{P^-}^+\subset C_P$ to be the
component containing $c_{P^-}$.  Let $S_P\subset C_P$ be the spine of $q_P$, the two-cylinder locus.

Define
\begin{equation*}
  \mathcal{U}_P = U_P \cup U_P^+\cup U_P^- \cup c_P \subset \Y^\sigma,
\end{equation*}
where $\sigma$ is the cusp of $\X$ corresponding to $P$.  The set  $\mathcal{U}_P$ is a neighborhood of $c_P$.

\paragraph{Local coverings.}

Given a cusp $\sigma$ of $\X$, we defined the covering $\X^\sigma\to\X$ which is injective on a neighborhood $\mathfrak{N}_r$ of the
cusp of $\X^\sigma$.  Let $\overline{\mathfrak{N}}_r$ be the closure of $\mathfrak{N}_r$ in $\Y$, and define $\Y^\sigma$ to be the complex
orbifold obtained by gluing $\overline{\mathfrak{N}}_r$ to $\X^\sigma$.  Then $\Y^\sigma$ is a complex orbifold consisting of $\X^\sigma$
together with a single chain of rational curves, and the natural map $\Y^\sigma\to\Y$ is locally biholomorphic.

Each surface $(X, [\omega])\in \Y^\sigma$ has a canonical horizontal direction, and we can normalize $\omega$ (up to real multiple) so that
the horizontal direction of $\omega$ coincides with this canonical direction.

We then obtain an action of the upper-triangular subgroup $B\subset\SLtwoR$ on $\Y^\sigma$  coming from the action of $\GLtwoRplus$ on
$\Omega\barmoduli$.  This action fixes the points $c_P$ and fixes $S_P$ pointwise.

\paragraph{Type one coordinates.}

We now describe local coordinates around points of $C_P \setminus \{c_P \cup c_{P^-}\}$ which will be used in the proof of Lemma~\ref{lem:hyperbolicnorm}.

Let $W\subset \sC_\lambda\setminus\{c_\lambda, c_{\lambda+1}\}$ be a simply connected domain.  For each $z\in W$, write $(X_z, \omega_z)$
for the associated stable differential, normalized as before so that the residues at the nonseparating nodes are $1$ and $\lambda$.  Write
$N=\{n_1, n_2\} \subset X_z$ for the set of nonseparating nodes with $n_1$ being the residue $1$ node.  Choose homology classes
\begin{equation*}
  \alpha_i \in H_1(X_z \setminus N_z; \zed) \quad \text{and} \quad \beta_i\in H_1(X_z, N_z;\zed)
\end{equation*}
so that $\alpha_i$ goes around $n_i$, and $\alpha_i\cdot\beta_j = \delta_{ij}$.  Choose these classes consistently so that they are parallel
with respect to the Gauss-Manin connection.

We can plumb the nodes $n_i$ as described in \S\ref{sec:abel-diff} to obtain a new surface $(X_z', \omega_z')$.  The homology class
$\beta_i$ becomes a class in $H_1(X_z'; \zed)$ which we will continue to call $\beta_i$.  It is well defined up to adding a multiple of
$\alpha_i$.  If $w_i$ is sufficiently large, then there is a unique way to plumb the node $n_i$ so that
\begin{equation}
  \label{eq:26}
  e^{2\pi i\omega'_{z}(\beta_i)/\omega'_z(\alpha_i)} = w_i.
\end{equation}
Let $\Plumb(z, w_1, w_2)$ be the result of plumbing $(X_z, \omega_z)$ so that \eqref{eq:26} holds.  If one of the $w_i$ is zero, the
corresponding node of $(X_z, \omega_z)$ remains a node of $\Plumb(z, w_1, w_2)$.

Let $U\subset C_P$ be a component of $\pi^{-1}(W)$.  Define $f\colon U\times V \to \Y^\sigma$ by
\begin{equation*}
  f(u, v) = \Plumb(\pi(u), v^{-c'/\gcd(a', c')}, r v^{a'/\gcd(a', c')}),
\end{equation*}
where $r$ satisfies
\begin{equation}
  \label{eq:27}
  r^{-c'} = e^{2 \pi i q'},
\end{equation}
and $V\subset \cx$ is a neighborhood of $0$ small enough that $f$ is defined on $U\times V$.

From Theorem~7.22 and Proposition~8.1 of \cite{bainbridge06}, we have:

\begin{theorem}
  \label{thm:typeonecoordinates}
  For some choice of $r$ satisfying \eqref{eq:27}, $f$ is biholomorphic onto its image in $\Y^\sigma$, a neighborhood of $U\subset C_P$.
\end{theorem}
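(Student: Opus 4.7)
The plan is to combine the plumbing construction of \S\ref{sec:abel-diff} with the real multiplication condition to cut out a one-parameter subfamily of the eigenform locus near the boundary.

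First, I would set up plumbing coordinates on a neighborhood of $U \subset C_P$ in $\proj\Omega\barmoduli$. As $z \in W$ varies, the family $(X_z, \omega_z)$ of stable differentials in $\sC_{\lambda}$ admits a plumbing of its two nonseparating nodes with parameters $(w_1, w_2)$ normalized by \eqref{eq:26}, and for small $(w_1, w_2)$ the triple $(z, w_1, w_2)$ forms holomorphic coordinates on a neighborhood of the stable locus over $W$. The plumbed surface $\Plumb(z, w_1, w_2)$ is smooth iff both $w_i$ are nonzero, and $w_i=0$ preserves the $i$th node.

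Second, I would translate the real multiplication condition into a holomorphic equation on $(w_1, w_2)$. Using Gauss--Manin, transport a symplectic basis $(\alpha_1, \alpha_2, \beta_1, \beta_2)$ of $H_1$ along $W$ with $\alpha_i$ the vanishing cycle at $n_i$ and $\beta_i$ its dual. By Proposition~\ref{prop:realmultcriterion}, the smoothed surface is an eigenform of prototype $P$ iff the integer matrix $T$ of \eqref{eq:21} acts self-adjointly on $H_1$ and satisfies $\omega'(T\gamma) = \lambda\omega'(\gamma)$ for every $\gamma \in H_1$. The relations for $\gamma = \alpha_i$ hold automatically once $z \in \sC_\lambda$ because the residue normalization matches the prototype. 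The relations for $\gamma = \beta_i$ involve the divergent periods $\omega'(\beta_i) = \frac{\log w_i}{2\pi i}\, \omega'(\alpha_i) + O(1)$ coming from \eqref{eq:26}, and reduce, after absorbing the regular parts by a holomorphic change of the $w_i$ (depending on $z$), to the single equation
\begin{equation*}
w_1^{a'} w_2^{c'} = e^{-2\pi i q'}.
\end{equation*}

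Third, I would uniformize this solution curve. Writing $g = \gcd(a', c')$, $a' = g\alpha$, $c' = -g\gamma$ with $\gcd(\alpha,\gamma)=1$ and $\alpha,\gamma>0$, the substitution $w_1 = v^{\gamma} = v^{-c'/g}$ and $w_2 = r v^{\alpha} = r v^{a'/g}$ gives $w_1^{a'}w_2^{c'}=r^{c'}$, so the equation holds precisely when $r^{-c'}=e^{2\pi i q'}$, as in \eqref{eq:27}. The $|c'|$ solutions for $r$ are identified in families of size $\gamma$ under the reparameterization $v\mapsto\zeta v$ ($\zeta^\gamma=1$, which acts by $r\mapsto\zeta^\alpha r$ and hits all $\gamma$-th roots of unity since $\gcd(\alpha,\gamma)=1$), leaving $|c'|/\gamma = g$ equivalence classes. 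These correspond bijectively to the $g$ sheets of $\pi\colon C_P \to \sC_\lambda$ over $W$ guaranteed by Theorem~\ref{thm:propertiesofYD}, so each component $U \subset \pi^{-1}(W)$ determines a unique $r$. Biholomorphicity of $f$ on a small enough $V$ then follows: injectivity from the explicit parameterization, and nonvanishing Jacobian from transversality of $v$ to $C_P$ in period coordinates (Theorem~\ref{thm:periodcharts}). The main obstacle is the second step: tracking the action of $\ord$ in the Gauss--Manin basis across the degeneration with enough precision to produce the constant $e^{-2\pi i q'}$ exactly, rather than merely up to a root of unity, so that the prototype data $\bar{q} \in \integers/\gcd(a,b,c)$ is correctly encoded as the choice of $r$.
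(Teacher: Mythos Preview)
The paper does not actually prove this theorem here; it is imported wholesale from Theorem~7.22 and Proposition~8.1 of \cite{bainbridge06}. Your sketch is a faithful reconstruction of the argument carried out in that reference: plumbing gives holomorphic coordinates $(z,w_1,w_2)$ on a neighborhood of $W$ in $\pobarm$, the eigenform condition of Proposition~\ref{prop:realmultcriterion} applied to the matrix \eqref{eq:21} collapses to the monomial relation $w_1^{a'}w_2^{c'}=e^{-2\pi i q'}$, and the substitution $w_1=v^{-c'/g}$, $w_2=r\,v^{a'/g}$ uniformizes each branch, with the $\gcd(a',c')$ choices of $r$ matching the sheets of $C_P\to\sC_\lambda$. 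Your count of equivalence classes of $r$ is correct and nicely explains the remark following the theorem.

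The one point that deserves more care is the identification of the constant on the right-hand side of the monomial equation. You invoke the matrix $T$ of \eqref{eq:21}, but that matrix is written in the \emph{three-cylinder} symplectic basis of \S\ref{sec:three-cylind-surf}, whereas your $\alpha_i,\beta_i$ are the vanishing cycles and their duals arising from the plumbing construction. These two bases agree on the Lagrangian $\langle\alpha_1,\alpha_2\rangle$ (both have periods $1,\lambda$), but the $\beta_i$ need not coincide, and the entry $q$ in \eqref{eq:21} depends on that choice. In \cite{bainbridge06} this is handled by tracking the homology explicitly through the degeneration; your phrase ``absorbing the regular parts by a holomorphic change of the $w_i$'' is pointing at the right issue but does not by itself pin down $q'$ rather than some other element of $\ratls/\zed$. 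This is exactly the obstacle you identify, so the sketch is honest about where the remaining work lies.
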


\begin{remark}
  The choice of $r$ in \eqref{eq:27} is equivalent to the choice of the component $U$ of $\pi^{-1}(W)$.
\end{remark}

It follows that $f$ defines coordinates $(u, v)$ around points in $U$.  We will call these \emph{type one coordinates}.

\paragraph{Type two coordinates.}

We now describe local coordinates around the points $c_P\in\Y$.  Define $R_\lambda\colon \Delta^3\to\pobarm$ by
\begin{equation*}
  R_\lambda(w_1, w_2, w_3) = S\left(1, \lambda, \lambda-1, \frac{1}{2\pi i}\log w_1, \frac{\lambda}{2\pi i}\log w_2, \frac{\lambda-1}{2\pi
      i}\log w_3\right),  
\end{equation*}
where if $w_i=0$, then the corresponding cylinder is instead a node.  Define $g_P\colon\Delta^2\to \pobarm$ by
\begin{multline*}
  g_P(u, v) = R_{\lambda(P)}\bigg(e^{2\pi i q'/a'} u^{-c'/\gcd(a', c')}v^{-(a'+b'+c')/\gcd(a', a'+b'+c')},\\
  u^{a'/\gcd(a', c')}, v^{a'/\gcd(a', a'+b'+c')}\bigg).
\end{multline*}

Given relatively prime  $m, s\in\zed$ with $m>0$, let $\theta_m$ be an $m$-th root of unity, and let $\Gamma_{m, s}$ be the group of
automorphisms of $\Delta^2$ generated by
\begin{equation*}
  (z, w)\mapsto (\theta_m z, \theta_m^s w).
\end{equation*}

From Theorem~7.27 and Proposition~8.3 of \cite{bainbridge06}, we have
\begin{theorem}
  \label{thm:typetwocoordinates}
  $g_P$ lifts to a holomorphic map $f\colon\Delta^2\to\Y^\sigma$, and $f$ descends to a map $\tilde{f}\colon \Delta^2/\Gamma_{m, s}\to
  \Y^\sigma$, where $m=m_P$ defined in \eqref{eq:25}.  The map $f$ is biholomorphic onto its image $\mathcal{U}_P$, a neighborhood of $c_P$.
  Furthermore,
  \begin{align*}
    f^{-1}(C_P) &= \{(u, v)\in\Delta^2\colon u=0\} \\
    f^{-1}(C_{P^+}) &= \{(u, v)\in\Delta^2\colon v=0\}.
  \end{align*}
\end{theorem}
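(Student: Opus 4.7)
My plan is to derive the theorem by aligning the map $g_P$ with the local coordinates around three-noded strata constructed in Theorem~7.22 and Proposition~8.3 of \cite{bainbridge06}, and then invoking those results. The concrete work falls into three parts: (a) checking $g_P$ lands in $\barX$, (b) identifying the behavior on the coordinate axes and at the origin, and (c) matching the cyclic quotient $\Gamma_{m,s}$ with the orbifold structure.

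For part (a), take $(u,v)\in(\Delta^*)^2$ so that the three plumbing parameters $w_1,w_2,w_3$ in the definition of $g_P$ are all nonzero. Then $g_P(u,v)$ is the three-cylinder surface $S(1,\lambda,\lambda-1,y_1,y_2,y_3)$ with $y_i=(x_i/2\pi i)\log w_i$. Substituting the monomial expressions for the $w_i$ into the eigenform congruence
\begin{equation*}
a'y_1+\frac{c'}{\lambda}y_2+\frac{a'+b'+c'}{\lambda-1}y_3\equiv -q'\pmod\zed
\end{equation*}
of Lemma~\ref{lem:eigenformequation}, the terms involving $\log u$ and $\log v$ cancel telescopically (this cancellation is precisely why the exponents of $u,v$ in the $w_i$ were chosen as they were), and the constant factor $e^{2\pi i q'/a'}$ in $w_1$ produces the required congruence. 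Hence $g_P$ lands in $U_P\subset\X^\sigma$ over the open stratum.

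For part (b), since $c'<0$ and $a'+b'+c'<0$, the exponents of $u$ and $v$ appearing in the $w_i$ are all positive. Letting $v\to 0$ with $u\ne 0$ sends $w_1,w_3\to 0$ while $w_2$ stays nonzero, so cylinders $1$ and $3$ collapse to nodes, leaving a one-cylinder surface of circumference $\lambda$ with two nodes: a generic point of $C_{P^+}$. Symmetrically, $u\to 0$ yields a generic point of $C_P$. At the origin all three cylinders collapse, producing the three-noded form $c_P$. This matches Theorem~\ref{thm:propertiesofYD} and identifies $f^{-1}(C_P)$ and $f^{-1}(C_{P^+})$ as the coordinate axes. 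For part (c), under $(u,v)\mapsto(\theta_m u,\theta_m^s v)$ with $m=m_P$, the parameter $w_2$ is multiplied by $\theta_m^{a'/\gcd(a',c')}$ and $w_3$ by $\theta_m^{s a'/\gcd(a',a'+b'+c')}$; using the identity $\gcd(a',b'+c')=\gcd(a',a'+b'+c')$ and the formula for $m_P$, both exponents are divisible by $m$, so $w_2$ and $w_3$ are fixed. The parameter $w_1$ acquires the factor $\theta_m^{-c'/\gcd(a',c')-s(a'+b'+c')/\gcd(a',a'+b'+c')}$, and requiring this to act trivially on $y_1\in\half/\zed$ determines $s\pmod m$ uniquely as the solution of a linear congruence whose coefficients are coprime to $m$. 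Thus $g_P$ factors through $\Delta^2/\Gamma_{m,s}$, and the induced $\tilde{f}$ is injective on the three-cylinder stratum by Theorem~\ref{thm:threecylcoordinates} and on the axes by Theorem~\ref{thm:typeonecoordinates}. Biholomorphicity on a neighborhood of $c_P$ then follows by properness and the inverse function theorem.

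The main obstacle is ensuring that $g_P$ genuinely lifts to $\Y^\sigma$ across the axes (where the target crosses $\bdry\X$ and one must work with the normalization $\Y\to\barX$) and that the local stabilizer order in $\Y^\sigma$ at $c_P$ really equals the $m_P$ of Theorem~\ref{thm:propertiesofYD}. Both of these are precisely the content of Theorem~7.22 of \cite{bainbridge06}; the role of the argument above is to repackage that result in the explicit monomial coordinates $(u,v)$ that will be needed in later intersection-number computations.
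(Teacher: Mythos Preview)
Your approach is essentially the same as the paper's: the paper simply cites Theorem~7.27 and Proposition~8.3 of \cite{bainbridge06} without further argument, and you do the same while additionally unpacking the verification that $g_P$ satisfies the eigenform congruence of Lemma~\ref{lem:eigenformequation}, identifying the axes with $C_P$ and $C_{P^+}$, and checking the $\Gamma_{m,s}$-invariance by hand. This extra work is correct and genuinely useful, since later sections (notably \S\ref{sec:inters-with-bound}) rely on these explicit monomial coordinates.

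Two small remarks. First, the paper cites Theorem~7.27 (not 7.22) of \cite{bainbridge06} for the type two chart; Theorem~7.22 is the reference for the type one coordinates. Second, your final clause ``biholomorphicity\ldots follows by properness and the inverse function theorem'' is doing a lot of work: at the origin the derivative of $g_P$ vanishes, so one really is invoking the orbifold chart structure established in \cite{bainbridge06} rather than the ordinary inverse function theorem. You acknowledge this in your last paragraph, but it would be cleaner to say so directly rather than suggest an independent argument.
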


We call the coordinates $(u, v)$ induced by $f$ on the neighborhood $\mathcal{U}_P$ of $c_P$ \emph{type two coordinates}.

\paragraph{Singularities of $\F$.}

We finish this section by discussing the geometry of the foliation $\F$ near $\bdry \X$.

\begin{prop}
  \label{prop:Fdoesntextend}
  The foliation $\F$ extends to a foliation of
  \begin{equation}
    \label{eq:30}
    \Y\setminus\bigcup_{P\in\mathcal{Y}_D}(c_P\cup S_P)
  \end{equation}
  and does not extend to any larger set.
\end{prop}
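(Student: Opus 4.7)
The foliation $\F$ on $\X$ coincides, up to the $\SOtwoR$-action, with the foliation by orbits of the upper-triangular subgroup $B \subset \SLtwoR$ acting on the projective eigenform locus. By the discussion preceding the statement, the $B$-action extends to $\Y^\sigma$, fixing $c_P$ and $S_P$ pointwise and acting with two-real-dimensional orbits on $U_P^\pm$. My strategy is to establish extension of $\F$ where $B$ acts nontrivially, and non-extension at the $B$-fixed loci.

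\emph{Extension across $U_P^\pm \setminus \{c_P, c_{P^-}\}$.} Fix $u_0$ in this locus and use type one coordinates $(u,v)$ from Theorem~\ref{thm:typeonecoordinates}, so that a neighborhood of $u_0$ in $\Y^\sigma$ is identified with $U \times V$ with $C_P$ corresponding to $\{v=0\}$. Since $\GLtwoRplus$ acts holomorphically on $\Omega\barmoduli$, the induced $B$-action on $\Y^\sigma$ is real-analytic and preserves the complex structure. The infinitesimal generators of $a_t$ and $h_t$ produce two real vector fields whose complex span defines a holomorphic line field $\mathcal{L}$ on $U \times V$. At $v=0$, $\mathcal{L}$ is tangent to $C_P$ and nonzero, precisely because $u_0 \notin S_P$ so $B$ moves $u_0$ along $C_P$ with nonzero speed. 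By holomorphicity $\mathcal{L}$ is non-vanishing on a smaller neighborhood, and the holomorphic Frobenius theorem integrates $\mathcal{L}$ to a holomorphic foliation, which must agree with $\F$ on the $\X$-portion by construction.

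\emph{Non-extension at $S_P \cup \{c_P\}$.} Any $p \in S_P \cup \{c_P\}$ is fixed by the entire $B$-action. In type one coordinates (for $p \in S_P$, choosing a nearby three-cylinder component) or type two coordinates (for $p = c_P$), the $B$-action linearizes at $p$ as a non-abelian two-parameter subgroup of $\mathrm{GL}(T_p \Y^\sigma) \cong \mathrm{GL}_2(\cx)$. A direct computation shows that the only $B$-invariant complex lines in $T_p \Y^\sigma$ are the tangent lines to the boundary divisors $C_P$ (and $C_{P^+}$ at $c_P$). Hence if $\F$ extended to a foliation near $p$, the leaf through $p$ would locally coincide with a boundary curve. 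But the boundary curves are not leaves of $\F$, and leaves of $\F$ through points of $\X$ near $p$ accumulate on large portions of $S_P$ or of $C_P \cup C_{P^+}$, not on a single smooth curve through $p$. This rules out a continuous extension of $\F$ to $p$.

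\emph{Main obstacle.} The delicate step is the extension argument: one must verify that the complex line field $\mathcal{L}$ generated by the $B$-action is holomorphic and nonvanishing across $\{v=0\}$ in type one coordinates. This reduces to computing directly how $a_t$ and $h_t$ transform the plumbing parameter $v$ in the canonical representation of a three-cylinder surface, using the explicit formulas in Theorem~\ref{thm:typeonecoordinates}. The non-extension arguments follow more routinely from the fixed-point structure of $B$ at $S_P$ and $c_P$ combined with an examination of the $B$-invariant complex lines in the tangent space.
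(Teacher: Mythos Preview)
Your extension argument is broadly in the spirit of the paper's, which simply observes that the $B$-action has two-dimensional orbits on the set \eqref{eq:30}, so those orbits give the extension of $\F$. (One caution: the $\GLtwoRplus$-action on $\Omega\barmoduli$ is only real-analytic, not holomorphic; the orbits happen to be complex curves, but you should not invoke holomorphicity of the action itself.)

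The non-extension arguments, however, have genuine problems.

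\textbf{At $c_P$.} The linearization you propose does not exist in any useful sense. In type two coordinates one computes $a_t\cdot u = u\,|u|^{e^t-1}$ (and similarly for $v$), so the derivative of $a_t$ at $c_P=(0,0)$ is $0$ for $t>0$ and blows up for $t<0$. There is no nondegenerate linear model to analyze. The paper instead gives a direct geometric obstruction: every surface in the neighborhood $\mathcal{U}_P$ is a three-cylinder surface, so $a_t\cdot(X,\omega)\to c_P$ for every nearby $(X,\omega)$; hence every leaf of $\F$ near $c_P$ accumulates on $c_P$, which is impossible for a foliation.

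\textbf{At $p\in S_P$.} Your key claim---that the only $B$-invariant complex line in $T_p\Y^\sigma$ is tangent to $C_P$---is false, and with it the contradiction collapses. In type one coordinates $(u,v)$ with $p=(0,0)$, the curve $\{0\}\times\Delta_\epsilon$ is $B$-invariant: a point $(0,v)$ with $v\neq 0$ is obtained by plumbing the two-cylinder surface $p$ (which $B$ fixes), and the $B$-action only changes the plumbing parameters, so $B\cdot(0,v)=(0,v')$. Thus $\{0\}\times\Delta_\epsilon^*$ is itself a leaf of $\F$, transverse to $C_P$. There are therefore two $B$-invariant lines at $p$, not one. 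Moreover, your assertion that ``boundary curves are not leaves of $\F$'' is wrong for the extended foliation: the three-cylinder loci $U_P^-$ and $U_{P^-}^+$ in $C_P$ \emph{are} leaves (single $B$-orbits). The paper's argument uses exactly these two facts: if $\F$ extended across $p$, then both $C_P$ (via $U_P^\pm$) and $\{0\}\times\Delta_\epsilon$ would be leaves through $p$, and two distinct leaves cannot meet.
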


\begin{proof}
  We work in a local covering $\Y^\sigma$ where the leaves of $\F$ are the orbits of $B$.  Since the $B$-action is free on the locus
  \eqref{eq:30}, its orbits there are the required extension of $\F$.
  
  In the neighborhood $\mathcal{U}_P$ of $c_P$, the horizontal foliation of every surface $(X, \omega)$ has three cylinders, so $a_t\cdot(X,
  \omega)\to c_P$.  Thus every leaf of $\F$ near $c_P$ passes through $c_P$, which is impossible if $\F$ can be extended to a foliation.
  
  Suppose $\F$ can be extended over a neighborhood of $p\in S_P$.  Choose type one coordinates $(u,v)$ around $p$ so that $p=(0,0)$.  In
  these coordinates $\{0\}\times\Delta_\epsilon$ is contained in a leaf of $\F$ transverse to $C_P$.  The subsets $U_P^-$ and $U_{P^-}^+$ of
  $C_P$ are also contained in leaves of $\F$.  Thus $C_P$ is a leaf of $\F$ near $p$, a contradiction because then $\F$ would have two
  leaves meeting at $p$.
\end{proof}

\section{$\F$ as a current on $\Y$}
\label{sec:foliation-f-as}

We saw in Proposition~\ref{prop:Fdoesntextend} that $\F$ doesn't extend to a foliation of $\Y$.  Nevertheless, we will show in this
section that the closed current on $\X$ defined by the measured foliation $\F$ does extend to $\Y$:

\begin{theorem}
  \label{thm:Fclosedcurrent}
  Integration of smooth $2$-forms on $\Y$ over $\F$ defines a closed $2$-current on $\Y$.
\end{theorem}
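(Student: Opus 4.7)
My plan is to first show that $\int_\F \eta$ converges for every smooth $2$-form $\eta$ on $\Y$ by reducing to the finiteness of $\mu_D$ proved in Theorem~\ref{thm:finiteinvariant}, and then to establish closedness via a cutoff-and-Stokes argument patterned on the proof of Theorem~\ref{thm:thomintegralzero} above.

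First I would make the definition precise on the dense open subset $\Y^\circ := \Y\setminus\bigcup_P(c_P\cup S_P)$, where by Proposition~\ref{prop:Fdoesntextend} the foliation $\F$ is an honest measured foliation.  On any flow box for $\F$,
\begin{equation*}
  \langle[\barF],\eta\rangle = \int_T d|q|(t)\int_{L_t}\eta\big|_{L_t},
\end{equation*}
and patching with a partition of unity, the definition on $\X$ is unambiguous.  Using the local product decomposition $\mu_D = |q|\cdot\mathrm{vol}_{\mathrm{Hyp},\F}$, I would rewrite the pairing as $\int_\X f_\eta\,d\mu_D$ where $f_\eta := \eta|_{T\F}/\mathrm{vol}_{\mathrm{Hyp},\F}$, giving the pointwise bound $|f_\eta|\leq\|\eta\|_\F$.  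The key observation is that $\|\eta\|_\F$ is bounded on $\Y^\circ$: in any smooth Riemannian metric $g$ on $\Y$, $|\eta|_g$ is bounded, and near any cusp of an $\F$-leaf (whether lying over a point of $\bdry\X$ or of $\Cusps(\X)$) the leafwise hyperbolic metric dominates $g$, so $g$-unit vectors have bounded leafwise hyperbolic length and $\|\eta\|_\F\domed 1$.  Combined with $\mu_D(\Y)<\infty$, the integral converges absolutely, so $[\barF]$ is a well-defined $2$-current on $\Y$.

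For closedness, I want $\langle[\barF],d\alpha\rangle=0$ for every smooth $1$-form $\alpha$ on $\Y$.  Let $\Sigma = \bigcup_P(c_P\cup S_P)\cup\Cusps(\X)$ and choose a smooth cutoff $\chi_\epsilon\colon\Y\to[0,1]$ which equals $1$ off an $\epsilon$-neighborhood of $\Sigma$ and vanishes on an $\epsilon/2$-neighborhood.  Leibniz gives
\begin{equation*}
  \langle[\barF],d\alpha\rangle = \langle[\barF],d(\chi_\epsilon\alpha)\rangle + \langle[\barF],(1-\chi_\epsilon)\,d\alpha\rangle - \langle[\barF],d\chi_\epsilon\wedge\alpha\rangle.
\end{equation*}
For fixed $\epsilon>0$ the form $\chi_\epsilon\alpha$ has compact support in $\Y^\circ$, so classical Stokes on each leaf together with holonomy invariance of the transverse measure $|q|$ (Theorem~\ref{thm:transversemeasure}) forces $\langle[\barF],d(\chi_\epsilon\alpha)\rangle=0$.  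The middle term tends to $0$ as $\epsilon\to 0$ because $\mu_D$ is finite and $\mu_D(\Y\setminus\supp\chi_\epsilon)\to 0$ together with the boundedness of $\|d\alpha\|_\F$.  It remains to show the boundary-type term $\langle[\barF],d\chi_\epsilon\wedge\alpha\rangle\to 0$.

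The main obstacle is this last piece near the singular locus $\bigcup_P(c_P\cup S_P)$, since that is precisely where $\F$ fails to extend.  The plan is to localize and estimate using the explicit coordinates built earlier.  Near a smooth point of $S_P$, type one coordinates (Theorem~\ref{thm:typeonecoordinates}) exhibit $\F$ as an ordinary holomorphic foliation transverse to $C_P$, and $|q|$ is bounded there, so the contribution is dominated by a constant times the length of a shrinking transverse curve and goes to $0$.  Near a cusp of $\X$, working in $\Y^\sigma$ with the neighborhoods $\mathfrak{N}_r(\sigma)$ of Theorem~\ref{thm:coreandcusps}, a direct horocycle computation bounds the contribution by the $|q|$-length of a shrinking horocycle.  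The hardest case is a neighborhood of $c_P$, where in type two coordinates $(u,v)$ (Theorem~\ref{thm:typetwocoordinates}) the transverse measure $|q|$ has a double-pole-type singularity, matching the double pole of $q_{\lambda(P)}$ at $c_{\lambda(P)}$ (Proposition~\ref{prop:qlambda}), and every nearby $\F$-leaf accumulates onto $c_P$ under the $a_t$-action.  There I would pattern the estimate after Lemmas~\ref{lem:boundoverhorocycle} and \ref{lem:boundoverTr}: pull back along the $B$-action to replace a shrinking annular boundary around $c_P$ by a horocyclic boundary inside a single leaf, whose hyperbolic length tends to $0$ while the leafwise integrand is controlled by $\|\alpha\|_\F$ and the integrable part of $|q|$.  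Summing over $\Sigma$, this forces $\langle[\barF],d\chi_\epsilon\wedge\alpha\rangle\to 0$ and yields $d[\barF]=0$.
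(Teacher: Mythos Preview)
Your convergence argument is essentially the paper's: bound $\|\eta\|_\F$ using the explicit coordinates and then invoke finiteness of $\mu_D$.  The paper packages the coordinate estimate as Lemma~\ref{lem:hyperbolicnorm} (for the larger class of forms with \Poincare growth), but the idea is the same.

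For closedness, your route diverges from the paper's and runs into trouble at exactly the point you flag as ``hardest.''  Two remarks.  First, the set $\Cusps(\X)$ does not sit inside $\Y$: in $\Y$ the cusps of $\X$ are resolved by the chains $\bigcup_P C_P$, so there are no additional cusp points to excise, and your ``near a cusp of $\X$'' case is already subsumed by the analysis near $\bdry\X$.  Second, and more seriously, with a generic metric cutoff $\chi_\epsilon$ around $c_P$ the leafwise norm $\|d\chi_\epsilon\|_\F$ is not bounded: in type two coordinates the unit $\F$-vector is $-iu\log|u|\,\partial_u - iv\log|v|\,\partial_v$, so a radial cutoff at scale $\epsilon$ has $\|d\chi_\epsilon\|_\F\sim\log(1/\epsilon)$.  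To beat this you would need a matching decay of the $\mu_D$-measure of the annulus, which you have not established; your hint ``pull back along the $B$-action to a horocyclic boundary'' is pointing at the right fix but is not an argument.

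The paper avoids this entirely by choosing the exhaustion dynamically rather than metrically.  It builds (Lemma~\ref{lem:addaptedneighborhood}) an $H$-invariant neighborhood $W\subset\X$ of the cusps as a superlevel set of the smoothed sum-of-cylinder-heights function, with $\bdry W$ smooth and transverse to $\F$.  Because $W$ is $H$-invariant, each leaf meets $\bdry W$ in a horocycle, and the elementary identity ``horocycle length $=$ area above it'' yields $\vol\bdry W=\vol W$ (Lemma~\ref{lem:volumerelation}).  Stokes on $\X\setminus W$ then gives
\[
\Big|\int_\F d\eta\Big|\le \vol(W)\,\|d\eta\|_\F^\infty + \vol(\bdry W)\,\|\eta\|_\F^\infty = \epsilon\,(\|d\eta\|_\F^\infty+\|\eta\|_\F^\infty),
\]
with no case analysis near $c_P$ or $S_P$ at all.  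In short, the content you are trying to squeeze out of a metric cutoff near $c_P$ is exactly what the $H$-invariance of $W$ gives for free; your sketch becomes a proof once you replace the metric $\chi_\epsilon$ by this adapted exhaustion.
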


We will then use this to interpret $\vol \E(1,1)$ as an intersection of classes in $H^2(\Y; \reals)$.

\paragraph{\Poincare growth.}

Let $\overline{X}$ be a compact, complex orbifold, and let $X\subset\overline{X}$ be the complement of a divisor $D$.  Suppose $D$ is
covered by coordinate charts of the form $\Delta^n/G$, where
\begin{itemize}
\item Each transformation $g\in G$ is of the form,
$$g(z_1, \ldots, z_n) = (\theta_1 z_1, \ldots, \theta_n z_n),$$ for some roots of unity $\theta_i$;
\item $D\cap \Delta$ is a union of the coordinate axes $z_1=0, \ldots, z_r=0$ for $1\leq r \leq n$.
\end{itemize}
In such a chart $\Delta^n/G$, we have $\Delta^n\cap X=(\Delta^*)^r\times\Delta^{n-r}$.  We give $\Delta^n\cap X$ a metric $\rho$ by putting
the \Poincare metric,
\begin{equation*}
  ds^2=\frac{|dz|^2}{|z|^2(\log|z|)^2},
\end{equation*}
on the $\Delta^*$ factors, putting the Euclidean metric $|dz|^2$ on the $\Delta$ factors, and defining $\rho$ to be the product metric.
Following Mumford \cite{mumford77}, we say that a form $\omega$ on $X$ has \emph{\Poincare growth} if there is a covering of $D$ by charts
$U_\alpha/G_\alpha$ with metrics $\rho_\alpha$ of this form such that for each $\alpha$, \label{page:poincare-growth}
\begin{equation*}
  \|\omega\|_{\rho_\alpha}\domed 1.
\end{equation*}

Let $\omega_i$ be the $2$-form on $\X$ covered by the forms,
\begin{equation*}
  \tilde{\omega}_i = \frac{1}{2\pi}\frac{dx_i\wedge dy_i}{y_i^2},
\end{equation*}
on $\half\times\half$.

\begin{prop}
  \label{prop:omegaipoincare}
  The forms $\omega_i$ have \Poincare growth.  Furthermore, there are smooth $1$-forms $\eta_i$ on $\X$ with \Poincare growth such that
  $\omega_i-d\eta_i$ are smooth $2$-forms on $\Y$.  Integration against $\omega_i$ defines a closed $2$-current on $\Y$.
\end{prop}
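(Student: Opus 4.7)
The plan is to prove each assertion by working in local charts near $\bdry\X$ and then patching.  Cover $\bdry\X$ by finitely many charts of two types: type one coordinates $(u,v)$ around points of $C_P\setminus\{c_P,c_{P^-}\}$, with $\{v=0\}$ cutting out $C_P$ (Theorem~\ref{thm:typeonecoordinates}), and type two coordinates $(u,v)$ around $c_P$, with $\{u=0\}$ and $\{v=0\}$ cutting out $C_P$ and $C_{P^+}$ respectively (Theorem~\ref{thm:typetwocoordinates}).  The crucial analytic input is the asymptotic $y_i\asymp|\log|v||$ in a type one chart, and an analogous estimate involving $|\log|u||$ and $|\log|v||$ in a type two chart.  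This follows by tracing through the definitions: the plumbing parameters are monomials in $u$ and $v$, and~\eqref{eq:26} expresses the relevant period ratios as $(2\pi i)^{-1}\log(\text{plumbing parameter})$ up to bounded terms, while $y_i$ is a bounded linear combination of imaginary parts of these period ratios.

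First I would verify Poincar\'e growth of $\omega_i$.  Using the asymptotic above, the factor $1/y_i^2$ in $\omega_i$ is comparable to $1/(\log|v|)^2$, while $dx_i\wedge dy_i$ pulls back to a bounded multiple of $dv\wedge d\bar v/|v|^2$ (since $z_i\sim (2\pi i)^{-1}\log v$ gives $dz_i\sim -dv/(2\pi v)$).  Hence $\omega_i$ is pointwise bounded by a constant multiple of the Poincar\'e volume form $|dv|^2/(|v|^2(\log|v|)^2)$ on the $v$-direction, and by a corresponding product in the type two case.  Thus $\|\omega_i\|_\rho\domed 1$ in each chart, and compactness of $\bdry\X$ yields Poincar\'e growth on all of $\X$.

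Next I would construct the primitive.  On $\half\times\half$, the $1$-form $\tilde\eta_i = dx_i/(2\pi y_i)$ satisfies $d\tilde\eta_i = \tilde\omega_i$ and is invariant under the parabolic stabilizer $\Gamma_\sigma$ of every cusp $\sigma$ (after conjugating $\sigma$ to $\infty$): translations $z_j\mapsto z_j+\alpha^{(j)}$ fix both $dx_i$ and $y_i$, while a unit dilation $z_j\mapsto\epsilon^{(j)}z_j$ scales $dx_i$ and $y_i$ by the same positive factor $\epsilon^{(i)}$, preserving the ratio.  Thus $\tilde\eta_i$ descends to a smooth $1$-form $\eta_i^\sigma$ on each neighborhood $\mathfrak{N}_r(\sigma)\subset\X$, with bounded Poincar\'e norm because the hyperbolic norm of $dx_i/y_i$ is identically $1$.

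Finally I would patch and verify smoothness.  Take a partition of unity $\{\psi_\sigma\}\cup\{\psi_\infty\}$ on $\Y$ with $\psi_\sigma$ supported in a neighborhood of the chain of boundary curves corresponding to the cusp $\sigma$ (lying inside the image of $\mathfrak{N}_r(\sigma)$) and $\psi_\infty$ supported in $\X$, chosen so that $\sum_\sigma\psi_\sigma\equiv 1$ on a smaller neighborhood of $\bdry\X$.  Set $\eta_i = \sum_\sigma\psi_\sigma\,\eta_i^\sigma$.  Since $d\eta_i^\sigma=\omega_i$ where $\eta_i^\sigma$ is defined, expansion gives
\begin{equation*}
  \omega_i - d\eta_i \;=\; \psi_\infty\,\omega_i \;-\; \sum_\sigma d\psi_\sigma\wedge\eta_i^\sigma,
\end{equation*}
and both terms on the right are compactly supported inside $\X$, so $\omega_i - d\eta_i$ extends by zero to a smooth form on $\Y$.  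For closedness of the current, $\omega_i - d\eta_i$ is smooth and closed on $\Y$, while the Poincar\'e growth of $\eta_i$ justifies $d[\eta_i]=[d\eta_i]$ as currents on $\Y$ via Stokes' theorem (the boundary integrals over shrinking tubular neighborhoods of $\bdry\X$ vanish by the Poincar\'e-growth bound, compare \cite{mumford77}).  The main obstacle I expect is the bookkeeping needed to extract the asymptotic $y_i\asymp|\log|v||$ from Theorems~\ref{thm:typeonecoordinates} and~\ref{thm:typetwocoordinates}: one must identify the correct linear combinations of period ratios with the $\half\times\half$-coordinates used to define $\omega_i$, especially at the points $c_P$ where two boundary divisors meet and both $u$ and $v$ contribute.
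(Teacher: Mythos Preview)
Your approach is sound and produces a correct proof, but it differs substantially from the paper's argument.  The paper does not compute anything in local coordinates; instead it observes that $\omega_i$ is the Chern form of a Hermitian metric $h_i$ on a line bundle $Q^i\Y$ over $\Y$ (established in \cite[Proposition~2.8]{bainbridge06}), and that $h_i$ is a \emph{good metric} in Mumford's sense (established in \cite[Theorem~9.8]{bainbridge06}).  All three assertions then follow immediately from \cite[Theorem~1.4 and Propositions~1.1--1.2]{mumford77}, which say precisely that Chern forms of good metrics have \Poincare growth, admit primitives with \Poincare growth modulo smooth forms, and define closed currents on the compactification.

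Your direct computation has the virtue of being self-contained and of making the primitive $dx_i/(2\pi y_i)$ explicit, at the cost of the bookkeeping you flag.  The point that needs care is relating the $\half\times\half$-coordinates $(z_1,z_2)$, which are period ratios of \emph{both} eigenforms $\omega_1$ and $\omega_2$, to the type one and type two coordinates of \S\ref{sec:compactification-x}, which are built from periods of $\omega_1$ alone.  Extracting the asymptotic of $y_2=\Im z_2$ therefore requires passing through the real multiplication to express periods of $\omega_2$ in terms of those of $\omega_1$; this is doable (the relation is $\reals$-linear with coefficients in $\K$), but it is exactly the sort of thing the good-metric machinery absorbs.  The paper's route buys brevity by outsourcing these estimates to \cite{bainbridge06} and \cite{mumford77}; yours buys transparency at the price of carrying them out.
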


\begin{proof}
  We showed in \cite[Proposition~2.8]{bainbridge06} that $\omega_i$ is the Chern form of a metric $h_i$ on a line bundle $Q^i\Y$ over $\Y$, and we showed in
  \cite[Theorem~9.8]{bainbridge06} that $h_i$ is what Mumford calls a good metric in \cite{mumford77}.  It follows from
  \cite[Theorem~1.4]{mumford77} that the $\omega_i$ have \Poincare growth.  The second statement follows from the proof of the same
  theorem, and the last statement follows from Propositions~1.1 and 1.2 of \cite{mumford77}.
\end{proof}

\paragraph{Bounded norm.}

In order to prove Theorem~\ref{thm:Fclosedcurrent}, we need to bound the norm $\|\omega\|_{\F}$ for smooth forms $\omega$ on $\Y$.  In fact,
we will need these bounds for forms which just have \Poincare growth.

\begin{lemma}
  \label{lem:hyperbolicnorm}
  For any $p$-form $\omega$ on $\X$ with \Poincare growth, $\|\omega\|_{\F}$ is bounded.
\end{lemma}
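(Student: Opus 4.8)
The plan is to reduce everything to a local computation near $\bdry\X$, since on any compact subset of $\X$ the foliation $\F$ has a continuous leafwise metric and any form of Poincar\'e growth is bounded there, so $\|\omega\|_\F$ is automatically bounded on compacta. Thus it suffices to bound $\|\omega\|_\F$ in a neighborhood of each boundary point, and by Proposition~\ref{prop:Fdoesntextend} and Theorem~\ref{thm:propertiesofYD} there are two kinds of boundary points to consider: the generic points of a curve $C_P$ (away from $c_P$ and $c_{P^-}$), and the singular points $c_P$. In both cases I will use the explicit local models from \S\ref{sec:compactification-x}: type one coordinates $(u,v)$ near a generic point of $C_P$, and type two coordinates $(u,v)$ near $c_P$. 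In these coordinates the leaves of $\F$ are the orbits of the subgroup $B$ (acting as in the discussion of local coverings), and in particular one can write down the leafwise hyperbolic metric explicitly, or at least estimate it.

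First I would treat a generic point $p\in C_P\setminus\{c_P\cup c_{P^-}\}$. In type one coordinates $f(u,v)=\Plumb(\pi(u),v^{-c'/\gcd},rv^{a'/\gcd})$, the curve $C_P$ is $\{v=0\}$ and the foliation $\F$ is transverse to $C_P$, with the leaf through $(0,v_0)$ being (locally) a copy of a hyperbolic surface. The key point is that $C_P$ itself is \emph{not} a leaf here (as noted in the proof of Proposition~\ref{prop:Fdoesntextend}, $\F$ extends as a genuine foliation over this region), so the leafwise hyperbolic metric is comparable, near $p$, to a Euclidean metric in a coordinate transverse to $C_P$ — more precisely, the leaf parameter is essentially $v$ (or a bounded power/root thereof coming from the branched cover), and the hyperbolic metric on the leaf is comparable to $|dv|^2/(\text{bounded})$ times possibly a $|\log|v||$ correction. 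Against this, a form $\omega$ of Poincar\'e growth satisfies $\|\omega\|_{\rho}\domed 1$ where $\rho$ puts the Poincar\'e metric $|dv|^2/(|v|\log|v|)^2$ on the punctured-disk direction; since $|v|^2(\log|v|)^{-2} \le (\text{leafwise metric scale})$ up to bounded factors in this regime, $\|\omega\|_\F \domed \|\omega\|_\rho \domed 1$. I would make this precise by comparing the leafwise hyperbolic metric to the ambient Poincar\'e metric $\rho$ and checking the leafwise metric dominates $\rho$ restricted to leaf directions — this is the heart of the estimate and should be a short computation once the coordinates are in hand.

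Next, the point $c_P$. In type two coordinates, $C_P=\{u=0\}$ and $C_{P^+}=\{v=0\}$, and every surface in the neighborhood $\mathcal{U}_P$ is a three-cylinder surface, so by the proof of Proposition~\ref{prop:Fdoesntextend} every leaf of $\F$ passes through $c_P$; the foliation is singular there. Here I expect the main obstacle: the leaves accumulate at $c_P$ and one must check the leafwise hyperbolic metric doesn't degenerate too fast relative to the Poincar\'e growth of $\omega$. The strategy is again to identify the leaves explicitly via the $B$-action and parameterize a leaf near $c_P$ by, say, the coordinate $u$ with $v$ held in a suitable relation (the leaf is where the horizontal periods are fixed, so $v$ is determined by $u$ up to the $B$-action parameters). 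Then the leafwise hyperbolic metric near the cusp of that leaf is comparable to the standard cusp metric $|du|^2/(|u|\log|u|)^2$, which is exactly the Poincar\'e metric $\rho$ places on the $u$-axis direction; for the $v$-direction one uses that $v$ is pinned to $u$ along leaves (or varies only in the transverse measure direction), so no unbounded contribution arises. Hence $\|\omega\|_\F \domed \|\omega\|_\rho \domed 1$ near $c_P$ as well.

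Assembling the three cases — compact interior, generic boundary point, singular point $c_P$ — and using compactness of $\Y$ to extract a finite cover by such neighborhoods gives a uniform bound on $\|\omega\|_\F$ over all of $\X$, proving the lemma. The one step I would be most careful about is the comparison of metrics at $c_P$: one has to rule out that, as a sequence of leaves shrinks toward $c_P$, the leafwise hyperbolic metric contracts faster than $\rho$ in some direction, which would break the bound; I believe the $B$-orbit description prevents this because it forces the leaf, near $c_P$, to look like a standard cusp neighborhood in the $u$-variable with the $v$-variable slaved to it, but this is the estimate that deserves the most attention in the full write-up.
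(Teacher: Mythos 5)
Your overall strategy is the one the paper uses: cover $\bdry\X$ by the type one and type two charts, use Poincar\'e growth to get $\|\omega\|_\rho\domed 1$ there, and reduce the lemma to showing that a vector field tangent to $\F$ of unit leafwise hyperbolic norm has bounded $\rho$-norm (equivalently, that the leafwise metric dominates $\rho$ in the leaf directions); your treatment of $c_P$ is essentially in line with this. The problem is the type one (generic boundary) case, where you assert that the leafwise hyperbolic metric in the coordinate $v$ transverse to $C_P$ is ``comparable to $|dv|^2$ up to bounded factors and a $|\log|v||$ correction.'' Taken literally this would sink the argument: if the leafwise metric were of Euclidean size, a unit leafwise vector would have Euclidean length $\comp 1$ in the $v$-direction, hence $\rho$-norm $\comp 1/(|v|\,|\log|v||)\to\infty$ as $v\to 0$, and the domination you need, $\|\omega\|_\F\domed\|\omega\|_\rho$, would fail exactly where it matters. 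Your inequality ``$|v|^2(\log|v|)^{-2}\le$ leafwise metric scale'' has the Poincar\'e density inverted and is not the condition required.

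The correct statement---which is the actual content of the paper's proof---is that in type one coordinates a unit leafwise vector field is $f(u,v)\,\pderiv{}{u} - iv\log|v|\,\pderiv{}{v}$ with $f$ continuous (continuity coming from tangency to the flow generated by $A$), and in type two coordinates it is $-iu\log|u|\,\pderiv{}{u} - iv\log|v|\,\pderiv{}{v}$; the latter is read off from the identification of $\tilde{U}_P$ with $\half\times\half$ via $u=e^{ic_2y_2}$, $v=e^{ic_3y_3}$, together with the fact that $a_t$ scales the imaginary parts of $y_2,y_3$. In other words, in the $v$-direction the leafwise hyperbolic metric is comparable to the punctured-disk Poincar\'e metric $|dv|^2/(|v|\log|v|)^2$---i.e.\ exactly the $\rho$-metric, not the Euclidean one---and each component of these vector fields then has manifestly bounded $\rho$-norm, which is what gives the lemma. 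So your plan can be completed, but only after replacing the Euclidean comparison by this computation; the ``heart of the estimate'' you defer is precisely the step where your guessed answer is wrong.
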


\begin{proof}
  Let $U\subset\Y$ be an open set meeting $\bdry\X$ with the coordinates $(u, v)$ defined in \S\ref{sec:compactification-x}.  Let
  $K\subset U$ be compact.  Let $\rho$ be the metric on $U$ as defined above.  Since $\|\omega\|_\rho$ is
  bounded on $K$, it suffices to show that $\|v\|_\rho/\|v\|_{\F}$ is bounded on $K$ for any vector field $v$ tangent to $\F$ (where $\|v\|_{\F}$ denotes
  the norm of $v$ with respect to the hyperbolic metric on leaves of $\F$).  Since $\F$ is one-dimensional, it suffices to show $\|v\|_\rho$
  is bounded for a single vector field $v$ tangent to $\F$ with unit $\F$-norm.  There are two cases, depending on whether or not $U$ is a
  type one or type two coordinate chart.

  First suppose $U$ is a type two coordinate chart around $c_P$ in $\Y$.  If $U$ is small enough, we can regard it as a subset of $U_P$ from
  \S\ref{sec:three-cylind-surf}.  By Theorem~\ref{thm:threecylcoordinates}, the universal cover $\tilde{U}_P$ is isomorphic to
  $\half\times\half$ with coordinates $(y_2, y_3)$ from \S\ref{sec:three-cylind-surf}.  The $(u, v)$-coordinates are related to the $(y_2,
  y_3)$-coordinates by
  \begin{equation*}
    u = e^{i c_2 y_2} \quad\text{and}\quad v = e^{i c_3 y_3},
  \end{equation*}
  for positive real constants $c_2$ and $c_3$.
  The action of the diagonal group $A\subset\SLtwoR$ on $\tilde{U}_P$ is given by
  \begin{equation*}
    a_t \cdot(u_2+iv_2, u_3+iv_3) = (u_2 + i e^t v_2, u_3 + i e^t v_3),
  \end{equation*}
  where $y_j = u_j + i v_j$.  Thus
  \begin{equation*}
    v = \Im y_2 \pderiv{}{y_2} + \Im y_3 \pderiv{}{y_3}
  \end{equation*}
  is a unit length tangent vector field to the lift of $\F$ to $\tilde{U}_P$.  In the $(u, v)$-coordinates,
  \begin{equation*}
    v = -i u \log|u|\pderiv{}{u}  - i v \log|v|\pderiv{}{v}.
  \end{equation*}
  This is a unit length vector field tangent to $\F$ with bounded $\rho$-length.  

  The case when $U$ is a type one coordinate chart is similar.  For some function $f$, the vector field,
  \begin{equation*}
    v = f(u,v)\pderiv{}{u} - i v \log|v|\pderiv{}{v},
  \end{equation*}
  is a unit length vector field tangent to $\F$.  The function $f$ is continuous because $v$ is tangent to the flow on $\Y$ generated by
  $A$.  Therefore $\|v\|_\rho$ is bounded on $K$.  
\end{proof}

\begin{cor}
  \label{cor:Fcurrent}
  For any 2-form $\omega$ on $X$ with \Poincare growth,
  \begin{equation}
    \label{eq:37}
    \int_{\F} |\omega| < \infty.
  \end{equation}
  In particular, $\F$ defines a current on $\Y$.
\end{cor}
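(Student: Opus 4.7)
The plan is to reduce the claim to Lemma~\ref{lem:hyperbolicnorm} together with the finiteness of $\mu_D$ provided by Theorem~\ref{thm:finiteinvariant}. First I will unpack what integration against the measured foliation $\F$ means: restrict $\omega$ to each leaf $L$ of $\F$ and integrate the resulting $2$-form against the transverse measure $|q|$. From the definition of $\|\omega\|_{\F}$, at each point one has $|\omega(v_1,v_2)| \leq \|\omega\|_{\F}\,\|v_1\|\|v_2\|$ for $v_1,v_2\in T\F$, and therefore the absolute value of $\omega|_L$ is at most $\|\omega\|_{\F}$ times the leafwise hyperbolic area form. Since $\mu_D$ is by construction the product of that area form with the transverse measure $|q|$, integrating along the leaves and then over the transversals yields
\[
\int_{\F} |\omega| \;\leq\; \int_{\X} \|\omega\|_{\F}\, d\mu_D.
\]

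Next I apply Lemma~\ref{lem:hyperbolicnorm} to conclude that $\|\omega\|_{\F}$ is uniformly bounded on $\X$, and Theorem~\ref{thm:finiteinvariant} to conclude that $\mu_D(\X)$ is finite. Combining these two facts in the displayed inequality gives \eqref{eq:37} at once.

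For the final assertion, I will check that every smooth $2$-form $\omega$ on $\Y$ restricts to a form of Poincar\'e growth on $\X$. In the type one and type two local coordinates around $\bdry\X$ described in Section~\ref{sec:compactification-x}, a smooth form is a finite sum of expressions built from $du$, $d\bar u$, $dv$, $d\bar v$ with bounded coefficients, and in each punctured-disk direction the Poincar\'e norm of $du$ is of order $|u|\,|\log|u||$, which tends to zero at the divisor. Thus the Poincar\'e norm of $\omega$ is uniformly bounded on $\X$, so \eqref{eq:37} applies. The linear functional $\omega\mapsto \int_{\F}\omega$ is therefore defined and continuous on smooth $2$-forms on $\Y$, which is exactly what it means to define a $2$-current.

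There is essentially no obstacle in this argument: the whole content is carried by Lemma~\ref{lem:hyperbolicnorm} and the finiteness of $\mu_D$, both of which are already available. The only bit of care required is the bookkeeping that identifies $\mu_D$ as the measure against which the $\F$-integral of a form is controlled, and the verification that smooth forms on the compactification $\Y$ are of Poincar\'e growth on $\X$, both of which are routine from the explicit coordinate descriptions.
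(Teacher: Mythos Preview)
Your proof is correct and follows essentially the same approach as the paper: both reduce \eqref{eq:37} to Lemma~\ref{lem:hyperbolicnorm} together with the finiteness of $\mu_D$ from Theorem~\ref{thm:finiteinvariant}, and both observe that smooth forms on $\Y$ have Poincar\'e growth. The only minor difference is that the paper spells out the continuity step a bit more explicitly---checking that if $\omega_n\to 0$ uniformly on $\Y$ then $\int_{\F}\omega_n\to 0$, by noting that $\|\omega_n\|_\rho\to 0$ in each chart and tracing this through the proof of Lemma~\ref{lem:hyperbolicnorm}---whereas you assert continuity without quite isolating this; but the missing detail is routine and follows from the same estimate you already wrote down.
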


\begin{proof}
  The first statement follows directly from Lemma~\ref{lem:hyperbolicnorm} since $\mu_D$ has finite volume by
  Theorem~\ref{thm:finiteinvariant}.  Since any smooth form on $\Y$ has \Poincare growth, \eqref{eq:37} holds in particular for smooth
  forms.  For $\F$ to define a current, it suffices to show that if $\omega_n\to 0$ is a uniformly convergent sequence of forms on $\Y$,
  then $\int_{\F}\omega_n\to 0$.  This is easily seen from the proof of Lemma~\ref{lem:hyperbolicnorm} using the fact that
  $\|\omega_n\|_\rho\to 0$ in each chart $U$.
\end{proof}

\paragraph{Cusp neighborhoods.}

The proof that $\F$ is closed, will require certain well-behaved neighborhoods of the cusps of $\X$.

\begin{lemma}
  \label{lem:addaptedneighborhood}
  For any $\epsilon>0$, there is a closed, $H$-invariant neighborhood $W\subset \X$ of the cusps of $\X$ such that $\Vol W<\epsilon$ and
  $\bdry W$ is a submanifold of $\X$ transverse to $\F$.
\end{lemma}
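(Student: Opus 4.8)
The plan is to build $W$ as a union over the (finitely many, by Theorem~\ref{thm:coreandcusps}) cusps $\sigma$ of $\X$ of neighborhoods $W_\sigma$, so it suffices to treat one cusp and produce an $H$-invariant $W_\sigma$ with $\Vol W_\sigma$ as small as we like and $\bdry W_\sigma$ transverse to $\F$. Conjugate $\SLtwoord$ so that $\sigma$ corresponds to $\infty\in\proj^1(\K)$; then $\Gamma_\sigma$ acts on $\half\times\half$ by maps $z_j\mapsto (u^{(j)})^2 z_j+\mu^{(j)}$ with $u\in\ord^\times$ a unit, and since $N(u)^2=1$ the function $h(z_1,z_2)=\log(\Im z_1\,\Im z_2)$ descends to a proper exhaustion function of a punctured neighborhood of $\sigma$ in $\X$, with $\{h\ge s\}=\overline{\mathfrak N_{e^s}(\sigma)}$ forming a neighborhood basis of $\sigma$. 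I will take $W_\sigma=\{h\ge s\}$ for a suitable large $s$.

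Two of the three requirements are immediate. First, $h$ is invariant under $H$: near $\sigma$ the action of $H$ on $\X$ is the restriction of the $B$-action on $\Y^\sigma$ from \S\ref{sec:compactification-x}, and in the coordinates there $h_t$ acts by a horizontal shear, which fixes the imaginary parts of all periods of the eigenforms and hence fixes $\Im z_1\,\Im z_2$; so each $\{h=s\}$ descends to a closed $H$-invariant hypersurface and $W_\sigma$ is a closed $H$-invariant neighborhood of $\sigma$. Second, since $\bigcap_s\{h\ge s\}=\emptyset$ and $\mu_D$ is a finite measure (Theorem~\ref{thm:finiteinvariant}), $\Vol\{h\ge s\}\to 0$ as $s\to\infty$, so the volume condition holds once $s$ is large.

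The remaining, and main, point is transversality of $\{h=s\}$ to $\F$. Here I use that the leaves of $\F$ are complex curves and that $h=\log\Im z_1+\log\Im z_2$ is pluri-superharmonic on $\half\times\half$ (because $i\,\partial\bar\partial\log\Im z_j=-\tfrac{1}{2(\Im z_j)^2}\,dx_j\wedge dy_j<0$), so $h|_L$ is superharmonic on every leaf $L$. Combined with $H$-invariance, $h|_L$ is constant along the horocycles of $L$, hence, in the coordinate identifying $L$ with $\half$ so that $H$ acts by $z\mapsto z+t$, it is a function of $\Im z$ alone; superharmonicity then forces this function to be concave. A concave function of one variable is transverse to each of its level sets except at its maximum, so $\{h=s\}$ is transverse to $\F$ at every point except possibly on the locus $Z=\{\,dh|_{T\F}=0\,\}$ of leafwise maxima of $h$, and $\{h=s\}$ is transverse to $\F$ altogether as soon as $s\notin h(Z)$.

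Thus everything reduces to showing $h(Z)$ omits arbitrarily large values, i.e. that no leaf of $\F$ has a local maximum of $h$ arbitrarily deep in the cusp; this is the step I expect to require the real work. The input is the description of $\F$ near $\bdry\X$ from \S\ref{sec:compactification-x}. In the type two chart around $c_P$ of Theorem~\ref{thm:typetwocoordinates}, the $a_t$-flow scales the imaginary parts of the three--cylinder coordinates by $e^t$, so it carries $h$ to $h+2t$; hence $h$ is strictly increasing along the $a_t$-direction inside every leaf near $c_P$, and $Z$ misses a neighborhood of $c_P$. A parallel analysis in the type one charts of Theorem~\ref{thm:typeonecoordinates}, using that the leaves there cross $\bdry\X$ transversally along directions in which $h\to\infty$ (cf. Proposition~\ref{prop:Fdoesntextend}), shows $Z$ misses a whole neighborhood of $\bdry\X$, so $h(Z)$ is bounded above; then any $s$ above this bound (and large enough for the volume estimate) gives the required $W_\sigma$, and $W=\bigcup_\sigma W_\sigma$. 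Alternatively, if $Z$ is only shown to be a proper real-analytic subset on which $h$ is not submersive, Sard's theorem gives $|h(Z)|=0$, which is equally good; the delicate point is in any case controlling $\F$, equivalently $h(Z)$, near $\bdry\X$.
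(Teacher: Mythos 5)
The construction stands or falls with your claim that $h=\log(\Im z_1\,\Im z_2)$ is $H$-invariant near the cusp, and that claim is not correct: the horocycle flow fixes the imaginary parts of the periods of the \emph{first} eigenform (in the cusp normalization where the canonical homological direction is horizontal), but it does not act on the Galois-conjugate eigenform by the matrix $h_t$. The conjugate eigenspace of $H^1$ is carried along by the Gauss--Manin connection, and the complex structure on it (hence $z_2$) evolves through the variation of Hodge structure, not linearly. Concretely, a closed leaf of $\F$ such as $\W$ lifts to $\half\times\half$ as the graph of a modular embedding $z_2=\phi(z_1)$, which is equivariant for the Veech group and its conjugate but is not affine; along an $H$-orbit one has $\Im z_1$ constant while $\Im z_2=\Im\phi(z_1)$ oscillates with $\Re z_1$. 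So the sets $\{h\ge s\}=\overline{\mathfrak{N}}_{e^s}(\sigma)$ are \emph{not} $H$-invariant, and with that the rest of your reduction collapses as well: $h$ restricted to a leaf is not constant along horocycles, hence not a function of $\Im z$ alone, and the concavity/transversality argument does not get started. In addition, the one step you yourself flag as "the real work" --- showing that the leafwise critical set $Z$ of $h$ stays away from $\bdry\X$ (or that $h(Z)$ has small image) --- is only sketched, and it is not clear how the type one charts would give it, since near the spine $S_P$ the foliation degenerates (Proposition~\ref{prop:Fdoesntextend}).

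The paper avoids both difficulties by working with flat geometry rather than with the Hilbert modular coordinates: on $\X^\sigma$ each point is an eigenform with its canonical horizontal direction, and one sets $f(X,\omega)=\sum_i\rho(\height(C_i))$, the sum of (smoothed) heights of the horizontal cylinders of the unit-area surface. This $f$ is manifestly $H$-invariant (shearing preserves horizontal cylinders and their heights), tends to infinity at the cusp, and satisfies $\deriv{}{t}f(a_t\cdot(X,\omega))>0$ wherever $f>0$; since the $a_t$-direction is tangent to $\F$, every level $\ell>0$ is a regular value and $\bdry W_\ell=f^{-1}(\ell)$ is automatically a smooth hypersurface transverse to $\F$, while finiteness of $\mu_D$ gives $\Vol W_\ell\to 0$. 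If you want to salvage your approach you would need an $H$-invariant exhaustion of the cusp to begin with, which essentially forces you back to a flat-geometric quantity like the cylinder heights rather than $\Im z_1\,\Im z_2$.
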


\begin{proof}
  It suffices to construct a neighborhood of the cusp at infinity in the covering $\X^\sigma$ with the required properties.  Let
  $\rho\colon\reals\to\reals$ be smooth and  increasing with $\rho(x)=0$ for $x<1/2$ and $\rho(x)=x$ for $x>1$.  Recall that each point in $\X^\sigma$ represents an
  eigenform $(X, \omega)$ together with a choice of horizontal direction.  Normalize each $(X, \omega)$ to have unit area.  Define a smooth
  function $f$ on $\X^\sigma$ by
  \begin{equation*}
    f(X, \omega) = \sum_i \rho(\height(C_i)),
  \end{equation*}
  where the sum is over the horizontal cylinders of $\X$ ($f$ is zero if there are no horizontal cylinders).  The bump function $\rho$ is
  there to make $f$ smooth; otherwise $f$ would just be continuous.

  Let $W_\ell = f^{-1}[\ell, \infty)$.  The function $f$ is $H$-invariant, so $W_\ell$ is as well.  We have $f(x)\to\infty$ as $x$
  approaches the cusp of $\X^\sigma$ because surfaces develop tall cylinders as they approach the cusp (see Theorem~5.5 and Proposition~7.1
  of \cite{bainbridge06}).  Thus $W_\ell$ is a neighborhood of the cusp.

  If $f(X, \omega)>0$, then $\deriv{}{t}f(a_t\cdot(X, \omega))>0$.  It follows that $\ell$ is a regular value of $f$, so $\bdry W_\ell$ is
  smooth if $\ell>0$.  Also if $\ell>0$, then $\bdry W_\ell$ is transverse to $\F$.

  Since $\X$ has finite volume, the cusp of $\X^\sigma$ does as well, so $\Vol W_\ell\to 0$ as $\ell\to\infty$.
\end{proof}

Given $W$ as in Lemma~\ref{lem:addaptedneighborhood}, let $\mathcal{W}$ be the measured foliation of $\bdry W$ induced by $\F$.  Give the
leaves of $\mathcal{W}$ their arclength measure as horocycles in $\half$, and let $\nu$ be the measure on $\bdry W$
which is the product of the leafwise and transverse measures.

\begin{lemma}
  \label{lem:volumerelation}
  We have
  \begin{equation*}
    \vol\bdry W = \vol W
  \end{equation*}
  with respect to the measures $\nu$ and $\mu_D$.
\end{lemma}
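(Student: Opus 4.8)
The plan is to disintegrate both $\mu_D$ and $\nu$ against the common transverse measure $|q|$, reducing the claim to a leafwise identity in hyperbolic geometry.

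First I would fix a local product structure near $\bdry W$. Since $\bdry W$ is transverse to $\F$, and $\F$ is complementary to the kernel foliation $\A$ on which $|q|$ is defined, a neighborhood of any point of $\bdry W$ in $\X$ can be written as a product $P\times T$, where $P$ is a plaque of $\F$ and $T$ is a two-real-dimensional transversal which we may take to lie inside $\bdry W$ and to be an open piece of a leaf of $\A$ equipped with the measure $|q|$. In such a chart $W$ corresponds to a family of regions $W_t\subset P$ (one for each $t\in T$) bounded by arcs $\gamma_t=\bdry W_t$, and by the definition of $\mu_D$ (Theorem~\ref{thm:transversemeasure}) and of $\nu$ we get $\mu_D(W\cap(P\times T))=\int_T \area(W_t)\,d|q|(t)$ and $\nu(\bdry W\cap(P\times T))=\int_T\length(\gamma_t)\,d|q|(t)$, the leafwise measures being the hyperbolic area and the horocyclic (hyperbolic) arclength respectively. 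Using a partition of unity this gives $\mu_D(W)=\int\area(W\cap L)\,d|q|$ and $\nu(\bdry W)=\int\length(\bdry W\cap L)\,d|q|$, the integrals running over the leaves $L$ of $\F$ (with additivity absorbing leaves that meet the cusp region in several components). So it suffices to prove, for each leaf $L$, that $\area(W\cap L)=\length(\bdry W\cap L)$.

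For the leafwise identity, recall that $W=W_\ell=f^{-1}[\ell,\infty)$ for the function $f$ built in Lemma~\ref{lem:addaptedneighborhood}. The function $f$ is $H$-invariant, so $f|_L$ is constant along the horocycles of $L$, and along each $a_t$-geodesic it is strictly increasing once positive; hence $W\cap L$ is a disjoint union of horoballs, one at each end of $L$ along which $f\to\infty$, with $\bdry W\cap L$ the corresponding union of bounding horocycles. On a single such horoball, realized in upper-half-plane coordinates as $\{\Im z>c\}/\langle z\mapsto z+p\rangle$ (allowing $p=\infty$, in which case both sides below are infinite), the hyperbolic area equals $p/c$ and the hyperbolic length of the bounding horocycle also equals $p/c$; equivalently, $\frac{dx}{y}$ is a primitive of the hyperbolic area form that restricts to the arclength form on the horocycle and to $0$ on the asymptotic geodesics, so Stokes' Theorem gives the equality. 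This is exactly the computation $\area(D_h)=\length(h)$ used in the proof of Lemma~\ref{lem:boundoverTr}. Summing over the components of $W\cap L$ proves the leafwise identity; leaves possessing a component of infinite area form a $|q|$-null set because $\Vol W<\infty$ by Theorem~\ref{thm:finiteinvariant}, so they cause no trouble. Putting the pieces together yields $\mu_D(W)=\nu(\bdry W)$.

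The main obstacle I anticipate is not the leafwise hyperbolic computation, which is elementary, but rather setting up the two disintegrations simultaneously: one must check that the transverse measure $|q|$ to $\F$ restricts on $\bdry W$ to precisely the transverse measure entering the definition of $\nu$, and one must handle leaves that enter the cusp region infinitely often, or are Teichm\"uller curves with several cusps, so that the partition-of-unity and Fubini bookkeeping is legitimate.
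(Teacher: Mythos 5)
Your proposal is correct and follows essentially the same route as the paper: the paper's proof is exactly the leafwise hyperbolic fact that the length of the horocycle in $\half$ joining $i$ to $i+t$ equals the area of the region above it, with the common transverse measure $|q|$ implicitly supplying the disintegration and Fubini bookkeeping that you spell out. No further comment is needed.
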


\begin{proof}
  This follows directly from the fact that the length of the horocycle in $\half$ joining $i$ to $i+t$ is equal to the area of the region in
  $\half$ lying above this horocycle.
\end{proof}

\paragraph{Proof of Theorem~\ref{thm:Fclosedcurrent}.}
After Corollary~\ref{cor:Fcurrent}, it remains to show that the current defined by integration over $\F$ is closed.  Let $\eta$ be a smooth
$1$-form on $\Y$.  We need to show $\int_{\F} d\eta = 0$.

Since $\eta$ is smooth, $\eta$ and $d\eta$ have \Poincare growth, so $\|\eta\|_{\F}$ and $\|\eta\|_{\F}$ are bounded.  Given $\epsilon>0$,
let $W \subset \X$ be the neighborhood of the cusps constructed in Lemma~\ref{lem:addaptedneighborhood} with $\Vol W<\epsilon$.  By Stokes'
Theorem and Lemma~\ref{lem:volumerelation},
\begin{align*}
  \int_{\F} d\eta &= \int_{\F|_W} d\eta + \int_{\mathcal{W}} \eta \\
  &\leq \epsilon(\|\eta\|_{\F}^\infty + \|d\eta\|_{\F}^\infty) 
\end{align*}
where the last integral is over the measured foliation of $\mathcal{W}$ of $\bdry W$ defined above.  Thus $\int_{\F} d\eta=0$.
\qed

\paragraph{Cohomological interpretation of $\vol \E(1,1)$.}

Let $[\barF], [\omega_i] \in H^2(\Y; \reals)$ be the cohomology classes defined by the respective closed currents.

\begin{theorem}
  \label{thm:cohomologicalvolume}
  We have
  \begin{equation*}
    \vol \E(1,1) = 2\pi [\omega_1] \cdot [\barF],
  \end{equation*}
  where the pairing is the intersection pairing on $H^2(\Y; \reals)$.
\end{theorem}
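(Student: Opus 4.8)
The plan is to relate the volume $\vol\E(1,1)$, which by definition is the integral of the product of the transverse measure $|q|$ with the leafwise hyperbolic area, to the cohomological pairing $[\omega_1]\cdot[\barF]$. The key observation is that $\F$ is the $\SLtwoR$-orbit foliation, its leaves carry the hyperbolic metric of curvature $-4$ coming from the uniformization by $\half$, and $2\pi\omega_1$ restricts on each leaf of $\F$ to (a constant multiple of) the leafwise hyperbolic area form. More precisely, on $\half\times\half$ the leaves of the lift of $\F$ are graphs $z_2 = g(z_1)$ of holomorphic (in fact \Mobius) maps, and along such a leaf the pullback of $\tilde\omega_1 = \frac{1}{2\pi}\frac{dx_1\wedge dy_1}{y_1^2}$ is exactly $\frac{1}{2\pi}$ times the hyperbolic area form on the leaf (parameterized by $z_1$). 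I would first verify this pointwise identity: that $2\pi\,\omega_1|_{\F} = d A_{\mathrm{hyp}}$, the leafwise hyperbolic area form.

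Granting that, the intersection pairing $[\omega_1]\cdot[\barF]$ is computed by integrating the closed $2$-form $\omega_1$ (or a smooth representative $\omega_1 - d\eta_1$ as in Proposition~\ref{prop:omegaipoincare}) against the current defined by $\F$, i.e. $[\omega_1]\cdot[\barF] = \int_\F \omega_1$. Here I would use that both $[\omega_1]$ and $[\barF]$ are represented by closed currents on $\Y$ (Proposition~\ref{prop:omegaipoincare} and Theorem~\ref{thm:Fclosedcurrent}), so the cup product is represented by the honest integral $\int_\F\omega_1$, which converges by Corollary~\ref{cor:Fcurrent} since $\omega_1$ has \Poincare growth. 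Then by the Fubini-type structure of the measure underlying the current $\F$ — integrate $\omega_1$ over each leaf and then integrate against the transverse measure $|q|$ — the pointwise identity gives
\begin{equation*}
  \int_\F \omega_1 = \frac{1}{2\pi}\int_\F d A_{\mathrm{hyp}} = \frac{1}{2\pi}\,\vol\mu_D = \frac{1}{2\pi}\,\vol\E(1,1),
\end{equation*}
where the middle equality is precisely the definition of $\mu_D$ as the product of $|q|$ with the leafwise hyperbolic area, and $\vol\mu_D = \vol\E(1,1)$ since $\mu_D$ is supported on (the image of) $\E(1,1)$. Rearranging yields $\vol\E(1,1) = 2\pi[\omega_1]\cdot[\barF]$.

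The main obstacle I anticipate is not the pointwise normalization computation (that is the "straightforward calculation" kind of step, essentially unwinding the $\SLtwoR$-equivariance as in Lemma~\ref{lem:actioncommutes}), but rather being careful that the intersection number $[\omega_1]\cdot[\barF]$, defined via cohomology on the compact orbifold $\Y$, really equals the naive integral $\int_\F\omega_1$ over the open part $\X$. One must check that no boundary contribution from $\bdry\X$ is lost: since $\F$ does not extend over $c_P\cup S_P$ (Proposition~\ref{prop:Fdoesntextend}) and $\omega_1$ is only a current, one should argue that replacing $\omega_1$ by the smooth representative $\omega_1 - d\eta_1$ changes $\int_\F$ by $\int_\F d\eta_1$, and this vanishes by the same Stokes-theorem-with-shrinking-cusp-neighborhoods argument used in the proof of Theorem~\ref{thm:Fclosedcurrent} (using Lemma~\ref{lem:addaptedneighborhood} and Lemma~\ref{lem:volumerelation}, together with the \Poincare growth of $\eta_1$). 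Once that bookkeeping is in place the result follows.
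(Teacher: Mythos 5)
Your proposal is correct and follows essentially the same route as the paper's proof: choose the smooth representative $\omega_1 - d\eta$ from Proposition~\ref{prop:omegaipoincare}, note that $\int_{\F} d\eta = 0$ by the Poincar\'e-growth/Stokes argument underlying Theorem~\ref{thm:Fclosedcurrent}, and identify $\int_{\F}\omega_1$ with $\frac{1}{2\pi}\vol(\mu_D)$ via the leafwise identification of $2\pi\omega_1$ with the hyperbolic area form and Fubini. The only small inaccuracy is the parenthetical claim that generic leaves are graphs of \emph{M\"obius} maps (only the closed leaves such as those of $\P$ and $\W$ are; generic leaves are merely holomorphic graphs), but your argument uses only the leafwise isometry with the first factor, so this does not affect the proof.
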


\begin{proof}
  By Proposition~\ref{prop:omegaipoincare}, there is a smooth $1$-form $\eta$ on $\X$ with \Poincare growth such that
  \begin{equation*}
    \alpha = \omega_1 - d\eta
  \end{equation*}
  is a smooth $2$-form on $\Y$.  We have,
  \begin{align*}
    [\omega_1]\cdot[\barF] &= \int_{\F} \alpha \\
    &= \int_{\F} \omega_1 - \int_ {\F} d\eta \\ &=\int_{\F} \omega_1 \\
    &= \frac{1}{2\pi} \vol \E(1,1).
  \end{align*}
\end{proof}

\section{Intersection with closed leaves}
\label{sec:inters-with-clos}

In this section, we will relate the foliation $\F$ in a neighborhood of $\P$ and $\W$ with the canonical foliations of the tangent bundles
of these curves.  This, together with Theorem~\ref{thm:thomintegralzero} will imply that the classes $[\barP]$ and $[\barW]$ have trivial
intersection with $[\barF]$.

\paragraph{Bundles over $\Y$.}

The line bundle $\Omega\X$ over $\X$ defined in \S\ref{sec:hilb-modul-surf} extends to a line bundle $\Omega\Y$ over $\Y$  whose fiber over
$(X, [\omega])$ is the line in $\Omega(X)$ determined by $[\omega]$.  Let $Q\Y = (\Omega\Y)^2$, the line bundle whose fiber over $(X,
[\omega])$ is the space $QX$ of quadratic differentials on $X$ which are multiples of $\omega^2$.  The $L^1$-metric on $Q\Y$ is the Hermitian
metric which assigns to $q$ norm $\int_X|q|$.

\paragraph{Cotangent bundles.}

For the rest of this section, $C$ will denote either of the curves $\P$ or $\W$.  For any curve $E\subset\Y$, we let $QE$ denote the
restriction of $Q\Y$ to $E$.  We start by
sketching a proof of the following well-known fact from \Teichmuller theory:
\begin{prop}
  \label{prop:quaddiffbundle}
  There is a natural isomorphism,
  \begin{equation}
    \label{eq:41}
    Q C \to T^* C,
  \end{equation}
  which identifies the $L^1$-metric on $Q C$ with twice the hyperbolic metric on $T^*C$.
\end{prop}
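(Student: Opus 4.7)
The map in \eqref{eq:41} is the familiar Teichm\"uller pairing: at $p=(X,[\omega])\in C$, send a quadratic differential $q\in Q_pX$ to the linear functional on $T_pC$ defined by
\begin{equation*}
  \langle q, \mu\rangle = \RE\int_X q\,\mu,
\end{equation*}
where a tangent vector to $C$ at $p$ is represented by a Beltrami differential $\mu$ via Kodaira-Spencer. Since $C$ is a closed leaf of $\F$, it is (locally) the image of an $\SLtwoR$-orbit in $\Omega_1\moduli[2]$, and the derivative at the identity of $A\mapsto A\cdot(X,\omega)$ exhibits $\mu_\omega = \bar\omega/\omega$ as a Beltrami differential whose class spans $T_pC$ (over $\reals$, up to rotation). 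So the first thing I would do is verify this explicit identification of $T_pC$.

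Next I would check that the pairing is a (real-linear) isomorphism. With $q = c\,\omega^2$ and $\mu=\bar\omega/\omega$,
\begin{equation*}
  \int_X q\,\mu = c\int_X \omega^2\cdot\frac{\bar\omega}{\omega} = c\int_X|\omega|^2,
\end{equation*}
which is nonzero (indeed, equals $c$ when $\omega$ is normalized to have unit area). Both $Q_pC$ and $T^*_pC$ are real $2$-dimensional, and the pairing is $\cx$-linear in $q$ and transforms correctly under rotation of $\omega$, so nondegeneracy on a single $c\,\omega^2$ forces the map to be an isomorphism. This identification is canonical, independent of the choice of representative $\omega$ in $[\omega]$, since rescaling $\omega\mapsto \lambda\omega$ rescales $\omega^2$ and $\bar\omega/\omega$ in a matched way.

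The last step is to compare the metrics. On the $T^*C$ side, the hyperbolic metric comes from the uniformization of $C$ by the upper half plane via the $\SLtwoR$-action, and the standard fact from Teichm\"uller theory is that the orbit map $\SLtwoR/\SOtwoR\to\teich_2$ is an isometric embedding of $\hyp$ (curvature $-4$, in the normalization where Beltrami differentials of norm $k<1$ give quasiconformal maps of dilatation $k$) onto the Teichm\"uller disk, whereas the hyperbolic metric of curvature $-1$ on $C$ is half of this Teichm\"uller metric. On the $QC$ side, the norm of a cotangent vector is exactly the $L^1$-norm of the dual quadratic differential. Explicitly tracking these normalizations through the pairing $\langle \omega^2,\bar\omega/\omega\rangle$ produces the stated factor of $2$. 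The main obstacle is purely bookkeeping: keeping the curvature normalization of the hyperbolic metric consistent with the $\SLtwoR$-parametrization of the leaf. Once the tangent vector to the orbit at $p$ corresponding to the standard Killing vector on $\hyp$ is identified with the Beltrami differential $\bar\omega/\omega$ up to the correct scalar, the factor of $2$ falls out of the computation above.
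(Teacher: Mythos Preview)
Your plan is correct and follows essentially the same route as the paper: define the map via the Teichm\"uller pairing $q\mapsto(\mu\mapsto\int_X q\mu)$, identify $T_pC$ with the line spanned by $\nu=\bar\omega/\omega$, and compute $\langle\omega^2,\nu\rangle=\|\omega^2\|$ to extract the factor of $2$. The paper pins down that factor by the explicit computation $\frac{d}{dt}\big|_{t=0}\mu_t=-\tfrac12\nu$ along the unit-speed geodesic $a_t\cdot(X,\omega)$, which is exactly the ``bookkeeping'' you allude to; one small point you omit is that for $C=\P$ the cotangent vector is obtained by pulling back along $\P\to\moduli[1]\times\moduli[1]$ rather than $\moduli[2]$, but the argument is otherwise identical.
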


\begin{proof}
  Suppose $C=\W$.  Given $(X, [\omega])\in\W$, the quadratic differential $\omega^2$ defines via \Teichmuller theory a cotangent vector
  $\tau$ to $\moduli[2]$ at $X$.  Pulling back $\tau$ by the immersion $\W\to\moduli$ defines a cotangent vector $\sigma$ to $\W$ at $(X,
  [\omega])$.  This defines the map \eqref{eq:41}.
  
  Consider the unit speed geodesic $(X_t, \omega_t) = a_t\cdot(X, \omega)$ on $\W$.  The induced \Teichmuller map $f_t\colon X \to X_t$ has
  complex dilatation,
  \begin{equation*}
    \mu_t = \frac{e^{-t/2} - e^{t/2}}{e^{-t/2} + e^{t/2}} \nu,
  \end{equation*}
  where $\nu = \overline{\omega}/\omega$.  We have
  \begin{equation*}
    \deriv{}{t}\mu_t = -\frac{1}{2}\nu,
  \end{equation*}
  so $\nu$ represents a tangent vector to $\W$ of hyperbolic norm $2$.  The pairing with $\sigma$ is given by
  \begin{equation*}
    \langle\sigma, \nu\rangle = \int \frac{\overline{\omega}}{\omega}\omega^2 = \int|\omega^2| = \|\omega^2\|,
  \end{equation*}
  so the  hyperbolic norm of $\sigma$  is half the $L^1$ norm.

  The case where $C=\P$ is identical, except that we use the immersion $\P\to\moduli[1]\times\moduli[1]$ instead of $\moduli$.
\end{proof}

\begin{lemma}
  \label{lem:quaddiffbundle}
  The natural isomorphism \eqref{eq:41}  extends to an isomorphism,
  \begin{equation*}
    Q\overline{C} \to T^*\overline{C}(D),
  \end{equation*}
  where the divisor $D$ is the sum of the cusps of $C$.
\end{lemma}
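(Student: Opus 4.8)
The plan is to check the asserted extension of \eqref{eq:41} one cusp at a time. Fix a cusp $c$ of $C$ and a local holomorphic coordinate $w$ on $\overline{C}$ with $w(c)=0$, so that $C$ near $c$ is the punctured disc $\{0<|w|<\epsilon\}$. Since $Q\Y$ is a line bundle over $\Y$ and the divisor $D$ is supported off $C$, both $Q\overline{C}$ and $T^*\overline{C}(D)=T^*\overline{C}\otimes\mathcal{O}(D)$ are line bundles over $\overline{C}$ restricting to $QC$ and $T^*C$ on $C$; so it is enough to show that the isomorphism $\iota$ of Proposition~\ref{prop:quaddiffbundle}, written in holomorphic frames near $c$, extends holomorphically and invertibly across $c$. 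Let $(X_w,\omega_w)$ be the family of eigenforms along $C$; since $\Omega\Y$ restricts to a line bundle on $\overline{C}$, after rescaling we may take $s:=\omega_w^2$ to be a nonvanishing holomorphic frame of $Q\overline{C}$ near $c$, with limit $\omega_c^2$ the square of the stable form at $c$. Writing $\iota(s)=\psi(w)\,\frac{dw}{w}$, where $\frac{dw}{w}$ is the standard local frame of $T^*\overline{C}(D)$, the function $\psi$ is holomorphic on $\{0<|w|<\epsilon\}$, and it suffices to prove $|\psi|\comp 1$ near $0$: then $\psi$ has a removable singularity and is nonzero at $0$, so $\iota$ extends to a bundle isomorphism near $c$.

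I would obtain $|\psi|\comp1$ by evaluating both sides of the metric identity of Proposition~\ref{prop:quaddiffbundle} near $c$. The complete hyperbolic metric on $C$ is the Poincar\'e metric of the punctured disc, so $\bigl\|\frac{dw}{w}\bigr\|_{\mathrm{hyp}}\comp\log\frac1{|w|}$, giving
\[
  \|s\|_{L^1}=2\|\iota(s)\|_{\mathrm{hyp}}=2|\psi(w)|\,\Bigl\|\tfrac{dw}{w}\Bigr\|_{\mathrm{hyp}}\comp|\psi(w)|\log\tfrac1{|w|}.
\]
On the other hand $\|s\|_{L^1}=\int_{X_w}|\omega_w|^2$, which I would estimate from the degeneration of the family at $c$. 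By Theorem~\ref{thm:propertiesofYD} the cusp $c$ lies on a unique curve $C_P$, at a smooth point of $\bdry\X$, and $\overline{C}$ meets $C_P$ transversely there; hence in the type one coordinates $(u,v)$ of Theorem~\ref{thm:typeonecoordinates} the transverse coordinate $v$ restricts to a local coordinate on $\overline{C}$, which we may take to be $w$. From the formula for those coordinates, $X_w$ degenerates as $w\to0$ to the stable form $\omega_c$ (which is $w_{\lambda(P)}$ if $C=\W$ and $p_{\lambda(P)}$ if $C=\P$) by acquiring two nonseparating nodes whose plumbing parameters are positive powers of $w$, and at those nodes $\omega_c$ has the nonzero residues $1$ and $\lambda(P)$. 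The standard plumbing estimate for the $L^2$-norm of a $1$-form near a forming node with nonzero residue $r$, namely a contribution $\frac{|r|^2}{2\pi}\log\frac1{|t|}+O(1)$ with $t$ the plumbing parameter, together with the square-integrability of $\omega_w\to\omega_c$ away from those nodes, yields $\int_{X_w}|\omega_w|^2=c_0\log\frac1{|w|}+O(1)$ for some $c_0>0$. Comparing with the display forces $|\psi(w)|\comp1$, which finishes the proof.

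The main obstacle is this degeneration estimate: one must confirm that $w$ is comparable to the transverse plumbing coordinate on $\overline{C}$, that both plumbing parameters really are positive powers of $w$ with $\omega_c$ having nonzero residues at the two nodes, and then carry out the local integral near each node --- uniformly for $C=\W$ and $C=\P$. Alternatively one could invoke that the $L^1$-metric on $Q\Y$ is a good metric in the sense of Mumford, as in the proof of Proposition~\ref{prop:omegaipoincare}, but that only furnishes bounds polynomial in $\log\frac1{|w|}$, and one would still need the exact logarithmic order to pin the twist down to $D$ rather than a larger multiple of $D$; so the plumbing computation appears to be the essential ingredient.
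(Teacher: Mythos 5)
Your proof is correct, and its skeleton is the one the paper uses: work cusp by cusp, trivialize $Q\overline{C}$ near the cusp, and compare the $L^1$-norm of the trivializing section with the hyperbolic norm of $dw/w$ (which is $\comp\log\frac{1}{|w|}$) to pin the twist down to exactly the cusp divisor. Where you genuinely diverge is in how the growth $\int_{X_w}|\omega_w|^2\comp\log\frac{1}{|w|}$ is obtained. You take a boundary-normalized holomorphic frame of $Q\overline{C}$ and extract the growth from a plumbing computation: transversality of $\overline{C}$ with $C_P$ (Theorem~\ref{thm:propertiesofYD}) makes the transverse type one coordinate of Theorem~\ref{thm:typeonecoordinates} a coordinate on $\overline{C}$, the two plumbing parameters are explicit positive powers of it, the residues $1$ and $\lambda$ at the two nonseparating nodes are nonzero, and each plumbed flat cylinder contributes $\comp|{\rm res}|^2\log\frac{1}{|w_i|}$ to the area while the thick part stays bounded --- all of which checks out, for both $\barW$ and $\barP$. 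The paper instead sidesteps the degeneration estimate you flag as the essential ingredient by choosing a different frame: it parametrizes a punctured neighborhood of the cusp by the $B$-orbit through a fixed $(X,\omega)\in C$, setting $\omega_w = A_w\cdot\omega$ with $A_w$ upper triangular of determinant $\frac{r}{2\pi}\log\frac{1}{|w|}$ (with $h_r$ generating the parabolic stabilizer), so that $\int_{X_w}|\omega_w|^2$ equals that determinant times a constant --- a one-line computation --- and the only boundary input needed is that this family converges to a nonzero stable form at $w=0$. So your route is heavier but self-contained given the boundary coordinates of \S\ref{sec:compactification-x}, and it exhibits the exact logarithmic coefficient in terms of the residues; the paper's choice of frame buys the logarithmic growth for free and requires no control of plumbing parameters at all. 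Your closing remark is also right: Mumford-goodness of the $L^1$-metric only bounds the norm by powers of $\log\frac{1}{|w|}$ and could not by itself identify the twist as exactly $D$.
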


\begin{proof}
  Given a cusp $c$, we need to construct a nonzero holomorphic section of $Q\overline{C}$ on a neighborhood of $c$ whose associated section
  of $T^*\overline{C}$ has a simple pole at $c$.  Let $(X, \omega)\in C$ with $a_t\cdot(X, \omega)\to c$ at $t\to\infty$.  Let $h_r$ be a generator of the
  subgroup of the stabilizer $\Stab(X, \omega)\subset\SLtwoR$ which stabilizes $(X, \omega)$.  Define a holomorphic map $f\colon\Delta\to C$ by
    $f(w) = (X_w, \omega_w)$,
  where if $w\neq 0$, then
  \begin{equation*}
    (X_w, \omega_w) =
    \begin{pmatrix}
      1 & \dfrac{r}{2\pi}\arg w \\
      0 & \dfrac{r}{2\pi}\log\dfrac{1}{|w|}
    \end{pmatrix}
    \cdot(X, \omega),
  \end{equation*}
  and $(X_0, \omega_0)$ is the unique limiting stable Abelian differential, which is visibly nonzero.  The restriction of $f$ to some
  $\Delta_\epsilon$ is a conformal isomorphism onto some neighborhood of $c$.

  We have a nonzero holomorphic section $w\mapsto \omega_w^2$ of $Q\overline{C}$ over $\Delta_\epsilon$.  The associated holomorphic
  $1$-form $\tau$ on $\Delta^*_\epsilon$ has  hyperbolic norm,
  \begin{equation*}
    \|\tau\| = \frac{1}{2}\int_X |\omega_w|^2 \comp -\log|w|,
  \end{equation*}
  with respect to the hyperbolic metric on $\Delta^*_\epsilon$.  Since a nonzero holomorphic section of $T^* \Delta_\epsilon$ has norm
  comparable to
    $-|w|\log|w|$,
  $\tau$ has a simple pole at $0$.
\end{proof}

\paragraph{Foliated tubular neighborhoods.}

Let $U\subset\pobarm[2]$ be a tubular neighborhood of $\overline{C}$ which is small enough that each point of $U\setminus \overline{C}$
represents a surface with a unique shortest saddle connection (up to the hyperelliptic involution) connecting distinct zeros.

Define a map $ \Phi \colon U \to (Q\overline{C})^* $ as follows.  Given $(X, \omega)\in U$, let $I\subset X$ be the unique shortest saddle
connection connecting distinct zeros, and let $(Y, \eta)$ be the surface obtained by collapsing $I$.  Identify $QY$ with $QX$ by the unique
linear map $T$ sending $\eta^2$ to $\omega^2$.  Define $S\in (Q\Y)^*$ by
\begin{equation*}
  S(q) = \left(\int_I \sqrt{T(q)}\right)^2,
\end{equation*}
and define $\Phi(X, \omega) = (Y, S)$.

We observed in the proof of \cite[Theorem~12.2]{bainbridge06}:

\begin{prop}
  \label{prop:tubularneighborhoods}
  In the case where $C=\P$, the map $\Phi$ is an isomorphism between $U$ and a neighborhood of the zero section in $(Q\barP)^*$.  In the case
  where $C=\W$, there is a tubular neighborhood $V\subset U$ which is a threefold branched covering by $\Phi$ of a neighborhood of the zero section in
  $(Q\barW)^*$, branched over $\barW$.
\end{prop}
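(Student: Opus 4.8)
The plan is to exhibit $\Phi$ as (for $C=\P$) a biholomorphism and (for $C=\W$) a threefold branched cover by writing down an explicit inverse built from the connected-sum and splitting operations of \S\ref{sec:abel-diff}, which are precisely the operations inverse to the collapse used to define $\Phi$, and then checking holomorphy with period coordinates.

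Case $C=\P$. Here a point of $U\setminus\barP$ carries a unique shortest saddle connection $I$ (up to $J$) joining the two zeros, and $\Collapse$ along $I$ yields $(Y,\eta)\in\barP$, two genus one differentials $(E_i,\eta_i)$ joined at a node. I would define the inverse $\Psi$ by $\Psi(Y,S)=\connsum((Y,\eta),\overline{0w})$ for any $w$ with $w^2=S(\eta^2)$: the two choices of $w$ differ by $-1$, which preserves the projective differential, and the embeddings $I\to E_i$ are immaterial since the translation groups of the $(E_i,\eta_i)$ act transitively, so $\Psi$ is well defined on the projectivized moduli space. One then checks $\Phi\circ\Psi=\mathrm{id}$ and $\Psi\circ\Phi=\mathrm{id}$: for $|w|$ small the image of $\overline{0w}$ is the shortest saddle connection of the connected sum joining distinct zeros, it has period $\pm w$, and collapsing it recovers $(Y,\eta)$, so transversally to $\barP$ both maps are the identity in the coordinate $S(\eta^2)=(\int_I\omega)^2$. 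Holomorphy of $\Phi$ and $\Psi$ follows from Theorem~\ref{thm:periodcharts}, since $\Collapse$, $\connsum$, and the function $S$ are all holomorphic in period coordinates.

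Case $C=\W$. Now $J(I)=I$, and $\Collapse$ along $I$ produces $(Y,\eta)\in\barW$ with a double zero $p$, a cone point of angle $6\pi$. Reversing $\Collapse$ is the splitting of $p$ along an embedded ``X'' $E$, which for fixed $|w|$ is determined by the direction of its first segment, ranging over the circle of directions at $p$, of circumference $6\pi$. The key combinatorial observation is that the multiset of the four segment directions of $E$, namely $\{\theta,\theta+\pi,\theta+3\pi,\theta+4\pi\}$, is invariant under $\theta\mapsto\theta+3\pi$, and that this rotation is a symmetry of the gluing pattern of Figure~\ref{fig:splitting}; hence splittings at $\theta$ and at $\theta+3\pi$ give the same surface, so the space of splittings of $p$ is a cone of angle $3\pi$, a smooth complex disk carrying a coordinate $\zeta$ in which the squared period of the new saddle connection is $S(\eta^2)=(\int_I\omega)^2=c\,\zeta^3$ for a unit $c$ (the exponent $3$ reflecting the cone angle $3\pi$). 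So, after shrinking $U$ to a tubular neighborhood $V$ over which this picture is uniform, $\Phi|_V$ is, fiberwise over $\barW$, the map $\zeta\mapsto\zeta^3$: a threefold cover of a neighborhood of the zero section of $(Q\barW)^*$, unramified away from $\barW$ and totally ramified over $\barW=\{\zeta=0\}$. Holomorphy again comes from Theorem~\ref{thm:periodcharts}.

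The step I expect to be the main obstacle is the bookkeeping for $C=\W$: verifying that the $3\pi$-cone of splittings is a genuine smooth complex manifold, that there are no further coincidences (in particular that the hyperelliptic involution rotates the directions at $p$ by $3\pi$ and so acts through the identification already made, contributing no extra quotient), and that the holomorphic transverse coordinate really is $\zeta$ with $S(\eta^2)=c\,\zeta^3$ and not some other power — equivalently, that the degree is exactly $3$. A secondary issue is extending $\Phi$ over the cusps of $C$ and matching neighborhoods of the cusp points of $\overline{C}$ in $(Q\overline{C})^*$; for this one uses the explicit local models near $p_\lambda$ and $w_\lambda$ together with Lemma~\ref{lem:quaddiffbundle}, which identifies $Q\overline{C}$ with $T^*\overline{C}(D)$.
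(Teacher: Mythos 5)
Your proposal is correct and follows essentially the same route as the paper, whose proof is exactly the observation that a connected sum along a given segment can be formed in one way while a double zero can be split along it in three ways; your explicit inverse maps and the local model $S = c\,\zeta^3$ (which the paper itself uses later, in the proof of Proposition~\ref{prop:foliationsconcide}) are just a fleshed-out version of that count.
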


The proof is essentially immediate from the fact that given a short segment $I\subset \cx$, there are three ways to split a double zero
along $I$ and one way to perform a connected sum along $I$.

\begin{prop}
  \label{prop:foliationsconcide}
  The map $\Phi$ sends the foliation $\F$ to $2i$ times the canonical foliation of $TC$.
\end{prop}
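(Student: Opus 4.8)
The plan is to reduce the statement to a pointwise identification of tangent spaces. The key is that both the foliation $\F$ and the canonical foliation $\mathcal{C}_C$ of $TC$ are characterized by a period-constancy property, so it suffices to match up the relevant periods under $\Phi$. First I would recall the setup for $C = \P$ (the case $C = \W$ being identical modulo the threefold branched cover of Proposition~\ref{prop:tubularneighborhoods}, which does not affect leaves). A point $(X,\omega) \in U\setminus C$ has a unique shortest saddle connection $I$ joining the two distinct zeros; collapsing $I$ gives a point $(Y,\eta)\in C$, i.e.\ two elliptic curves joined at a node, and $(X,\omega) = \connsum((Y,\eta), I)$. The vector $\Phi(X,\omega) = (Y, S) \in (Q\barP)^*$ records, via $S(q) = \left(\int_I \sqrt{T(q)}\right)^2$, the ``complex length squared'' of $I$ as a cotangent-paired quantity. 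Using the isomorphism $Q\barP \cong T^*\barP(D)$ of Lemma~\ref{lem:quaddiffbundle}, $(Q\barP)^*$ is identified with a twist of $T\barP$, and $S$ is identified with a tangent vector to $\barP$ at $(Y,\eta)$.

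Next I would identify the leaves of $\F$ near $C$. A leaf of $\F$ through $(X,\omega)$ is the $\GLtwoRplus$-orbit (projectivized, i.e.\ $\SLtwoR$-orbit); by Proposition~\ref{prop:periodsconstant} and the discussion of the kernel foliation $\A$, the locus where the \emph{absolute} periods of $(X,\omega)$ are fixed up to $\cx^*$ is exactly the leaf of $\A$, and $\F$ is transverse to $\A$. But on the connected-sum picture, the absolute periods of $\connsum((Y,\eta),I)$ are exactly the absolute periods of $(Y,\eta)$ (they do not see $I$ at all), so under $\Phi$ the fibers of the projection $(Q\barP)^* \to \barP$ are exactly the leaves of $\A$, and $\F$ maps to a foliation transverse to these fibers, i.e.\ to something that looks like the graph of a section over each leaf of $\A$ locally. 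Moving along a leaf of $\F$ from $(X,\omega)$ corresponds to applying $a_t$, which scales $\omega$ anisotropically; the key computation is to track how $\int_I \omega$ — equivalently $S$ — varies. Since $a_t$ fixes the collapsed surface $(Y,\eta)$ only up to the $\SLtwoR$-action, the base point $(Y,\eta)$ moves along its own $\SLtwoR$-orbit in $C$ precisely as $a_t\cdot(Y,\eta)$, and simultaneously $S$ is transported; the upshot I expect is that the image leaf is exactly a leaf of the canonical foliation $\mathcal{C}_C$ as constructed in \S\ref{sec:canonical-foliations} from the $\PSLtwoR$-action on $\half\times Q$.

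The concrete way to nail the constant $2i$: recall from \S\ref{sec:canonical-foliations} that $\mathcal{C}_\half$ is the image under $\Phi(z,(u+iv)^2) = (z,(u+zv)^2\partial_z)$ of the horizontal foliation of $\half\times Q$, where $Q = (\cx, \tfrac{1}{2z}dz^2)$; so a leaf of $\mathcal{C}_C$ is parameterized, over the leaf of $\F$ in $C$ identified with $\half$ via the developing map of the hyperbolic metric, by $z \mapsto (z, (u_0 + z v_0)^2)$ for fixed $(u_0,v_0)$. On the other side, along $a_t\cdot(X,\omega)$ the segment $I$ has holonomy $a_t \cdot \int_I\omega$, and $\big(\int_I \omega\big)^2$ transforms exactly as the fiber coordinate of $TC$ under the geodesic flow does — this is the same computation as equation~\eqref{eq:9} in the proof of Lemma~\ref{lem:normrho}, up to the factor coming from the $\tfrac{1}{2z}$ in the quadratic differential $q$ defining the transverse measure, versus the identification $QC \to T^*C$ carrying the $L^1$-metric to \emph{twice} the hyperbolic metric (Proposition~\ref{prop:quaddiffbundle}). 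Chasing these two factors of $2$ and the $i = \sqrt{-1}$ relating the $(z,t)$ and $(w,s)$ coordinates (again as in Lemma~\ref{lem:normrho}, where $s = -it$) produces the claimed factor $2i$.

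The main obstacle I anticipate is bookkeeping the precise normalization: there are three places where factors of $2$, $i$, or $-1$ enter — the definition of $q = \tfrac{1}{2z}dz^2$ on $Q$, the definition $q = (\partial f)^2$ of the transverse measure to $\F$ with $f = \int_p^q\omega$, and the $L^1$-versus-twice-hyperbolic discrepancy in Proposition~\ref{prop:quaddiffbundle}. Getting the composite exactly equal to $2i$ (rather than, say, $\pm i$ or $2$) requires care, but it is a finite linear-algebra/normalization check rather than a conceptual difficulty; the conceptual content — that $\Phi$ intertwines the two foliations at all — follows cleanly from period-constancy along leaves of $\A$ and the transversality of $\F$ to $\A$, combined with the $\PSLtwoR$-equivariance of $\Phi$ (Lemma~\ref{lem:actioncommutes}) matched against the $\SLtwoR$-equivariance of the collapsing construction.
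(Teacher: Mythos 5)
Your proposal follows essentially the same route as the paper's proof: parameterize the tubular neighborhood by $f(z,w)=\connsum((X_z,\omega_z),I_w)$ (respectively by splitting the double zero for $\W$), use the equivariance of this surgery under the affine action to see that a leaf of $\F$ becomes the graph $z\mapsto (u+zv)^2$ in the $(z,w)$-coordinates, and use Proposition~\ref{prop:quaddiffbundle} to normalize the identification of $(QC)^*$ with $TC$, so the strategy is correct and the remaining work is exactly the finite normalization check you describe. Two corrections to your bookkeeping plan: the factor of $i$ comes from the Beltrami pairing in the proof of Proposition~\ref{prop:quaddiffbundle}, where $\overline{\omega}_z/\omega_z$ corresponds to the tangent vector $-2i\,\Im z\,\pderiv{}{z}$ and $S_w$ to $-2iw\,\pderiv{}{z}$ (not from the cusp coordinates $s=-it$ of Lemma~\ref{lem:normrho}, which are irrelevant here), and the leaves must be traced under the full upper-triangular action $\begin{pmatrix}1&\Re z\\0&\Im z\end{pmatrix}$ acting on the holonomy of $I$ (sending $u+iv$ to $u+zv$), not only under $a_t$, since each leaf is two-dimensional and only the full action identifies it with a graph of the canonical foliation.
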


\begin{proof}
  Given $(X, \omega)\in C$,  identify the universal cover $\widetilde{C}$ with $\half$ by $z\mapsto (X_z, \omega_z)$, where
  \begin{equation*}
    (X_z, \omega_z) =
    \begin{pmatrix}
      1 & \Re z \\
      0 & \Im z
    \end{pmatrix}
    \cdot(X, \omega).
  \end{equation*}

  By the proof of Proposition~\ref{prop:quaddiffbundle}, the Beltrami differential
  $
    \nu_z = \overline{\omega}_z/\omega_z
  $
  on $X_z$ represents the tangent at $z$ to the geodesic $\gamma(s) = e^s z$ at $s=t$, that is, the vector 
  \begin{equation*}
    v_z = - 2 i y\pderiv{}{z},
  \end{equation*}
  at $z$.
  
  Now restrict to the case where $C=\P$.  Let $U\subset\half$ be a small neighborhood of $i$.  Given $w\in\cx\setminus\{0\}$ let $I_w\in\cx$ be
  the segment joining $\sqrt{w}/2$ and $-\sqrt{w}/2$.  Choose some $\epsilon$ small enough that for every $w\in\Delta_\epsilon$ and $z\in
  U$, we can form the connected sum of $(X_z, \omega_z)$ along $I_w$.  We  parameterize a neighborhood of $(X, \omega)$ in $\Y$ by $f\colon
  U\times\Delta_\epsilon\to \Y$, where
  \begin{equation}
    \label{eq:42}
    f(z, w) = \connsum ((X_z, \omega_z), I_w).
  \end{equation}
  In these $(z, w)$-coordinates, we have
  $
    \Phi(z, w) = (X_z, S_w),
  $
  where $S_w\in (QX)^*$ is characterized by
  $
    S(\omega_z^2) = w,
  $
  which corresponds to the Beltrami differential,
  \begin{equation*}
    \frac{w}{\Im z} \nu_z,
  \end{equation*}
  which in turn corresponds to the tangent vector,
  \begin{equation*}
    -2 i w\pderiv{}{z}.
  \end{equation*}
  Thus, identifying $(Q\P)^*$ with $T\P$, we have
  \begin{equation}
    \label{eq:43}
    \frac{i}{2}\Phi(z, w) = \left(z, w\pderiv{}{z}\right).
  \end{equation}

  We have,
  \begin{equation*}
    \begin{pmatrix}
      1 & \Re z \\
      0 & \Im z
    \end{pmatrix}
    \cdot \connsum((X, \omega), I_{(u+ iv)^2}) = \connsum((X_z, \omega_z), I_{(u + zv)^2}), 
  \end{equation*}
  so in $(z, w)$ coordinates, the leaf of $\F$ through $(i, (u+iv)^2)$ is parameterized by $z\mapsto (z, (u+ zv)^2)$, which corresponds to 
  the leaf of $\mathcal{C}_{\P}$ through $\frac{i}{2}\Phi(i, w)$.

  The proof in the case where $C = \W$ is nearly identical except that we use the operation of splitting a double zero to parameterize a
  tubular neighborhood of $\W$.  Recall  that a splitting of a double zero of a differential $(X, \omega)$ is
  determined by an embedded ``X'' as defined in \S\ref{sec:abel-diff}.  Equation  \eqref{eq:42} becomes
  \begin{equation*}
    f(z, w) = \zerosplit((X_z, \omega_z), E_z(w)),
  \end{equation*}
  where $E_z$ is a holomorphic map from some small disk $\Delta_\epsilon\subset\cx$ to the space of embedded ``X''s on $(X_z, \omega_z)$
  such that for each segment $I$ of $E_z(w)$,
  \begin{equation*}
    \left(\int_I \omega_z\right)^2 = w^3.
  \end{equation*}
  Then, \eqref{eq:43} becomes
  \begin{equation*}
    \frac{i}{2}\Phi(z, w) = \left(z, w^3\pderiv{}{z}\right).
  \end{equation*}
\end{proof}

\begin{theorem}
  \label{thm:intersectionzero}
  We have,
  \begin{equation}
    \label{eq:44}
    [\P]\cdot[\barF] = [\W]\cdot[\barF]= 0.
  \end{equation}
\end{theorem}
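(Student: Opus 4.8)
The plan is to represent the Poincaré dual of $[\barP]$ (resp. $[\barW]$) by a Thom form of the normal bundle, supported in one of the foliated tubular neighborhoods of Proposition~\ref{prop:tubularneighborhoods}, transport the computation via the map $\Phi$ onto the model bundle of \S\ref{sec:canonical-foliations}, and then quote Theorem~\ref{thm:thomintegralzero}. Write $C$ for either $\P$ or $\W$. Since $[\barF]$ is represented by the closed current ``integrate over $\F$'' (Theorem~\ref{thm:Fclosedcurrent}), and since the fundamental class of $\overline{C}$ in $H^2(\Y;\reals)$ is represented by any closed $2$-form $\Psi$ Poincaré dual to $\overline{C}$, we have $[\overline{C}]\cdot[\barF]=\int_\F\Psi$. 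We are free to take $\Psi$ smooth, compactly supported in the neighborhood $U$ (resp. $V$) of Proposition~\ref{prop:tubularneighborhoods}, and equal there to a Thom form of the normal bundle $N$ of $\overline{C}$ (extended by zero, it is then smooth and closed on all of $\Y$). By Lemma~\ref{lem:quaddiffbundle}, $(Q\overline{C})^*\cong T\overline{C}(-D_C)$, where $D_C$ is the sum of the cusps of $C$, and this is exactly the line bundle $L$ of Theorem~\ref{thm:thomintegralzero} for the finite volume hyperbolic orbifold $C$.

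Consider first $C=\P$. By Proposition~\ref{prop:tubularneighborhoods}, $\Phi$ is a biholomorphism of $U$ onto a neighborhood of the zero section of $(Q\barP)^*=L$. Fiberwise rescaling is a bundle automorphism of $L$ fixing the zero section, so $\Phi':=\tfrac{i}{2}\Phi$ is again a biholomorphism of $U$ onto such a neighborhood, and by Proposition~\ref{prop:foliationsconcide} (via formula \eqref{eq:43}) $\Phi'$ carries $\F$ exactly onto the canonical foliation $\mathcal{C}_\P$. From the explicit description of $\Phi$ in the proof of Proposition~\ref{prop:foliationsconcide}, the relative period $f=\int_p^q\omega$ becomes, up to a constant, a square root of the fiber coordinate on $T\barP(-D_C)$, so the quadratic differential $q=(\partial f)^2$ defining the transverse measure of $\barF$ is carried by $\Phi'$ to a constant multiple of the fiberwise quadratic differential $\tfrac{1}{2z}\,dz^2$ defining the canonical transverse measure of $\mathcal{C}_\P$. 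Hence $\Phi'$ pushes the current $\barF|_U$ forward to a positive multiple of the current $\int_{\mathcal{C}_\P}(\cdot)$. Taking $\Psi=(\Phi')^*\Psi_0$, where $\Psi_0=d(\rho(r)\psi)$ is the Thom form of Theorem~\ref{thm:thomintegralzero} (shrinking the support of $\rho$ so that $\Psi_0$ is supported inside $\Phi'(U)$), we conclude that $[\barP]\cdot[\barF]=\int_\F(\Phi')^*\Psi_0$ is a positive multiple of $\int_{\mathcal{C}_\P}\Psi_0$, which vanishes by that theorem.

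For $C=\W$ the argument is identical except for a threefold branched cover. By Proposition~\ref{prop:tubularneighborhoods}, $\Phi'$ realizes $V$ as a threefold cyclic branched cover of a neighborhood of the zero section of $(Q\barW)^*=L$, branched over $\barW$; in particular $N^{\otimes 3}\cong L$, and $\Phi'$ still carries $\F$ onto $\mathcal{C}_\W$. Pulling the Thom form $\Psi_0$ back along this branched cover, $(\Phi')^*\Psi_0$ is a smooth, compactly supported, closed $2$-form on $V$ which restricts to three times the generator on each fiber of $N$ (since $t\mapsto t^3$ has degree three), so it represents $3[\barW]$ in compactly supported cohomology and $\tfrac13(\Phi')^*\Psi_0$ is a Thom form of $N$. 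Since $\Phi'$ pushes $\barF|_V$ forward to a positive multiple of $\int_{\mathcal{C}_\W}(\cdot)$, we conclude exactly as before that $[\barW]\cdot[\barF]=\tfrac13\int_\F(\Phi')^*\Psi_0$ is a multiple of $\int_{\mathcal{C}_\W}\Psi_0=0$. This proves \eqref{eq:44}.

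The substantive content is entirely in Theorem~\ref{thm:thomintegralzero}, together with the foliated tubular neighborhoods of \S\ref{sec:inters-with-clos}; the work here is bookkeeping. I expect the only genuine subtlety to be the verification that $\Phi'$ intertwines the transverse measure of $\barF$ with a constant multiple of the canonical transverse measure on $\mathcal{C}_C$, so that $\int_\F\Psi$ really becomes a scalar multiple of $\int_{\mathcal{C}_C}\Psi_0$ rather than an integral of $\Psi_0$ against some unrelated transverse measure, and the correct handling of the branched cover and orbifold multiplicities in the identification $[\overline{C}]\cdot[\barF]=\int_\F\Psi$. Because the target value is $0$, none of the constants entering these identifications needs to be computed.
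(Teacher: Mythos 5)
Your proposal is correct and follows essentially the same route as the paper: represent the Poincaré dual of $\overline{C}$ by the Thom form $\Psi$ of Theorem~\ref{thm:thomintegralzero} pulled back through the tubular-neighborhood map $\Phi$ of Proposition~\ref{prop:tubularneighborhoods} (which gives $[\barP]$, respectively $3[\barW]$), use Proposition~\ref{prop:foliationsconcide} to identify $\F$ with (a constant rescaling of) the canonical measured foliation, and conclude from the vanishing of $\int_{\mathcal{C}_C}\Psi$. The extra care you take with the transverse measures and the factor of $3$ from the branched cover is exactly the bookkeeping the paper leaves implicit.
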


\begin{proof}
  Let $\Psi$ be the form defined in \S\ref{sec:canonical-foliations} which represents the Thom class on $T\barP(-D)$ or $T\barW(-D)$, where
  $D$ is the divisor of cusps.  By Proposition~\ref{prop:tubularneighborhoods}, $\Phi^*\Psi$ represents either the cohomology class $[\barP]$ or
  $3[\barW]$.  Then we obtain \eqref{eq:44} from Proposition~\ref{prop:foliationsconcide} and Theorem~\ref{thm:thomintegralzero}.
\end{proof}

\section{Intersection with boundary curves}
\label{sec:inters-with-bound}

In this section, we will calculate the intersection numbers of the curves $C_P$ with the class $[\barF]$.  We will show:

\begin{theorem}
  \label{thm:intCPandF}
  For any prototype $P=(a, b, c, \overline{q})\in\Yprot$,
  \begin{equation*}
    [C_P]\cdot[\barF] =
    \begin{cases}
      \dfrac{\gcd(a, c)}{2\gcd(a, b, c)} \left( 1 -\dfrac{b}{\sqrt{D}}\right), & \text{if $a-b+c < 0$};\\
      \dfrac{\gcd(a, c)}{2\gcd(a, b, c)} \left( 1 -\dfrac{a+c}{\sqrt{D}}\right), & \text{if $a-b+c > 0$}.
    \end{cases}
  \end{equation*}
\end{theorem}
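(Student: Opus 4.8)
The plan is to represent the Poincar\'e dual of $C_P$ by an explicit closed $2$-form supported near $C_P$, pair it against the foliation current $[\barF]$ of Theorem~\ref{thm:Fclosedcurrent}, and evaluate the resulting integral by localizing it at the two orbifold points $c_P$ and $c_{P^-}$ of $\Y$ lying on $C_P$. Concretely: equip the normal line bundle of $\overline{C_P}$ in $\Y$ — pulled back to the local branched covers that resolve the cyclic quotient singularities $c_P,c_{P^-}$ — with a Hermitian metric, let $\psi$ be a global angular form as in Theorem~\ref{thm:globalangularform}, and set $\eta_P=d(\rho(r)\psi)$, a smooth closed $2$-form on $\Y$ representing $\pm[C_P]$ and supported in a thin tubular neighborhood $\mathcal{N}$ of $C_P$. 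Then $[C_P]\cdot[\barF]=\pm\int_\F\eta_P$, and everything reduces to computing this integral.

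First I would show that $\int_\F\eta_P$ localizes at $c_P$ and $c_{P^-}$. Decompose $C_P$ into the three-cylinder loci $U_P^-,U_{P^-}^+$, the regular part of the spine $S_P$, the finitely many points of $\barW\cap C_P$ and $\barP\cap C_P$, and the two points $c_P,c_{P^-}$. Near a point of $U_P^-$ or $U_{P^-}^+$ one uses the type-one coordinates $(u,v)$ of Theorem~\ref{thm:typeonecoordinates}, in which $C_P=\{v=0\}$ is a leaf of $\F$ (Proposition~\ref{prop:Fdoesntextend}) and the nearby leaves are graphs over $C_P$ on which $\arg v$ has zero winding, so $\int_L d(\rho\psi)=0$ on such leaves by Stokes' Theorem. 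Near the points of $\barW\cap C_P$ and $\barP\cap C_P$ the foliation $\F$ agrees with the canonical foliation of the tangent bundle (Proposition~\ref{prop:foliationsconcide}) and these points are cusps of $\barW$, $\barP$, so the estimates of Lemmas~\ref{lem:normrho}--\ref{lem:boundoverTr} which prove $\int_{\mathcal{C}_X}\Psi=0$ (Theorem~\ref{thm:thomintegralzero}, as used for Theorem~\ref{thm:intersectionzero}) kill their contribution. Along the regular part of $S_P$, where $\F$ genuinely fails to extend, the local picture in the proof of Proposition~\ref{prop:Fdoesntextend} together with the norm bound of Lemma~\ref{lem:hyperbolicnorm} shows that the contribution there vanishes as $\mathcal{N}$ shrinks. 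Hence $\int_\F\eta_P$ is the sum of the localized contributions at $c_P$ and $c_{P^-}$.

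The main work is the contribution at $c_P$; the one at $c_{P^-}$ is the same computation with $P$ replaced by the previous prototype $P^-$, since $c_{P^-}$ is the point ``$c_Q$'' for $Q=P^-$ and $C_P=C_{(P^-)^+}$. In a type-two chart $(u,v)\in\Delta^2/\Gamma_{m_P,s}$ about $c_P$ (Theorem~\ref{thm:typetwocoordinates}) one has $C_P=\{u=0\}$, with open part $U_P$ parameterized by $S(1,\lambda,\lambda-1,y_1,y_2,y_3)$, $\lambda=\lambda(P)$, and $y_1$ determined by $y_2,y_3$ via Lemma~\ref{lem:eigenformequation}, where
\[
  u=\exp\!\Big(\frac{2\pi i\,\gcd(a',c')}{\lambda a'}\,y_2\Big),\qquad
  v=\exp\!\Big(\frac{2\pi i\,\gcd(a',a'+b'+c')}{(\lambda-1)a'}\,y_3\Big).
\]
As in the proof of Lemma~\ref{lem:hyperbolicnorm}, each leaf of $\F$ near $c_P$ is a diagonally embedded copy of $\half$ with a single cusp at $c_P$; Stokes' Theorem on such a cusped leaf leaves only the integral over a small loop around that cusp, where $\rho(|u|)\equiv1$ and $\frac{1}{2\pi}\oint d\arg u$ is a winding number, constant over the leaves near $c_P$. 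Integrating this constant against the transverse measure of $\F$ — governed near $c_P$ by the quadratic differential $q_P=(\pi|_{C_P})^*q_{\lambda(P)}$, which by Proposition~\ref{prop:qlambda} has a half-infinite cylinder of circumference $\gcd(a',c')\lambda$ there — expresses the $c_P$-contribution as an explicit rational multiple of $\lambda$ and its Galois conjugate $\lambda'=c/(a\lambda)$. Adding the two contributions and simplifying with $a\lambda^2+b\lambda+c=0$, $\sqrt{D}=2a\lambda+b=a(\lambda-\lambda')$, $\gcd(a,c)=\gcd(a,b,c)\gcd(a',c')$, and the explicit formula for $P^-$ yields the stated answer; the dichotomy $a-b+c<0$ versus $a-b+c>0$ is precisely the dichotomy in the definition of $P^-$ and selects which of the two expressions appears (for instance $\lambda(P^-)=\lambda+1$ when $a-b+c<0$).

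The hard part will be the local analysis at $c_P$ and $c_{P^-}$. Because $\F$ does not extend as a foliation across $c_P$ — every nearby leaf passes through $c_P$, and $\Y$ is singular there — the local intersection number is \emph{not} the $q_P$-measure of $C_P$ near $c_P$, which is in fact infinite because of the half-infinite cylinder. One must instead run Stokes' Theorem carefully on the cusped leaves in the type-two charts, correctly extract both the winding number of $\arg u$ along the cusp loops and the transverse-measure weighting, and keep track of the orbifold order $m_P$ and the branching order $\gcd(a',c')$ of $\pi|_{C_P}$. The vanishing of the contribution along the regular part of the spine $S_P$, where transversality is unavailable, is a secondary but real difficulty.
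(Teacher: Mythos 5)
Your overall skeleton matches the paper's (represent the Poincar\'e dual of $C_P$ by an explicit closed form, pair with the closed current $[\barF]$, reduce to a computation in the charts near the cusp, add the two contributions associated with $c_P$ and $c_{P^-}$, and simplify with the prototype algebra), but the two steps that carry all of the content are either unjustified or incorrect as stated. First, the localization. Leaves of $\F$ near the open three-cylinder arcs of $C_P$ are \emph{not} graphs over $C_P$ with zero winding of $\arg v$: in the type one and type two charts the horocycle flow rotates the argument of the plumbing parameter, so nearby leaves wind around $C_P$ all along the curve, not just at the corners. Worse, in the type two chart a leaf piece in $U_P$ has $\log|u|/\log|v|$ constant, so leaves of extreme slope enter any thin tube about $C_P$ far from the corners, and these are exactly the leaves with large winding number; their contribution is not killed by shrinking the tube and is only controlled after integration against the transverse measure. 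Second, at the corner your key quantitative claims fail: the winding number is \emph{not} constant over the leaves near $c_P$ --- for the closed leaves of slope corresponding to $p/q$ it equals $q$ around $C_P$ and $p$ around $C_{P^+}$ (Proposition~\ref{prop:integraloverdelta}), and it is unbounded; what is constant (resp.\ linear in the slope) is a transverse average, and proving that is a real step (linking numbers in the torus $\hat{W}_t$, then continuity in the slope). Moreover the transverse measure of $\F$ near $c_P$ is not governed by the boundary differential $q_P$ (a differential on $C_P$, with infinite mass at $c_P$, as you yourself note); it has to be computed from the kernel-foliation differential via the holonomy of $\F$, as in Proposition~\ref{prop:transversemeasuretoW0} and Lemma~\ref{lem:area}, which give the density $\tfrac{a}{\sqrt{D}}(1+v_3)^{-3}\,du_3\,dv_3$ whose integral produces the factor $\tfrac12$ and the weights $\gcd(a',c')(\lambda-1)$ and $\gcd(a',a'+b'+c')\lambda$ of Proposition~\ref{prop:intoverUP}. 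Without these two inputs your outline reproduces only the shape of the answer, not the coefficients, and the ``hard part'' you flag is precisely the part left unresolved.

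For comparison, the paper avoids the tube and the Thom form entirely: it chooses a Poincar\'e dual $\nu_P$ with compact support \emph{disjoint} from $C_P$ and contained in the periodic locus, so that $[C_P]\cdot[\barF]=\int_{\F|U_P}\nu_P+\int_{\F|U_{P^-}}\nu_P$ (the two-cylinder locus having measure zero), then evaluates each term by Fubini over the transversal $W$: the leafwise factor is the varying linking number $q$ computed in the type two chart, and the transverse factor is the explicit measure above. Incidentally, the points of $\barW\cap C_P$ and $\barP\cap C_P$ lie on the spine $S_P$, inside the measure-zero two-cylinder locus, so they need no separate treatment via Theorem~\ref{thm:thomintegralzero}; that machinery is needed for Theorem~\ref{thm:intersectionzero}, not here.
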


From this, we will derive Theorem~\ref{thm:volED}.

\paragraph{Definitions.}

We regard $U_P\subset \Y^c$ as having coordinates $(y_1, y_2, y_3)$ defined by \eqref{eq:23} and Theorem~\ref{thm:threecylcoordinates}.
Recall we defined a cover $\hat{U}_P$ of $U_P$ having coordinates $(y_2, y_3)$ given by \eqref{eq:24}.  Let
$\tilde{U}_P\isom\half\times\half$ be the universal cover of $U_P$ with natural coordinates $(y_2, y_3)$.  In each case, $y_i$ ranges in
either $\half$ or $\half/\mu\zed$ for some $\mu>0$.  We set
\begin{equation*}
  y_j = u_j + i v_j.
\end{equation*}

We now define several objects on $U_P$ and its covers.  We use the convention that for any object $X$ on $U_P$, such as a subset,
a foliation, or a form, $\hat{X}$ denotes the pullback of $X$ to $\hat{U}_P$, and $\tilde{X}$ denotes the pullback to $\tilde{U}_P$.

\begin{itemize}
\item Define
  \begin{equation*}
    W = \{(y_1, y_2, y_3) \in U_P : \Im y_2 = 1\}.
  \end{equation*}
  The locus $W$ limits on a closed leaf of $q_{P^+}$ in $C_{P^+}$ and should be regarded as a boundary of a tubular neighborhood of $C_P$ in
  $U_P$.  We have
  \begin{equation*}
    \hat{W}\isom \reals/\frac{a'}{\gcd(a', c')}\lambda\zed \times \reals/\frac{a'}{\gcd(a', a'+b'+c')}(\lambda-1)\zed \times \reals^+
  \end{equation*}
  with coordinates $(u_2, u_3, v_3)$.  
\item Define
  \begin{equation*}
    W^0 = \{(y_1, y_2, y_3)\in U_P : y_2 = i\}\subset W
  \end{equation*}
  We have
  \begin{equation*}
    \hat{W}^0\isom \reals/\frac{a'}{\gcd(a', a'+b'+c')}(\lambda-1)\zed \times \reals^+
  \end{equation*}
  with coordinates $(u_3, v_3)$.
\item Define
  \begin{equation*}
    W_x = \{(y_1, y_2, y_3)\in U_P : \Im y_2 = 1 \text{ and } \Im y_3 = x\}\subset W.
  \end{equation*}
  We have
  \begin{equation*}
    \hat{W}_x\isom\reals/\frac{a'}{\gcd(a', c')}\lambda\zed \times \reals/\frac{a'}{\gcd(a', a'+b'+c')}(\lambda-1)\zed 
  \end{equation*}
  with coordinates $(u_2, u_3)$.
\item The lift $\tilde{\mathcal{A}}_D$ of the foliation $\A$ to $\tilde{U}_P$ is given by
  \begin{equation*}
    y_2 + y_3 = {\rm const}.
  \end{equation*}
  We define the leaf,
  \begin{equation*}
    \tilde{\mathcal{A}}_D(r) = \{(y_2, y_3)\in\half\times\half : y_2 + y_3 = r\}.
  \end{equation*}
\item Let $\nu_P$ be a $2$-form on $\Y^c$ which is \Poincare dual to $C_P$, and whose support is compact and disjoint from $C_P$.
\item Let $p\colon U_P \to W$ be the canonical projection, defined by
  \begin{equation*}
    \pi(X, \omega) = a_t \cdot(X, \omega),
  \end{equation*}
  where $t$ is the unique real number so that $a_t \cdot(X, \omega)\in W$.
\item Let $\eta_P = p_*\nu_P$, a $1$-form on $W$.
\item Let $A(r)$ be the area of any surface $(X, \omega)\in\tilde{\mathcal{A}}_D(r)$.
\end{itemize}

\paragraph{Induced foliation of $W$.}

We now study the intersection of $W$ with $\F$ and the induced measured foliation of $W$.

\begin{prop}
  \label{prop:Wistransverse}
  $W$ is transverse to $\F$.
\end{prop}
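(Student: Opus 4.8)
The plan is to prove transversality pointwise by producing, at each $p\in W$, a single tangent vector to $\F$ that is not tangent to $W$. Since $W=\{\Im y_2=1\}=(\Im y_2)^{-1}(1)$ and $y_2$ is locally a holomorphic coordinate on $U_P$ by Theorem~\ref{thm:threecylcoordinates}, the function $\Im y_2$ has nowhere-vanishing differential, so $W$ is a smooth real hypersurface with $T_pW=\ker d(\Im y_2)_p$, of real codimension one in $T_pU_P$. As $\F$ is two-real-dimensional, it is therefore enough to exhibit $\xi\in T_p\F$ with $d(\Im y_2)_p(\xi)\neq 0$.

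The natural candidate is the velocity at $t=0$ of the path $t\mapsto a_t\cdot p$. First I would check that this path stays in $U_P$ and lies in the leaf of $\F$ through $p$: the action of $a_t$ fixes the horizontal direction, carries the three horizontal cylinders of $p$ to three horizontal cylinders, preserves being an eigenform by Theorem~\ref{thm:invariantorbifolds}, and acts trivially on $H_1(X;\zed)$ — hence leaves the matrix $T$ of \eqref{eq:21} and the associated prototype unchanged — so $a_t\cdot p\in U_P$, while $a_t\cdot p$ lies in the $\GLtwoRplus$-orbit of $p$ and hence in its image, the leaf of $\F$. Next I would run the elementary computation. Writing $p=S(1,\lambda,\lambda-1,y_1,y_2,y_3)$ with $y_2=u_2+iv_2$, applying $a_t$ scales the real part of each period by $e^t$ and the imaginary part by $e^{-t}$, and renormalizing so that the $\alpha_1$-period returns to $1$ divides all periods by $e^t$; thus $y_2(a_t\cdot p)=u_2+ie^{-2t}v_2$ and $\Im y_2(a_t\cdot p)=e^{-2t}v_2$. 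Consequently $\frac{d}{dt}\big|_{t=0}\Im y_2(a_t\cdot p)=-2v_2$, which on $W$ equals $-2\neq 0$ since there $v_2=\Im y_2=1$.

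This gives $\xi:=\frac{d}{dt}\big|_{t=0}(a_t\cdot p)\in T_p\F$ with $d(\Im y_2)_p(\xi)=-2\neq 0$, so $\xi\notin T_pW$; since $T_pW$ has codimension one in $T_pU_P$, this forces $T_p\F+T_pW=T_pU_P$, which is the desired transversality. The computation itself is immediate; the only point requiring care is the first one — confirming that the $a_t$-orbit through $p$ genuinely stays inside $U_P$ (not merely inside the larger space $\X^\sigma$ or $\mathcal{X}_D$) and is tangent to $\F$ there — which is exactly the invariance of the prototype under the homologically trivial action of $a_t$ recorded above.
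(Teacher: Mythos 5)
Your proof is correct and is essentially the paper's argument, which simply observes that at $p\in W$ the tangent space is spanned by $T_pW$ together with the tangent to the curve $a_t\cdot p$; you have just made the codimension-one bookkeeping and the derivative of $\Im y_2$ along the $a_t$-orbit explicit. (Your renormalized scaling $e^{-2t}$ differs from the factor $e^{t}$ used in the paper's coordinates in Lemma~\ref{lem:hyperbolicnorm}, but this is only a convention/sign discrepancy and either way the derivative is nonzero, which is all the transversality needs.)
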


\begin{proof}
  At $p\in W$, the tangent space $T_p\Y$ is spanned by $T_p W$ and the tangent to the curve $a_t \cdot p$.
\end{proof}

Let $\mathcal{W}$ be the induced measured foliation of $W$.

\begin{prop}
  \label{prop:leavesofW}
  Each leaf of $\mathcal{W}$ is contained in some $W_x$.  The induced foliation $\mathcal{W}_x$ of $W_x$ is the foliation by lines of slope
  $x$.  More precisely, $\mathcal{W}_x$ is generated by the vector field
  \begin{equation*}
    \pderiv{}{u_2} + x \pderiv{}{u_3}.
  \end{equation*}
  The cylinder $W^0\subset W$ is transverse to $\mathcal{W}_x$.
\end{prop}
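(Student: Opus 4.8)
The plan is to compute the foliation $\mathcal W$ induced on $W$ by $\F$ directly in the $(y_2,y_3)$-coordinates, using that in a neighborhood of $\U$ inside $\Y^c$ every leaf of $\F$ is an orbit of the upper-triangular group $B$ (Proposition~\ref{prop:Fdoesntextend}).

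First I would record how $B$ acts on three-cylinder surfaces in coordinates. For $z\in\half$ put $g_z=\begin{pmatrix}1 & \Re z\\ 0 & \Im z\end{pmatrix}\in\GLtwoRplus$, so that the leaf of $\F$ through a point $p\in\U$ is parameterized by $z\mapsto g_z\cdot p$, as in \S\ref{sec:compactification-x}. The matrix $g_z$ acts $\reals$-linearly on $\cx$, fixing the real numbers $x_i=1,\lambda,\lambda-1$ and sending $y\in\half$ to $(\Re y + \Re z\,\Im y) + i\,\Im z\,\Im y$. Hence $g_z$ carries $S(1,\lambda,\lambda-1,y_1,y_2,y_3)$ to $S(1,\lambda,\lambda-1,g_zy_1,g_zy_2,g_zy_3)$, which is again a three-cylinder surface with the same (real) periods along $\alpha_1$ and $\alpha_2$, hence of the same prototype $P$. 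Since the equation \eqref{eq:22} defining $\fU_P$ is real-affine in $(y_1,y_2,y_3)$, the covering map \eqref{eq:24} intertwines this $B$-action with $(y_2,y_3)\mapsto(g_zy_2,g_zy_3)$; so on $\hat U_P$ the leaf of $\F$ through $(y_2,y_3)$ is $\{(g_zy_2,g_zy_3):z\in\half\}$.

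Next I would intersect with $W=\{\Im y_2=1\}$. Since $\Im(g_zy_2)=\Im z\cdot\Im y_2$, a leaf of $\F$ meeting $W$ returns to $W$ exactly when $\Im z=1$, i.e.\ $z=t+i$ with $t\in\reals$. For $z=t+i$ one computes $g_zy_2=(u_2+t)+i$ and $g_zy_3=(u_3+v_3 t)+i v_3$, so along the leaf $v_2$ and $v_3$ stay constant while $(u_2,u_3)$ moves affinely with velocity $(1,v_3)$. This simultaneously shows: every leaf of $\mathcal W$ lies in a single slice $W_x$, with $x=v_3$; inside $W_x$ the leaf is a line of slope $x$, tangent to $\partial/\partial u_2 + x\,\partial/\partial u_3$; and $W^0=\{u_2=0\}$ is transverse to $\mathcal W_x$, because the leafwise vector $\partial/\partial u_2 + x\,\partial/\partial u_3$ is not tangent to $W^0$, whose tangent space is $\langle\partial/\partial u_3,\partial/\partial v_3\rangle$ — equivalently, each leaf of $\mathcal W_x$ crosses $\{u_2=0\}$ exactly once.

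The delicate point is the first step: confirming that, under the chart $\phi_P$ of Theorem~\ref{thm:threecylcoordinates}, the leaves of $\F$ (the $B$-orbits) are given by the explicit formula $(y_2,y_3)\mapsto(g_zy_2,g_zy_3)$. This reduces to checking that $g_z$ preserves the canonical form of a three-cylinder surface and its prototype, and that it commutes with the affine section \eqref{eq:24}; once that is established, the rest of the proposition is an immediate computation.
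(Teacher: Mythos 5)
Your proposal is correct and follows essentially the same route as the paper: the paper's proof simply observes that the leaf of $\tilde{\mathcal{W}}$ through $p=(u_2+i,u_3+iv_3)$ is the horocycle orbit $\{h_t\cdot p\}$ and differentiates to get $\pderiv{}{u_2}+v_3\pderiv{}{u_3}$, which is exactly your computation with $z=t+i$ (your extra verification that the $B$-action in the $(y_2,y_3)$-coordinates is the $\reals$-linear action fixing the horizontal periods is the same fact the paper has already used, e.g.\ in Lemma~\ref{lem:hyperbolicnorm}). The only quibble is your parenthetical ``crosses $\{u_2=0\}$ exactly once,'' which is not literally true on the torus $\hat{W}_x$ and is not needed; transversality follows from the tangent-space argument you already give.
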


\begin{proof}
  The leaf of $\tilde{W}$ through $p=(u_2+i, u_3 + i v_3)\in \tilde{W}$ is $\{h_t\cdot p\}$.  So the tangent vector to $\mathcal{W}$ at $p$,
  \begin{equation*}
    \deriv{}{t} h_t \cdot p = \pderiv{}{u_2} + v_3 \pderiv{}{v_3},
  \end{equation*}
  is the required tangent to $\tilde{W}_{v_3}$.  The transversality statement is then clear.
\end{proof}

Recall that we have local coordinates $(u_3, v_3)$ on $W^0$.  

\begin{prop}
  \label{prop:transversemeasuretoW0}
  The measure on $W^0$ induced by the transverse measure to $\mathcal{W}$ is
  \begin{equation}
    \label{eq:45}
    \frac{a}{\sqrt{D}}\frac{1}{(v_3 + 1)^3} du_3 dv_3
  \end{equation}
\end{prop}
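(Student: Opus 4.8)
The plan is to compute the transverse measure to $\mathcal{W}$ on $W^0$ by pulling back the leafwise quadratic differential $|q|$ from the leaves of the kernel foliation $\A$ via the holonomy of $\F$. First observe that $W^0$ is a transversal to $\F$: it is transverse to $\mathcal{W}$ in $W$ (Proposition~\ref{prop:leavesofW}), $W$ is transverse to $\F$ (Proposition~\ref{prop:Wistransverse}), and any vector in $TW^0\cap T\F\subset TW\cap T\F=T\mathcal{W}$ must vanish. Since moreover each leaf of $\F$ meets $W$ in a single leaf of $\mathcal{W}$ (the intersection has dimension $2+3-4=1$), the local leaf spaces of $\mathcal{W}$ and $\F$ are identified, so by definition of the induced foliation the transverse measure to $\mathcal{W}$ on $W^0$ agrees with the transverse measure to $\F$ on $W^0$. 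I will compute the latter using that $\A$ is transverse to $\F$ and carries the transverse measure $|q|$: at $\tilde p\in W^0$ it is the pullback, under the $\F$-holonomy from $W^0$ to the kernel leaf $\tilde{\mathcal{A}}_D(r)$ through $\tilde p$, of $|q|$ on that leaf.

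The next step is to make $\tilde{\F}$ and $\tilde{\mathcal{A}}_D$ explicit on $\tilde U_P\cong\half\times\half$ in the coordinates $(y_2,y_3)$. From $a_t\cdot(u_2+iv_2,u_3+iv_3)=(u_2+ie^tv_2,u_3+ie^tv_3)$ (proof of Lemma~\ref{lem:hyperbolicnorm}) and the fact that $h_s$ acts by $y_j\mapsto y_j+s\,\Im y_j$ (the shear on the three-cylinder surface, underlying the computation in the proof of Proposition~\ref{prop:leavesofW}), computing the orbit $\{h_s a_t\cdot\tilde p\}$ shows that the leaves of $\tilde{\F}$ are the graphs $\{y_3=\beta y_2+\delta\}$ with $\beta>0$, $\delta\in\reals$, while the leaves of $\tilde{\mathcal{A}}_D$ are the graphs $\{y_3=-y_2+r\}$, $r\in\half$. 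On a leaf $\tilde{\mathcal{A}}_D(r)$ coordinatized by $y_2$, the measure $|q|$ equals $A(r)^{-1}|dy_2|^2$: here $|q|$ is computed from $q=(\partial f_1)^2$ with $f_1=f/\sqrt A$ the relative period $\int_p^q\omega$ of the unit-area representative; by the homology relations $\beta_1=\gamma_1-\gamma_3$, $\beta_2=\gamma_2+\gamma_3$ in Figure~\ref{fig:Zshaped}, $f$ differs from $\pm y_2$ by an absolute period; and since the absolute periods and the area $A$ are constant along leaves of $\tilde{\mathcal{A}}_D$ (Proposition~\ref{prop:periodsconstant}) and such a leaf is coordinatized by $y_2$, we get $\partial f_1=\pm A^{-1/2}\,dy_2$. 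Finally, eliminating $\Im y_1$ from the area $\Im y_1+\lambda\Im y_2+(\lambda-1)\Im y_3$ via \eqref{eq:22} and simplifying with $a\lambda^2+b\lambda+c=0$ gives $A(r)=(\lambda-\lambda')\,\Im r=\tfrac{\sqrt D}{a}\,\Im r$, where $\lambda'$ is the Galois conjugate of $\lambda$.

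It then remains to compute the holonomy and assemble. For $\tilde p=(i,u_3+iv_3)\in W^0$ the kernel leaf through $\tilde p$ is $\tilde{\mathcal{A}}_D(r)$ with $r=u_3+i(1+v_3)$, and a nearby point $(i,u_3'+iv_3')\in W^0$ lies on the $\F$-leaf $\{y_3=v_3'y_2+u_3'\}$, which meets $\tilde{\mathcal{A}}_D(r)$ at the point with $y_2$-coordinate $(r-u_3')/(v_3'+1)$. Differentiating $(u_3',v_3')\mapsto\bigl(\tfrac{u_3-u_3'}{v_3'+1},\tfrac{1+v_3}{v_3'+1}\bigr)$ at $\tilde p$ gives Jacobian determinant $(1+v_3)^{-2}$, so the pullback of $A(r)^{-1}|dy_2|^2$ is $\tfrac{1}{A(r)(1+v_3)^2}\,du_3\,dv_3=\tfrac{a}{\sqrt D}\tfrac{1}{(1+v_3)^3}\,du_3\,dv_3$, which descends to $U_P$ as claimed. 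The step I expect to be most delicate is pinning down the $B$-action in the $(y_2,y_3)$-coordinates — in particular that $h_s$ acts by $y_j\mapsto y_j+s\,\Im y_j$, so that the leaves of $\F$ are graphs with \emph{real} slope $\beta$; granting this, the holonomy Jacobian $(1+v_3)^{-2}$ and the factor $A(r)^{-1}$ (whose appearance rests on $A$ being constant along $\A$-leaves, so that the unit-area normalization commutes with $\partial$) follow by routine calculation.
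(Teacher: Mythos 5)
Your proof is correct and follows essentially the same route as the paper: transport the unit-area-normalized leafwise measure $|q| = A^{-1}|dy_2|^2$ on a kernel-foliation leaf to $\tilde{W}^0$ via the $\F$-holonomy, computed from the explicit description of the $B$-action (equivalently, of the $\F$-leaves) in the $(y_2,y_3)$-coordinates, and invoke $A(r)=\tfrac{\sqrt{D}}{a}\Im r$ (the paper's Lemma~\ref{lem:area}). The only cosmetic difference is that the paper sends all of $\tilde{W}^0$ to the fixed leaf $\tilde{\mathcal{A}}_D(i)$, getting Jacobian $(1+v_3)^{-3}$ against the constant $A(1)$, whereas you map to the leaf through the basepoint, getting Jacobian $(1+v_3)^{-2}$ with the remaining factor $(1+v_3)^{-1}$ supplied by $A(r)$.
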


\begin{proof}
  Recall that we defined in \S\ref{sec:hilb-modul-surf} a leafwise measure on $\A$ which is invariant under the holonomy of $\F$.  The leaf
  $\tilde{\mathcal{A}}_D(i)$ is parameterized by the coordinates $(u_3, v_3)$, and in these coordinates, the measure on
  $\tilde{\mathcal{A}}_D(i)$ is
  \begin{equation*}
    \mu = \frac{1}{\sqrt{A(1)}} du_3 dv_3
  \end{equation*}
  (the constant factor is to normalize the surfaces parameterized by this leaf to have unit area).

  Let $\Hol\colon \tilde{W}^0 \to \tilde{\mathcal{A}}_D(i)$ be the holonomy map for $\tilde{F}_D$.  We have
  \begin{equation*}
    \begin{pmatrix}
      1 & -\dfrac{u_3}{v_3+1} \\
      0 & \phantom{-}\dfrac{1}{v_3+1}
    \end{pmatrix}
    \cdot(i, u_3 + i v_3) = \left(\frac{-u_3 + i}{v_3+1}, \frac{u_3+iv_3}{v_3+1}\right) \in\tilde{\mathcal{A}}_D(i),
  \end{equation*}
  so
  \begin{equation*}
    \Hol(u_3 + iv_3) = \frac{u_3 + iv_3}{v_3 + 1}
  \end{equation*}
  in the coordinates $(u_3, v_3)$ on the domain and range.
  Thus the induced measure on $W^0$ is
  \begin{equation*}
    \frac{1}{A(1)}|D\Hol| du_3dv_3 =     \frac{1}{A(1)(v_3+1)^3} du_3dv_3, 
  \end{equation*}
  which is equal to \eqref{eq:45} by the following Lemma.
\end{proof}

\begin{lemma}
  \label{lem:area}
  We have
  \begin{equation}
    \label{eq:46}
    A(r) = \frac{\sqrt{D}}{a} \Im r. 
  \end{equation}
\end{lemma}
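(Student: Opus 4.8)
The plan is to compute $A(r)$ directly from the explicit parametrization of $\tilde{U}_P\cong\half\times\half$. A point lying over $(y_2,y_3)$ represents the three-cylinder surface $S(1,\lambda,\lambda-1,y_1,y_2,y_3)$ with $\lambda=\lambda(P)$, where $y_1$ is pinned down on the universal cover by the eigenform equation \eqref{eq:22}. Since that equation constrains $y_1$ only modulo $\zed$, the imaginary part $v_1=\Im y_1$ is a well-defined $\reals$-linear function of $v_2=\Im y_2$ and $v_3=\Im y_3$, so the area of the surface is a well-defined function on $\tilde{U}_P$; in particular it makes sense to speak of $A(r)$.

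First I would record that $S(1,\lambda,\lambda-1,y_1,y_2,y_3)$ is obtained by gluing along edges the three parallelograms $P(x_i,y_i)$ with $x_1=1$, $x_2=\lambda$, $x_3=\lambda-1$ (all positive since $\lambda>1$), and that the parallelogram $P(x_i,y_i)$ has area $x_i\Im y_i=x_iv_i$. Gluing along edges does not change area, so
\[
\Area(X,\omega)=v_1+\lambda v_2+(\lambda-1)v_3.
\]
Taking imaginary parts in \eqref{eq:22}, and using that $a'$, $c'$, $a'+b'+c'$, $\lambda$, $\lambda-1$, and $q'$ are all real, gives
\[
a'v_1=-\frac{c'}{\lambda}v_2-\frac{a'+b'+c'}{\lambda-1}v_3,
\]
which I would substitute into $a'\Area(X,\omega)$.

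This reduces the problem to simplifying the coefficients of $v_2$ and $v_3$ in $a'\Area(X,\omega)$, which are $(a'\lambda^2-c')/\lambda$ and $\bigl(a'(\lambda-1)^2-(a'+b'+c')\bigr)/(\lambda-1)$. Dividing $ax^2+bx+c$ by $\gcd(a,b,c)$ shows $\lambda$ is a root of $a'x^2+b'x+c'$, and likewise $\lambda-1$ is a root of $a'(x+1)^2+b'(x+1)+c'=a'x^2+(2a'+b')x+(a'+b'+c')$; using these two quadratic relations, both coefficients collapse to $2a'\lambda+b'$. Since $a'>0$ and, as $c<0$, $\lambda$ is the larger of the two roots of $ax^2+bx+c$, one has $2a'\lambda+b'=\sqrt{(b')^2-4a'c'}=\sqrt{D}/\gcd(a,b,c)$. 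Hence $a'\Area(X,\omega)=\bigl(\sqrt{D}/\gcd(a,b,c)\bigr)(v_2+v_3)$; dividing by $a'$ and using $a'\gcd(a,b,c)=a$ yields $\Area(X,\omega)=(\sqrt{D}/a)(v_2+v_3)$. On the leaf $\tilde{\mathcal{A}}_D(r)=\{y_2+y_3=r\}$ we have $v_2+v_3=\Im r$, which is \eqref{eq:46}. (The computation also shows the area is constant along $\tilde{\mathcal{A}}_D(r)$, as the statement implicitly requires.)

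There is no conceptual obstacle here; the only thing that needs care is the bookkeeping with primed versus unprimed prototype data and verifying that the two parallelogram-coefficient polynomials both reduce to $2a'\lambda+b'$ via the defining quadratic relations.
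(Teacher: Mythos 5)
Your proof is correct and is essentially the paper's own argument: the paper likewise writes $\Area=v_1+\lambda v_2+(\lambda-1)v_3$, eliminates $v_1$ via the relation coming from \eqref{eq:22} (in the form of the covering map \eqref{eq:24}), and collapses both coefficients to $\sqrt{D}/a$ using the quadratic relation satisfied by $\lambda$. Your only cosmetic deviation is carrying out the algebra in primed variables via $2a'\lambda+b'=\sqrt{D}/\gcd(a,b,c)$ rather than the paper's identity $\lambda-\tfrac{c}{a\lambda}=\lambda-1-\tfrac{a+b+c}{a(\lambda-1)}=\tfrac{\sqrt{D}}{a}$, which is the same computation.
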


\begin{proof}
  Consider $(X_p, \omega_p)\in\tilde{\mathcal{A}}_D(r)$ with $v_2+v_3= \Im r$.  We have
  \begin{equation}
    \label{eq:47}
    \Area(X_p, \omega_p) = v_1 + \lambda v_2 + (\lambda-1) v_3.
  \end{equation}
  Using \eqref{eq:24} and the equations,
  \begin{equation*}
    \lambda- \frac{c}{a\lambda} = \lambda-1 - \frac{a+b+c}{a(\lambda-1)} = \frac{\sqrt{D}}{a},
  \end{equation*}
  which follow from elementary algebra we see that \eqref{eq:47} reduces to \eqref{eq:46}.
\end{proof}

\paragraph{Integral of $\nu_P$ over $U_P$.}

For any $t\in\ratls\cdot (\lambda-1)/\lambda$, the foliation $\hat{\mathcal{W}}_t$ of $\hat{W}_t$ consists of closed leaves.  Let $\delta_t$ be
one of these closed leaves, and let $\Delta_t$ be the leaf of $\hat{\mathcal{F}}_D$ containing $\delta_t$, a punctured disk.

\begin{prop}
  \label{prop:integraloverdelta}
  Given
  \begin{equation}
    \label{eq:48}
    t = \frac{p}{q} \frac{\gcd(a', c')}{\gcd(a', a'+b'+c')} \frac{\lambda-1}{\lambda}
  \end{equation}
  with $\gcd(p, q) =1$, we have
  \begin{align}
    \label{eq:49}
    \int_{\delta_t} \hat{\eta}_P &= q, \quad\text{and} \\
    \label{eq:50}
    \int_{\delta_t} \hat{\eta}_{P^+} &= p.
  \end{align}
\end{prop}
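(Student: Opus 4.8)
The plan is to interpret both integrals as topological intersection numbers in the compactification $\Y^c$, using the fact that $\nu_P$ is Poincaré dual to $C_P$ and that $\hat\eta_P = p_*\nu_P$ records how many times the disk $\Delta_t$ winds around the relevant boundary curve. First I would unwind the definitions: $\delta_t$ is a closed leaf of $\hat{\mathcal W}_t$, so it is a closed horocycle-like curve sitting in the leaf $\Delta_t$ of $\hat{\mathcal F}_D$, and $\Delta_t$ is a punctured disk whose puncture corresponds to a point of $C_P$ (as $\Im y_2\to\infty$, equivalently $v_3$ fixed, $y_2\to i\infty$ so $u_2\to$ a node, pushing the surface onto $C_P$). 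By Stokes' theorem, $\int_{\delta_t}\hat\eta_P$ equals the integral of $d\hat\eta_P = \hat\nu_P$ over the portion of $\Delta_t$ bounded by $\delta_t$ on the side containing the puncture — i.e. it counts (with multiplicity) the intersection of the punctured disk $\Delta_t$ with the support of $\hat\nu_P$, which by Poincaré duality is the intersection number of $\Delta_t$ (compactified by adding its puncture point on $C_P$) with $C_P$.

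The key computation is then purely combinatorial: determine the intersection multiplicity of the compactified leaf $\overline{\Delta_t}$ with $C_P$ inside $\Y^c$. Here I would use the type two coordinates $(u,v)$ around $c_P$ from Theorem~\ref{thm:typetwocoordinates}, in which $C_P = \{u=0\}$ and $C_{P^+} = \{v=0\}$, together with the relation between the $(u,v)$-coordinates and the $(y_2,y_3)$-coordinates on $\tilde U_P$ recorded in the proof of Lemma~\ref{lem:hyperbolicnorm}, namely $u = e^{ic_2 y_2}$, $v = e^{ic_3 y_3}$ up to the covering maps. Under the covering $\hat U_P\to U_P$ described by \eqref{eq:24}, the coordinate $u_2$ lives in $\reals/\tfrac{a'}{\gcd(a',c')}\lambda\zed$ and $u_3$ in $\reals/\tfrac{a'}{\gcd(a',a'+b'+c')}(\lambda-1)\zed$. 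A closed leaf $\delta_t$ of $\hat{\mathcal W}_t$ has slope $t$ by Proposition~\ref{prop:leavesofW}, and closes up precisely when $t$ has the form \eqref{eq:48}; tracing how many times such a leaf wraps around the $u$-direction versus the $v$-direction as one traverses it once gives the two integers. The leaf winds $q$ times in the direction transverse to $C_P$ (whence $\int_{\delta_t}\hat\eta_P = q$) and $p$ times in the direction transverse to $C_{P^+}$ (whence $\int_{\delta_t}\hat\eta_{P^+} = p$), because $\gcd(p,q)=1$ makes $\delta_t$ a primitive closed curve and the slope \eqref{eq:48} was normalized exactly so that the periods in the two lattice directions are $q$ and $p$.

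I expect the main obstacle to be the careful bookkeeping of lattice constants: one must check that the particular normalization of the slope in \eqref{eq:48}, including the factor $\gcd(a',c')/\gcd(a',a'+b'+c')$ and the ratio $(\lambda-1)/\lambda$, is exactly what is needed so that a primitive closed leaf $\delta_t$ projects to a curve whose class in $H_1$ of the punctured disk (or whose linking numbers with $C_P$ and $C_{P^+}$) are $q$ and $p$ respectively, with no stray multiplicative factors. This amounts to comparing the lattice $\tfrac{a'}{\gcd(a',c')}\lambda\zed \oplus \tfrac{a'}{\gcd(a',a'+b'+c')}(\lambda-1)\zed$ in the $(u_2,u_3)$-plane with the line of slope $t$ and computing the index of the sublattice generated by a primitive vector along that line together with the relevant coordinate vectors. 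Once the winding numbers are identified with $q$ and $p$, equations \eqref{eq:49} and \eqref{eq:50} follow from the Poincaré duality / Stokes argument above, since $\hat\eta_P$ (resp. $\hat\eta_{P^+}$) is, up to an exact form, $\tfrac{1}{2\pi}$ times the angular form in the $u$-direction (resp. $v$-direction) of the respective coordinate.
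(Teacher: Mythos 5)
Your proposal is correct and follows essentially the same route as the paper: both reduce $\int_{\delta_t}\hat{\eta}_P$ and $\int_{\delta_t}\hat{\eta}_{P^+}$ to the local intersection numbers of the compactified leaf $\overline{\Delta}_t$ with $C_P=\{u=0\}$ and $C_{P^+}=\{v=0\}$ in the type two coordinates of Theorem~\ref{thm:typetwocoordinates}, and then read these off from the fact that $\delta_t$ wraps $q$ times and $p$ times around the two circle directions of the flat torus $\hat{W}_t$ (the paper phrases this as a linking-number computation in a small $3$-sphere, you as winding of the angular coordinates, which is the same count). The only quibble is that the identity $\int_{\delta_t}\hat{\eta}_P=\int_{\Delta_t}\hat{\nu}_P$ comes directly from the definition of $\eta_P=p_*\nu_P$ as a fiberwise pushforward along the $a_t$-projection, not from a literal equation $d\hat{\eta}_P=\hat{\nu}_P$ (the forms live on different spaces), but this does not affect the argument.
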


\begin{proof}
  Define
  \begin{align*}
    u &= e^{2\pi i (\gcd(a', c')/a'\lambda)y_2}\quad\text{and} \\
    v &= e^{2\pi i (\gcd(a', a'+b'+c')/a'(\lambda-1))y_3}.
  \end{align*}
  These $(u, v)$ are the same coordinates as in Theorem~\ref{thm:typetwocoordinates} on the polydisk $\Delta^2$ which parameterizes a
  neighborhood of $c_P$ in $\Y^c$ by $f\colon\Delta^2\to\Y^c$.  Define
  \begin{align*}
    \hat{C}_P &= f^{-1}(C_P) = \{(u, v)\in\Delta^2\colon u=0\} \\
    \hat{C}_{P^+} &= f^{-1}(C_{P^+}) = \{(u, v)\in\Delta^2\colon v=0\}.
  \end{align*}

  We have
  \begin{align*}
    \int_{\delta_t} \hat{\eta}_P &= \int_{\Delta_t}\hat{\nu}_P = \overline{\Delta}_t\cdot\hat{C}_P\quad\text{and} \\
    \int_{\delta_t} \hat{\eta}_{P^+} &= \int_{\Delta_t}\hat{\nu}_{P^+} = \overline{\Delta}_t\cdot\hat{C}_{P^+},
  \end{align*}
  where the last expressions are the local intersection numbers of these curves at $(0,0)\in\Delta^2$.  In $(u, v)$-coordinates,
  \begin{equation*}
    \hat{W}_t = \{(u, v)\in\Delta^2 : |u|=r \text{ and } |v| = s\},
  \end{equation*}
  for some $0<r, s<1$.  Let $S\subset\Delta^2$ be the sphere of radius $\sqrt{r^2+s^2}$, which contains $\hat{W}_t$.  We have
  \begin{equation*}
    \overline{\Delta}_t\cdot\hat{C}_P = \Link(\delta_t, \zeta)
  \end{equation*}
  where $\Link$ denotes the linking number, and $\zeta=S\cap\hat{C}_P$.  In $S$, the curve $\zeta$ bounds a disk which intersects
  $\hat{W}_t$ in the curve $\xi= \{u_2 = 0\}$.  Thus
  \begin{equation*}
    \Link(\delta_t, \zeta) = \delta_t\cdot\xi = q
  \end{equation*}
  (where the last expression is the intersection number in $\hat{W}_t$) because $\delta_t$ is a geodesic on $\hat{W}_t$ with its flat metric
  of slope $p/q$.  Thus we obtain \eqref{eq:49} and \eqref{eq:50} follows in the same way.  
\end{proof}

\begin{prop}
  \label{prop:intoverUP}
  We have
  \begin{align}
    \label{eq:51}
    \int_{U_P}\nu_P &= \frac{1}{2}\frac{a}{\sqrt{D}}\frac{\gcd(a,c)}{\gcd(a, b, c)}(\lambda-1), \quad\text{and} \\
    \label{eq:52}
    \int_{U_P}\nu_{P^+} &= \frac{1}{2}\frac{a}{\sqrt{D}}\frac{\gcd(a, a+b+c)}{\gcd(a, b, c)}\lambda.
  \end{align}
\end{prop}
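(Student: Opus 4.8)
The plan is to reduce the integral $\int_{U_P}\nu_P$ to a product of the transverse measure of $\F$ across $W^0$ with the intersection data computed in Proposition~\ref{prop:integraloverdelta}. First I would push $\nu_P$ forward along the projection $p\colon U_P\to W$ to obtain the $1$-form $\eta_P$ on $W$, so that $\int_{U_P}\nu_P=\int_W\eta_P$ (the fibers of $p$ are the $a_t$-orbits, along which $\nu_P$ is being integrated away). Next I would decompose the integral over $W$ using the foliation $\mathcal W$: by Proposition~\ref{prop:leavesofW}, $W$ is foliated by the slices $W_x$, each of which carries the linear foliation $\mathcal W_x$ of slope $x$, and $W^0$ is a global transversal. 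So $\int_W\eta_P$ becomes an integral over $W^0$ (with respect to the transverse measure of $\mathcal W$, computed in Proposition~\ref{prop:transversemeasuretoW0} to be $\frac{a}{\sqrt D}\frac{du_3\,dv_3}{(v_3+1)^3}$) of the periods $\int_{\delta}\eta_P$ over the leaves $\delta$ of $\mathcal W$ through each point of $W^0$.

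The key point is that for $x=v_3$ rational of the form \eqref{eq:48}, the leaf $\delta_t$ is closed and $\int_{\delta_t}\hat\eta_P=q$ by \eqref{eq:49}; passing from $\hat U_P$ down to $U_P$ introduces the covering degree $a'/(\gcd(a',c')\gcd(a',a'+b'+c'))$, and the period over a leaf of $\mathcal W_x$ in $W$ itself (as a function of $x$) is, up to this normalization, proportional to $x\cdot\lambda/(\lambda-1)$ — i.e. it is the "$q$-denominator" read off from the slope, which varies linearly in $x$. Concretely I would argue that $\int_{\text{leaf through }(u_3,v_3)}\eta_P$ equals $\frac{\gcd(a,c)}{\gcd(a,b,c)}\cdot\frac{\lambda}{\lambda-1}\,v_3$ times a universal constant, by continuity/linearity from the dense set of rational slopes \eqref{eq:48} where Proposition~\ref{prop:integraloverdelta} applies. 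Multiplying this by the transverse measure \eqref{eq:45} and integrating, the $u_3$-integral contributes the length $\frac{a'}{\gcd(a',a'+b'+c')}(\lambda-1)$ of the $u_3$-circle (times the appropriate reduction to $U_P$ versus $\hat U_P$), and the $v_3$-integral is $\int_0^\infty \frac{v_3}{(v_3+1)^3}\,dv_3=\tfrac12$. Collecting the constants yields \eqref{eq:51}; the formula \eqref{eq:52} for $\nu_{P^+}$ follows identically, using \eqref{eq:50} in place of \eqref{eq:49} (so $p$ replaces $q$, hence $\gcd(a,a+b+c)$ and $\lambda$ replace $\gcd(a,c)$ and $\lambda-1$) and the same $\int_0^\infty\frac{v_3}{(v_3+1)^3}\,dv_3=\tfrac12$.

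The main obstacle I anticipate is bookkeeping the covering degrees and the $\zed$-quotient ranges of the coordinates $u_2,u_3$ correctly when descending from $\tilde U_P$ and $\hat U_P$ to $U_P$: one must check that the period $\int_{\delta_t}\hat\eta_P=q$ on the cover corresponds, after dividing by the covering degree and accounting for how $\delta_t$ wraps, to exactly the claimed linear function of the slope on $U_P$, and that the measure \eqref{eq:45} on $W^0$ is the one on $U_P$ (not $\hat U_P$). A secondary subtlety is justifying the exchange of integration and the passage to the limit from rational slopes — this is handled by dominated convergence using the boundedness from Lemma~\ref{lem:hyperbolicnorm}, since $\eta_P$ has Poincaré growth and the transverse measure is finite. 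Once the normalizations are pinned down, the computation is the elementary integral above.
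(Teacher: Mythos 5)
Your overall scheme (push $\nu_P$ forward to $\eta_P$ on $W$, slice by the tori $W_x$, feed in Proposition~\ref{prop:integraloverdelta} at the rational slopes \eqref{eq:48}, extend by continuity, and integrate against the transverse measure \eqref{eq:45}) is the same as the paper's, but the step you call the key point is wrong as stated, and it is exactly where the content lies. You claim that the leafwise period $\int_{\delta}\eta_P$ through a point of $W^0$ is a linear function of the slope $v_3$ (proportional to $\frac{\gcd(a,c)}{\gcd(a,b,c)}\frac{\lambda}{\lambda-1}v_3$), to be obtained ``by continuity/linearity from the dense set of rational slopes.'' But at a slope $t$ as in \eqref{eq:48} the closed leaf has period $\int_{\delta_t}\hat{\eta}_P=q$, the \emph{denominator} of the slope; this is neither linear nor continuous in $t$ (it is unbounded on every interval), and at irrational slopes the leaves of $\mathcal{W}_t$ are dense in $W_t$, not closed, so they have no finite period at all. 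There is no function of the slope to extend by continuity here. Relatedly, writing $\int_{\mathcal{W}}\eta_P$ as the integral over $W^0$ of the full leaf period against \eqref{eq:45} overcounts: the closed leaf of slope corresponding to $p/q$ wraps $q$ times around the $u_2$-circle and hence meets $W^0\cap W_t$ exactly $q$ times.

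The quantity that is actually continuous in $t$, and which the paper uses, is the integral of $\hat{\eta}_P$ over the whole measured foliation $\hat{\mathcal{W}}_t$ of the torus slice: a transversal segment of $du_3$-measure $\frac{1}{q}\frac{a'}{\gcd(a',a'+b'+c')}(\lambda-1)$ meets each closed leaf exactly once, so by \eqref{eq:49} and \eqref{eq:50} the $q$ cancels and one gets $\int_{\hat{\mathcal{W}}_t}\hat{\eta}_P=\frac{a'}{\gcd(a',a'+b'+c')}(\lambda-1)$, constant in $t$, while $\int_{\hat{\mathcal{W}}_t}\hat{\eta}_{P^+}=t\,\frac{a'}{\gcd(a',c')}\lambda$, linear in $t$; these do extend by continuity to all $t>0$ and are then integrated against \eqref{eq:45} via Fubini, together with the covering degree $a'/\bigl(\gcd(a',c')\gcd(a',a'+b'+c')\bigr)$ of $\hat{U}_P\to U_P$. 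Note also that your version conflates the two cases: the weight $\int_0^\infty v_3(1+v_3)^{-3}\,dv_3=\frac12$ belongs only to the $\nu_{P^+}$ computation; for $\nu_P$ the slice integral is constant in $v_3$ and the relevant integral is $\int_0^\infty(1+v_3)^{-3}\,dv_3=\frac12$. The two coincide numerically, which hides the error, but your intermediate formula (period proportional to $v_3\lambda/(\lambda-1)$) would, if carried through consistently, produce a factor $\lambda$ rather than $\lambda-1$ in \eqref{eq:51}.
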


\begin{proof}
  Give $\hat{\mathcal{W}}_t$ the transverse measure $du_3$.  With $t$ as in \eqref{eq:48}, a segment transverse to $\hat{\mathcal{W}}_t$ of
  transverse measure
  \begin{equation*}
    \frac{1}{q}\frac{a'}{\gcd(a', a'+b'+c')}(\lambda-1)
  \end{equation*}
  meets each leaf of $\hat{\mathcal{W}}_t$ once, so by Proposition~\ref{prop:integraloverdelta},
  \begin{align*}
    \int_{\hat{\mathcal{W}}_t}\hat{\eta}_P &= \frac{a'}{\gcd(a', a'+b'+c')}(\lambda-1),\quad\text{and} \\
    \int_{\hat{\mathcal{W}}_t}\hat{\eta}_{P^+} &= \frac{p}{q}\frac{a'}{\gcd(a', a'+b'+c')}(\lambda-1) \\
    &= t\frac{a'}{\gcd(a', c')}\lambda.
  \end{align*}
  By continuity, these formulas hold for all $t\in\reals^+$.
  By the definition of $\eta_P$, we have
  \begin{equation*}
    \int_{\F|U_P} \nu_P = \int_{\mathcal{W}} \eta_P,
  \end{equation*}
  and similarly for $\nu_{P^+}$.  Finally, from Fubini's Theorem, Proposition~\ref{prop:transversemeasuretoW0} and the fact that $\hat{U}_P$
  is an $a'/\gcd(a', c')\gcd(a', a'+b'+c')$-fold cover of $U_P$, we obtain
  \begin{align*}
    \int_{\F|U_P} \nu_P &= \frac{\gcd(a', c')\gcd(a', a'+b'+c')}{a'}\int_{\hat{\mathcal{W}}}\hat{\eta}_P \\
    &=\frac{a}{\sqrt{D}} \gcd(a', c') (\lambda-1) \int_0^\infty \frac{dv_3}{(1+v_3)^3} \\
    &=\frac{a}{2\sqrt{D}} \gcd(a', c') (\lambda-1), \\
    \intertext{and similarly}
    \int_{U_P}\nu_{P^+} &= \frac{a}{2\sqrt{D}}\gcd(a', a'+b'+c')\lambda.
  \end{align*}
\end{proof}

\paragraph{Proof of Theorem~\ref{thm:intCPandF}.}

Since $\nu_P$ is supported on the locus of periodic surfaces, and the locus of two-cylinder surfaces has measure zero, we have
\begin{equation}
  \label{eq:53}
  [C_P]\cdot[\barF] = \int_{\F|U_P}\nu_P + \int_{\F|U_{P^-}}\nu_P.
\end{equation}
There are two cases to consider.  When $a-b+c<0$, then by the definition of $P^-$ and by Proposition~\ref{prop:intoverUP}, \eqref{eq:53}
becomes 
\begin{equation*}
  [C_P]\cdot[\barF] = \frac{1}{2}\left(\frac{a}{\sqrt{D}}(\lambda-1) + \frac{a}{\sqrt{D}}(\lambda+1)\right)\frac{\gcd(a, c)}{\gcd(a, b, c)}.
\end{equation*}
When $a-b+c>0$, \eqref{eq:53} becomes
\begin{equation*}
   [C_P]\cdot[\barF] = \frac{1}{2}\left(\frac{a}{\sqrt{D}}(\lambda-1) -\frac{c}{\sqrt{D}}\frac{\lambda+1}{\lambda}\right)\frac{\gcd(a,
     c)}{\gcd(a, b, c)}. 
\end{equation*}
Both formulas reduce to the desired expression by elementary algebra.
\qed

\begin{cor}
  \label{cor:CPdotPD}
  For every prototype $P\in\Yprot$,
  \begin{equation*}
    [C_P]\cdot([\barF]+\tau^*[\barF]) = [C_P]\cdot[\barP].
  \end{equation*}
\end{cor}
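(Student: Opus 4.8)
The plan is to reduce the identity to the explicit formula of Theorem~\ref{thm:intCPandF} together with two facts from Theorem~\ref{thm:propertiesofYD}: that the involution $\tau$ of $\Y$ carries $C_P$ to $C_{t(P)}$, and that $C_P$ meets $\barP$ transversely in $\gcd(a',c')$ points. First I would handle the $\tau^*[\barF]$ term. Since $\tau$ is a holomorphic involution of $\Y$ it is orientation preserving, so $\tau^*$ preserves the intersection pairing on $H^2(\Y;\reals)$ and is its own inverse; moreover $\tau^*[C_P] = [\tau^{-1}(C_P)] = [\tau(C_P)] = [C_{t(P)}]$. Therefore
\[
  [C_P]\cdot\tau^*[\barF] = \tau^*[C_P]\cdot\tau^*\tau^*[\barF] = [C_{t(P)}]\cdot[\barF],
\]
so the left-hand side of the corollary equals $[C_P]\cdot[\barF] + [C_{t(P)}]\cdot[\barF]$.

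Next I would evaluate this sum with Theorem~\ref{thm:intCPandF}, splitting on the sign of $a-b+c$ (these cases are exhaustive, since $a-b+c=0$ would force $D=(a-c)^2$ to be a square). When $a-b+c<0$ one has $t(P)=(a,-b,c,\bar q)$, whose defining quantity $a-(-b)+c = a+b+c$ is negative by the prototype axioms, so $t(P)$ falls under the first branch of the formula; adding the first-branch value for $P$ to the first-branch value for $t(P)$ makes the $b/\sqrt D$ terms cancel and leaves $\gcd(a,c)/\gcd(a,b,c)$. When $a-b+c>0$ one has $t(P)=(-c,b,-a,\bar q)$, whose defining quantity is $-(a+b+c)>0$, so $t(P)$ falls under the second branch; adding the two second-branch values makes the $(a+c)/\sqrt D$ terms cancel and again leaves $\gcd(a,c)/\gcd(a,b,c)$. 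Hence in both cases $[C_P]\cdot([\barF]+\tau^*[\barF]) = \gcd(a,c)/\gcd(a,b,c)$.

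Finally I would identify the right-hand side. By Theorem~\ref{thm:propertiesofYD}, $[C_P]\cdot[\barP] = \gcd(a',c')$; since $a'=a/\gcd(a,b,c)$ and $c'=c/\gcd(a,b,c)$ and $\gcd(a,b,c)$ divides $\gcd(a,c)$, the elementary identity $\gcd(a/d,c/d)=\gcd(a,c)/d$ gives $\gcd(a',c') = \gcd(a,c)/\gcd(a,b,c)$, which matches the value just computed. This proves the corollary. There is no serious obstacle here; the only point requiring care is checking that $t(P)$ always lands in the opposite branch of the formula from $P$, which is precisely where the prototype inequality $a+b+c<0$ enters.
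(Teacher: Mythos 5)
Your proof is correct and follows essentially the same route as the paper: it sums the two branch values of Theorem~\ref{thm:intCPandF} over $P$ and $t(P)$ (using $\tau(C_P)=C_{t(P)}$ and invariance of the intersection pairing to convert $[C_P]\cdot\tau^*[\barF]$ into $[C_{t(P)}]\cdot[\barF]$), obtaining $\gcd(a,c)/\gcd(a,b,c)$, and then identifies this with $[C_P]\cdot[\barP]=\gcd(a',c')$ via Theorem~\ref{thm:propertiesofYD}. The paper's proof is just a terser version of the same argument; your added checks (that $t(P)$ lands in the matching branch, that $a-b+c\neq 0$ since $D$ is nonsquare, and the gcd identity) are exactly the details it leaves implicit.
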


\begin{proof}
  From Theorem~\ref{thm:intCPandF}, and from the definition of the involution $t$ on $\Yprot$, we have
  \begin{equation*}
    [C_P]\cdot[\barF] +[C_{t(P)}]\cdot[\barF] = \frac{\gcd(a,c)}{\gcd(a, b, c)}.
  \end{equation*}
  By Theorem~\ref{thm:propertiesofYD}, this is just $[C_P]\cdot[\barP]$.
\end{proof}

\paragraph{Volume of $\E(1,1)$.}

We are now ready to calculate the volume of $\E(1,1)$ with respect to $\mu_D$.  In \cite{bainbridge06}, we showed:

\begin{theorem}
  \label{thm:fundamentalclasses}
  The fundamental classes of $\barW$ and $\barP$ in $H^2(\Y; \ratls)$ are given by
  \begin{align*}
    [\barW] &= \frac{3}{2}[\omega_1] + \frac{9}{2}[\omega_2] + B_D, \quad\text{and} \\
    [\barP] &= \frac{5}{2}[\omega_1] + \frac{5}{2}[\omega_2] + B_D, 
  \end{align*}
  where $B_D\in H^2(\Y; \ratls)$ is a linear combination of the fundamental classes of the curves $C_P$.
\end{theorem}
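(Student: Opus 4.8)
The plan is to compute $[\barW]$ and $[\barP]$ by intersection theory on $\Y$. First I would record the following facts, all from \cite{bainbridge06}: the classes $[\barW]$ and $[\barP]$ lie in the subspace of $H^2(\Y;\ratls)$ spanned by $[\omega_1]$, $[\omega_2]$ and the boundary classes $\{[C_P]\}_{P\in\Yprot}$; $[\omega_i]\cdot[\omega_i]=0$, since $\tilde\omega_i\wedge\tilde\omega_i=0$ and $\omega_i$ has \Poincare growth (cf.\ Proposition~\ref{prop:omegaipoincare}); $[\omega_1]\cdot[\omega_2]=\chi(\X)$, by Gauss--Bonnet for the product hyperbolic metric (this is precisely why the normalization $\tfrac1{2\pi}$ appears in $\tilde\omega_i$); $[\omega_i]\cdot[C_P]=0$ for every prototype $P$, because $\Omega\Y$ restricts to the rational curve $C_P$ as the trivial bundle --- trivialized by the residue-normalized stable eigenform --- so that $Q^1\Y|_{C_P}$, and similarly $Q^2\Y|_{C_P}$, have degree zero; and the intersection form on $\bigoplus_P\ratls[C_P]$ is negative definite, the $C_P$ being the cusp-resolution cycles of $\Y$. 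From the displayed intersection numbers the sum $\ratls[\omega_1]\oplus\ratls[\omega_2]\oplus\bigoplus_P\ratls[C_P]$ is direct. Writing $[\barW]=\alpha_1[\omega_1]+\alpha_2[\omega_2]+B_W$ and $[\barP]=\beta_1[\omega_1]+\beta_2[\omega_2]+B_P$ with $B_W,B_P\in\bigoplus_P\ratls[C_P]$, pairing with $[\omega_1]$ and $[\omega_2]$ gives $\alpha_2\chi(\X)=[\barW]\cdot[\omega_1]$, $\alpha_1\chi(\X)=[\barW]\cdot[\omega_2]$, and likewise for $\barP$; it remains to evaluate these four degrees and to prove $B_W=B_P$.

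Since $[\omega_i]=c_1(Q^i\Y)$, we have $[\barW]\cdot[\omega_i]=\deg(Q^i\Y|_{\barW})$. On the \Teichmuller curve $\barW$ the underlying surface carries its generating eigenform $\omega$, so $Q^1\Y|_{\barW}=\mathcal L^{\otimes2}$ where $\mathcal L\subset\Omega\Y|_{\barW}$ is the line spanned by $\omega$; by M\"oller's theorem this subbundle of the Hodge bundle is maximal Higgs, $\mathcal L^{\otimes2}\isom\Omega^1_{\barW}\bigl(\log(\text{cusps})\bigr)$, hence $\deg(Q^1\Y|_{\barW})=-\chi(\barW)$, while the complementary eigenline $Q^2\Y|_{\barW}=\mathcal M^{\otimes2}$ is the second graded piece of the variation of Hodge structure and, the second Lyapunov exponent of a \Teichmuller curve in the stratum $\Omega\moduli(2)$ being $\tfrac13$, satisfies $\deg\mathcal M=\tfrac13\deg\mathcal L$, so $\deg(Q^2\Y|_{\barW})=-\tfrac13\chi(\barW)$. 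On $\barP$ the stable surface is a pair of elliptic curves meeting at a node, and $Q^1\Y|_{\barP}$, $Q^2\Y|_{\barP}$ are the squares of the Hodge lines of the two elliptic fibrations; the involution $\tau$ exchanges these fibrations and exchanges $[\omega_1]\leftrightarrow[\omega_2]$ (Proposition~\ref{prop:taufixesP} and its proof), so $\tau^*[\barP]=[\barP]$ forces $\beta_1=\beta_2$, and the common value is read off from the modular --- equivalently, Hirzebruch--Zagier --- description of $\barP$. Substituting the Euler-characteristic identities $\chi(\barW)=-\tfrac92\chi(\X)$ and the corresponding value for $\barP$ yields $\alpha_1=\tfrac32$, $\alpha_2=\tfrac92$, $\beta_1=\beta_2=\tfrac52$.

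Finally, for every prototype $P$ the vanishing $[\omega_i]\cdot[C_P]=0$ gives $(B_W-B_P)\cdot[C_P]=[\barW]\cdot[C_P]-[\barP]\cdot[C_P]$, and by Theorem~\ref{thm:propertiesofYD} both $\barW$ and $\barP$ meet $C_P$ transversely in $\gcd(a',c')$ points, all of which are smooth points of $\Y$, so this difference is $0$. As $B_W-B_P$ lies in $\bigoplus_P\ratls[C_P]$, on which the intersection form is negative definite and hence nondegenerate, $B_W=B_P=:B_D$, completing the proof. The real obstacle is the content of the second paragraph: establishing the degree identities $\deg(Q^1\Y|_{\barW})=-\chi(\barW)$ and $\deg(Q^2\Y|_{\barW})=-\tfrac13\chi(\barW)$ with the correct behavior at the cusps (and orbifold points) of $\barW$, carrying out the parallel computation on $\barP$, and evaluating $\chi(\barW)$ and $\chi(\barP)$ in terms of $\chi(\X)$ --- these occupy the bulk of \cite{bainbridge06} and rest in part on \cite{mcmullenspin}. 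The supporting fact $[\omega_i]\cdot[C_P]=0$, used both to isolate the $[\omega_i]$-coefficients and to compare boundary parts, likewise requires a careful analysis of how the eigenform line bundle degenerates along each boundary curve.
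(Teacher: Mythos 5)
The paper does not prove this theorem here at all: it is quoted from \cite{bainbridge06}, where the argument exhibits $\barP$ and $\barW$ (together with boundary contributions along the curves $C_P$) as the zero divisors of explicit Hilbert modular forms --- essentially the product of the ten even theta constants for $\barP$ and a theta-derivative form for $\barW$ --- so that membership of $[\barP]$ and $[\barW]$ in the span of $[\omega_1]$, $[\omega_2]$ and the classes $[C_P]$, the coefficients $\tfrac52,\tfrac52$ and $\tfrac32,\tfrac92$, and the common boundary class $B_D$ all come out of one construction. Your proposal takes that span membership as a quotable ``fact from \cite{bainbridge06}'', but it is not available independently of the theorem: it \emph{is} the essential content. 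Since $H^2(\Y;\ratls)$ is far larger than this span (the surface carries many further independent divisor classes), intersection numbers against $[\omega_1]$, $[\omega_2]$ and the $[C_P]$ determine $[\barW]$ and $[\barP]$ only after one knows they lie in that span, and you give no route to that other than citing the result being proved. This is the central gap.

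There is a second, numerical, circularity. In \cite{bainbridge06} the identity $\chi(\W)=-\tfrac92\chi(\X)$ is the main theorem and is \emph{deduced} from the fundamental-class formula via $\chi(\W)=-\deg\bigl(Q\Y|_{\barW}\bigr)=-[\barW]\cdot[\omega_1]$, i.e.\ exactly the identity you invoke in the opposite direction; the analogous value for $\P$ and, historically, $\lambda_2=\tfrac13$ for Teichm\"uller curves in $\Omega\moduli(2)$ were likewise first obtained as corollaries of these computations (independent proofs of $\lambda_2=\tfrac13$ now exist, but you would have to cite them and also verify that the extension $Q^2\Y$ restricted to $\barW$ has the same degree as the Deligne-type extension implicit in the maximal-Higgs/Lyapunov statements --- different extensions can differ by boundary terms). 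For $\barP$ the decisive value $\tfrac52$ is ``read off from the modular description'', which is precisely the theta-function computation your scheme was meant to bypass. To be fair, much of your bookkeeping is sound and consistent with the paper: $[\omega_i]^2=0$, $[\omega_1]\cdot[\omega_2]=\chi(\X)$, $[\omega_i]\cdot[C_P]=0$ are exactly the rules used to obtain Lemma~\ref{lem:selfintBD}; the ratio $\alpha_2=3\alpha_1$ from the maximal-Higgs property is correct; and deducing $B_W=B_P$ from $[\barW]\cdot[C_P]=[\barP]\cdot[C_P]=\gcd(a',c')$ together with negative definiteness of the boundary intersection form is a clean argument. But these steps only redistribute the difficulty; without the modular-form construction (or a genuine substitute for it) the proposal does not constitute a proof.
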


We also showed in \cite[Lemma~13.1]{bainbridge06}:

\begin{lemma}
  \label{lem:selfintBD}
  The self-intersection number of $B_D$ is
  \begin{equation*}
    B_D\cdot B_D = -15 \chi(\X) .
  \end{equation*}
\end{lemma}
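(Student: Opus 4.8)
\emph{Proof plan.} The plan is to express $B_D\cdot B_D$ in terms of the intersection numbers of $[\omega_1]$, $[\omega_2]$, $[\barW]$ and $[\barP]$, using three inputs: the two identities of Theorem~\ref{thm:fundamentalclasses}, the disjointness of $\barW$ and $\barP$, and the symmetry of the configuration under $\tau$.

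First I would rewrite Theorem~\ref{thm:fundamentalclasses} as
\begin{equation*}
  B_D = [\barW] - \tfrac{3}{2}[\omega_1] - \tfrac{9}{2}[\omega_2] = [\barP] - \tfrac{5}{2}[\omega_1] - \tfrac{5}{2}[\omega_2],
\end{equation*}
and then extract the symmetry. Since $\tilde\tau$ exchanges the two factors of $\half\times\half$, the involution $\tau$ of $\Y$ exchanges $[\omega_1]$ and $[\omega_2]$; by Proposition~\ref{prop:taufixesP} it fixes $[\barP]$, hence by the second identity above it fixes $B_D$. As $\tau$ is holomorphic it preserves the intersection form, so
\begin{equation*}
  [\omega_1]^2 = [\omega_2]^2, \qquad B_D\cdot[\omega_1] = B_D\cdot[\omega_2].
\end{equation*}
Writing $p = [\omega_1]^2$, $q = [\omega_1]\cdot[\omega_2]$, $r = B_D\cdot[\omega_1]$, I would expand the relation $0 = [\barW]\cdot[\barP]$ using the two formulas for $B_D$ and collect terms with these identities; this collapses to the single scalar equation
\begin{equation*}
  0 = 15p + 15q + 11r + B_D\cdot B_D,
\end{equation*}
so that $B_D\cdot B_D = -15p - 15q - 11r$, and the problem is reduced to computing $p$, $q$ and $r$.

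For these I would appeal to the intersection theory of $\Y$ from \cite{bainbridge06}: Hirzebruch proportionality gives $q = [\omega_1]\cdot[\omega_2] = \chi(\X)$, since $\omega_1\wedge\omega_2$ is $\tfrac1{4\pi^2}$ times the product of the leafwise hyperbolic area forms on the two factors; and $p = [\omega_1]^2 = 0$ (as $\omega_1\wedge\omega_1\equiv 0$ pointwise and the cusps contribute nothing) and $r = B_D\cdot[\omega_1] = 0$ (as $\omega_1$ restricts trivially to each boundary curve $C_P$). Substituting $p=0$, $q=\chi(\X)$, $r=0$ into $B_D\cdot B_D = -15p - 15q - 11r$ yields $B_D\cdot B_D = -15\chi(\X)$.

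The hard part is the evaluation of $p$ and $r$: everything up to that point is formal manipulation with the classes of Theorem~\ref{thm:fundamentalclasses}, but $[\omega_1]^2 = 0$ and $B_D\cdot[\omega_1] = 0$ are genuine statements about the behaviour of the line bundle $Q^1\Y$ along the chains of rational curves $C_P\subset\bdry\X$, and proving them requires Mumford's good-metric estimates together with the explicit local coordinates of \S\ref{sec:compactification-x}. A second route to the same two vanishings avoids the boundary analysis: pairing the first identity above with $[\omega_1]$ and with $[\omega_2]$, and using $[\barW]\cdot[\omega_1] = \tfrac1{2\pi}\Area_{\mathrm{hyp}}(\W) = -\chi(\W)$ (Gauss--Bonnet, as $\W$ is a leaf of $\F$), $[\barW]\cdot[\omega_2] = \tfrac13[\barW]\cdot[\omega_1]$ (the second Lyapunov exponent of the $\SLtwoR$-action along $\W$ being $\tfrac13$), and McMullen's formula for $\chi(\W_D)$ in terms of $\chi(\X)$, one solves for $p$ and $r$ and again gets $p = r = 0$.
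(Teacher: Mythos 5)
Your proof is correct and takes essentially the same route as the paper: the paper's own proof is precisely the expansion of $[\barW]\cdot[\barP]=0$ via Theorem~\ref{thm:fundamentalclasses}, with the auxiliary intersection numbers $[\omega_1]^2=[\omega_2]^2=0$, $[\omega_1]\cdot[\omega_2]=\chi(\X)$ and $B_D\cdot[\omega_i]=0$ taken as known from \cite{bainbridge06} rather than re-derived. Your $\tau$-symmetry reduction, the scalar identity $0=15p+15q+11r+B_D\cdot B_D$, and the values $p=r=0$, $q=\chi(\X)$ are exactly the inputs the paper uses implicitly, so the arguments coincide.
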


\begin{proof}
  This can be seen easily from Theorem~\ref{thm:fundamentalclasses} using the fact that $$[\barW]\cdot[\barP]=0$$ because these curves are
  disjoint by Theorem~\ref{thm:propertiesofYD}.
\end{proof}

\begin{theorem}
  \label{thm:volXD}
  The volume of $\E(1,1)$ with respect to $\mu_D$ is
  \begin{equation*}
    \Vol(\E(1,1)) = 4 \pi \chi(\X).
  \end{equation*}
\end{theorem}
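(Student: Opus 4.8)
The plan is to reduce the volume to a cohomological intersection number on $\Y$ and then evaluate that number from the linear-algebraic data already in hand. By Theorem~\ref{thm:cohomologicalvolume}, $\Vol(\E(1,1)) = 2\pi\,[\omega_1]\cdot[\barF]$, so it is enough to show $[\omega_1]\cdot[\barF] = 2\chi(\X)$. Write $x = [\omega_1]\cdot[\barF]$, $y = [\omega_2]\cdot[\barF]$, and $z = B_D\cdot[\barF]$. Pairing the two identities of Theorem~\ref{thm:fundamentalclasses} with $[\barF]$ and using $[\barW]\cdot[\barF] = [\barP]\cdot[\barF] = 0$ from Theorem~\ref{thm:intersectionzero} gives the relations $\tfrac32 x + \tfrac92 y + z = 0$ and $\tfrac52 x + \tfrac52 y + z = 0$; subtracting yields $x = 2y$, and then $z = -\tfrac{15}{4}x$. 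Thus everything comes down to showing $B_D\cdot[\barF] = -\tfrac{15}{2}\chi(\X)$.

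For this I would bring in the involution $\tau$ of $\Y$. Since $\tilde\tau(z_1,z_2)=(z_2,z_1)$ interchanges the two factors of $\half\times\half$, one has $\tau^*[\omega_1]=[\omega_2]$, while $\tau(\barP)=\barP$ (Proposition~\ref{prop:taufixesP}, extended over $\Y$ as in Theorem~\ref{thm:propertiesofYD}) gives $\tau^*[\barP]=[\barP]$; applying $\tau^*$ to $[\barP]=\tfrac52[\omega_1]+\tfrac52[\omega_2]+B_D$ and comparing with the original relation forces $\tau^*B_D=B_D$. Next I would compute $B_D\cdot[\barP]$. Using the disjointness $[\barW]\cdot[\barP]=0$ (Theorem~\ref{thm:propertiesofYD}), the elementary Hilbert--modular identities $[\omega_1]^2=[\omega_2]^2=0$ and $[\omega_1]\cdot[\omega_2]=\chi(\X)$ (immediate from the shape of $\tilde\omega_i$ together with Gauss--Bonnet, and recorded in \cite{bainbridge06}), the self-intersection $B_D\cdot B_D=-15\chi(\X)$ of Lemma~\ref{lem:selfintBD}, and the equality $B_D\cdot[\omega_1]=B_D\cdot[\omega_2]$ that follows from $\tau$-invariance, the expansion of $0=[\barW]\cdot[\barP]$ collapses to $11\,(B_D\cdot[\omega_1])=0$; hence $B_D\cdot[\omega_i]=0$, and then $[\barP]=\tfrac52[\omega_1]+\tfrac52[\omega_2]+B_D$ gives $B_D\cdot[\barP]=B_D\cdot B_D=-15\chi(\X)$.

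To finish, I would invoke Corollary~\ref{cor:CPdotPD}. Writing $B_D$ as a rational combination of the boundary classes $[C_P]$ (Theorem~\ref{thm:fundamentalclasses}) and summing the identity $[C_P]\cdot([\barF]+\tau^*[\barF]) = [C_P]\cdot[\barP]$ against those coefficients gives $B_D\cdot([\barF]+\tau^*[\barF]) = B_D\cdot[\barP] = -15\chi(\X)$; since $\tau^*B_D=B_D$ and the holomorphic involution $\tau$ preserves the intersection pairing, the left side equals $2\,(B_D\cdot[\barF]) = 2z$, so $z=-\tfrac{15}{2}\chi(\X)$. Then $x = -\tfrac{4}{15}z = 2\chi(\X)$, and $\Vol(\E(1,1)) = 2\pi x = 4\pi\chi(\X)$. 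The part that actually carries content is not the linear algebra but the three inputs it rests on --- Theorem~\ref{thm:intersectionzero}, Lemma~\ref{lem:selfintBD}, and the normalized identity $[\omega_1]\cdot[\omega_2]=\chi(\X)$; granting these, the only real hazard is bookkeeping with the coefficients.
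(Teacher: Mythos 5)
Your proposal is correct and follows essentially the same route as the paper's proof: reduce to $[\omega_1]\cdot[\barF]$ via Theorem~\ref{thm:cohomologicalvolume}, kill the $[\barW]$ and $[\barP]$ contributions with Theorem~\ref{thm:intersectionzero}, and evaluate $B_D\cdot[\barF]$ using $\tau^*B_D=B_D$, Corollary~\ref{cor:CPdotPD}, and Lemma~\ref{lem:selfintBD}. The only cosmetic difference is that you justify $B_D\cdot[\barP]=B_D\cdot B_D$ by deriving $B_D\cdot[\omega_i]=0$ from the expansion of $[\barW]\cdot[\barP]=0$, whereas the paper takes that step directly from Theorem~\ref{thm:fundamentalclasses} together with the intersection numbers recorded in \cite{bainbridge06}.
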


\begin{proof}
  By Theorem~\ref{thm:fundamentalclasses},
  \begin{equation*}
    [\omega_1] = -\frac{1}{3} [\barW] + \frac{3}{5}[\barP] - \frac{4}{15} B_D.
  \end{equation*}
  Then by Theorem~\ref{thm:intersectionzero},
  \begin{equation*}
    [\omega_1]\cdot[\barF] = -\frac{4}{15} B_D\cdot[\barF].
  \end{equation*}

  By Proposition~\ref{prop:taufixesP}
  and Theorem~\ref{thm:fundamentalclasses}, $\tau^* B_D = B_D$.  We then have
  \begin{alignat*}{2}
    B_D \cdot [\barF] &= \frac{1}{2}(B_D + \tau^* B_D)\cdot[\barF]\\
    &= \frac{1}{2}B_D \cdot([\barF]+ \tau^*[\barF])\\
    &= \frac{1}{2} B_D \cdot [\P] & &\quad\text{(by Corollary~\ref{cor:CPdotPD})}\\
    &= \frac{1}{2} B_D\cdot B_D & &\quad\text{(by Theorem~\ref{thm:fundamentalclasses})}\\
    &= -\frac{15}{2}\chi(\X).& &\quad\text{(by Lemma~\ref{lem:selfintBD})}
  \end{alignat*}
  Thus $[\omega_1]\cdot[\barF] = 2 \chi(\X)$, and the claim follows by Theorem~\ref{thm:cohomologicalvolume}.
\end{proof}

\section{Counting functions}
\label{sec:siegelveech}

In this section, we calculate the integrals over $\Omega_1\E(1,1)$ of the counting functions $N_{\rm c}(T, L)$ and $N_{\rm s}^i(T, L)$
defined in \S\ref{sec:introduction}, proving Theorem~\ref{thm:svintegral}.

\paragraph{Integral of $N_{\rm c}(T, L)$.}

Given a prototype $P=(a, b, c, \overline{q})$, define
\begin{equation*}
  v(P) = a' \lambda(\lambda-1)\left(1 + \frac{1}{\lambda^2} + \frac{1}{(\lambda-1)^2}\right). 
\end{equation*}

\begin{theorem}
  \label{thm:svED2}
  For any $\epsilon>0$,
  \begin{equation*}
    \int\limits_{\Omega_1\E(1,1)} N_c(T, \epsilon)\,d\mu_D^1(T) = \epsilon^2\sum_{P\in\Yprot}v(P).
  \end{equation*}
\end{theorem}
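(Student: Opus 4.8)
The plan is as follows. Since the circumference of a cylinder depends only on its core curve and is invariant under the $\SOtwoR$--action, the integrand $N_{\rm c}(T,\epsilon)$ is constant on the circle fibers of $\Omega_1\E(1,1)\to\E(1,1)$, so the integral equals $\int_{\E(1,1)}N_{\rm c}(\cdot,\epsilon)\,d\mu_D$, evaluated on unit--area representatives. Because $\mu_D$ is finite and $\SLtwoR$--invariant and the set of circumference vectors of maximal cylinders is $\SLtwoR$--equivariant, the Siegel--Veech formula (see \cite{veech98,eskinmasur}) shows that $\int N_{\rm c}(\cdot,\epsilon)\,d\mu_D=c\,\epsilon^2$ for a constant $c$ independent of $\epsilon$; hence it suffices to evaluate the integral for arbitrarily small $\epsilon$.

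Next I would introduce the incidence space $\mathcal{Z}_\epsilon$ of pairs $(T,C)$ with $T$ a unit--area eigenform with two simple zeros and $C$ a maximal cylinder of circumference at most $\epsilon$, carrying the lift $\hat\mu$ of $\mu_D$, so that $\int N_{\rm c}(\cdot,\epsilon)\,d\mu_D=\hat\mu(\mathcal{Z}_\epsilon)$. Using $\SOtwoR$ to rotate $C$ into the horizontal direction, I use the complete periodicity of eigenforms to conclude that the horizontal direction of $T$ is then periodic; and since a cylinder of circumference $\le\epsilon$ has a short core curve, $T$ lies, for $\epsilon$ small, in a small neighborhood of $\bdry\X=\bigcup_P C_P$. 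By Proposition~\ref{prop:qlambda} and the description of neighborhoods of $\bdry\X$ in \S\ref{sec:compactification-x}, up to a $\hat\mu$--null set every surface near $\bdry\X$ has a three--cylinder horizontal decomposition; so the directed surface underlying $(T,C)$ lies in $U_P$ for a unique prototype $P$ (Corollary~\ref{cor:componentsofU}), and $C$ is its cylinder of index $i\in\{1,2,3\}$, of circumference $x_1=1$, $x_2=\lambda=\lambda(P)$, $x_3=\lambda-1$ in the normalization of Proposition~\ref{prop:canonicalrepresentation}. This splits $\mathcal{Z}_\epsilon$, up to a null set, into pieces $\mathcal{Z}_\epsilon^{P,i}$.

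I would then compute $\hat\mu(\mathcal{Z}_\epsilon^{P,i})$ in the coordinates $(y_1,y_2,y_3)$ on $U_P$ of Theorem~\ref{thm:threecylcoordinates}, writing $y_j=u_j+iv_j$. A point of $U_P$ represents a surface of area $A=v_1+\lambda v_2+(\lambda-1)v_3$; rescaling its form to unit area divides every circumference by $\sqrt A$, so the condition that $C$ have circumference $\le\epsilon$ becomes $A\ge x_i^2/\epsilon^2$. Since $\mu_D$ is the product of the holonomy--invariant transverse measure $|q|$ to $\F$ with the leafwise hyperbolic area, its form in these coordinates can be read off just as in Proposition~\ref{prop:transversemeasuretoW0}; using the area identity $A(r)=\tfrac{\sqrt D}{a}\Im r$ of Lemma~\ref{lem:area} together with the covering $\hat U_P\to U_P$ of degree $a'/(\gcd(a',c')\gcd(a',a'+b'+c'))$ from \eqref{eq:24}, one finds $\hat\mu(\mathcal{Z}_\epsilon^{P,i})=\epsilon^2\,a'\,x_1x_2x_3/x_i^2$: here $x_1x_2x_3=\lambda(\lambda-1)$ is the volume of the torus of twist parameters cut out by \eqref{eq:23} (after correcting by the covering degree), the factor $1/x_i^2$ comes from the unit--area rescaling of the $i$--th cylinder, and the factor $a'$ comes from Lemma~\ref{lem:area}. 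Summing over $i$ gives $\epsilon^2\,a'\lambda(\lambda-1)\bigl(1+\lambda^{-2}+(\lambda-1)^{-2}\bigr)=\epsilon^2\,v(P)$, and summing over $P\in\Yprot$ yields the theorem.

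The hard part is the measure bookkeeping in the last step: showing that the transverse measure $|q|$, the leafwise hyperbolic metric, the covolumes of the lattices $\zed$, $\lambda\zed$, $(\lambda-1)\zed$, the linear constraint \eqref{eq:23}, and the covering degree \eqref{eq:24} combine to produce exactly the clean factor $a'\,x_1x_2x_3/x_i^2$, with no leftover discriminant or $\gcd$ factors. A secondary point needing care is the reduction to three--cylinder configurations: one must verify that pairs $(T,C)$ whose horizontal decomposition has one or two cylinders form a $\hat\mu$--null set of $\mathcal{Z}_\epsilon$ for small $\epsilon$ — which holds because, near $\bdry\X$, the horizontal decomposition is three--cylinder off a null set — so that they do not affect the quadratic coefficient, hence, by Siegel--Veech exactness, the value of the integral for every $\epsilon$.
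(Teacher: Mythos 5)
Your outline follows the same route as the paper's proof: use rotation invariance to pass from $\mu_D^1$ to $\mu_D$, unfold pairs (surface, cylinder) to directed surfaces in $\mathcal{X}_D$, discard the non--three-cylinder configurations as a $\mu_D$-null set, and compute prototype by prototype in the coordinates of Theorem~\ref{thm:threecylcoordinates}; and the value you assert for each piece, $\epsilon^2 a'\lambda(\lambda-1)/x_i^2$, is exactly what the paper obtains, so summing over $i$ and $P$ does give $\epsilon^2\sum_{P\in\Yprot}v(P)$. The difficulty is that the step you set aside as ``the hard part'' is the entire content of the theorem: as written, $\hat\mu(\mathcal{Z}^{P,i}_\epsilon)$ is asserted rather than derived, and your heuristic attribution of the factors is not a reliable substitute (for instance, the $\sqrt D/a$ of Lemma~\ref{lem:area} in fact cancels against the $a/\sqrt D$ in the transverse measure \eqref{eq:45}, and the factor $a'$ enters together with $\lambda(\lambda-1)$ as the $du_2\,du_3$-area of a torus fiber of $W$ after dividing by the covering degree of \eqref{eq:24}, not from Lemma~\ref{lem:area}). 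The paper closes exactly this gap by an explicit disintegration of $\mu_D|_{U_P}$: take the transversal $W=\{\Im y_2=1\}$ (Proposition~\ref{prop:Wistransverse}); for $p\in W$ integrate the indicator of ``the $i$-th cylinder is shorter than $\epsilon$ on the unit-area representative'' over the strip $\{a_sh_tp\}$ of its leaf with the leafwise hyperbolic area, obtaining a density $\epsilon^2A/x_i^2$ per unit horocycle length with $A=\tfrac{\sqrt D}{a}(1+v_3)$; then integrate this against the transverse measure of Proposition~\ref{prop:transversemeasuretoW0}, where the $A$-dependence cancels, $\int_0^\infty(1+v_3)^{-2}\,dv_3=1$, and the $u_2,u_3$-torus contributes $a'\lambda(\lambda-1)$ after the covering correction. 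Until you carry out some version of this computation, the stated equality is not established.

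Two of your auxiliary reductions are also unnecessary detours. The appeal to Siegel--Veech exactness to reduce to small $\epsilon$, and the localization ``near $\bdry\X$'', can be dropped: inside $U_P$ the shortness condition is simply $\Im(y_2+y_3)\ge a x_i^2/(\sqrt D\,\epsilon^2)$, a region of $U_P$ for every $\epsilon$, so the identity $\int_{U_P}N_{\rm c}(T,\epsilon)\,d\mu_D=\epsilon^2v(P)$ is exact for all $\epsilon$ with no limiting argument. What does need justification is the point you state via complete periodicity: a cylinder direction on an eigenform is homological (hence gives a point of $\mathcal{X}_D$), and the one- and two-cylinder periodic loci are $\mu_D$-null, so only the pieces $U_P$ contribute.
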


\begin{proof}
  Since $N_c(T, \epsilon)$ is rotation-invariant, we can regard $N_c(T, \epsilon)$ as a function on $\X$ or $\X^c$, where as a function on
  $\X^c$, $N_c((X, \omega), \epsilon)$ is the number of cylinders in the horizontal foliation of $(X, \omega)$ which have circumference at
  most $\epsilon$, with $(X, \omega)$ normalized to have unit area.  We then have
  \begin{align*}
    \int\limits_{\Omega_1\E(1,1)} N_c(T, \epsilon)\,d\mu_D^1(T) &= \sum_{c\in C(\X)} \int_{\X^c}N_c(T, \epsilon)\, d\mu_D(T) \\
    &= \sum_{P\in\Yprot} \int_{U_P} N_c(T, \epsilon)\, d\mu_D(T).
  \end{align*}
   We will show that
  \begin{equation}
    \label{eq:54}
    \int_{U_P} N_c(T, \epsilon)\, d\mu_D(T) = \epsilon^2 v(P).
  \end{equation}

  We continue to use the notation of \S\ref{sec:inters-with-bound}, in particular the locus $W\subset U_P$ and the coordinates
  $y_j=u_j+i v_j$ on $U_P$.  Given $p=(X, \omega)\in W$ and
  $L\in\reals^+$, define
  \begin{equation*}
    B_L = \{a_s h_t p : s\in\reals \text{ and } 0\leq t\leq L\},
  \end{equation*}
  a subset of a leaf of $\F$ which we regard as a copy of $\half$ with its hyperbolic area measure $\rho$.  We identify the point
  $a_th_s p$ of this leaf with $e^{t}i+s\in\half$.

  We claim that
  \begin{equation}
    \label{eq:55}
    \int_{B_L} N_c(T, \epsilon) \, d\rho(T) = \epsilon^2 \frac{\sqrt{D}}{a}(1+v_3)\left(1+\frac{1}{\lambda^2} + \frac{1}{(\lambda-1)^2}\right) L.
  \end{equation}
  The form $(X, \omega)$, normalized to have unit area has three horizontal cylinders of circumference $1/\sqrt{A}$, $\lambda/\sqrt{A}$, and
  $(\lambda-1)/\sqrt{A}$, where
  \begin{equation*}
    A= \frac{\sqrt{D}}{a}(1+v_3)
  \end{equation*}
  by Lemma~\ref{lem:area}.  A cylinder of circumference $\ell$ on $(X, \omega)$ has circumference less than $\epsilon$ on $a_t h_s (X,
  \omega)$ if and only if
  \begin{equation*}
    e^{t} > \frac{\ell^2}{\epsilon^2 A},
  \end{equation*}
  so the contribution of this cylinder to \eqref{eq:55} is
  \begin{equation*}
    \int_0^L \int_{\ell^2/\epsilon^2A}^\infty \frac{1}{y^2}\,dy\,dx = \frac{\epsilon^2 A L}{\ell^2},
  \end{equation*}
  which yields \eqref{eq:55} by summing over all cylinders.

  Now, with $p\colon U_P\to W$ the canonical projection along $a_s$-orbits,
  \begin{equation*}
    \sigma=p_*(N_c(\cdot, \epsilon)\rho)
  \end{equation*}
  is a leafwise measure for the measured foliation $\mathcal{W}$.  If we parameterize the leaves of $\mathcal{W}$ as unit-speed horocycles,
  we obtain the leafwise measure $du_2$, so by \eqref{eq:55},
  \begin{equation*}
    \sigma = \epsilon^2 \frac{\sqrt{D}}{a}(1+v_3)\left(1+\frac{1}{\lambda^2} + \frac{1}{(\lambda-1)^2}\right) du_2.
  \end{equation*}
  Let $\tau$ be the transverse measure to $\mathcal{W}$, given by Proposition~\ref{prop:transversemeasuretoW0}.  We have
  \begin{align*}
    \int_{U_P}N_c(T, \epsilon)\,d\mu_D(T) &= \int_W d\sigma\,d\tau \\
    &=\int_0^\infty\int_{W_{v_3}}\epsilon^2\left(1+\frac{1}{\lambda^2} +
      \frac{1}{(\lambda-1)^2}\right)\frac{1}{(1+v_3)^2}\,du_2\,du_3\,dv_3 \\
    &= \epsilon^2 a'\lambda(\lambda-1)\left(1+\frac{1}{\lambda^2} + \frac{1}{(\lambda-1)^2}\right) \int_0^\infty\frac{1}{(1+v_3)^2}dv_3 \\
    &= \epsilon^2 v(P),
  \end{align*}
  which proves \eqref{eq:54}.
\end{proof}

\paragraph{Sums over prototypes.}

We now evaluate the sum over prototypes appearing in Theorem~\ref{thm:svED2}.  Define an involution $s\colon\Yprot\to\Yprot$ by
\begin{equation*}
  s(P) = t(P^+),
\end{equation*}
or equivalently
\begin{equation*}
  s(a, b, c, \overline{q}) = (a, -2a-b, a+b+c, \overline{q}).
\end{equation*}
We also make the following definitions:
\begin{align*}
  &v'(P) = a \lambda(\lambda-1)\left(1+\frac{1}{\lambda^2}+\frac{1}{(\lambda-1)^2}\right),\\
  &w(P) = v'(P) + v'(s(P)),\\
  &S_D = \{(a, b, c)\in\zed^3 : b^2-4ac=D, a>0, c<0, \text{ and }a+b+c<0 \},\quad\text{and} \\
  &S_D'= \{(a, b, c)\in\zed^3 : b^2-4ac=D, a>0, \text{ and } c<0\}.
\end{align*}
Since the definitions of $v$, $v'$, $w$, and $s$ make sense on elements of $S_D$, we regard them to be defined there as well.

\begin{theorem}
  \label{thm:sumw}
  For any quadratic discriminant $D$,
  \begin{equation}
    \label{eq:56}
    \sum_{P\in \Yprot} v(P) = 60 \chi(\X).
  \end{equation}
\end{theorem}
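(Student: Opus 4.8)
The plan is to evaluate the arithmetic sum $\sum_{P\in\Yprot}v(P)$ in closed form and match it against Siegel's and Cohen's formulas for $\chi(\X)$. First I would sum out the residue $\overline q$: since $v(P)=v'(P)/\gcd(a,b,c)$ and, for fixed $(a,b,c)\in S_D$, there are exactly $\varphi(g)$ admissible $\overline q$ with $g:=\gcd(a,b,c)$, we get $\sum_{P\in\Yprot}v(P)=\sum_{(a,b,c)\in S_D}\frac{\varphi(g)}{g}v'(a,b,c)$. The involution $s$ preserves $S_D$, the content $g$, and the weight $\varphi(g)/g$, so reindexing by $s$ and averaging yields $\sum_{P\in\Yprot}v(P)=\tfrac12\sum_{(a,b,c)\in S_D}\frac{\varphi(g)}{g}w(a,b,c)$. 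The gain is that, writing $\lambda,\lambda''$ for the two roots of $ax^2+bx+c$, one computes $w(a,b,c)=a\bigl(\frac{(\lambda(\lambda-1)+1)^2}{\lambda(\lambda-1)}+\frac{(\lambda''(\lambda''-1)+1)^2}{\lambda''(\lambda''-1)}\bigr)$, which is symmetric in the two roots and hence a rational function of $a,b,c$ with $\sqrt D$ eliminated; explicitly $w(a,b,c)=\frac Da+b+2c+4a+\frac{a(D+ab+2ac)}{c(a+b+c)}$. So the summand is now a genuine arithmetic function of the triple $(a,b,c)$.

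Next I would unfold the constrained sum. The set $S_D$ carries the constraint $a+b+c<0$, i.e.\ $\lambda>1$, and $\Yprot$ is partitioned into finite cycles under $P\mapsto P^+$ (this is the minus-continued-fraction algorithm on $\lambda$, matching the cyclic chains $C_P$ of Theorem~\ref{thm:propertiesofYD}). Using the identity $s=t\circ(\,\cdot\,)^+$ together with the cycle structure, I would rewrite $\sum_{(a,b,c)\in S_D}\frac{\varphi(g)}{g}w(a,b,c)$ as a sum with the inequality $a+b+c<0$ removed — that is, a sum over all $(a,b,c)$ with $b^2-4ac=D$, $a>0$, $c<0$ (the set $S_D'$) — of a simpler arithmetic function; the cyclic telescoping is precisely what carries the "boundary" contribution of one prototype over to the next and eliminates the constraint. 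On $S_D'$ one parametrizes by $b$ with $b\equiv D\pmod 2$ and $b^2<D$, a divisor $a\mid\frac{D-b^2}{4}$, and $c=-\frac{D-b^2}{4a}$; after this substitution the sum should collapse to a combination of divisor sums of the form $\sum_{b^2<D,\,b\equiv D(2)}\sigma_1\!\bigl(\tfrac{D-b^2}{4}\bigr)$ (plus lower-order pieces that cancel), with the content weight $\varphi(g)/g=\sum_{d\mid g}\mu(d)/d$ assembling into exactly the Möbius combination that appears in Siegel's formula.

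Finally I would identify the answer with $\chi(\X)$. By Cohen's formula (Theorem~\ref{thm:h2dsum}), $\sum_{b\equiv D(2)}\sigma_1\!\bigl(\tfrac{D-b^2}{4}\bigr)=-5H(2,D)$, and combining the Möbius-inversion identity $\sum_{r\mid f}\chi(\X[r^2D_0])=-\tfrac16H(2,f^2D_0)$ with Siegel's formula (Theorem~\ref{thm:chiX}) for $\chi(\X[f^2D_0])$ in terms of $\zeta_{\K}(-1)$, the divisor sum of the previous step is a fixed rational multiple of $\chi(\X)$, and tracking the numerical constants pins that multiple to $60$. As a consistency check, for $D=5$ one has $\sum_P v(P)=v'(1,-1,-1)=4$ and $\chi(\X[5])=2\zeta_{\ratls(\sqrt5)}(-1)=\tfrac1{15}$, so $60\chi(\X)=4$; for $D=8$, $\sum_Pv(P)=v'(1,0,-2)+v'(1,-2,-1)=(5-\tfrac1{\sqrt2})+(5+\tfrac1{\sqrt2})=10$ and $\chi(\X[8])=\tfrac16$, so $60\chi(\X)=10$.

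The step I expect to be the main obstacle is the unfolding in the middle paragraph: making the telescoping across a $(\,\cdot\,)^+$-cycle precise enough to remove the constraint $a+b+c<0$, and keeping the content-weight $\varphi(g)/g$ aligned with the factors $\kron{D_0}{r}\mu(r)/r^2$ in Siegel's formula. The algebra is elementary but the combinatorics of the Hirzebruch--Jung cusp cycles, and their interaction with the content of the forms, is where the real work lies.
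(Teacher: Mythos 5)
Your setup is sound and your endpoints check out: the reduction of the $\overline q$-sum to a content weight, the symmetrization over $s$, the closed form for $w$ (your expression agrees with Lemma~\ref{lem:wformula} after using $D=b^2-4ac$), and the final identification via Theorem~\ref{thm:h2dsum} and Siegel's formula, including your numerical checks for $D=5,8$. But the heart of the proof --- evaluating $\sum_{(a,b,c)\in S_D}w(a,b,c)$ --- is exactly the step you leave open, and the mechanism you propose for it (a ``cyclic telescoping'' over the orbits of $P\mapsto P^+$, i.e.\ over the Hirzebruch--Jung cusp cycles) is neither carried out nor needed. In the paper no telescoping over cusp chains occurs. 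Instead one applies the involution $s(a,b,c)=(a,-2a-b,a+b+c)$, which preserves $S_D$, \emph{term by term} to the rational expression for $w$: this gives $\sum(a+b)=0$, converts $\sum a(2a+b)/(a+b+c)$ into $-\sum ab/c$, and turns $\sum bc/a$ into a linear-combination identity, so that $\sum_{S_D}w$ collapses to $\sum_{S_D}(4a-4c)$ (Lemma~\ref{lem:identities}). Only for this last, very simple summand is the constraint $a+b+c<0$ removed, and that is done not by any cycle structure but by the second involution $\sigma(a,b,c)=(-c,-b,-a)$, which exchanges the two halves of $S_D'$ and makes $a-c$ invariant; this yields $\sum_{S_D}(a-c)=\sum_{S_D'}a=\sum_{e\equiv D(2)}\sigma_1\bigl((D-e^2)/4\bigr)=-5H(2,D)$. (The same $\sigma$ together with $b\mapsto-b$ also kills the cross terms $\sum(ab/c-bc/a)$.) Without some such argument your middle paragraph is an unproven assertion, and it is the entire content of the theorem.

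A second, smaller divergence: you carry the weight $\varphi(g)/g$ through the whole computation at the single level $D$. This can be made to work, because both $s$ and $\sigma$ preserve $g=\gcd(a,b,c)$, so the weighted versions of the identities above still hold, and the weighted divisor sum $\sum_{S_D'}\frac{\varphi(g)}{g}a$ unwinds via $\varphi(g)/g=\sum_{d\mid g}\mu(d)/d$ into $\sum_{d\mid f}\mu(d)\bigl(-5H(2,(f/d)^2D_0)\bigr)=30\chi(\X)$ --- but you would have to say all of this explicitly. The paper sidesteps the bookkeeping by proving the identity at all levels at once (Lemma~\ref{lem:sumvPequalssumvprime}: $\sum_{s\mid f}\sum_{P\in\Yprot[s^2E]}v(P)=\sum_{S_D}v'(P)$, where the $\varphi$'s sum to $g$ and the weight disappears) and performing a single M\"obius inversion at the very end. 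Either route is fine, but as written your proposal neither establishes the constraint-removal step nor pins down how the content weight interacts with it, so the proof is incomplete at its central point.
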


\begin{lemma}
  \label{lem:sumvPequalssumvprime}
  Given a quadratic discriminant $D = f^2 E$ with $E$ fundamental and $f\in\nats$, we have
  \begin{equation}
    \label{eq:57}
    \sum_{s|f}\sum_{P\in\Yprot[s^2E]} v(P) = \sum_{P\in S_D} v'(P).
  \end{equation}
\end{lemma}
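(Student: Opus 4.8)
The plan is to reindex both sides of \eqref{eq:57} as sums over \emph{primitive} triples and to compare them term by term, the comparison coming down to the classical identity $\sum_{k\mid m}\varphi(k)=m$. Call a triple $t_0=(a_0,b_0,c_0)$ primitive if $\gcd(a_0,b_0,c_0)=1$, and impose throughout the sign conditions $a_0>0$, $c_0<0$, $a_0+b_0+c_0<0$ (so that $\lambda(t_0)>0$ solving $a_0\lambda^2+b_0\lambda+c_0=0$ exists). Set
\[
v_0(t_0)=a_0\,\lambda\,(\lambda-1)\Bigl(1+\tfrac1{\lambda^2}+\tfrac1{(\lambda-1)^2}\Bigr),\qquad \lambda=\lambda(t_0),
\]
and write $D_0(t_0)=b_0^2-4a_0c_0$. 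The two elementary facts I will use are: (i) since $\lambda$ depends only on the ratios $a:b:c$, every prototype $P$ with primitive part $t_0$ has $v(P)=v_0(t_0)$, and every triple $(a,b,c)=g\,t_0$ has $v'(a,b,c)=g\,v_0(t_0)$; (ii) for a fixed $(a,b,c)=g\,t_0$ (with the sign conditions), the number of $\bar q\in\zed/g$ making $(a,b,c,\bar q)$ a prototype is $\varphi(g)$, since the only condition on $\bar q$ is $\gcd(g,\bar q)=1$.

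With these, grouping prototypes of discriminant $s^2E$ by their primitive part and $\gcd$, and triples in $S_D$ likewise, the two sides of \eqref{eq:57} become
\[
\text{LHS}=\sum_{t_0}v_0(t_0)\!\!\sum_{\substack{s\mid f,\ g\ge 1\\ g^2D_0(t_0)=s^2E}}\!\!\varphi(g),\qquad \text{RHS}=\sum_{t_0}v_0(t_0)\!\!\sum_{\substack{g\ge 1\\ g^2D_0(t_0)=f^2E}}\!\!g,
\]
the outer sums running over primitive $t_0$ with the sign conditions. I will then show the inner sums coincide for each $t_0$. The key input is an arithmetic sublemma: \emph{if $E$ is a fundamental discriminant and $D_0=b_0^2-4a_0c_0$ for integers $a_0,b_0,c_0$ with $D_0/E$ the square of a rational number, then $D_0/E$ is the square of a positive integer.} Granting it: if $D_0(t_0)=n^2E$ for some $n\in\nats$, then $g^2D_0(t_0)=s^2E$ forces $s=gn$, so the LHS inner sum is $\sum_{g\ge 1,\ gn\mid f}\varphi(g)$, which vanishes unless $n\mid f$ and equals $\sum_{g\mid(f/n)}\varphi(g)=f/n$ when $n\mid f$; the RHS inner sum is the single term $g=f/n$ when $n\mid f$ and is empty otherwise. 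If $D_0(t_0)$ is not of the form $n^2E$, then by the sublemma neither $g^2D_0(t_0)=s^2E$ nor $g^2D_0(t_0)=f^2E$ is solvable, so both inner sums vanish. In every case the inner sums agree, which is \eqref{eq:57}.

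What remains is the sublemma, and this case analysis is the crux. Writing $D_0/E=p^2/q^2$ with $p,q\in\nats$ and $\gcd(p,q)=1$, the relation $Ep^2=D_0q^2$ forces $q^2\mid E$, and it suffices to prove $q=1$ (then $D_0=p^2E$). If $E$ is squarefree (so $E\equiv1\pmod4$) this is immediate; if $E=4m$ with $m$ squarefree and $m\equiv2,3\pmod4$, then $q^2\mid E$ allows only $q=1$ or $q=2$, and $q=2$ would give $D_0=mp^2$ with $p$ odd, hence $D_0\equiv m\equiv2,3\pmod4$, contradicting $D_0\equiv b_0^2\equiv0,1\pmod4$; so $q=1$. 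I expect this interaction between ``$E$ fundamental'' and ``$D_0$ a quadratic-form discriminant'' to be the only genuinely nontrivial point; the reindexing in (i)--(ii) and the use of $\sum_{k\mid m}\varphi(k)=m$ are routine.
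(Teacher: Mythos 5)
Your proof is correct and is essentially the paper's argument: both group prototypes by their underlying triple, count the admissible $\bar q$ by Euler's $\phi$, and finish with $\sum_{d\mid m}\phi(d)=m$; the paper simply indexes by the imprimitive triples in $S_D$ rather than by primitive ones. The only extra content in your write-up is the explicit sublemma that a quadratic-form discriminant rationally commensurable to the fundamental discriminant $E$ is an integer-square multiple of it, a point the paper's proof uses implicitly (e.g.\ to know $\gcd(a,b,c)\mid f$) without comment.
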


\begin{proof}
  Given $P=(a, b, c)\in S_D$, note that for any discriminant $D/s^2$ prototype $Q=(a/s, b/s, c/s, \overline{q})$, we have $v(P)=v(Q)$.  We
  need to show that the contribution to the left hand side of \eqref{eq:57} from prototypes $Q$ of this form is $v'(P)$.  If $s|\gcd(a, b,
  c)$, then there are $\phi(\gcd(a, b, c)/s)$ prototypes in $\Yprot[D/s^2]$ of the form $(a/s, b/s, c/s, \overline{q})$ (where $\phi$ is the Euler
  function).  The contribution to the left hand side from $P=(a, b, c)$ is
  \begin{equation*}
    v(P)\sum_{s|\gcd(a, b, c)}\phi\left(\frac{\gcd(a, b, c)}{s}\right) = v(P)\gcd(a, b, c) = v'(P).
  \end{equation*}
\end{proof}

The following lemma is just a computation in elementary algebra and will be left to the reader.

\begin{lemma}
  \label{lem:wformula}
  For any $P=(a, b, c)\in S_D$,
  \begin{equation*}
    w(P) = 4a + b + \frac{b^2}{a} + \frac{ab}{c} - 2c - \frac{a(2a+b)}{a+b+c}.
  \end{equation*}
\end{lemma}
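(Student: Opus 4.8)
The plan is to collapse everything to a single linear expression in $\lambda=\lambda(P)$ and watch the $\lambda$‑dependence disappear. First I would unfold the definition of $v'$: distributing the product in $v'(P)=a\lambda(\lambda-1)\bigl(1+\lambda^{-2}+(\lambda-1)^{-2}\bigr)$ gives $v'(P)=a\lambda^2-a\lambda+2a-a/\lambda+a/(\lambda-1)$. Now use the defining relation $a\lambda^2+b\lambda+c=0$ to eliminate $\lambda^2$; dividing that relation by $\lambda$ gives $1/\lambda=-(a\lambda+b)/c$, and setting $\mu=\lambda-1$ (which satisfies $a\mu^2+(2a+b)\mu+(a+b+c)=0$) and dividing by $\mu$ gives $1/(\lambda-1)=-(a\lambda+a+b)/(a+b+c)$. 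These denominators are nonzero for $P\in S_D$ since $c<0$ and $a+b+c<0$. The outcome is a clean formula $v'(P)=A\lambda+B$ with
$A=-(a+b)+a^2/c-a^2/(a+b+c)$ and $B=2a-c+ab/c-a(a+b)/(a+b+c)$.

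Next I would pin down $\lambda(s(P))$. Writing $s(P)=(a_1,b_1,c_1)=(a,-2a-b,a+b+c)$ (a member of $S_D$ by the determinant check $b_1^2-4a_1c_1=b^2-4ac=D$), one observes that $-\mu=-(\lambda-1)$ is a root of $a_1x^2+b_1x+c_1$ — it is precisely the $\mu$‑relation after negating the linear coefficient. Since $a_1c_1<0$ the two roots have opposite sign, so Vieta forces the positive root to be $\lambda_1:=\lambda(s(P))=-\dfrac{a+b+c}{a(\lambda-1)}>0$. Applying the first‑step formula to $(a_1,b_1,c_1)$, and using the convenient identities $a_1+b_1=-a-b$, $a_1+b_1+c_1=c$, $c_1=a+b+c$ (which swap, up to sign, the roles of $a+b$ and of $c$ versus $a+b+c$ in $A$), I get $v'(s(P))=A_1\lambda_1+B_1$ with $A_1=-A$. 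Hence $w(P)=v'(P)+v'(s(P))=A(\lambda-\lambda_1)+(B+B_1)$.

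The decisive point is that $\lambda-\lambda_1$ is $\lambda$‑free:
$\lambda-\lambda_1=\lambda+\dfrac{a+b+c}{a(\lambda-1)}=\dfrac{a\lambda^2-a\lambda+a+b+c}{a(\lambda-1)}$,
and substituting $a\lambda^2=-b\lambda-c$ the numerator collapses to $-(a+b)(\lambda-1)$, so $\lambda-\lambda_1=-(a+b)/a$. Therefore $w(P)=-\dfrac{a+b}{a}A+B+B_1$ is a rational function of $a,b,c$ alone; collecting the $1/c$ part (which reduces to $ab/c$), the $1/(a+b+c)$ part (which reduces to $-a(2a+b)/(a+b+c)$), and the $1/a$ part together with the polynomial terms (which reduce to $4a+b+b^2/a-2c$) yields exactly the claimed formula.

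I expect the only real obstacle to be bookkeeping — keeping the three rational‑function channels ($1/c$, $1/(a+b+c)$, and $1/a$ plus the polynomial part) separated while adding $v'(P)$ and $v'(s(P))$; everything there is mechanical. The one step that needs an actual (tiny) idea rather than brute algebra is identifying which root of the $s(P)$ quadratic equals $\lambda(s(P))$, handled above by the $\mu=\lambda-1$ substitution together with Vieta; it is also worth recording that $s(P)\in S_D$ so that $\lambda_1>1$ and all the denominators stay negative and hence nonzero throughout.
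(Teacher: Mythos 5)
Your computation is correct: the paper gives no proof of this lemma (it is explicitly ``left to the reader''), and your argument supplies exactly the intended elementary-algebra verification. I checked the key steps — the expansion $v'(P)=a\lambda^2-a\lambda+2a-\frac{a}{\lambda}+\frac{a}{\lambda-1}$, the identities $\frac{1}{\lambda}=-\frac{a\lambda+b}{c}$ and $\frac{1}{\lambda-1}=-\frac{a\lambda+a+b}{a+b+c}$, the identification $\lambda(s(P))=-\frac{a+b+c}{a(\lambda-1)}$ via Vieta (legitimate since $s(P)\in S_D$), the relation $A_1=-A$, and the collapse $\lambda-\lambda(s(P))=-\frac{a+b}{a}$ — and the final collection of the $1/c$, $1/(a+b+c)$, and polynomial-plus-$1/a$ terms does reproduce $4a+b+\frac{b^2}{a}+\frac{ab}{c}-2c-\frac{a(2a+b)}{a+b+c}$.
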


\begin{lemma}
  \label{lem:identities}
  {\allowdisplaybreaks
    The following identities hold:
    \begin{align}
      \label{eq:58}
      &\sum(a+b)=0 \\
      \label{eq:59}
      &\sum\frac{ab}{c} = -\sum\frac{a(2a+b)}{a+b+c} \\
      \label{eq:60}
      &\sum\frac{2bc}{a}+\frac{b^2}{a} - a + 2c =0 \\
      \label{eq:61}
      &\sum\frac{ab}{c} = \sum\frac{bc}{a} \\
      \label{eq:62}
      &\sum(a-c) = -5 H(2, D).
    \end{align}
    All sums are over $(a, b, c)\in S_D$.
  }
\end{lemma}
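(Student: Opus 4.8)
The plan is to prove the five identities one at a time, using the involutions of $S_D$ already available — $t$ from \S\ref{sec:prototypes} and $s(a,b,c)=(a,-2a-b,a+b+c)$ — together with one further involution $\iota(a,b,c)=(-c,-b,-a)$ of the larger set $S_D'$. Throughout one uses that $D$ is nonsquare: this rules out the degenerate loci $a+b+c=0$, $a-b+c=0$, and $\{b=0,\ a+c=0\}$ on $S_D'$ (each would force $D$ to be a square), so the case distinctions in the definitions of $t$ and $\iota$ are well posed and these maps are honest (fixed-point-free) involutions.

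For \eqref{eq:58}, \eqref{eq:59}, and \eqref{eq:60}: I claim that in each case the summand $f(a,b,c)$ satisfies $f(a,b,c)+f(s(a,b,c))=0$. Since $s$ is an involution of $S_D$, this gives $\sum_{S_D}f=\tfrac12\sum_{S_D}(f+f\circ s)=0$, which is exactly the asserted identity (for \eqref{eq:59}, one writes it as $\sum(\tfrac{ab}{c}+\tfrac{a(2a+b)}{a+b+c})=0$). The verification of $f+f\circ s=0$ is a direct substitution of $s(a,b,c)=(a,-2a-b,a+b+c)$: using $\tfrac{(-2a-b)^2}{a}=4a+4b+\tfrac{b^2}{a}$ and $\tfrac{(-2a-b)(a+b+c)}{a}=-(2a+3b+2c+\tfrac{b^2}{a}+\tfrac{bc}{a})$, each $f(s(a,b,c))$ collapses to $-f(a,b,c)$. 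The cases \eqref{eq:58} (where $a+b\mapsto-(a+b)$) and \eqref{eq:59} (where $\tfrac{ab}{c}\mapsto-\tfrac{a(2a+b)}{a+b+c}$) are immediate; \eqref{eq:60} is the one requiring the full computation.

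For \eqref{eq:61}: here $s$ is unhelpful but $t$ works, applied to two sums simultaneously. Partition $S_D=A\sqcup B$ with $A=\{a-b+c<0\}$, $B=\{a-b+c>0\}$; the map $t$ acts on $A$ by $(a,b,c)\mapsto(a,-b,c)$ and on $B$ by $(a,b,c)\mapsto(-c,b,-a)$, preserving each of $A$ and $B$. Re-indexing $\sum_{S_D}\tfrac{ab}{c}$ by $t$ gives $\sum_{S_D}\tfrac{ab}{c}=-\sum_A\tfrac{ab}{c}+\sum_B\tfrac{bc}{a}$, hence $2\sum_A\tfrac{ab}{c}=\sum_B(\tfrac{bc}{a}-\tfrac{ab}{c})$; likewise $2\sum_A\tfrac{bc}{a}=\sum_B(\tfrac{ab}{c}-\tfrac{bc}{a})$. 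Subtracting these two relations yields $\sum_A(\tfrac{ab}{c}-\tfrac{bc}{a})=-\sum_B(\tfrac{ab}{c}-\tfrac{bc}{a})$, i.e. $\sum_{S_D}(\tfrac{ab}{c}-\tfrac{bc}{a})=0$, which is \eqref{eq:61}. For \eqref{eq:62}: first note that $\iota(a,b,c)=(-c,-b,-a)$ is a fixed-point-free involution of $S_D'$ which negates $a+b+c$, hence carries $S_D$ bijectively onto $S_D'\setminus S_D$; since $a-c$ is $\iota$-invariant, $\sum_{S_D'}(a-c)=2\sum_{S_D}(a-c)$. One then evaluates $\sum_{S_D'}(a-c)$ directly by grouping by the value of $b$ (which ranges over integers with $b\equiv D\pmod 2$ and $b^2<D$, as $b^2=D$ is impossible): for such $b$ the triples are $(a,b,-m_b/a)$ with $a$ a positive divisor of $m_b:=(D-b^2)/4$, and $\sum_{a\mid m_b}(a+m_b/a)=2\sigma_1(m_b)$ since $a\mapsto m_b/a$ permutes the divisors. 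Thus $\sum_{S_D'}(a-c)=2\sum_{e\equiv D\,(2)}\sigma_1((D-e^2)/4)=-10\,H(2,D)$ by Theorem~\ref{thm:h2dsum}, and dividing by $2$ gives \eqref{eq:62}.

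The main obstacle is purely computational rather than conceptual: carrying out the substitution $f+f\circ s=0$ for \eqref{eq:60} (the bookkeeping with the $\tfrac{b^2}{a}$ and $\tfrac{bc}{a}$ terms), and, in \eqref{eq:62}, making sure the $b$-range and the $\sigma_1$-convention in the displayed sum agree with those of Theorem~\ref{thm:h2dsum} (values with $b^2\ge D$ contribute nothing, consistent with taking $\sigma_1$ of a nonpositive argument to be $0$). Everything else is a matter of checking that $s$, $t$, and $\iota$ map the relevant index sets to themselves as claimed, which follows from the nonsquareness of $D$.
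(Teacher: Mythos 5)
Your proposal is correct and takes essentially the paper's approach: each identity comes from symmetrizing under an involution ($s$ for \eqref{eq:58}--\eqref{eq:60}, a $b\mapsto-b$ type symmetry for \eqref{eq:61}, and the involution $(a,b,c)\mapsto(-c,-b,-a)$ of $S_D'$ combined with the divisor count and Theorem~\ref{thm:h2dsum} for \eqref{eq:62}), with only cosmetic differences from the paper's proof (you verify pointwise anti-invariance of the summand in \eqref{eq:60} instead of invoking \eqref{eq:58}, and for \eqref{eq:61} you work with $t$ inside $S_D$ rather than passing to $S_D'$). The one inaccuracy is the parenthetical claim that $s$ and $t$ are fixed-point-free involutions -- they are not in general (e.g.\ $(1,-1,-1)\in S_5$ is fixed by $s$, and triples with $b=0$ are fixed by $t$) -- but since your re-indexing arguments only use that they are involutions, nothing breaks.
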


\begin{proof}
  Applying the involution $s$, we have
  \begin{equation*}
    \sum a+b = \sum a + (-2a-b) = -\sum a+b, 
  \end{equation*}
  which proves \eqref{eq:58}.

  By applying $s$ to $\sum ab/c$, we obtain \eqref{eq:59} immediately.

  Applying the involution $s$ and \eqref{eq:58} yields
  \begin{align*}
    \sum\frac{bc}{a} &= -\sum 2a+3b+2c+\frac{b^2}{a} + \frac{bc}{a} \\
    &= \sum a - 2c - \frac{b^2}{a} -\frac{bc}{a},
  \end{align*}
  which is \eqref{eq:60}.

  The sum,
  \begin{equation*}
   \sum \frac{ab}{c}-\frac{bc}{a} = b\sum\frac{a}{c}-\frac{c}{a},
  \end{equation*}
  is invariant under the involution,
  \begin{equation*}
    \sigma(a, b, c) = (-c, -b, -a), 
  \end{equation*}
  of $S_D'$.  Since $S_D'$ is the disjoint union $S_D\cup\sigma(S_D)$, this implies
  \begin{equation*}
    \sum_{(a, b, c)\in S_D} \frac{ab}{c}-\frac{bc}{a} = \frac{1}{2} \sum_{(a, b, c)\in S_D'}b\left(\frac{a}{c}-\frac{c}{a}\right) = 0,
  \end{equation*}
  where the second sum vanishes because the elements $(a, b, c)$ and $(a, -b, c)$ contribute opposite values to this sum.  This implies
  \eqref{eq:61}.

  Since the summand in \eqref{eq:62} is also $\sigma$-invariant, we have
  \begin{alignat*}{2}
    \sum_{P\in S_D} a-c &= \frac{1}{2} \sum_{P\in S_D'} a-c & &(\text{because the sum is $\sigma$-invariant})\\
    &= \sum_{P\in S_D'} a & & \text{(applying $\sigma$ again)}\\
    &=  \sum_{e\equiv D\,(2)} \sigma_1\left(\frac{D-e^2}{4}\right) \\
    &= -5 H(2, D), & &\text{(by Theorem~\ref{thm:h2dsum})}
  \end{alignat*}
  which proves \eqref{eq:62}.
\end{proof}

\paragraph{Proof of Theorem~\ref{thm:sumw}.}

We have
\begin{alignat*}{2}
  \sum w(P) &= \sum 3a + \frac{b^2}{a} + \frac{2ab}{c} -2c & & \quad \text{(Lemma~\ref{lem:wformula}, \eqref{eq:58}, and \eqref{eq:59})} \\
  &= \sum 4a - 4c - \frac{2bc}{a} + \frac{2ab}{c} & & \quad\text{(by \eqref{eq:60})} \\
  &= \sum 4a - 4c & & \quad\text{(by \eqref{eq:61})} \\
  &= -20 H(2, D) & &\quad \text{(by \eqref{eq:62})},
\end{alignat*}
with all sums over $(a,b,c)\in S_D$.  Thus
\begin{equation*}
  \sum v'(P) = \frac{1}{2}\sum w(P) = -10 H(2, D),
\end{equation*}
and we obtain \eqref{eq:56} from Lemma~\ref{lem:sumvPequalssumvprime}, Theorem~\ref{thm:chiX}, \eqref{eq:18}, and \Mobius inversion.
\qed

This completes the proof of \eqref{eq:2}.

\paragraph{Integral of $N_{\rm s}^i(T, L)$.}

We now compute the integrals of the counting functions $N_{\rm s}^i(T, L)$ over $\Omega_1\E(1,1)$, using the operation of collapsing a
saddle connection to relate this to the volumes of $\Omega_1\P$ and $\Omega_1\W$.

Define $S_i$ to be the space of pairs $(T, I)$, where $T\in\Omega_1\E(1,1)$ and $I$ is a multiplicity $i$ saddle connection on $T$ joining
distinct zeros.  Let $p_i\colon S_i\to\Omega_1\E(1,1)$ be the natural local homeomorphism forgetting the saddle connection.  We equip $S_i$
with the measure $\sigma_i= p^*\mu_D^1$ which is determined by $\sigma_i(U) = \mu_D^1(U)$ if $p_i$ is injective on $U$.  Given
$\epsilon>0$, let $S_i(\epsilon)$ be the locus of saddle connections of length less than $\epsilon$.

\begin{lemma}
  \label{lem:vol1}
  For every $\epsilon>0$,
  \begin{equation}
    \label{eq:63}
    \Vol_{\sigma_i}S_i(\epsilon) = \int\limits_{\Omega_1\E(1,1)}N_{\rm s}^i(T, \epsilon)\,d\mu_D^1(T).
  \end{equation}
\end{lemma}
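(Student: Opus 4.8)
The plan is to obtain \eqref{eq:63} as the change-of-variables (Tonelli) formula for the étale map $p_i$, so that the only real work is keeping track of the multiplicity of $p_i$.

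First I would record the structural facts about $p_i\colon S_i\to\Omega_1\E(1,1)$ that are already in hand. It is a local homeomorphism: a saddle connection joining distinct zeros of $(X,\omega)$ persists, with locally constant multiplicity, under a small deformation of the differential, because in the period coordinates of Theorem~\ref{thm:periodcharts} its holonomy vector is one of the coordinate functions, so every nearby differential carries a unique nearby saddle connection of the same combinatorial type. Hence $S_i$ is a second-countable orbifold of the same dimension as $\Omega_1\E(1,1)$, and $\sigma_i=p_i^*\mu_D^1$ is a well-defined Borel measure: if $U$ and $U'$ are Borel sets on each of which $p_i$ is injective, the two prescriptions $\sigma_i(U\cap U')=\mu_D^1(p_i(U\cap U'))$ coming from $U$ and from $U'$ agree, and a standard patching argument extends this to all Borel sets. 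I would also note that $S_i(\epsilon)$ is open (length is continuous on $S_i$), that for a fixed $T$ the holonomy vectors of saddle connections of $T$ form a discrete subset of $\cx$, so $p_i^{-1}(T)\cap S_i(\epsilon)$ is finite, and that by the definitions of $S_i$ and of $N^i_{\rm s}$ this fiber is in natural bijection with the set of configurations counted by $N^i_{\rm s}(T,\epsilon)$; in particular $\#\bigl(p_i^{-1}(T)\cap S_i(\epsilon)\bigr)=N^i_{\rm s}(T,\epsilon)$ for every $T\in\Omega_1\E(1,1)$.

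Next I would decompose and sum. Using second countability, choose a countable Borel partition $S_i(\epsilon)=\bigsqcup_j U_j$ with $p_i|_{U_j}$ injective onto its (Borel) image $p_i(U_j)$. Then
\[
  \Vol_{\sigma_i}S_i(\epsilon)=\sum_j\sigma_i(U_j)=\sum_j\mu_D^1\bigl(p_i(U_j)\bigr)
  =\int\limits_{\Omega_1\E(1,1)}\Bigl(\sum_j\mathbf 1_{p_i(U_j)}(T)\Bigr)\,d\mu_D^1(T),
\]
the last step by monotone convergence. Because the $U_j$ are pairwise disjoint and $p_i$ is injective on each, for every $T$ the sum $\sum_j\mathbf 1_{p_i(U_j)}(T)$ is exactly the number of points of $p_i^{-1}(T)\cap S_i(\epsilon)$, which is $N^i_{\rm s}(T,\epsilon)$ by the previous paragraph. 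Substituting yields \eqref{eq:63}. Equivalently, this says $(p_i)_*\sigma_i=N^i_{\rm s}(\cdot,\epsilon)\,\mu_D^1$, and the lemma is its total mass.

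The only point requiring genuine care — and the step I would expect to be the main obstacle — is the orbifold bookkeeping: $\Omega_1\E(1,1)$ and $S_i$ are orbifolds rather than manifolds, so "$p_i$ injective on $U$'' and "$\sigma_i(U)=\mu_D^1(p_i(U))$'' must be read in local manifold charts (or on a manifold cover of $\Omega_1\E(1,1)$ pulled back through $p_i$), and one must check that the countable injective partition can be chosen subordinate to such an atlas. This is routine, since both orbifolds carry countable atlases of uniformizing charts and each chart can be subdivided into countably many Borel pieces on which $p_i$ is injective; once this is set up, everything else is the elementary measure-theoretic manipulation above.
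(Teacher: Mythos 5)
Your proposal is correct and is essentially the paper's argument: the paper's entire proof is the observation that every $T\in\Omega_1\E(1,1)$ has exactly $N_{\rm s}^i(T,\epsilon)$ preimages under $p_i$ in $S_i(\epsilon)$, which is precisely your fiber-counting step, with the countable injective partition and monotone convergence being the routine unwinding of the definition of $\sigma_i$ that the paper leaves implicit.
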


\begin{proof}
  Every point $T$ in $\Omega_1\E(1,1)$ has exactly $N_{\rm s}^i(T, \epsilon)$ preimages under $p_i$.
\end{proof}

Recall that the operation of collapsing a saddle connection is defined if it is unobstructed in the sense of \S\ref{sec:abel-diff}.

\begin{lemma}
  \label{lem:unobstructed}
  Almost every saddle connection in $S_2$ is unobstructed.
\end{lemma}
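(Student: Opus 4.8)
The plan is to produce a $\mu_D^1$-null set $\mathcal{B}\subset\Omega_1\E(1,1)$ such that every multiplicity two saddle connection joining distinct zeros on a surface $T\notin\mathcal{B}$ is unobstructed. Granting this, the obstructed locus $\{(T,I)\in S_2 : I\text{ obstructed}\}$ is contained in $p_2^{-1}(\mathcal{B})$, and since $\sigma_2=p_2^*\mu_D^1$ is defined so that $p_2$ is locally measure preserving, this locus is $\sigma_2$-null, which is the assertion.

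To identify $\mathcal{B}$ I would first examine when collapsing a multiplicity two saddle connection $I$ is reversible. By \S\ref{sec:abel-diff} the curve $I\cup J(I)$ is embedded — the two saddle connections are parallel with opposite holonomy, so they have disjoint interiors — and it separates $T$, so $\Collapse(T,I)$ is always defined; the real content is that for almost every $(T,I)$ the pair $(T,I)$ is recovered from $\Collapse(T,I)$ and the holonomy vector of $I$ by the connected sum operation of \S\ref{sec:abel-diff}. Exactly as in the multiplicity one case treated there, the slit, or the auxiliary geodesic segments, needed to perform this reverse surgery can fail to embed only in the presence of a \emph{competitor parallel to $I$}: a saddle connection joining distinct zeros, distinct from $I$ and $J(I)$, whose holonomy is parallel to that of $I$, or a closed geodesic of $T$ parallel to $I$ (of length $\domed\length(I)$, though the length will play no role below). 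Accordingly I take $\mathcal{B}$ to be the set of $T\in\Omega_1\E(1,1)$ on which two among the saddle connections joining distinct zeros and the closed geodesics of $T$ are parallel, other than a hyperelliptic pair $\{I,J(I)\}$.

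It remains to show $\mu_D^1(\mathcal{B})=0$. I would then cover $\Omega_1\E(1,1)$ by countably many simply connected period charts (Theorem~\ref{thm:periodcharts}); on each chart the Gauss--Manin connection trivializes the relevant absolute and relative homology classes, and for fixed classes $\gamma_1,\gamma_2$ with $\gamma_2\notin\{\pm\gamma_1,\pm J_*\gamma_1\}$ the condition that the corresponding periods be parallel is the vanishing of the real-analytic function $T\mapsto\Im\bigl(\omega_T(\gamma_1)\,\overline{\omega_T(\gamma_2)}\bigr)$, where $\omega_T$ denotes the form at $T$. This function is not identically zero on the connected real-analytic manifold $\Omega_1\E(1,1)$ — otherwise the fixed classes $\gamma_1,\gamma_2$ would have proportional periods on every eigenform, which is false — so its zero set has Lebesgue measure zero, hence $\mu_D^1$-measure zero since $\mu_D^1$ is absolutely continuous (Theorem~\ref{thm:finiteinvariant} and \cite{mcmullenabel}). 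As $\mathcal{B}$ is the union of countably many such zero sets, it is null. The one point that will require genuine care, and which I expect to be the main obstacle, is the first step: making precise the notion of an obstructed multiplicity two saddle connection and verifying that obstruction forces a parallel competitor — a flat-geometry argument of the same flavor as, but slightly more delicate than, the one behind the remark in \S\ref{sec:abel-diff} that a unique shortest saddle connection up to $J$ is unobstructed.
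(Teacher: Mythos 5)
There is a genuine gap, on two levels. First, the notion the lemma actually needs is the one defined in \S\ref{sec:abel-diff} for saddle connections with $J(I)=I$: these are the connections whose collapse reverses the splitting of a double zero and lands in $\Omega_1\W$ (this is the map that Lemma~\ref{lem:vol2} needs to be defined almost everywhere; the paper's indexing of $S_1,S_2$ versus the multiplicity convention of \S\ref{sec:introduction} is internally inconsistent, but the content is unambiguous). For $J(I)\neq I$ the collapse is always defined and no obstruction notion exists, so your reformulation of ``unobstructed'' as ``recoverable from $\Collapse(T,I)$ by connected sum'' is answering a different question. Moreover, the step you defer as the ``main obstacle'' is, for the correct notion, exactly the paper's one-line observation: if one of the segments $I_1,I_2$ of \S\ref{sec:abel-diff} fails to embed, its geodesic prolongation meets a zero or $I$ within length $\ell/2$, and one extracts an \emph{absolute} period of $\omega$ that is a nonzero real multiple of the \emph{relative} period $\omega(I)$. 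Note that this condition always pairs a relative period against an absolute one; it never involves two absolute periods.

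Second, and decisively, your measure-zero argument fails for the set $\mathcal{B}$ you chose. Since $\mathcal{B}$ includes surfaces with two parallel closed geodesics (not a hyperelliptic pair), the corresponding fixed classes are both absolute, and for such pairs your non-vanishing claim is false: if $\gamma_2=\rho(\lambda)\cdot\gamma_1$ with $\lambda\in\ord$, the eigenform property gives $\omega(\gamma_2)=\lambda^{(1)}\omega(\gamma_1)$, so $\Im\bigl(\omega(\gamma_1)\overline{\omega(\gamma_2)}\bigr)$ vanishes identically on $\Omega_1\E(1,1)$. Geometrically this is reflected in the fact that every eigenform has (many) periodic directions with two or three parallel cylinders whose core curves are not $J$-related, so $\mathcal{B}$ as defined has full measure, not measure zero. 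Even after restricting to the pairs that an obstruction actually produces (relative versus absolute, or two relative classes), ``not identically zero'' still needs a proof, because $\Omega\E(1,1)$ is a proper (codimension-two) subvariety on which fixed-class period relations can hold identically; the paper supplies exactly this via the kernel foliation (Proposition~\ref{prop:periodsconstant}): one can vary any relative period while keeping all absolute periods fixed without leaving $\Omega\E(1,1)$, so in period coordinates each parallelism condition cuts out a proper real-linear subspace of the linear eigenform locus, hence a null set, and a countable union over classes finishes the proof. This kernel-foliation step is the crux, and your sketch replaces it with the unsupported assertion ``which is false.''
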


\begin{proof}
  It suffices to show that almost every surface in $\Omega_1\E(1,1)$ has no obstructed saddle connection, which in turn would follow from
  the same statement for $\Omega\E(1,1)$.  If a saddle connection $I$ on $(X, \omega)$ is obstructed, then from the definition of an obstruction,
  there must necessarily be an absolute period of $\omega$ which is a real multiple of the relative period $\omega(I)$.
  
  Let $R\subset\Omega_1\E(1,1)$ be the locus of surfaces which have a relative period and an absolute period which are real multiples of
  each other.  It suffices to show that $R$ has measure zero.  Consider $U\subset\Omega\moduli$ with period coordinates $U\to\cx^5$ defined
  in \S\ref{sec:abel-diff}.  In these coordinates, $\Omega\E(1,1)$ is a codimension two linear subspace.  The locus $R$ is a countable union
  of real-linear subspaces.  Since we can vary any relative period while keeping the absolute periods constant by moving along the kernel
  foliation, each of these subspaces is proper, thus $R$ has measure zero.
\end{proof}

\begin{lemma}
  \label{lem:vol2}
  For any $\epsilon>0$, we have
  \begin{align*}
    \Vol_{\sigma_1}S_1(\epsilon) &= \frac{\pi\epsilon^2}{2} \Vol \Omega_1\P, \quad\text{and} \\
    \Vol_{\sigma_2}S_2(\epsilon) &= \frac{3\pi\epsilon^2}{2}\Vol\Omega_1\W.
  \end{align*}
\end{lemma}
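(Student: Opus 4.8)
The plan is to use the saddle-connection--collapsing surgery of \S\ref{sec:abel-diff} to identify $S_i(\epsilon)$, up to a $\sigma_i$-null set, with a disk bundle over $\Omega_1\P$ (when $i=1$) or $\Omega_1\W$ (when $i=2$), and to check that this identification carries $\sigma_i$ to the product of Lebesgue measure on the fibres with $\mu_D^1$ on the base. Concretely: for $(T,I)\in S_i$ with $I$ a sufficiently short saddle connection joining distinct zeros --- and, when $i=2$, unobstructed in the sense of \S\ref{sec:abel-diff}, which by Lemma~\ref{lem:unobstructed} excludes only a $\sigma_2$-null set (when $i=1$ every short such saddle connection is automatically unobstructed, by the remark in \S\ref{sec:abel-diff}) --- the collapse $\Collapse(T,I)$ is defined. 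Since the surgery only cuts and re-glues along a union of geodesic segments it creates no new flat area, so $\Collapse(T,I)$ again has unit area; it is again an eigenform because collapsing a saddle connection moves along the kernel foliation $\Omega_1\A$ (Proposition~\ref{prop:periodsconstant}), along which the eigenform condition --- being a condition on \emph{absolute} periods --- is preserved. Thus $\Collapse(T,I)$ lies in $\Omega_1\P$ when $i=1$ and in $\Omega_1\W$ when $i=2$. Running the surgery backwards, from a base point $S$ and a complex number $v$ of small modulus one recovers a point of $\Omega_1\E(1,1)$ carrying a distinguished multiplicity-$i$ saddle connection of holonomy $v$, in exactly $1$ way when $i=1$ (a connected sum along the segment of holonomy $v$) and in exactly $3$ ways when $i=2$ (the three ways of splitting a double zero along that segment, as recalled after Proposition~\ref{prop:tubularneighborhoods}). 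Hence $\Collapse$ exhibits $S_i(\epsilon)$, off a null set, as a $1$-fold (resp.\ $3$-fold) branched disk bundle over $\Omega_1\P$ (resp.\ $\Omega_1\W$) with fibre the disk $\{|v|<\epsilon\}$, modulo the identification $v\sim -v$ coming from the fact that the saddle connections counted are unoriented (and, for $i=2$, are counted in $J$-pairs, cf.\ the definition of $N_{\rm s}^2$).

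For the measure statement, observe that the fibres of $\Collapse$ are pieces of leaves of the kernel foliation $\Omega_1\A$, and that the holonomy parameter $v=\int_p^q\omega$ is exactly the multivalued function whose leafwise derivative defines the quadratic differential $q=(\partial f)^2$ of \S\ref{sec:hilb-modul-surf}. Consequently the transverse measure $|q|$ built into $\mu_D^1$ restricts on each fibre to Lebesgue measure $|dv|^2$, with no extra factor, and the condition ``$I$ has length at most $\epsilon$'' is exactly ``$|v|<\epsilon$'', an honest Euclidean disk. The complementary, transverse-to-$\Omega_1\A$ factor of $\mu_D^1$ restricts on the slice $\{v=0\}$ to the invariant measure $\mu_D^1$ of $\Omega_1\P$ (resp.\ $\Omega_1\W$); here one uses that $q$ has a \emph{simple} pole along $\P$ and a \emph{simple} zero along $\W$ (\S\ref{sec:hilb-modul-surf}), so that passing to the boundary slice introduces no extra order of vanishing and no spurious constant. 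Thus in a neighbourhood of $\Omega_1\P$ (resp.\ $\Omega_1\W$) the measure $\mu_D^1$ is the product of the invariant measure on the base with $|dv|^2$ on the fibre, and integrating the fibrewise mass $\tfrac12\pi\epsilon^2$ (resp.\ $\tfrac32\pi\epsilon^2$) over the base yields $\Vol_{\sigma_1}S_1(\epsilon)=\tfrac{\pi\epsilon^2}{2}\Vol\Omega_1\P$ and $\Vol_{\sigma_2}S_2(\epsilon)=\tfrac{3\pi\epsilon^2}{2}\Vol\Omega_1\W$, the two asserted identities.

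The main obstacle is the second step: making rigorous that, near $\Omega_1\P$ and $\Omega_1\W$, the measure $\mu_D^1$ is \emph{exactly} the product just described, with the relative period $v$ entering with unit coefficient so that the fibres are genuine round disks of radius $\epsilon$. This requires the tubular-neighbourhood structure of $\P$ and $\W$ developed in \S\ref{sec:compactification-x}--\S\ref{sec:inters-with-clos} together with a computation in period coordinates (Theorem~\ref{thm:periodcharts}) verifying that $\mu_D^1$ on $\Omega_1\E(1,1)$ and its restriction to $\Omega_1\P$, $\Omega_1\W$ are Lebesgue on the relevant period-coordinate subspaces with matching normalization; the area-preservation of the surgery is precisely what makes the identity hold \emph{exactly} in $\epsilon$ rather than only asymptotically, and should be emphasized. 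A softer route to the same conclusion: $\Collapse$ is $\SLtwoR$-equivariant and $\SLtwoR$ preserves $|dv|^2$ on $\cx$, so $\Collapse_*\sigma_i$ is an $\SLtwoR$-invariant Lebesgue-class measure on $\Omega_1\P\times\cx$ (resp.\ $\Omega_1\W\times\cx$), hence a scalar multiple of (invariant measure)$\times|dv|^2$ by the uniqueness statements (Theorem~\ref{thm:finiteinvariant}, together with transitivity of the $\SLtwoR$-action on the \Teichmuller curve $\Omega_1\W$), the scalar then being pinned down by one normalization; the branch counts $1$ and $3$ and the overall factor $\tfrac12$ are routine bookkeeping once this is in hand.
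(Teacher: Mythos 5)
Your proposal is essentially the paper's own argument: the paper proves the lemma via the same collapse maps $q_i((X,\omega),I)=\left(\Collapse((X,\omega),I),\int_I\omega\right)$ onto $\Omega_1\P\times(\Delta_\epsilon/\pm 1)$ and $\Omega_1\W\times(\Delta_\epsilon/\pm 1)$, with connected sum providing a measurable inverse (degree one) and splitting a double zero providing an almost-everywhere local inverse of degree three (using Lemma~\ref{lem:unobstructed}), and then reads off the stated volumes from the product structure. Your extra discussion of why $\mu_D^1$ is exactly the product of the invariant measure on the base with Lebesgue measure in the relative period $v$ fills in a step the paper asserts without comment, and, like the paper's proof, you adopt the same convention pairing $S_1$ with $\Omega_1\P$ and $S_2$ with $\Omega_1\W$.
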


\begin{proof}
  Let $q_1\colon S_1(\epsilon)\to\Omega_1\P\times(\Delta_\epsilon/\pm1)$ and $q_2\colon S_2(\epsilon)\to\Omega_1\W\times(\Delta_\epsilon/\pm
  1)$ be defined by
  \begin{equation*}
    q_i((X, \omega), I) = \left(\Collapse((X, \omega),I), \int_I\omega\right).
  \end{equation*}
  The map $q_2$ is really only defined almost everywhere, on the set of unobstructed saddle connections (using
  Lemma~\ref{lem:unobstructed}).  The map $q_1$ is an isomorphism of measure spaces, since the connected sum operation provides a
  measurable inverse.  The map $q_2$ is a local isomorphism of measure spaces and almost everywhere three-to-one, since the operation of
  splitting a zero provides a local inverse, and there are three ways to split a zero along almost every segment $\overline{0w}\subset\cx$.
  Therefore,
  \begin{align*}
    \Vol_{\sigma_1}S_1(\epsilon) &= \Vol(\Omega_1\P \times (\Delta_\epsilon/\pm 1)) = \frac{\pi\epsilon^2}{2} \Vol \Omega_1\P,
    \quad\text{and} \\
    \Vol_{\sigma_2}S_2(\epsilon) &=  3\Vol(\Omega_1\W \times (\Delta_\epsilon/\pm 1)) = \frac{3\pi\epsilon^2}{2}\Vol\Omega_1\W.
  \end{align*}  
\end{proof}

\begin{cor}
  \label{cor:intsad}
  For every $\epsilon>0$, we have
  \begin{align*}
    \int\limits_{\Omega_1\E(1,1)}N_{\rm s}^1(T, \epsilon)\,d\mu_D^1(T) &= \frac{27}{2}\pi^2\epsilon^2\chi(\X), \quad\text{and} \\
    \int\limits_{\Omega_1\E(1,1)}N_{\rm s}^2(T, \epsilon)\,d\mu_D^1(T) &= \frac{5}{2}\pi^2\epsilon^2\chi(\X).
  \end{align*}
\end{cor}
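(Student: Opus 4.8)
The plan is to combine the two reduction lemmas of this subsection with the cohomological computations of \S\ref{sec:inters-with-bound}. By Lemma~\ref{lem:vol1} each of the two integrals equals $\Vol_{\sigma_i}S_i(\epsilon)$, and by Lemma~\ref{lem:vol2} these are $\tfrac{3\pi\epsilon^2}{2}\Vol\Omega_1\W$ and $\tfrac{\pi\epsilon^2}{2}\Vol\Omega_1\P$ respectively --- the factor $\tfrac{\pi\epsilon^2}{2}=\area(\Delta_\epsilon/\pm 1)$ being the area of the disk of collapsing parameters, and the extra factor $3$ recording the three ways of splitting a double zero along a given segment. So everything reduces to computing $\Vol\Omega_1\W$ and $\Vol\Omega_1\P$, and I claim these equal $9\pi\chi(\X)$ and $5\pi\chi(\X)$.

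For this I would argue, exactly as in the proof of Theorem~\ref{thm:cohomologicalvolume}, that the $\SLtwoR$-invariant volume of each of these loci is $2\pi$ times an integral of $\omega_1$. The curve $\W$ is a union of closed leaves of $\F$, on each of which $\omega_1$ restricts to $\tfrac1{2\pi}$ times the leafwise hyperbolic area form (the same computation used to identify $\omega_1$ on leaves of $\F$); since $\Omega_1\W$ is the associated circle bundle with unit-mass fibers --- the normalization already built into $\mu_D^1$ --- this gives $\Vol\Omega_1\W=2\pi\int_{\barW}\omega_1=2\pi\,[\omega_1]\cdot[\barW]$. Similarly $\Vol\Omega_1\P=2\pi\,[\omega_1]\cdot[\barP]$: each component of $\P$ is a graph over the first factor of $\half\times\half$, and the projection to that factor exhibits its universal cover as $\half$ with the hyperbolic metric, so once more $\omega_1$ restricts to $\tfrac1{2\pi}$ the area form.

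It remains to evaluate the two intersection numbers. Write $w_i=[\omega_i]$ and $B=B_D$. I would use $w_1^2=w_2^2=0$ and $w_1\cdot w_2=\chi(\X)$ --- valid because $w_i=c_1(Q^i\Y)$ and, by \cite{bainbridge06}, $Q^i\Y$ is, up to a boundary twist, pulled back from one factor of $\half\times\half$, while $Q^1\Y\otimes Q^2\Y$ accounts for $c_2$ of the cotangent bundle --- together with $B^2=-15\chi(\X)$ (Lemma~\ref{lem:selfintBD}) and $[\barW]\cdot[\barP]=0$ (disjointness, Theorem~\ref{thm:propertiesofYD}). Since $\tau^*w_1=w_2$ and $\tau^*B=B$ (as in the proof of Theorem~\ref{thm:volXD}) we get $w_1\cdot B=w_2\cdot B$; expanding $[\barW]\cdot[\barP]=0$ by Theorem~\ref{thm:fundamentalclasses} and the relations above then collapses to $11\,(w_1\cdot B)=0$, so $w_1\cdot B=w_2\cdot B=0$. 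Hence $[\omega_1]\cdot[\barW]=\tfrac32 w_1^2+\tfrac92 w_1 w_2+w_1\cdot B=\tfrac92\chi(\X)$ and $[\omega_1]\cdot[\barP]=\tfrac52 w_1 w_2=\tfrac52\chi(\X)$, so $\Vol\Omega_1\W=9\pi\chi(\X)$ and $\Vol\Omega_1\P=5\pi\chi(\X)$, and substituting into the first paragraph gives $\tfrac{27}{2}\pi^2\epsilon^2\chi(\X)$ and $\tfrac52\pi^2\epsilon^2\chi(\X)$.

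The main obstacle I anticipate is the identification in the second paragraph --- most delicately for $\Omega_1\P$ --- of the $\SLtwoR$-invariant measure produced by the collapse correspondence of Lemma~\ref{lem:vol2} with the product of the base-curve hyperbolic area measure and a unit-mass circle, and the careful bookkeeping of all the normalization constants (the factor $3$, the $\tfrac{\pi\epsilon^2}{2}$, the $2\pi$, and the unit-mass convention on circle fibers of $\mu_D^1$). With the groundwork of \S\ref{sec:inters-with-clos}--\S\ref{sec:inters-with-bound} in place, the algebra in the third paragraph is routine.
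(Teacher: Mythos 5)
Your first paragraph is exactly the paper's argument: Lemma~\ref{lem:vol1} converts each integral into $\Vol_{\sigma_i}S_i(\epsilon)$, and the collapse/splitting correspondence gives $\tfrac{3\pi\epsilon^2}{2}\Vol\Omega_1\W$ for multiplicity one and $\tfrac{\pi\epsilon^2}{2}\Vol\Omega_1\P$ for multiplicity two. (Note that the printed statement of Lemma~\ref{lem:vol2} pairs $S_1$ with $\Omega_1\P$ and $S_2$ with $\Omega_1\W$; your assignment is the one forced by the geometry --- a $J$-invariant saddle connection collapses to a double zero, with three splittings as local inverse --- and the one needed to reproduce the constants of the corollary, so the lemma's indices appear to be a typo and your usage is the correct one.)

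Where you diverge is in the inputs $\Vol\Omega_1\W=9\pi\chi(\X)$ and $\Vol\Omega_1\P=5\pi\chi(\X)$: the paper simply quotes $\Vol\Omega_1\P=-2\pi\chi(\P)$ and $\Vol\Omega_1\W=-2\pi\chi(\W)$ together with $\chi(\P)=-\tfrac52\chi(\X)$, $\chi(\W)=-\tfrac92\chi(\X)$ from \cite{bainbridge06}, whereas you try to rederive them inside this paper via $\Vol\Omega_1 C=2\pi[\omega_1]\cdot[\overline C]$ and Theorem~\ref{thm:fundamentalclasses}. As written this is not self-contained. The relations $[\omega_1]^2=[\omega_2]^2=0$ and $[\omega_1]\cdot[\omega_2]=\chi(\X)$ are true, but they are themselves results of \cite{bainbridge06} (via Mumford's good-metric theory) and are nowhere established here; and your deduction of $[\omega_1]\cdot B_D=0$ is circular within this paper, since the only available equation is $[\barW]\cdot[\barP]=0$ --- which, with $\tau$-symmetry, gives the single relation $\tfrac{15}{2}[\omega_1]^2+15[\omega_1][\omega_2]+11[\omega_1]\cdot B_D=15\chi(\X)$, and Lemma~\ref{lem:selfintBD} is derived from that same equation, so nothing new is gained: you must assume two of the three products to extract the third. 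The step $\Vol\Omega_1 C=2\pi[\omega_1]\cdot[\overline C]$ also needs two justifications you only gesture at: that the invariant measure on $\Omega_1\W$ and $\Omega_1\P$ is unit-mass circles over leafwise hyperbolic area (so that the volume is $-2\pi\chi(C)$), and that pairing the non-smooth class $[\omega_1]$ with $[\overline C]$ is computed by $\int_C\omega_1$ despite the cusps (a Stokes/Poincar\'e-growth argument as in \S\ref{sec:foliation-f-as}, or Mumford's theory). Your final constants are correct, but the honest options are either to quote the two volumes from \cite{bainbridge06} as the paper does, or to import from there the intersection relations you are assuming.
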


\begin{proof}
  This follows from Lemmas~\ref{lem:vol1} and \ref{lem:vol2} together with
  \begin{align*}
    \Vol \Omega_1\P &= -2\pi \chi(\P) = 5 \pi\chi(\X) \\
    \Vol \Omega_1\W &= -2\pi\chi(\W) = 9 \pi\chi(\X),
  \end{align*}
  which we proved in \cite{bainbridge06}.
\end{proof}

This completes the proof of Theorem~\ref{thm:svintegral}.

\section{Equidistribution of large circles}
\label{sec:equid-large-circl}

In this section, we prove Theorem~\ref{thm:uniformdistribution}, that circles in $\Omega_1\E(1,1)\setminus\Omega_1 D_{10}$ become
equidistributed as their radius goes to infinity.

Let $\cE[D]$ be the two fold covering of $\Omega_1\E(1,1)$ whose points are eigenforms $(X, \omega)\in \Omega_1\E(1,1)$ together with a choice
of ordering of the zeros of $\omega$.  In $\cE[5]$, let $\cD$ be the inverse image of the decagon curve $\Omega_1 D_{10}\subset
\Omega_1\E(1,1)$.  The action of $\SLtwoR$ on $\Omega_1\E(1,1)$ lifts to an action of $\SLtwoR$ on $\cE[D]$, and there is a natural ergodic,
finite, $\SLtwoR$-invariant measure $\nu_D$ on $\cE[D]$ obtained by pulling back the measure $\mu_D^1$ on $\Omega_1\E(1,1)$.

Given $x\in\cE[D]$, let $m_x$ be the uniform measure on $\SOtwoR \cdot x$ and let $m_x^t= (a_t)_* m_x$.
Since $\nu_D$ projects to $\mu_D^1$, Theorem~\ref{thm:uniformdistribution} follows from the following theorem:

\begin{theorem}
  \label{thm:uniformdistribution2}
  For any $x\in \cE[D]$ for $D\neq 5$ or $x\in \cE \setminus \cD$,
  \begin{equation*}
    \lim_{t\to\infty} m_x^t = \frac{\nu_D}{\Vol(\nu_D)}.
  \end{equation*}
\end{theorem}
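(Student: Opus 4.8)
The plan is to deduce Theorem~\ref{thm:uniformdistribution2} from the general equidistribution machinery of Eskin--McMullen and Eskin--Margulis--Mozes (as used in \cite{emm}), combined with the classification of horocycle-invariant ergodic measures on $\cE[D]$ obtained in \cite{caltawortman}. The overall strategy is standard: the measures $m_x^t = (a_t)_* m_x$ are obtained by pushing the $\SOtwoR$-invariant probability measure on a circle forward under the geodesic flow, so their weak-$*$ limits are $a_t$-invariant. The key point is to first show that no mass escapes to infinity (so that any weak-$*$ limit is a probability measure), and then to identify every possible limit as $\nu_D/\Vol(\nu_D)$.

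First I would establish non-divergence. The non-escape of mass for the $a_t$-pushforwards of circle measures in a stratum is the main result needed from \cite{emm} (building on \cite{eskinmasur}, \cite{veech98}); one needs to check that the relevant hypotheses hold on $\cE[D]$, which is a finite-volume $\SLtwoR$-invariant locus by Theorems~\ref{thm:invariantorbifolds} and \ref{thm:finiteinvariant}. Here one uses quantitative recurrence estimates: the function measuring the reciprocal of the length of the shortest saddle connection is integrable against $\nu_D$ (indeed $\int N_{\rm c}(T,\epsilon)\,d\mu_D^1 < \infty$ and the analogous saddle-connection integrals are finite by Theorems~\ref{thm:svintegral} and \ref{thm:svED2}), which feeds into the $\alpha$-function argument of \cite{emm}. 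Once tightness is known, any subsequential limit $\mu_\infty$ of $m_x^t$ is an $a_t$-invariant probability measure on $\cE[D]$.

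Second I would upgrade $a_t$-invariance to $H$-invariance (invariance under the full upper-triangular unipotent, i.e. horocyclic, subgroup). This is the standard "banana" or thickening argument: since $m_x$ is supported on an $\SOtwoR$-orbit, $m_x^t$ is the normalized measure on a large circle, and as $t\to\infty$ these circles, viewed in the stable/unstable decomposition, become long horocycle-like pieces; a uniform continuity argument (conjugating $r_\theta$ by $a_t$ and using that $a_{-t} r_\theta a_t \to h_s$ pointwise for suitable reparametrizations) shows that any limit is invariant under $H$, hence under the full unipotent one-parameter subgroup generated by $h_t$. At this stage $\mu_\infty$ is an $H$-invariant probability measure on $\cE[D]$, so its ergodic components are $H$-invariant ergodic probability measures.

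Third, and this is where I expect the main obstacle to lie, I would invoke the partial classification of ergodic $H$-invariant measures on $\cE[D]$ from \cite{caltawortman}: every such measure is either the canonical measure $\nu_D/\Vol(\nu_D)$, or is supported on a proper invariant subvariety which (in genus two, with two simple zeros, eigenform locus) must be a \Teichmuller curve --- and the only $\SLtwoR$-invariant proper closed subsets of $\Omega_1\E(1,1)$ are the Weierstrass curves $\Omega_1\W$ (which lie in $\Omega_1\moduli(2)$, hence meet $\Omega_1\E(1,1)$ only after removing the double zero, so are actually disjoint from it) together with the decagon curve $\Omega_1 D_{10}\subset\Omega_1\E[5](1,1)$ and square-tiled \Teichmuller curves. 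The hypothesis that $x\notin\cD$ when $D=5$, together with a dimension/transversality argument showing the starting circle $\SOtwoR\cdot x$ is not contained in any \Teichmuller curve and spends asymptotically zero proportion of its length near any such curve (again a consequence of the non-divergence/recurrence estimates applied to each lower-dimensional invariant subvariety), rules out every ergodic component except $\nu_D/\Vol(\nu_D)$. The delicate part is handling the square-tiled \Teichmuller curves: one must argue either that $\cE[D]$ contains only finitely many of them below any given length bound and use the linear (in $t$) escape rate from a neighborhood of each, or cite the relevant results of \cite{caltawortman} directly to the effect that such curves do not carry limits of $m_x^t$ unless $x$ already lies on one. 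Since $x\in\cE\setminus\cD$ (or $D\neq5$) is assumed, and the decagon curve is the unique \Teichmuller curve in $\Omega_1\E(1,1)$ that is \emph{not} square-tiled, combining this with the square-tiled case completes the identification $\mu_\infty = \nu_D/\Vol(\nu_D)$; as this holds for every subsequential limit, $m_x^t \to \nu_D/\Vol(\nu_D)$, which is Theorem~\ref{thm:uniformdistribution2}, and Theorem~\ref{thm:uniformdistribution} follows by projecting down along $\cE[D]\to\Omega_1\E(1,1)$.
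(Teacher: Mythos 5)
There is a genuine gap at the heart of your third step. The paper's proof also reduces, via no-escape-of-mass (Corollary~5.3 of \cite{eskinmasur}), $h_t$-invariance of subsequential limits, and the Calta--Wortman classification, to showing that no subsequential limit $m_x^\infty$ charges the decagon locus $\cD$ when $x\notin\cD$. But that last statement is exactly what you have not proved: you assert it follows from ``a dimension/transversality argument'' and from ``non-divergence/recurrence estimates applied to each lower-dimensional invariant subvariety,'' or alternatively from citing \cite{caltawortman} ``to the effect that such curves do not carry limits of $m_x^t$ unless $x$ already lies on one.'' Neither works as stated: \cite{caltawortman} classifies $h_t$-invariant measures and says nothing about which of them arise as limits of circle pushforwards, and non-divergence estimates control escape to the thin part of the stratum, not concentration of mass on a fixed invariant curve sitting inside $\cE$. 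The actual argument (Lemma~\ref{lem:nestedintervals}, adapted from \cite{emm}) is a quantitative non-concentration estimate using the kernel foliation $\A'$: since $x\notin\cD$, the translation distance $\kappa$ from $x$ to $\cD$ along $\A'$ is positive, and using the equivariance $A\cdot(x+v)=A\cdot x+Av$ together with elementary trigonometry and the lower bound $|a_t^{-1}v|\geq\kappa$, one shows that inside each component of the set of angles $\theta$ with $a_tr_\theta x$ in a box neighborhood $B_\rho(K')$ of a compact piece of $\cD$, the angles landing in the thinner box $B_{\delta\rho}(K)$ have relative measure at most $2\delta$. Without this (or an equivalent) estimate your proof is incomplete, and this is the only genuinely new ingredient of the section.

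A secondary problem is your statement of the classification. The measures you must rule out are only known to be $h_t$-invariant, so the relevant dichotomy is not ``$\nu_D$ or supported on an $\SLtwoR$-invariant proper subvariety (Teichm\"uller curve).'' The Calta--Wortman theorem, as used in Lemma~\ref{lem:sufficestoshow}, says a finite ergodic $h_t$-invariant measure on $\cE[D]$ is either supported on the locus $\cE[D]^h$ of surfaces with a horizontal saddle connection, or is the uniform measure on $\cD$, or is $\nu_D$. The measures supported on $\cE[D]^h$ (e.g.\ on closed horocycles through periodic surfaces) are not covered by your dichotomy and require the separate argument that points of $\cE[D]^h$ diverge under $a_t$, which combined with unit mass of all limits and the $a_t$-invariance of the \emph{set} of limits rules them out. (Relatedly, an individual subsequential limit of $(a_{t_n})_*m_x$ is not automatically $a_t$-invariant, only $h_t$-invariant; and your worry about square-tiled Teichm\"uller curves is moot, since $D$ is assumed nonsquare throughout the paper, so $\Omega_1\E(1,1)$ contains no square-tiled curves and the only Teichm\"uller curve present is $\Omega_1D_{10}$ when $D=5$.)
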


\begin{lemma}
  \label{lem:sufficestoshow}
  To prove Theorem~\ref{thm:uniformdistribution2}, it suffices to show that for any subsequential limit $m_x^\infty = \lim m_x^{t_n}$,
  the measure $m_x^\infty$ assigns zero mass to $\cD$.  In particular, Theorem~\ref{thm:uniformdistribution2} is true for any $x\in\cE[D]$
  for $D\neq 5$.
\end{lemma}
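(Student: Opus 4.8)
The plan is to carry out the argument of \cite{emm}, with the partial classification of ergodic $H$-invariant measures in \cite{caltawortman} supplying the needed rigidity. Recall that $m_x^t=(a_t)_\ast m_x$ is the uniform probability measure on the circle $a_t\SOtwoR x\subset\cE[D]$, and that $\nu_D$ is finite and $\SLtwoR$-invariant (Theorem~\ref{thm:finiteinvariant}). The first step is to rule out escape of mass. Let $\alpha$ be the proper exhaustion function on $\cE[D]$ given by the maximum of the reciprocals of the systole and of the length of the shortest saddle connection joining the two zeros; $\alpha$ blows up both at the cusps of $\Omega_1\E(1,1)$ and along $\Omega_1\W$, where the two zeros collide. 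By the quantitative non-divergence estimates used in \cite{emm} (going back to Eskin and Masur \cite{eskinmasur}), $m_x^t\{\alpha>T\}=O(T^{-\kappa})$ for some $\kappa>0$, uniformly in $t$; hence $\{m_x^t:t\geq 0\}$ is tight, and every weak-$\ast$ subsequential limit $m_x^\infty=\lim_n m_x^{t_n}$ is again a probability measure on $\cE[D]$.

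Next, any such $m_x^\infty$ is invariant under the unipotent group $H=\{h_s\}$. This is the usual mechanism by which an expanding circle converges to an $H$-invariant measure: using the identity $h_s a_t=a_t h_{e^{-2t}s}$ and splitting $\SOtwoR x$ into the two short arcs collapsing onto the angles $\theta\equiv 0,\pi$ --- which carry $m_x^t$-mass tending to $0$, hence contribute nothing in the limit by tightness --- and the complementary arcs, on which applying $h_s$ reparameterizes the circle with Jacobian tending to $1$ uniformly on compacta, one obtains $(h_s)_\ast m_x^\infty=m_x^\infty$ for every $s$. This computation is carried out in detail in \cite{emm}, and its transfer to the double cover $\cE[D]$ is routine.

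Now pass to the $H$-ergodic decomposition of $m_x^\infty$. By \cite{caltawortman}, every ergodic $H$-invariant probability measure on $\Omega_1\E(1,1)$, hence (after lifting) on $\cE[D]$, is either $\nu_D/\Vol(\nu_D)$ or is supported on a closed $\SLtwoR$-orbit. For $D$ nonsquare, McMullen's classification of orbit closures (\cite{mcmullenbild}) leaves only Teichm\"uller curves generated by square-tiled surfaces and, when $D=5$, the decagon curve $\Omega_1D_{10}$; the square-tiled case cannot occur, since such a surface has all periods in $\ratls(i)$ after scaling, while an eigenform for a nonsquare $D$ carries an endomorphism with $\lambda^{(1)}=\omega(\rho(\lambda)\gamma)/\omega(\gamma)$ real irrational, impossible for a ratio of two Gaussian-integer periods. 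Hence the $H$-ergodic components of $m_x^\infty$ are $\nu_D/\Vol(\nu_D)$ together with, only if $D=5$, probability measures supported on $\cD$ (the preimage of $\Omega_1D_{10}$). Writing
\[
  m_x^\infty=(1-c)\,\frac{\nu_D}{\Vol(\nu_D)}+c\,\lambda_{\cD},\qquad c\in[0,1],
\]
with $\lambda_{\cD}$ supported on $\cD$, and using $\nu_D(\cD)=0$, we get $m_x^\infty(\cD)=c$. Thus if every subsequential limit gives $\cD$ zero mass, then $c=0$ along every subsequence, i.e.\ $m_x^t\to\nu_D/\Vol(\nu_D)$, which is Theorem~\ref{thm:uniformdistribution2}. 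When $D\neq 5$ the set $\cD$ is empty, so the hypothesis is automatic and the theorem follows unconditionally.

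The main obstacle is the incompleteness of the classification in \cite{caltawortman}: it is just strong enough to reduce $m_x^\infty$ to a convex combination of $\nu_D/\Vol(\nu_D)$ and a measure on $\cD$, but it does not rule out the decagon contribution outright --- which is exactly why the lemma is stated conditionally and why the case $D=5$, $x\notin\cD$, requires the separate argument occupying the rest of \S\ref{sec:equid-large-circl}. The non-divergence and $H$-invariance steps are technical but standard; the only delicate point there is that the two ``bad'' arcs of each circle carry vanishing mass in the limit, which is precisely what the non-divergence bound of the first step provides.
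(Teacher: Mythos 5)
Your non-divergence and $h_s$-invariance steps match the paper's (it cites Corollary~5.3 of \cite{eskinmasur} for unit mass of subsequential limits and \cite[Lemma~7.3]{emm} for unipotent invariance), but there is a genuine gap at the point where you invoke \cite{caltawortman}. You state the classification as: every ergodic $H$-invariant probability measure on $\cE[D]$ is either $\nu_D/\Vol(\nu_D)$ or is supported on a closed $\SLtwoR$-orbit. That is not what the classification says, and it is false: there is a third class, namely ergodic $H$-invariant measures supported on the locus $\cE[D]^h$ of surfaces with a horizontal saddle connection. For example, a horizontally periodic eigenform that is not a lattice surface has a compact $H$-orbit (the horocycle flow just twists its horizontal cylinders), and the uniform measure on that orbit is ergodic and $H$-invariant but is neither $\nu_D$ nor carried by a closed $\SLtwoR$-orbit. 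Your McMullen-orbit-closure detour (ruling out square-tiled curves for nonsquare $D$) does nothing to address this class, so your ergodic decomposition $m_x^\infty=(1-c)\,\nu_D/\Vol(\nu_D)+c\,\lambda_{\cD}$ does not follow from what you have proved: a priori $m_x^\infty$ could also have components supported on $\cE[D]^h$.

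The paper closes exactly this hole with an extra argument you omit: the set $S$ of subsequential limits of $m_x^t$ is compact and $a_t$-invariant, and all its elements have unit mass (no escape of mass); on the other hand, points of $\cE[D]^h$ diverge under $a_t$, so for any measure $\mu$ supported on $\cE[D]^h$ one has $(a_t)_*\mu\to 0$ on compact sets, which would force $0\in S$, a contradiction. Hence no component of a subsequential limit can be supported on $\cE[D]^h$, and only then is the limit a convex combination of $\nu_D/\Vol(\nu_D)$ and the uniform measure on $\cD$, giving the lemma (and the unconditional statement for $D\neq 5$, where $\cD=\emptyset$). Your proposal needs this $a_t$-invariance-plus-divergence step (or some substitute) before the decomposition in your third paragraph is legitimate; the rest of your outline is essentially the paper's argument.
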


\begin{proof}
  Let $\cE[D]^h\subset\cE[D]$ be the locus of surfaces which have a horizontal saddle connection.  Points in $\cE[D]^h$ diverge under the
  action of $a_t$ in the sense that for every compact set $K$ of $\cE[D]^h$, the images $a_t(K)$ eventually leaves every compact set of
  $\cE[D]$.

  Let $M$ be the space of measures on $\cE[D]$ with the weak${}^*$ topology, and let $S\subset M$ be the compact, $a_t$-invariant set of
  subsequential limits of $m_x^t$.  By Corollary~5.3 of \cite{eskinmasur}, every measure in $S$ has unit mass.

  It is well-known that any subsequential limit of $m_x^t$ is $h_t$-invariant (see \cite[Lemma~7.3]{emm}).  By the main result of
  \cite{caltawortman}, every finite $h_t$-invariant measure on $\cE[D]$ is either supported on $\cE[D]^h$, is uniform measure on $\cD$, or
  is the uniform measure $\nu_D$ on $\cE[D]$.  But no measure $\mu\in S$ can be supported on $\cE[D]^h$  because by divergence we would have
  $(a_t)_*\mu\to 0$, but then $0\in S$, which contradicts every measure in $S$ having unit mass.
\end{proof}

We now show that $\cD$ is not a subsequential limit, following the proof of a similar statement in \cite{emm}.

Let $\A'$ be the kernel foliation of $\cE[D]$, that is, the foliation of $\cE[D]$ given by pulling back the foliation $\Omega_1\A$ of
$\Omega_1\E(1,1)$.  The leafwise quadratic differential $q$ on $\Omega_1\A$ lifts to a leafwise Abelian differential on $\A'$, so leaves of
$\A'$ have natural translation structures.  Given $x\in\cE[D]$ and $v\in\cx$, we write $x+v$ for the point in $\cE[D]$ obtained by
translating $x$ by $v$ in its leaf of $\A'$.  This is not always well defined because the segment through $x$ in the direction of $v$ might
hit a cone point of the translation structure.  The map $x \mapsto x+v$ is defined for almost every $v$ and given $x$, for sufficiently
small $v$.  We have the fundamental identity,
\begin{equation*}
  A\cdot(x+v) = A\cdot x + Av
\end{equation*}
for every $A\in \SLtwoR$.

Now let $\mathcal{D}'$ be the Riemann surface orbifold of which $\cD$ is the unit tangent bundle and let $\pi\colon\cD\to \mathcal{D}'$ be
the natural projection.  Let $K\subset K'\subset \cD$ be the inverse images under $\pi$ of closed hyperbolic discs in $\mathcal{D}'$ with
$K\subset \interior K'$.  

Given $r, s>0$, define
\begin{equation*}
  \BOX(r, s) = \{(x, y)\in\reals^2 : |x|<r\text{ and } |y|<s\}.
\end{equation*}
Since $K'$ is compact, the natural map $K'\times \BOX(\rho,\rho)\to \cE[D]$ defined by $(x, v)\mapsto x+v$
is everywhere-defined and injective for some $\rho>0$.  Given any $S\subset K'$ and $r<\rho$, let $B_r(S)$ be the image of this map, which we identify with
the box $S\times\BOX(r, r)$.  Define
\begin{equation*}
  R(S, r, t) = \{\theta \in \reals/\zed: a_t r_\theta x \in B_r(S)\}. 
\end{equation*}

\begin{lemma}
  \label{lem:nestedintervals}
  For any $\delta>0$, we have
  \begin{equation*}
    |R(K, \delta\rho, t)|<2\delta|R(K', \rho, t)|
  \end{equation*}
  whenever $t$ is sufficiently large.
\end{lemma}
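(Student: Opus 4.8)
The plan is to exploit that $\cD$ is $\SLtwoR$-invariant, so that near $\cD$ the leaves of the kernel foliation $\A'$ provide $\cx$-valued transverse coordinates on which $\SLtwoR$ acts by the \emph{linear} cocycle $A\cdot(p+v)=A\cdot p+A\cdot v$, where $p\in\cD$ and $v\in\cx$ is small. If $R(K',\rho,t)$ is empty the asserted inequality is trivial, so fix a connected component $J$ of $R(K',\rho,t)$ and, on $J$, write $a_t r_\theta x=\sigma(\theta)+v(\theta)$ with $\sigma(\theta)\in K'\subset\cD$ and $v(\theta)=v_1(\theta)+iv_2(\theta)\in\BOX(\rho,\rho)$. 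Differentiating $\theta\mapsto a_t r_\theta$ and using linearity of the cocycle, one gets along $J$ the \emph{exact} decoupled system
\begin{equation*}
  \frac{d\sigma}{d\theta}=(\mathrm{Ad}(a_t)Z)^{*}(\sigma),\qquad \frac{dv}{d\theta}=(\mathrm{Ad}(a_t)Z)\cdot v ,
\end{equation*}
where $Z$ generates $r_\theta$ and $\mathrm{Ad}(a_t)Z=-e^{2t}N_{+}+e^{-2t}N_{-}$. Hence $v(\theta)=a_t r_{\theta}a_t^{-1}\xi$ for a constant $\xi=v(\theta_J)$, which one computes explicitly to be of the form $v_1(\theta)=R_1\cos(\theta-\theta_*)$ and $v_2(\theta)=-R_2\sin(\theta-\theta_*)$ with $R_1=e^{2t}R_2$ and $R_2\le\sqrt2\,\rho$, while $\sigma(\theta)=a_t r_{\theta-\theta_J}a_t^{-1}\sigma(\theta_J)$ traces, after rescaling $\theta$ by $e^{2t}$, a standard $N_{+}$-horocycle orbit in $\cD$ up to an $O(e^{-2t})$ error.

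Next I would extract a non-degeneracy fact. If $R_1<\rho$ on $J$ then $|\Im\xi|<e^{-2t}\rho$, so $a_{-t}$ takes $a_t r_{\theta_J}x=\sigma(\theta_J)+\xi$ to a point whose transverse coordinate $a_{-t}\xi$ has modulus $\lesssim e^{-t}\rho$; as $\cD$ is $\SOtwoR$-invariant this forces $\mathrm{dist}(x,\cD)\lesssim e^{-t}\rho$, impossible for large $t$ since $x\notin\cD$. Thus for $t$ large $R_1\ge\rho$ on every component, $\{|v_1|<\rho\}$ is a union of at most two arcs $I_{\pm}$ centered at the zeros of $v_1$, $R(K',\rho,t)\subset I_{+}\cup I_{-}$, and the concentric sub-arcs $I_{\pm}^{(\delta)}:=\{|v_1|<\delta\rho\}$ satisfy $|I_{\pm}^{(\delta)}|\le\frac{\pi}{3}\delta\,|I_{\pm}|$ by an elementary estimate for $\arcsin$. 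Along $I_{\pm}$ the value $v_2$ stays near its extreme $\pm R_2$, so if $R_2\gtrsim\delta\rho$ the constraint $|v_2|<\delta\rho$ required for membership in $B_{\delta\rho}(K)$ already fails throughout $I_{\pm}$ and $R(K,\delta\rho,t)\cap I_{\pm}=\emptyset$. This leaves the regime $R_2\lesssim\delta\rho$, where the horocycle piece $\sigma(I_{\pm})$ has rescaled length $\Lambda_{\pm}\asymp\rho/R_2$, which is large (and tends to infinity with $t$ outside a boundary regime handled as in \cite{emm}).

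In that regime one has $R(K',\rho,t)\cap I_{\pm}=\{\theta\in I_{\pm}:\sigma(\theta)\in K'\}$ and $R(K,\delta\rho,t)\cap I_{\pm}\subset\{\theta\in I_{\pm}^{(\delta)}:\sigma(\theta)\in K'\}$; reparametrizing $I_{\pm}$ by horocycle arclength turns $\sigma$ into a standard horocycle piece of length $\Lambda_{\pm}$ with $I_{\pm}^{(\delta)}$ its concentric middle portion of relative length $\le\frac{\pi}{3}\delta$. Uniform equidistribution of long horocycle pieces then yields that each of these pieces spends a fraction $\Vol(K')/\Vol(\cD)+o(1)$ of its length in $K'$, so
\begin{equation*}
  \frac{|R(K,\delta\rho,t)\cap I_{\pm}|}{|R(K',\rho,t)\cap I_{\pm}|}\le\frac{\pi}{3}\delta+o(1)<2\delta
\end{equation*}
once $t$ is large; summing over $I_{+}$ and $I_{-}$, which between them contain all components $J$ of $R(K',\rho,t)$, gives the lemma.

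The step I expect to be the main obstacle is the input of uniform horocycle equidistribution: the basepoint $\sigma(\theta_J)$ varies with $t$ and may drift toward the cusp of $\cD$, where equidistribution is unavailable, and there are companion cases in which $\Lambda_{\pm}$ stays bounded. Both are dealt with exactly as in \cite{emm}: surfaces near the cusp of $\cD$ carry a horizontal saddle connection and hence diverge under $a_t$, so that the corresponding portions of the circle contribute a negligible amount and do not affect the bound $2\delta$.
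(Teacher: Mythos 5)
Your route is genuinely different from the paper's. The paper works one component $I$ of $R(K',\rho,t)$ at a time: it transports the transverse vector linearly, compares the two concentric angular windows $S\subset S'$ on which the transported vector lies in the $\delta\rho$-box and the $\rho$-box (the elementary bound $|S|\le 2\delta|S'|$), and then argues $I=S'$ by estimating that across the tiny window $S'$ the $\cD$-component of the point stays inside $\interior K'$; no dynamics on $\cD$ is used at all. You instead let the $\cD$-component run along a long rescaled horocycle-like path and try to extract the factor $\delta$ from equidistribution of that path on $\cD$. As written, this does not close, for two concrete reasons. First, the bookkeeping: the ellipse $v(\theta)=a_tr_{\theta-\theta_J}a_t^{-1}\xi$ describes the canonical transverse coordinate only as long as the decomposition seeded at $\theta_J$ can be continued; once $|v(\theta)|$ reaches size $R_1\ge e^t\kappa$ the translation $\sigma(\theta)+v(\theta)$ is in general not even defined (leaves of $\A'$ are bounded translation surfaces with singularities), and when the circle later re-enters $B_\rho(K')$ it does so with fresh transverse data unrelated to this ellipse. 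Hence the claims ``$R(K',\rho,t)\subset I_+\cup I_-$'' and the concluding ``summing over $I_+$ and $I_-$, which contain all components'' are unjustified; the estimate must be organized excursion by excursion and then summed, which your proposal never sets up.

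Second, and more seriously, the crossover regime $R_2\asymp\delta\rho$ is exactly where the factor $\delta$ is not delivered: there the piece $\sigma(I_\pm^{(\delta)})$ has bounded (not growing) length, so uniform equidistribution of \emph{long} horocycle pieces gives no upper bound on $|R(K,\delta\rho,t)\cap I_\pm|$, while the exclusion ``$|v_2|$ stays near $R_2>\delta\rho$'' also fails; if the base path happens to lie in $K$ precisely during the closest approach, a single such excursion contributes a numerator comparable to its denominator contribution rather than a $\delta$-fraction of it, so the asserted bound $\tfrac{\pi}{3}\delta+o(1)$ does not follow. Your one-sentence deferral to \cite{emm} (surfaces near the cusp carry horizontal saddle connections and diverge under $a_t$) addresses basepoints drifting cuspward, not this bounded-length case; moreover the equidistribution you invoke must be uniform over basepoints that vary with $t$ and may themselves approach the cusp, which you acknowledge but do not resolve. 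Note also where your picture and the paper's part ways: your (correct in spirit) description of $\sigma(\theta)$ as moving at speed comparable to $e^{2t}$ per unit angle is not compatible with the paper's estimate that $\pi(y)$ moves only $O(e^{-t/2})$ across $S'$; so to make your approach work you would either have to justify a displacement bound of the paper's type (which would render the equidistribution input unnecessary) or carry out in full the heavier excursion-by-excursion analysis, including the bounded-length and cusp regimes, rather than citing them away.
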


\begin{remark}
  We use the notation $|S|$ for the Lebesgue measure of $S$.
\end{remark}

\begin{proof}
  Let $I$ be a component of $R(K', \rho, t)$.  It suffice to show that
  \begin{equation*}
    |I\cap R(K, \delta\rho, t)| < 2 \delta |I|
  \end{equation*}
  for $t$ sufficiently
  large.  Suppose this intersection is nontrivial and that $\theta_0\in I\cap R(K, \delta\rho, t)$.  By the choice of $\rho$, there is a unique
  $v\in\BOX(\delta\rho, \delta\rho)$ such that
  \begin{equation}
    \label{eq:67}
    y := a_t r_{\theta_0} x + v \in \cD.
  \end{equation}
  We define
  \begin{align*}
    S &= \{ \theta\in (\theta_0 -\pi/2, \theta_0+\pi/2): a_t r_\theta r_{\theta_0}^{-1} a_t^{-1} v \in \BOX(\delta\rho, \delta\rho)\}\\
    &= \{\theta\in (\theta_0 -\pi/2, \theta_0+\pi/2) : r_{\theta-\theta_0} a_t^{-1} v \in \BOX(e^{-t}\delta\rho, e^t \delta\rho)\}, \quad\text{and}\\
    S' &= \{ \theta\in (\theta_0 -\pi/2, \theta_0+\pi/2) : a_t r_\theta r_{\theta_0}^{-1} a_t^{-1} v \in \BOX(\rho, \rho)\}.
  \end{align*}

  Let $\kappa$ be the distance from $x$ to $\cD$ along $\A'$.  We have $\kappa>0$ since $x\not\in\cD$.  Since $a_t^{-1}v +
  r_{\theta_0}x\in\cD$, we have
  \begin{equation}
    \label{eq:69}
    |a_t^{-1} v| \geq \kappa.
  \end{equation}
  Thus, if $t$ is sufficiently large, we have
  \begin{equation}
    \label{eq:64}
    |a_t^{-1}v| > e^{-t}\rho.
  \end{equation}
  It then follows from \eqref{eq:64} and elementary trigonometry that
  \begin{align*}
    \sin\frac{|S|}{2} &\leq \frac{e^{-t}\delta\rho}{|a_t^{-1}v|}, \quad\text{and}\\
    \sin\frac{|S'|}{2} &= \frac{e^{-t}\rho}{|a_t^{-1}v|},
  \end{align*}
  so
  \begin{gather}
    \notag
    |S| \leq \frac{4 e^{-t}\delta\rho}{|a_t^{-1}v|}, \quad\text{and}\\
    \label{eq:68}
    \frac{2e^{-t}\rho}{|a_t^{-1}v|} \leq |S'| \leq  \frac{4 e^{-t}\rho}{|a_t^{-1}v|}. 
  \end{gather}
  Therefore,
  \begin{equation}
    \label{eq:66}
    \frac{|S|}{|S'|}\leq 2\delta.
  \end{equation}

  We have $I\cap R(K, \delta\rho, t)\subset S$, so by \eqref{eq:66}, it is enough to show that $I=S'$.  To show this, it is enough to show
  that for either endpoint $\psi$ of $S'$, we have
  \begin{equation}
    \label{eq:70}
    a_t r_\psi x \in \interior K' \times \bdry\BOX(\rho, \rho),
  \end{equation}
  identifying $B_\rho(K')$ with $K'\times\BOX(\rho, \rho)$.

  From \eqref{eq:67}, we have
  \begin{equation*}
    a_tr_\psi x + a_t r_\psi r_{\theta_0}^{-1} a_t^{-1} v = a_t r_\psi r_{\theta_0}^{-1}a_t^{-1}y.
  \end{equation*}
  Let $y'= a_t r_\psi r_{\theta_0}^{-1}a_t^{-1}y$.  The points $\pi(y)$ and $\pi(y')$ in $\mathcal{D}'$ are joined by an arc of angle
  $|\psi-\theta_0|$ in a hyperbolic circle of radius $t$.  Therefore, the distance between $\pi(y)$ and $\pi(y')$ in $\mathcal{D}'$ is at most
  \begin{equation*}
    |\psi-\theta_0| 2\pi\frac{e^t-1}{e^{t/2}} < 8\pi e^{-t/2}\frac{\rho}{\kappa},
  \end{equation*}
  using \eqref{eq:69} and \eqref{eq:68}.  Since $\pi(K)\subset\interior \pi(K')$, this is smaller than the distance between $K$ and $\bdry
  K'$ in $\mathcal{D}'$ if $t$ is sufficiently large.  This proves \eqref{eq:70}.
\end{proof}

\paragraph{Proof of Theorem~\ref{thm:uniformdistribution2}.}

Let $m_x^\infty = \lim m_x^{t_n}$ for some sequence $t_n$.  Suppose $m_x^\infty$ assigns positive mass to $\cD$.  Then we can choose compact
subsets $K\subset K'\subset\cD$ as above so that $m_x^\infty(K)>0$.
For any $\delta>0$, we have
\begin{align*}
  m_x^\infty(K) &= \lim_{n\to\infty} m_x^{t_n} (K)\\
  &\leq\lim_{n\to\infty} m_t^{t_n} B_{\delta\rho}(K)\\
  &= \lim_{n\to\infty} |R(K, \delta\rho, t_n)|\\
  &\leq \lim_{n\to\infty} 2\delta|R(K', \rho, t_n)|\\
  &\leq 2\delta,
\end{align*}
using Lemma~\ref{lem:nestedintervals}.  This contradicts $\mu_x^\infty(K)>0$.  Therefore, $m_x^\infty(\cD)=0$, which implies
Theorem~\ref{thm:uniformdistribution2} by Lemma~\ref{lem:sufficestoshow}.
\qed

\section{Applications to billiards}
\label{sec:appl-bill}

In this section, we summarize results of Eskin, Masur, and Veech which allow us to obtain the asymptotics for counting saddle connections and
closed geodesics claimed in Theorems~\ref{thm:countingsaddleconnections} and \ref{thm:EDcountings}.

\paragraph{Counting problems.}

Consider the following situation.  Let $S\subset\Omega_1\moduli[g]$ be a stratum equipped with a finite, ergodic, $\SLtwoR$-invariant
measure $\mu$.  Let $M(\reals^2)$ be the space of measures on $\reals^2$.  Consider a function $V\colon S\to
M(\reals^2)$ which is $\SLtwoR$-equivariant and satisfies
\begin{equation*}
  N_V(T, R) < C N_{\rm s}(T, R)
\end{equation*}
for some constant $C$ and any $T\subset S$, where $N_V(T, R)$ is the $V(T)$-measure of the ball $B_R(0)\subset \reals^2$, and $N_{\rm s}(T,
R)$ is the number of saddle connections of length at most $R$ on $T$.  We call the triple $(S, \mu, V)$ a \emph{counting problem}.

\paragraph{Siegel-Veech transform.}

Given any measurable nonnegative $f\colon\reals^2\to\reals$, the \emph{Siegel-Veech transform} of $f$ is the function
$\hat{f}\colon S\to\reals$ defined by
\begin{equation*}
  \hat{f}(T) = \int_{\reals^2}f \, dV(T).
\end{equation*}
Note that if $\chi_R$ is the characteristic function of $B_R(0)$, then $\hat{\chi}_R(T) = N_V(T, R)$. 
\begin{theorem}[\cite{veech98}]
  \label{thm:svtransform}
  There is a constant $c_{\mu, V}$ such that for any nonnegative, measurable $f$, we have
  \begin{equation*}
    \frac{1}{\Vol(\mu)}\int_S \hat{f}\,d\mu = c_{\mu, V}\int_{\reals^2} f(x, y)\,dx\,dy.
  \end{equation*}
\end{theorem}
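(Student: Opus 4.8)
The plan is to recognize the map $f\mapsto\frac{1}{\Vol(\mu)}\int_S\hat f\,d\mu$ as integration against an $\SLtwoR$-invariant Radon measure on $\reals^2$, and then to pin that measure down using the classification of invariant measures. Write $\Lambda(f)=\frac{1}{\Vol(\mu)}\int_S\hat f\,d\mu$ for nonnegative measurable $f$.

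First I would establish that $\Lambda$ is finite on $C_c(\reals^2)$. Any $f\in C_c(\reals^2)$ satisfies $|f|\leq M\chi_R$ for suitable $M,R>0$, so $|\hat f|\leq M\hat\chi_R = M\,N_V(\cdot,R)\leq MC\,N_{\rm s}(\cdot,R)$ by the defining estimate of a counting problem. Hence finiteness of $\Lambda$ reduces to the integrability bound $\int_S N_{\rm s}(T,R)\,d\mu(T)<\infty$, which is supplied by the Eskin--Masur integrability theorem \cite{eskinmasur} (building on \cite{veech98}). This is the one genuinely hard ingredient: granting it, the rest is soft. Since $f\mapsto\hat f$ is linear, sends nonnegative $f$ to nonnegative functions, and commutes with increasing limits (monotone convergence under the integral over $\reals^2$, then under the integral over $S$), $\Lambda$ extends to a positive linear functional on $C_c(\reals^2)$, finite by the above. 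By the Riesz--Markov theorem there is a Radon measure $\nu$ on $\reals^2$ with $\Lambda(f)=\int_{\reals^2}f\,d\nu$ for all $f\in C_c(\reals^2)$; by monotone convergence and inner regularity this identity persists for every nonnegative Borel function $f$. In particular $\nu(B_\epsilon(0))=\Lambda(\chi_{B_\epsilon(0)})=\frac{1}{\Vol(\mu)}\int_S N_V(T,\epsilon)\,d\mu(T)$.

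Next I would check that $\nu$ is invariant under the linear $\SLtwoR$-action on $\reals^2$. For $A\in\SLtwoR$, the $\SLtwoR$-equivariance of $V$ means $V(A\cdot T)=A_*V(T)$, so a change of variables gives $\widehat{f\circ A}(T)=\int_{\reals^2}f(Ax)\,dV(T)(x)=\int_{\reals^2}f\,d(A_*V(T))=\hat f(A\cdot T)$. Integrating over $S$ and using the $\SLtwoR$-invariance of $\mu$ yields $\Lambda(f\circ A)=\Lambda(f)$, hence $\nu$ is $\SLtwoR$-invariant.

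Finally I would invoke the classification of $\SLtwoR$-invariant Radon measures on $\reals^2$. The action on $\reals^2\setminus\{0\}$ is transitive with point stabilizers conjugate to the unipotent subgroup $H$; both $\SLtwoR$ and $H$ are unimodular, so $\reals^2\setminus\{0\}$ carries a unique $\SLtwoR$-invariant Radon measure up to a positive scalar, and Lebesgue measure $dx\,dy$ is one such. Therefore $\nu$ restricted to $\reals^2\setminus\{0\}$ equals $c\cdot dx\,dy$ for some $c\geq 0$. To rule out an atom at the origin, note that $N_V(T,\epsilon)\leq C\,N_{\rm s}(T,\epsilon)$ vanishes once $\epsilon$ drops below the length of the shortest saddle connection on $T$, so $N_V(T,\epsilon)\to 0$ pointwise as $\epsilon\to0$ while staying dominated by the $\mu$-integrable function $C\,N_{\rm s}(\cdot,1)$; dominated convergence then gives $\nu(\{0\})=\lim_{\epsilon\to0}\nu(B_\epsilon(0))=0$. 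Hence $\nu=c\cdot dx\,dy$ on all of $\reals^2$, and setting $c_{\mu,V}=c$ gives $\frac{1}{\Vol(\mu)}\int_S\hat f\,d\mu=c_{\mu,V}\int_{\reals^2}f(x,y)\,dx\,dy$ for every nonnegative measurable $f$. The expected main obstacle is exactly the finiteness input from \cite{eskinmasur}; the classification step is standard, and the remaining steps are routine measure theory.
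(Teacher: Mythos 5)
The paper does not prove this statement at all --- it is quoted directly from Veech \cite{veech98} (with the requisite integrability supplied in the generality used here by \cite{eskinmasur}) --- so there is no internal proof to compare against. Your argument is correct and is essentially the standard proof of the Siegel--Veech formula from the cited sources: Riesz representation applied to the positive functional $f\mapsto\frac{1}{\Vol(\mu)}\int_S\hat f\,d\mu$, uniqueness up to scale of the $\SLtwoR$-invariant Radon measure on $\reals^2\setminus\{0\}$, exclusion of an atom at the origin, with the genuinely hard input being exactly the integrability of $N_{\rm s}(\cdot,R)$ that you correctly attribute to Eskin--Masur.
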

The constant $c_{\mu, V}$ is called a \emph{Siegel-Veech constant}.

\paragraph{Pointwise asymptotics.}

Given $T\in S$, let $m_T$ be the uniform measure on $\SOtwoR\cdot T$.  Eskin and Masur showed in \cite{eskinmasur}:
\begin{theorem}
  \label{thm:pointwiseasymptotics}
  Consider a counting problem $(S, \mu, V)$.  Suppose for some $T\in S$, we have
  \begin{equation*}
    \lim_{t\to\infty}(a_t)_*m_T = \frac{\mu}{\Vol(\mu)}.
  \end{equation*}
  We then have
  \begin{equation}
    \label{eq:71}
    N_V(T, R)\sim \pi c_{\mu, V}R^2.
  \end{equation}
\end{theorem}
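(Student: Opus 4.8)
The plan is to reproduce the argument of Eskin and Masur \cite{eskinmasur}. The underlying philosophy is that $N_V(T,R)=\hat\chi_{B_R(0)}(T)$ where $\chi_{B_R(0)}$ is the indicator of the ball of radius $R$, and that by Theorem~\ref{thm:svtransform} the \emph{average} of $\hat\chi_{B_R(0)}$ over $S$ is exactly $c_{\mu,V}\cdot\pi R^2$; the content of the theorem is that, under the equidistribution hypothesis, the value of this counting function at the single point $T$ is asymptotic to its average. The bridge between ``average'' and ``value at $T$'' is the equidistribution of the expanding circles $(a_t)_*m_T$, so the first task is to replace the indicator of the large ball $B_R(0)$ by a function of bounded support, at the cost of pushing $T$ out by $a_t$ with $t\asymp\log R$.

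Concretely, I would fix a smooth nonnegative function $g$ on $\reals^2$ with compact support avoiding the origin, for instance supported in a thin trapezoid, and let $\hat g$ be its Siegel--Veech transform for the counting problem $(S,\mu,V)$. Writing $m_T^t=(a_t)_*m_T$, one has $\int_S\hat g\,dm_T^t=\frac1{2\pi}\int_0^{2\pi}\hat g(a_tr_\theta T)\,d\theta$, and unwinding the definition of $\hat g$ this equals a sum over the vectors $v$ recorded by $V(T)$ weighted by $\frac1{2\pi}|\{\theta:a_tr_\theta v\in\operatorname{supp}g\}|$, which is comparable to $e^{-t}/|v|$ when $|v|$ lies between a constant multiple of $e^{-t}$ and a constant multiple of $e^{t}$, and is negligible otherwise. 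Hence, with $t=\log R$, this quantity is comparable to $e^{-t}\sum_{|v|\le e^{t}}|v|^{-1}$, and an integration by parts, $e^{-t}\int_0^{e^{t}}\rho^{-1}\,dN_V(T,\rho)$, sandwiches $N_V(T,R)/R^2$ between fixed multiples of $\int_S\hat g\,dm_T^{\log R}$. Repeating this with trapezoids slightly larger and slightly smaller than $g$ makes the sandwich tight as these converge, and the geometric factor $\operatorname{area}(\operatorname{supp}g)$ appearing in Theorem~\ref{thm:svtransform} is where the constant $\pi$ in \eqref{eq:71} ultimately comes from.

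It then remains to take $t\to\infty$ in $\int_S\hat g\,dm_T^t$. By hypothesis $m_T^t\to\mu/\Vol(\mu)$ weakly, and $\int_S\hat g\,d(\mu/\Vol\mu)=c_{\mu,V}\int_{\reals^2}g$ by Theorem~\ref{thm:svtransform}; since $g$ is smooth and supported away from $0$, $\hat g$ is a nonnegative \emph{continuous} function on $S$. The genuine difficulty — and the step I expect to be the main obstacle — is that $\hat g$ is \emph{unbounded}, blowing up on surfaces carrying arbitrarily short saddle connections, so weak-$*$ convergence alone does not pass the integral to the limit. I would handle this by truncation: for the bounded continuous part $\min(\hat g,M)$ weak convergence applies directly, while for the tail one needs $\sup_t\int_S(\hat g-M)_+\,dm_T^t\to 0$ as $M\to\infty$. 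Using the standing hypothesis $N_V(T,\cdot)<C\,N_{\rm s}(T,\cdot)$ one has $\hat g<C'\,N_{\rm s}(\cdot,K)$ for a fixed $K$ depending on $\operatorname{supp}g$, so this reduces to a uniform integrability bound for the number of short saddle connections along the circles $m_T^t$ — precisely the quantitative non-divergence and $L^{1+\delta}$ estimates established in \cite{eskinmasur} (building on \cite{veech98}), whose hypotheses are available here because the equidistribution assumption precludes escape of mass under $a_t$. Granting that analytic input, combining the three steps gives $\lim_{t\to\infty}\int_S\hat g\,dm_T^t=c_{\mu,V}\int_{\reals^2}g$, and feeding this back into the geometric sandwich of the second paragraph yields $N_V(T,R)\sim\pi c_{\mu,V}R^2$.
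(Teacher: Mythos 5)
First, a point of reference: the paper does not prove Theorem~\ref{thm:pointwiseasymptotics} at all --- it is quoted from \cite{eskinmasur} --- so your proposal is measured against the Eskin--Masur argument you set out to reproduce rather than against anything internal to the paper. Your overall architecture (Siegel--Veech transforms of compactly supported test functions, circle averages, the equidistribution hypothesis to pass to the limit, and the $L^{1+\delta}$/nondivergence estimates of \cite{eskinmasur} to handle the unboundedness of $\hat g$) is indeed that of the cited proof, and you correctly single out the unboundedness of $\hat g$ as the main analytic obstacle.

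The genuine gap is in the geometric reduction of your second paragraph. With a test function supported in a fixed compact set hugging the horizontal axis, a vector $v$ with $e^{-t}\lesssim|v|\lesssim e^{t}$ is indeed picked up with angular weight comparable to $e^{-t}/|v|$, so the circle average is comparable to $e^{-t}\int \rho^{-1}\,dN_V(T,\rho)$ --- a count in which each vector is weighted by $1/|v|$, not counted equally. A two-sided sandwich ``between fixed multiples'' can then only yield $N_V(T,R)\asymp R^2$, and shrinking or enlarging the trapezoid cannot tighten it to the asymptotic with the exact constant $\pi c_{\mu,V}$, because the defect is in the weight, not in the size of the support (moreover, the inequality bounding the circle average above by a multiple of $N_V(T,e^t)e^{-2t}$ is itself unjustified without a priori control of short vectors). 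The actual Eskin--Masur reduction uses sectors/trapezoids with apex pointing at the origin, e.g. $\{(x,y)\colon |x|\le\epsilon y,\ 0<y\le1\}$: for such a set, $a_t r_\theta v\in\mathrm{supp}\,f$ amounts to $|\tan\psi|\le\epsilon e^{-2t}$ together with $|v|\le e^{t}$ (up to edge effects near $|v|\approx e^t$, which is what the slightly larger and smaller trapezoids are really for), so every vector of length at most $e^{t}$ receives the \emph{same} angular weight $\approx 2\epsilon e^{-2t}$; the circle average is then $\approx(\epsilon/\pi)\,e^{-2t}N_V(T,e^{t})$, and equating with the Siegel--Veech limit $c_{\mu,V}\cdot\mathrm{area}=c_{\mu,V}\,\epsilon$ is precisely what produces the constant $\pi c_{\mu,V}$ in \eqref{eq:71}. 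A smaller slip: $\hat g$ need not be continuous even for smooth $g$ supported away from $0$ (a saddle connection is destroyed when a zero moves onto its interior), only lower semicontinuous; this is also dealt with in \cite{eskinmasur}, but it cannot simply be asserted.
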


\begin{remark}
  For $T$ with nonunit area, the right hand side of \eqref{eq:71} is scaled by a factor of $1/\Area(T)$.
\end{remark}

\paragraph{Counting problems in genus two.}

Now restrict to the stratum $\Omega_1\moduli(1,1)$, equipped with the period measure $\mu_D^1$ supported on $\Omega_1\E(1,1)$.
Given a surface $T\in \Omega_1\moduli(1,1)$, let $\mathfrak{C}(T)$ be the set of maximal cylinders on $T$, and let $\mathfrak{S}_i(T)$ be the set of saddle
connections joining distinct zeros of multiplicity $i$  on $T$ (we count pairs of saddle connections related by the hyperelliptic involution
as a single saddle connection).  A cylinder or saddle connection $I\in \mathfrak{C}(T)$ or $\mathfrak{S}_i(T)$ determines a
holonomy vector $v(I)\in\cx$, well-defined up to sign.  Define
\begin{align*}
  C(T) &= \frac{1}{2}\sum_{I\in\mathfrak{C}(T)}(\delta_{v(I)} + \delta_{-v(I)}) \quad\text{and}\\
  S_i(T) &= \frac{1}{2}\sum_{I\in\mathfrak{S}_i(T)}(\delta_{v(I)} + \delta_{-v(I)}).
\end{align*}

\paragraph{Proof of Theorem~\ref{thm:EDcountings}.}

Consider the counting problems $(\Omega_1\moduli(1,1), \mu_D^1, C)$ and $(\Omega_1\moduli(1,1), \mu_D^1, S_i)$ and let $c_D$ and $s_D^i$ be the
associated Siegel-Veech constants.  We calculate these constants by applying Theorem~\ref{thm:svtransform} with $f$ the characteristic
function $\chi_R$.  By Theorems~\ref{thm:volED} and \ref{thm:svintegral}, we have
\begin{align*}
  c_D &= \frac{15}{\pi^2},\\
  s_D^1 &= \frac{27}{8}, \quad\text{and}\\
  s_D^2 &= \frac{5}{8}.
\end{align*}
We then obtain the desired asymptotics from Theorems~\ref{thm:pointwiseasymptotics} and \ref{thm:EDcountings}.
\qed

\begin{figure}[htbp]
  \centering
  \includegraphics{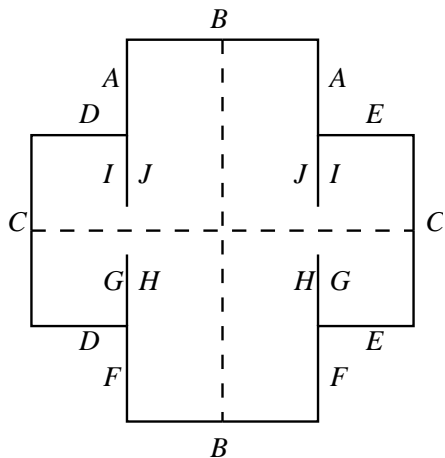}
  \caption{Unfolding of $P$}
  \label{fig:unfolding}
\end{figure}

\paragraph{Unfolding.}

There is associated to every rational-angled polygon $P$ a compact Riemann surface with a holomorphic one-form, called the \emph{unfolding}
$U(P)$ of $P$, obtained by gluing together several reflected copies of $P$.

If $P$ is the L-shaped polygon with barrier
$P(a, b, t)$ shown in Figure~\ref{fig:Lshaped}, then the unfolding $U(P)$ is the surface obtained by gluing four reflected copies of $P$ as shown in
Figure~\ref{fig:unfolding}.  It is easy to see that $U(P)$ is genus two with two simple zeros.  There is a natural refolding map $\pi\colon U(P)\to P$.  The map
$\pi$ sends each closed geodesic of length $L$ to a closed billiards path of length $L$.  Conversely, each closed billiards path on $P$ 
which is neither horizontal nor vertical is the image of exactly two closed geodesics on $U(P)$ of the same length.  The analogous statement
holds for saddle connections.

A saddle connection $I$ on $U(P)$ has multiplicity one if and only if $I$ passes through a Weierstrass point of $U(P)$.  The six Weierstrass
points of $U(P)$ map to the six corners of $P$.  Therefore, a saddle connection on $U(P)$ is multiplicity one if and only if the
corresponding saddle connection of $P$ is multiplicity one.  We therefore have
\begin{equation}
  \label{eq:72}
  N_{\rm c}(U(P), R)\sim 2 N_c(P, R),
\end{equation}
and similarly for the counting functions for type one and two saddle connections.

It is straightforward to check that $U(P(x, y, t))$ lies on the decagon curve if and only if
\begin{equation*}
  x = y = \frac{1+\sqrt{5}}{2} \quad \text{and}\quad t=\frac{5-\sqrt{5}}{10}.
\end{equation*}
We then obtain Theorem~\ref{thm:countingsaddleconnections} directly from Theorem~\ref{thm:EDcountings} and \eqref{eq:72}.

\bibliography{my}

\end{document}